\definecolor{refkey}{rgb}{1,0,0}
\definecolor{labelkey}{rgb}{0,0,1}
\definecolor{labelkey}{rgb}{1,1,1}
\theoremstyle{plain}
\newtheorem{theorem}{Theorem}[section]
\newtheorem{lemma}[theorem]{Lemma}
\newtheorem{corollary}[theorem]{Corollary}
\newtheorem{definition}[theorem]{Definition}
\newtheorem{proposition}[theorem]{Proposition}
\newtheorem{example}[theorem]{Example}
\newtheorem{remark}[theorem]{Remark}
\theoremstyle{definition}
\newtheorem*{note*}{Note}
\theoremstyle{remark}
\DeclareMathOperator{\ad}{ad} \DeclareMathOperator{\Ad}{Ad}
\DeclareMathOperator{\id}{id}
\DeclareMathOperator{\Hom}{Hom}
 \DeclareMathOperator{\mult}{mult}
\newcommand{\ba}[2]{[#1,#2]}
\newcommand{\inserts}{\iota}
\newcommand{\galgebra}{\mathfrak{g}}
\newcommand{\crossedmoduletriple}[3]{(#1\stackrel{#2}{\rightarrow}#3)}
\newcommand{\moduleaction}{ \triangleright}
\newcommand{\rhopush}{D_{\rho}}
\newcommand{\minuspower}[1]{(-1)^{#1}}
\newcommand{\thetaalgebradegone}{\mathrm{\vartheta}}
\newcommand{\galgebradegzero}{\mathfrak{g}}
\newcommand{\actionmapaction}[2]{#1 \moduleaction #2}
\newcommand{\tobefilledin}{\,\stackrel{\centerdot}{}\,}
\newcommand{\jet}{\mathfrak{J}}
\newcommand{\heat}{\mathfrak{H}}
\newcommand{\core}{\mathfrak{h}}
\newcommand{\liftingd}{\jmath^1}
\newcommand{\Gpd}{\mathcal{G}}
\newcommand{\invstar}{\mathrm{inv}_*}
\newcommand{\CinfM}{C^\infty(M)}
\newcommand{\littlecirc}{}
\newcommand{\XkmultG}[1]{\mathfrak{X}^{#1}_{\mathrm{mult}}(\Gpd)}
\newcommand{\OmegakmultG}[1]{\Omega^{#1}_{\mathrm{mult}}(\Gpd)}
\newcommand{\Drhostarpush}{D_{\rho^*}}
\newcommand{\sign}{\mathrm{sgn}}
\newcommand{\piM}{\underline{P}}
\begin{document}

\title{
Multiplicative forms on Poisson groupoids
}

\author{Zhuo Chen}
\address{Department of Mathematics, Tsinghua University, Beijing 100084, China}
\email{\href{mailto:chenzhuo@tsinghua.edu.cn}{chenzhuo@tsinghua.edu.cn}}

\author{Honglei Lang$^\diamond$} 
\address{College of Science, China Agricultural University, Beijing 100083, China}
\email{\href{mailto:hllang@cau.edu.cn}{hllang@cau.edu.cn}(corresponding author)}

\author{Zhangju Liu}
\address{School of Mathematical and Statistical Sciences, Henan University, Kaifeng 475004, China}
\email{\href{mailto: liuzj@pku.edu.cn}{zhangju@henu.edu.cn}}

\makeatother

\begin{abstract} 
	Given a Lie groupoid $\Gpd$ over $M$,  $A$   the tangent Lie algebroid of $\Gpd$, and $\rho: A\rightarrow TM$   the anchor map, we provide a formula that decomposes an arbitrary multiplicative $k$-form $\Theta$ on $\Gpd$ into two parts. The first part is $e$, a $1$-cocycle of $\jet \Gpd$ valued in $\wedge^k T^*M$, and the second part is $\theta\in \Gamma(A^*\otimes (\wedge^{k-1} T^*M))$ which is $\rho$-compatible, meaning that $\iota_{\rho(u)}\theta(u)=0$ for all $u\in A$. We call this pair of data $(e,\theta)$ the $(0,k)$-characteristic pair of $\Theta$. Next, we prove that if $\Gpd$ is a Poisson Lie groupoid, then the space $\OmegakmultG{\bullet}$ of multiplicative forms on $\Gpd$ has a differential graded Lie algebra (DGLA) structure. Furthermore, when combined with $\Omega^\bullet(M)$, which is the space of forms on the base manifold $M$, $\OmegakmultG{\bullet}$ forms a canonical DGLA crossed module. This supplements a previously known fact that multiplicative multivector fields on $\Gpd$ form a DGLA crossed module with the Schouten algebra $\Gamma(\wedge^\bullet A)$ stemming from the tangent Lie algebroid $A$.

  \emph{Keywords}:  {Multiplicative form, Poisson groupoid, Lie algebra crossed module, characteristic pair.}\\ 
  \emph{MSC}:~Primary  53D17, 16E45. Secondary   58H05.
\end{abstract}

\maketitle
\tableofcontents



\section*{Introduction}

The motivation for this study derives from two primary sources. Firstly, we seek to build upon our prior research on multiplicative multi-vector fields on Lie groupoids \cite{CL1}; our current focus is directed towards multiplicative forms. Secondly, we aim to investigate Poisson Lie groupoids and, in particular, the constituting multiplicative forms of their induced graded Lie algebras.

 The concept of Lie groupoids was introduced by Ehresmann in the late 1950's \cite{a,b}  to describe smooth symmetries of a smooth family of objects. That is, the collection of arrows is a manifold $\Gpd$, the set of objects is a manifold $M$ called the base, and all the structure maps of the groupoid are smooth. Taking sources of arrows defines the source map $s:\Gpd\to M$, and similarly, one has the target map   $t:\Gpd\to M$, both being considered as part of the groupoid structures.  Let us denote by   $\Gpd \rightrightarrows M$ for such a Lie groupoid. Its infinitesimal counterpart, namely the tangent Lie algebroid of $\Gpd$, is defined and denoted by $A:=\ker(s_*) |_M$, i.e., vectors tangent to the $s$-fibers of $\Gpd$ along $M$.
The theory of Lie groupoids and   Lie algebroids has become a far reaching extension of the usual Lie theory, and it finds application in many areas of mathematics. The reader is referred to the texts \cites{Mackenzie, CF} for more useful information  on this subject.

Geometric structures compatible with the groupoid structure  are often called multiplicative. 
  Multiplicative objects have attracted widespread attention  because  they can be regarded as geometric structures on differentiable stacks \cite{Luca}. We refer to  \cite{Kosmann} for a thorough  survey of   different kinds of multiplicative structures on Lie groupoids   defined and studied in the past decades.  For this article, we hope that readers have some familiarities of   works on   multiplicative Poisson structures \cites{W1,Weinstein1987, LSX}.

A multiplicative vector field on a Lie groupoid is a vector field generating a flow of local groupoid automorphisms \cite{MX2}. In a recent work \cite{BCLX},  Bonechi,  Ciccoli,  Laurent-Gengoux, and  Xu have shown    a canonical graded Lie algebra (GLA for short) crossed module structure   on the space of multiplicative multi-vector fields of a groupoid --- Let $\Gpd \rightrightarrows M$ be a Lie groupoid over $M$ and $A$ its tangent Lie algebroid. The aforementioned GLA crossed module is actually composed of  a triple:  	
\begin{equation}\label{Introtemp1}\Gamma(\wedge^\bullet A) \xrightarrow{T} \XkmultG{\bullet} ,\qquad\mbox{where~}~  {T}(u):= \overleftarrow{u}-\overrightarrow{u},\qquad  \forall u\in \Gamma(\wedge^\bullet A).\end{equation}
Here $\Gamma(\wedge^\bullet A)$ is the Schouten algebra of $A$,  $\XkmultG{\bullet}$ stands for the space of multiplicative multi-vector fields of $\Gpd$, and the action of $\XkmultG{\bullet}$ on $\Gamma(\wedge^\bullet A)$ is intrinsic (see Example \ref{exBCLX}). 
 It is also proved in \cite{BCLX} that the homotopy equivalence class of the GLA crossed module $\Gamma(\wedge^\bullet A) \xrightarrow{T} \XkmultG{\bullet}$ is  invariant under Morita equivalence of Lie groupoids, and thus is considered as the multi-vector fields on the corresponding differentiable stack. 

In our previous work \cite{CL1}, we have established a   formula for multiplicative multi-vector fields (see Theorem \ref{Thm:mainexplicitformula}) --- any multiplicative $k$-vector field $\Pi$ on a Lie groupoid $\Gpd \rightrightarrows M$ can be decomposed into   
\begin{equation}\label{formula3intro}
\Pi_g=R_{g*}c([b_g])+L_{[b_g]}\big(\frac{1-e^{-\rhopush}}{\rhopush} (\pi)\big)_{s(g)}
\end{equation}
where $g\in \Gpd $,   $b_g$  is a bisection through $g$,   $c: \jet \Gpd \to \wedge^k A$ is a $1$-cocycle,  $\pi\in  \Gamma(TM\otimes (\wedge^{k-1} A))$  is a  $\rho$-compatible $(k,0)$-tensor (see Definition   \ref{Defn:properktensor}), and $D_\rho$  is    a  degree $0$ derivation on $ \wedge^\bullet (TM\oplus A)$. We call $(c,\pi)$ a $(k,0)$-characteristic pair on  $\Gpd$. More facts about multiplicative multi-vector fields are recalled in Section \ref{Sec:multikvectosrecalling}.

In duality to multiplicative multi-vector fields, differential forms on Lie groupoids suitably compatible with the groupoid structure are referred to as multiplicative forms, and are the main objects of
interest in this paper. After their first  
appearance with the advent of symplectic groupoids \cites{Karasev,Weinstein1987},
  a lot of interesting works on  multiplicative forms of Lie groupoids   have emerged. For example,  a one-to-one correspondence between  multiplicative forms (with certain coefficients) and  Spencer operators on Lie algebroids is established in \cite{C}, from which we find a lot of inspiration.  

Our first objective is to decompose multiplicative forms by drawing an analogy with Equation \eqref{formula3intro} of multiplicative multivector fields. Fortunately, Crainic, Salazar, and Struchiner \cite{C} have discovered an important result in which a multiplicative $k$-form $\Theta$ on $\Gpd$ can be characterized by a $(0,k)$-characteristic pair $(e,\theta)$. Here, $e$ is a 1-cocycle of the jet groupoid $\jet \Gpd$ valued in $\wedge^k T^*M$,  i.e. 
$e\in Z^1(\jet \Gpd , \wedge^k T^*M)$, and $\theta\in \Gamma(A^*\otimes (\wedge^{k-1} T^*M))$ is a $\rho$-compatible $(0,k)$-tensor (see Definition \ref{Def:rhocompatible0ktensor}). Using this tool, we can analyze the constituent elements of multiplicative forms and determine the relationship between multiplicative forms $\Theta$ and $(e,\theta)$. Our first main Theorem \ref{main2} states that for any bisection $b_g$ passing through $g\in \Gpd$, a nice decomposition can be obtained: 
	\begin{equation}\label{intro1}
	\Theta_g=R_{[b_g^{-1}]}^*\big(e[b_g]+\frac{e^{\Drhostarpush }-1}{\Drhostarpush }(\theta)_{t(g)}\big).\end{equation} 
	Here $\Drhostarpush$ is  a degree $0$ derivation  $\wedge^\bullet(T^*M\oplus A^*)\to \wedge^\bullet (T^*M\oplus A^*)$ (see  Equation \eqref{Eqt:Drhostarpush}).
	
	The infinitesimal counterparts of multiplicative forms on Lie groupoids are certain structures on Lie algebroids, and they are called    infinitesimally multiplicative (IM for short) forms \cite{BC}.
	Likewise, the infinitesimals of  $(0,k)$-characteristic pairs on Lie groupoids are $(0,k)$-characteristic pairs on Lie algebroids (see Definition \ref{2.16} and Proposition \ref{Prop:groupoidchartoalgebroidchar}), and we show that they are equivalent to IM-forms (see Proposition  \ref{Prop:LiepaircharpairtoIMform}).

	Our second goal is to create a new   object (in parallel to the aforementioned GLA crossed module) --- a triple involving  
	\begin{equation}\label{Introtemp2}\Omega^{\bullet}(M)\xrightarrow{~J~} \OmegakmultG{\bullet},\quad \mbox{ where }~
	J(\gamma):= s^*\gamma-t^*\gamma,\quad \forall \gamma\in \Omega^{\bullet}(M). \end{equation}

	Here, $\OmegakmultG{\bullet}$ refers to the space of multiplicative forms on $\Gpd$, and $s$ and $t$ are the source and target maps of the $\Gpd$ groupoid, respectively. However, as of now, the triple in \eqref{Introtemp2} is only a morphism of cochain complexes wherein the spaces $\Omega^{\bullet}(M)$ and $\OmegakmultG{\bullet}$ have the standard de Rham differentials. Our main result regarding this triple states that if $\Gpd$ is a Poisson Lie groupoid, i.e., equipped with a multiplicative Poisson structure $P$, then the triple $\Omega^{\bullet}(M)\xrightarrow{~J~} \OmegakmultG{\bullet}$ becomes a differential graded Lie algebra (DGLA) crossed module . 
	
	Poisson Lie groupoids, which unify Poisson Lie groups \cite{LuW} and symplectic groupoids, were introduced by Weinstein in \cite{Weinstein 1987,W1}. The Poisson structure $P$ on $\Gpd$ upgrades the triple in \eqref{Introtemp1} to a DGLA crossed module, and the two DGLA crossed modules \eqref{Introtemp1} and \eqref{Introtemp2} are related by a natural morphism. This is our second main Theorem \ref{multiform}.

Note that Ortiz and Waldron \cite{OW} had already discovered part of Theorem \ref{multiform}, which indicates that the data $\Omega^{1}(M)\xrightarrow{~J~}\OmegakmultG{1}$ forms a Lie algebra crossed module. This elegant fact is reasserted in Theorem \ref{Thm:LiealgebracrossedmodulePoissonLiegroupoid}, and to ensure comprehensiveness, we present a proof employing our theory of characteristic pairs.

 The paper is structured as follows. In Section \ref{Sec:Prelim}, we provide an overview of the fundamental concepts of Lie groupoids, Lie algebroids, their corresponding jets, and multiplicative multi-vector fields. We also establish the relation between $(k,0)$-characteristic pairs and multiplicative multi-vector fields. Next, in Section \ref{Section:multforms-charpairs}, we examine multiplicative forms, $(0,k)$-characteristic pairs on Lie groupoids and their interconnections, culminating in our main result, Theorem \ref{main2}. We also explore the infinitesimal theories of $(0,k)$-characteristic pairs and IM forms on Lie algebroids. We then focus on transitive Lie groupoids and Lie algebroids. In Section \ref{Sec:CartanComplex}, we derive a variety of essential formulas related to multiplicative multi-vector fields and forms. Finally, in Section \ref{Sec:PoissonGpd}, we investigate multiplicative forms on Poisson groupoids, and present our second main outcome, Theorem \ref{multiform}, along with its proof, which depends on propositions and lemmas developed earlier in the paper.


 To clarify and make more concise, we recommend consulting the following works related to Lie algebroid IM forms integrated to Lie groupoids: Calaque and Bursztyn \cite{BC}, Cabrera and Ortiz \cite{BCO}, and Cabrera, Marcut, and Salazar \cite{CMS0,CMS1}. For further information on multiplicative tensors and their infinitesimals, please refer to Bursztyn and Drummond's work \cite{BD}. Additionally, we suggest exploring a relevant piece of work by Jotz Lean,  Sti\'enon,    Xu \cite{JSX} on multiplicative generalized complex structures that could be a valuable resource for further research.

 \subsection*{List of conventions and notations}\
 
 Throughout the paper, $M$ stands for a smooth manifold and $k$ denotes a \textit{positive integer} (usually within the range $1\leqslant k\leqslant  \dim M+1$). Further, `GLA' stands for `graded Lie algebra', and `DGLA'   for `differential graded Lie algebra'. Some commonly used symbols are listed below.

 
 \smallskip
 
 \begin{enumerate}
 	\item $\mathrm{Sh}(p,   q)$ \quad  \textemdash  \quad  the set of $(p,   q)$-shuffles; 
 	A $(p,   q)$-shuffle is a permutation
 	$\sigma$ of the set
 	$\{1, 2, \cdots, p+q\}$
 	such that
 	$\sigma(1)<\cdots < \sigma(p)$
 	and
 	$\sigma(p+1)<\cdots < \sigma(p+q)$;
 	
 	\item $T^\sharp$ \quad \textemdash \quad   the contraction map $U^*\to V$, $u^*\mapsto \iota_{u^*}T$ for a given tensor $T\in U\otimes V$; 
 
	\item  $\Gpd \rightrightarrows M$ \quad \textemdash \quad     a Lie groupoid over $M$; 
	
	\item $\XkmultG{\bullet}$ $~(\OmegakmultG{\bullet})$ \quad \textemdash \quad    the space of multiplicative multi-vector fields (multiplicative forms) of $\Gpd$;
	
	\item $(A,[-,-],\rho)$ \quad \textemdash \quad   a Lie algebroid  with its bracket and anchor maps; Usually, $A$ is the tangent Lie algebroid of $\Gpd$;

		\item $b_g$ \quad \textemdash \quad     a (local) bisection  on  $\Gpd$ which passes through $g\in \Gpd$;  
		
		\item  $[b_g]$ \quad \textemdash \quad  the first jet of $b_g$ at $g\in \Gpd$; 

	\item $\jet \Gpd$ $~ (\jet A)$\quad \textemdash \quad     the   jet groupoid of $\Gpd$  (the jet Lie algebroid of $A$); 

	\item $\liftingd u$ \quad \textemdash \quad 
	the section of jet in  $\Gamma({\jet A})$ arising from   $u\in \Gamma(A)$; See Equation \eqref{Eqt:liftingd};
	
	\item  $\heat$ $~ (\core)$ \quad \textemdash \quad bundle of isotropy jet groups (bundle of  isotropy Lie algebras); 

 	\item $\Ad$ \quad \textemdash \quad     the adjoint action of the jet groupoid $\jet \Gpd$ on $A$ and $TM$ and also on $\wedge^k A$ and $\wedge^k TM$;
 	
 	\item $\Ad^\vee$ \quad \textemdash \quad     the coadjoint action of $\jet \Gpd$ on $A^*, T^*M$ and also on $\wedge^k A^*, \wedge^k T^*M$;

	\item   $\Drhostarpush$  \quad \textemdash \quad  a degree $0$ derivation  $\wedge^\bullet(T^*M\oplus A^*)\to \wedge^\bullet (T^*M\oplus A^*)$; See Equation \eqref{Eqt:Drhostarpush};

	\item  $\mathrm{CP}^\bullet (A)$  \quad \textemdash \quad the space of characteristic pairs on $A$;

	\item  $\mathrm{IM}^\bullet (A)$  \quad \textemdash \quad the space of IM-forms on $A$.

 \end{enumerate} 



\section{Preliminaries}\label{Sec:Prelim}
This section provides an overview of preliminary concepts such as Lie groupoids, Lie algebroids, their jets, and our notation and conventions regarding Lie groupoid and Lie algebroid modules. Additionally, we briefly recall multiplicative multi-vector fields and forms, with references to \cites{Mackenzie, CF} and our previous work \cite{CL1}.

\subsection{Lie algebroid and groupoid modules}\label{Sec:LALGPbasic}
  	A  {Lie  algebroid} is a $\mathbb{R}$-vector bundle $A \rightarrow M$ endowed with a Lie bracket $[{\tobefilledin},{\tobefilledin}]$ in its space of sections $\Gamma(A)$, together with a bundle map $\rho:  ~ A \rightarrow T M $ called {\bf anchor}, such that
	$\rho:~~ \Gamma(A) \rightarrow \mathfrak{X}^1(M) $ is a morphism of Lie algebras and
	\[
	[u, \, fv]=f[u, \,v]+(\rho(u)f)v
	\]
	holds for all $u, v \in \Gamma(A)$ and $f\in  C^\infty({M})$.

By  an {\bf $A$-module}, we mean a vector bundle $E\to M$ which is endowed with an $A$-connection:
$$
\nabla:~\Gamma(A)\times  \Gamma(E)\to \Gamma(E)
$$
which is flat:
$$
\nabla_{[u,v]}e=\nabla_u\nabla_v e-\nabla_v\nabla_u e,\qquad\forall u,v\in \Gamma(A), e\in \Gamma(E).
$$

Given an $A$-module $E$, we have the standard Chevalley-Eilenberg complex $(C^\bullet(A,E),d_A)$, where
$$
C^\bullet (A,E):=\Gamma(\Hom(\wedge^\bullet  A , E )) 
$$
and the coboundary operator  $d_A: ~C^{n} (A,E)\to C^{n+1} (A,E)$ is given by
\begin{eqnarray*}
	&&(d_A\lambda)(u_0,u_1,\cdots, u_{n})\\
	&=& \sum_{i}\minuspower{i}\nabla_{u_i} \lambda(\cdots, \widehat{u}_i, \cdots)
	+\sum_{i<j}\minuspower{i+j}\lambda([u_i,u_j],\cdots, \widehat{u}_i, \cdots, \widehat{u}_j, \cdots),
\end{eqnarray*}
for all $u_0,\cdots, u_n\in\Gamma(A)$.  


Let $\Gpd $  be a Lie groupoid over a smooth manifold $M$ with its   source and target maps
being denoted by $s:\Gpd\to M$ and  $t:\Gpd\to M$, respectively. We use the short notation  $\Gpd \rightrightarrows M$ in which $s$ and $t$ are omitted to denote such a Lie groupoid. We treat the set of identities $M\hookrightarrow \Gpd $   as a submanifold of $\Gpd $. The groupoid multiplication of two elements $g$ and ${r}$ is denoted by $g{r}$, provided that $s(g)=t({r})$. The collection of such pairs   $(g,{r})$,  called composable pairs, is denoted by $\Gpd ^{(2)}$. 
The groupoid inverse map of $\Gpd $ is denoted by $\mathrm{inv}: ~\Gpd \rightarrow \Gpd $. For $g\in \Gpd $, its inverse $\mathrm{inv}(g)$ is also denoted by $g^{-1}$.

A {\bf bisection} of $\Gpd $ is a smooth splitting $b: M\to \Gpd $ of the source map $s$ (i.e., $s{\littlecirc} b=\mathrm{id}_M$) such  that $$\phi_{b}:=t{\littlecirc} b:\quad M\to M$$ is a diffeomorphism. The set of bisections of $\Gpd $ forms a group which we denote by $\mathrm{Bis}(\Gpd )$ and its  identity element is   $\mathrm{id}_M$.  The multiplication $bb'$ of $b$ and $b'\in  \mathrm{Bis}(\Gpd )$ is given by
$$ (b  b')(x):=b(\phi_{b'}(x))b'(x),\qquad \forall x\in M.$$ 

A bisection $b$ defines a diffeomorphism of $\Gpd $ by left multiplication:
$$
L_b:\Gpd \to \Gpd, \quad L_b(g):=b(t(g))g,\qquad  \forall g\in \Gpd .
$$  We call $L_b$ the left translation by $b$.
Similarly, $b$ defines the right translation:
$$
R_b:\Gpd \to \Gpd, \quad R_b(g):=g b(\phi_b^{-1}s(g)),\qquad  \forall g\in \Gpd .
$$

In particular, there is an induced map $R_b^{!}: M\to \Gpd$ which is the restriction of $R_b$ on $M$:
\begin{equation}\label{Eqt:bt}
R_b^{!}(x):=R_b(x)=b(\phi_b^{-1} (x)),\qquad \forall x\in M.
\end{equation}
Clearly, $R_b^{!}$ is a section of the fibre bundle $\Gpd\stackrel{t}{\to}M$.



A local bisection $b$ passing through the point $b(x)=g$ on $\Gpd $, where $x=s(g)\in U$, is also denoted by $b_g$, to emphasise the particular point $g$.\emph{ In the sequel, by saying a   bisection through $g$, we mean a local bisection that passes through $g$.
}

A (left) {\bf $\Gpd $-module}   is a vector bundle $E\rightarrow M$  together with a smooth assignment: $g\mapsto \Phi_g$, where $g\in \Gpd $ and $\Phi_g\in \mathrm{GL}(E_{s(g)},E_{t(g)})$ (the set of isomorphisms from $E_{s(g)}$ to $E_{t(g)}$),
satisfying
\begin{enumerate}
	\item $\Phi_x=\mathrm{id}_{E_x}$, for all $x\in M$;
	\item $\Phi_{gr}= { \Phi_g{\littlecirc} \Phi_r}$, for all composable pairs $(g,r)$.
\end{enumerate}


 We recall the tangent Lie algebroid $(A,  \rho, [~,~]_A) $ of the Lie groupoid $\Gpd $. In fact, 
$A=\ker s_*|_M$ and the anchor map $\rho :~A\rightarrow TM$ is simply $t_*$. For  $u,v\in \Gamma({A})$,
the Lie bracket $[u,v]$ is determined by
$$\overrightarrow{[u,v]}=[\overrightarrow{u},\overrightarrow{v}].$$
Here $\overrightarrow{u}$ denotes the right-invariant vector field on $\Gpd $ corresponding to $u$. In the meantime, the left-invariant vector field corresponding to $u$, denoted by $\overleftarrow{u}$, is related to $\overrightarrow{u}$ via
$$
\overleftarrow{u}= - \invstar  (\overrightarrow{u})=-\overleftarrow{\invstar  u}.
$$

A $\Gpd $-module $E$ is also an $A$-module, i.e., there is an $A$-action on $E$,
$$
\nabla:~\Gamma(A)\times  \Gamma(E)\to \Gamma(E)
$$
defined by
$$
\nabla_{u}e=  \frac{d}{d\epsilon }|_{\epsilon =0} \Phi_{\exp{(-\epsilon u)}}e,\qquad \forall u\in \Gamma(A), e\in \Gamma(E).
$$

An {\bf $n$-cochain} on $\Gpd $ valued in the $\Gpd $-module $E$ is a smooth map $c: \Gpd ^{(n)}\to E$ such that $c(g_1,\cdots,g_n)\in E_{t(g_1)}$. 
 Denote by $C^n(\Gpd , E)$  the space of $n$-cochains.
The coboundary operator
\[ d_\Gpd: C^n(\Gpd , E)\to C^{n+1}(\Gpd , E),\]
is  standard:
\begin{itemize}
	\item[(1)] for $n=0$, $\nu\in C^0(\Gpd,E)=\Gamma(E)$, define 
	$$
	(d_\Gpd \nu)(g)=\Phi_g \nu_{s(g)} -\nu_{t(g)},\qquad\forall g\in \Gpd;
	$$ 	\item[(2)] for  $c\in C^n(\Gpd , E)$ $(n\geqslant  1)$ on $\Gpd$, define 
	\begin{eqnarray*}
		(d_\Gpd c)(g_0,g_1,\cdots,g_{n })&=&\Phi_{g_0} c(g_1,\cdots,g_{n })\\ &&+\sum_{i=0}^{n-1} (-1)^{i-1} c(g_0,\cdots,g_ig_{i+1},\cdots,g_{n})+(-1)^{n+1} c(g_0,\cdots,g_{n-1}).
	\end{eqnarray*}
\end{itemize}

A groupoid $1$-cocycle $c$ induces a Lie algebroid $1$-cocycle $\widehat{c}: A\to E$   by the following formula.  For each $u\in A_x$, choose a smooth curve $\gamma(\epsilon)$  in   the $s$-fibre $s^{-1}(x)$ such that $\gamma'(0)=u$. Then   $\widehat{c}(u)\in E_x$ is defined by
\begin{equation}\label{Eqt:hatcdefinition}
\widehat{c}(u):=  -\frac{d}{d\epsilon }|_{\epsilon =0}~ \Phi^{-1}_{\gamma(\epsilon)} c(\gamma(\epsilon))= \frac{d}{d\epsilon }|_{\epsilon =0}~ c(\gamma(\epsilon)^{-1}).
\end{equation}
We call $\widehat{c}$ the infinitesimal of $c$.
Note that our convention is slightly different from that in \cite{VE} (up to a minus sign).
Also, it is easily verified that, for $\nu\in \Gamma(E)$, we have
\begin{equation}\label{Eqt:dGpddA}
\widehat{d_\Gpd\nu}=d_A\nu.
\end{equation}



\subsection{The jet Lie algebroid and jet Lie groupoid}\label{Appendix:jetLiealgebroid}

The first jet space of a Lie algebroid $A$, denoted by $\jet A$, is also a Lie algebroid (see \cites{CLomni,C}), which fits into 
 an exact sequence of Lie algebroids:
\begin{equation*}\label{Eqt:jetalgebroidexactseq}
0\to\core  \xrightarrow{i}\jet A\xrightarrow{p} A\to 0.
\end{equation*}
Here $\core=T^*M\otimes A=\mathrm{Hom}(TM,A)$ is a bundle of Lie algebras. 
We will call $\core$ the bundle of \textbf{{isotropy} jet Lie algebras}.

We have a natural lifting map $\liftingd:\Gamma(A)\to \Gamma({\jet A})$ that sends $u\in \Gamma(A)$ to its jet $\liftingd u\in \Gamma({\jet A})$:
\begin{equation}\label{Eqt:liftingd}
(\liftingd u)_x=[u]_x,\qquad\forall x\in M.
\end{equation}
Moreover, the Lie bracket  of $\Gamma(\jet A)$ is determined by the  relation:
\begin{eqnarray*}\label{Eqt:jetbracket1}
[\liftingd u_1,\liftingd u_2]&=&\liftingd [u_1,u_2].
\end{eqnarray*}
The anchor of $\jet A$ is simply: \[\rho_{\jet A}(\liftingd u_1+df\otimes u_2)=\rho(u_1).\]

The vector bundles $A$   and $TM$ are modules of the jet algebroid $\jet A$ via adjoint actions:
\begin{eqnarray*}
	\ad_{\liftingd u}v=[u,v],\qquad \ad_{df\otimes u}v=-\rho(v)(f)u,
\end{eqnarray*}
and 
\[\ad_{\liftingd u} X=[\rho(u),X],\qquad \ad_{df\otimes u}X=-X(f)u,\]
where $u,v\in\Gamma(A)$, $f\in {\CinfM}$ and $X\in \mathfrak{X}^1(M)$.
Then $\jet A$ also naturally acts on $\wedge^k A, \wedge^k TM, \wedge^k A^*$ and $\wedge^k T^*M$.

Let $b_g,b'_{g}$ be two  bisections through $g\in \Gpd $. They are said to be equivalent at $g$ if $b_{*x}=b'_{*x}:~T_xM\to T_g\Gpd $, where $x=s(g)$. The equivalence class, denoted by $[b]_x$, or $[b_g]$, is called the first jet of   $b_g$.

The  {\bf (first) jet groupoid} $\jet \Gpd $ of a Lie groupoid $\Gpd $, consisting of such  jets  $[b_g]$, is a Lie groupoid over $M$.
The source and target  maps of $\jet \Gpd $ are given by \[s[b_g]=s(g), \qquad t[b_g]=t(g).\] The multiplication is given by
\[
[b_{g}][b'_{r}] =[(b   b')_{g{r}}],\]
for two   bisections $b_g$ and $b'_r$ through $g$ and $r$ respectively. 

{Given a   bisection $b_g$ through $g$, its first jet induces a left translation $$L_{[b_g]}:~ T \Gpd |_{t^{-1}(x)} \to T \Gpd |_{t^{-1}(y)},$$ where  $x=s(g)$ and  $y=\phi_b(x)=t(g)$. Similarly we have a right translation $$R_{[b_g]}: T \Gpd |_{s^{-1}(y)} \to T \Gpd |_{s^{-1}(x)}.$$ Note also that when  $L_{[b_g]}$ is restricted on $(\ker t_*)|_{t^{-1}(x)}$, it is exactly the left translation $L_{g*}$. Similarly, when  restricted on $(\ker s_*)|_{s^{-1}(y)}$, $R_{[b_g]}$  is exactly the right translation $R_{g*}$.}

Moreover, the restriction of $L_{[b_g]}$ to $T_x M$  is exactly the tangent map $b_{*x}: T_xM\to T_g\Gpd$, i.e., $L_{[b_g]}(X)=b_{g*}(X)$ for $X\in T_x M$.

A bisection $b$ of $\Gpd $ acts on $\Gpd $ by conjugation 
\[\mathrm{AD}_b (g)=R_{b^{-1}}{\littlecirc} L_b(g)=b(t(g)) g \bigl(b(s(g))\bigr)^{-1},\]
which maps units to units and $s$-fibres to $s$-fibres ($t$-fibres to $t$-fibres as well). There is an induced  action of the jet groupoid $\jet\Gpd $ on $A$ and $TM$.
\begin{definition}\label{Def:jetGpdadjoint} 
There is an action of $\jet \Gpd$, called the {\bf adjoint action}, on $A$ and $TM$:
	$$\Ad_{[b_g]}:\quad A_x\to A_{\phi_b(x)},\qquad T_x M\to T_{\phi_b(x)} M,  \quad\mbox{ where }x=s(g)$$
	defined by
	$$
	\Ad_{[b_g]} u=R_{g^{-1}*}{\littlecirc} L_{[b_g]} (u)=R_{g^{-1}*}(L_{g*}(u-\rho(u))+b_{*}(\rho(u))),\quad u\in A_x,$$
and 
 \[ \Ad_{[b_g]} X=R_{[b_g]^{-1}}{\littlecirc} L_{[b_g]}(X)=\phi_{b*} (X),\quad  X\in T_x M.\]
\end{definition}
Then, we also have the adjoint actions of $\jet \Gpd$ on $\wedge^k A$ and $\wedge^k TM$, all denoted by $\Ad$. In the meantime, we have the \textbf{coadjoint actions} of $\jet \Gpd$ on $\wedge^k A^*$ and $\wedge^k T^*M$ which we will denote by $\Ad^\vee$: 
$$
\Ad^{\scriptsize \vee}_{[b_g]}:=(\Ad_{[b_g]^{-1}} )^*.
$$
Note that the tangent Lie algebroid of $\jet \Gpd$ is the jet Lie algebroid $\jet A$. Taking infinitesimal, we get the adjoint and coadjoint actions of $\jet A$ introduced preciously.

The following exact sequence of groupoids can be easily established: 
\begin{equation*}\label{Eqt:jetgroupoidexactseq}
1\to\heat  \stackrel{I}{\rightarrow }\jet\Gpd \stackrel{P}{\rightarrow } \Gpd \to 1.
\end{equation*}
 The space  $\heat$ consists of  jets $[h]$,  where   
  $h$ is a bisection through $x\in M$. Let us call $\heat$ the bundle of \textbf{{isotropy} jet groups}.

For $[h ]\in \heat_x$, there exists $H: T_x M\rightarrow A_x$ such that
$$
h_{*}(X)= H(X)+X,\qquad X\in T_x M,
$$and \[\phi_{h*}=t_*{\littlecirc} h_{*}=\id+\rho{\littlecirc} H
\in \mathrm{GL}(T_x M).\]
Let us introduce  \[{\underline{\mathrm{Hom}}(TM,A)}:=\{H\in \mathrm{Hom}(TM,A);\id+\rho{\littlecirc} H\in \mathrm{GL}(T M)\}.\]
Then we have  $\heat \cong {\underline{\mathrm{Hom}}(TM,A)}$. In the sequel, for $H\in {\underline{\mathrm{Hom}}(T_xM,A_x)}$, we  write $[h ]=\id+H\in \heat_x$. See \cites{C,CL1,LL} for more details.

Now let us write down the explicit formulas for  the translation  of $\heat$ on $T\Gpd |_M=A\oplus TM$. See \cite{CL1} for its proof.

\begin{lemma}\label{ad for h}
	If $[h ]=\id+H \in \heat_x$, where $H\in {\underline{\mathrm{Hom}}(T_xM,A_x)}$, then the left and right translation maps
	\[~L_{[h ] },\quad R_{[h ] }:~ A_x\oplus T_x M \to A_x\oplus T_x M \] are given by
	\begin{eqnarray}\label{Eqn:Lhx}
	\nonumber L_{[h ]}(u+X )&=& H(X)+u+H \rho(u)+X,\\\label{Eqn:Rhx}
	R_{[h ]}(u+X  )&=&  H(\id+\rho H)^{-1} (X)+u +(\id+\rho H)^{-1} X,
	\end{eqnarray}
	where     $u\in A_x$, $X\in T_x M$.
\end{lemma}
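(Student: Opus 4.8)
The plan is to reduce both identities to the elementary descriptions of jet translations recalled just before Definition~\ref{Def:jetGpdadjoint}. Since $h$ is a bisection through $x$, the underlying element $g=h(x)=x$ is a unit, so $s(g)=t(g)=x$ and both $L_{[h]}$ and $R_{[h]}$ are endomorphisms of $T_x\Gpd=A_x\oplus T_x M$, where $A_x=(\ker s_*)_x$ and $T_x M$ denotes the tangent space to the unit section at $x$. Throughout I will use that at the unit $x$ one has $t_*(u+X)=\rho(u)+X$ for $u\in A_x$, $X\in T_x M$ (because $t_*|_{A}=\rho$ and $t\circ\id_M=\id_M$), together with the defining relation $h_*(X)=H(X)+X$ and $\phi_{h*}=\id+\rho\circ H$.

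First I would establish \eqref{Eqn:Lhx}. From $t_*(u+X)=\rho(u)+X$ we get $(\ker t_*)_x=\{u-\rho(u):u\in A_x\}$, which meets $T_x M$ only in $0$ and has complementary dimension, so $A_x\oplus T_x M=(\ker t_*)_x\oplus T_x M$ and $u+X=(u-\rho(u))+(\rho(u)+X)$. Because $g=x$ is a unit, $L_{g*}=\id$, so $L_{[h]}$ is the identity on $(\ker t_*)_x$; and on $T_x M$ it equals $h_{*x}$, i.e. $Y\mapsto H(Y)+Y$. Applying $L_{[h]}$ to each summand and collecting terms gives $L_{[h]}(u+X)=(u-\rho(u))+\big(H(\rho(u)+X)+\rho(u)+X\big)=H(X)+u+H\rho(u)+X$, which is \eqref{Eqn:Lhx}.

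For \eqref{Eqn:Rhx} I would exploit that the jets based at the single point $x$ form the isotropy group $\heat_x$, so left and right translations compose as genuine group translations; in particular $R_{[h]^{-1}}=R_{[h]}^{-1}$, and the defining relation $\Ad_{[h]}=R_{[h]^{-1}}\circ L_{[h]}$ of Definition~\ref{Def:jetGpdadjoint} rearranges to $R_{[h]}\circ\Ad_{[h]}=L_{[h]}$. On $\ker s_*=A_x$ the translation $R_{[h]}$ restricts to $R_{g*}=\id$. On $T_x M$, Definition~\ref{Def:jetGpdadjoint} gives $\Ad_{[h]}X=\phi_{h*}(X)=(\id+\rho H)(X)$; setting $Y=(\id+\rho H)(X)$ and inserting the already-proven $L_{[h]}(X)=H(X)+X$ into $R_{[h]}(\Ad_{[h]}X)=L_{[h]}(X)$ yields $R_{[h]}(Y)=H(\id+\rho H)^{-1}(Y)+(\id+\rho H)^{-1}(Y)$. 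Since $\id+\rho H$ is invertible, $Y$ runs through all of $T_x M$, and combining with the value on $A_x$ produces \eqref{Eqn:Rhx}.

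The only genuinely delicate point is the bookkeeping of the three distinct subspaces $A_x=(\ker s_*)_x$, $(\ker t_*)_x$ and $T_x M$ of $T_x\Gpd$, and taking care to apply each translation on the subspace where its elementary description is valid; once the correct direct-sum decompositions are fixed, both formulas follow by linearity. I expect this decomposition matching to be the main source of sign and placement errors, whereas the algebra itself (including the identity $(\id+H\rho)^{-1}=\id-H(\id+\rho H)^{-1}\rho$, should one prefer to compute $R_{[h]}=L_{[h]}\circ\Ad_{[h]}^{-1}$ directly instead) is routine.
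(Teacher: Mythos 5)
The paper gives no proof of this lemma at all --- it simply points the reader to \cite{CL1} --- so there is no argument of the paper's to compare against; judged on its own, your proof is correct. The derivation of \eqref{Eqn:Lhx} is exactly right: the two elementary facts recalled before Definition~\ref{Def:jetGpdadjoint} (that $L_{[h]}$ restricts to $L_{x*}=\id$ on $(\ker t_*)_x$ and to $h_{*x}$ on $T_xM$) combined with the decomposition $u+X=(u-\rho(u))+(\rho(u)+X)$ give $L_{[h]}(u+X)=u+H\rho(u)+H(X)+X$ by linearity. Your route to \eqref{Eqn:Rhx} is also sound: $R$ is an anti-homomorphism of $\mathrm{Bis}(\Gpd)$, so $R_{[h]^{-1}}=R_{[h]}^{-1}$ and the relation in Definition~\ref{Def:jetGpdadjoint} rearranges to $R_{[h]}\circ\Ad_{[h]}=L_{[h]}$ on $T_xM$; since the identity $\Ad_{[h]}X=\phi_{h*}(X)=(\id+\rho H)X$ is asserted in that definition (and is independently justified by $\mathrm{AD}_h|_M=\phi_h$, so there is no circularity with the lemma), substituting $Y=(\id+\rho H)X$ yields the stated formula on $T_xM$, while $R_{[h]}|_{A_x}=R_{x*}=\id$ handles the other summand. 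One can cross-check the result against the formula $\Ad_{[h]}(u+X)=(\id+H\rho)u+(\id+\rho H)X$ recorded right after the lemma, via $R_{[h]}=L_{[h]}\circ\Ad_{[h]}^{-1}$, and everything is consistent. No gap.
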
 Consequently, the adjoint action $\Ad_{[h ]}:~ A_x\oplus T_x M \to A_x\oplus T_x M  $ is given by
	\begin{equation*}\label{Eqt:adhXu}
	\Ad_{[h ]}(u+X )= (\id+H\rho)(u)+ (\id+\rho H)(X),
	\end{equation*}
and the dual maps 
\[R_{[h ]}^*, \quad L_{[h ]}^*: A_x^*\oplus T^*_x M  \to A_x^*\oplus T_x^*M \]
are, respectively,
\begin{eqnarray}
\nonumber L_{[h ]}^*(\chi+\xi )&=&  \chi+\rho^*H^*\chi+\xi+H^*\chi,\\\label{Rhxichi} R_{[h ]}^*(\chi+\xi )&=&  \chi + (\id+\rho H)^{-1*} \xi+(\id+\rho H)^{-1*} H^*(\chi),
\end{eqnarray}
where $\xi\in T_x^* M, \chi\in A_x^*$. In particular, the induced coadjoint action of $\heat$ on $\wedge^k T^*M$ is given by
\begin{equation}\label{coadjoint}
	\Ad^\vee_{[h ]^{-1}}w=
( \Ad_{[h ]})  ^*w=(\id+\rho H)^{*\otimes k} (w),\qquad\forall w\in  \wedge^k T^*_x M.
\end{equation}



\subsection{Multiplicative $k$-vector fields and  characteristic pairs of  $( k,0)$-type}\label{Sec:multikvectosrecalling}
We first recall $\rho$-compatible $( k,0)$-tensors introduced in \cite{CL1}.

\begin{definition}\label{Defn:properktensor} Let $k\geqslant 1$ be an integer.
	A \textbf{$\rho$-compatible $(k,0)$-tensor}  is a section $\pi$   $\in \Gamma(TM\otimes (\wedge^{k-1} A))$ which satisfies \begin{equation*}\label{Eqt:fine-condition}
		\inserts_{\rho^*\xi}\iota_{\eta}\pi
		= -\inserts_{\rho^*\eta}\iota_{\xi}\pi,
		\qquad\forall\xi,\eta\in\Omega^1(M).
	\end{equation*}
\end{definition}

We need a particular operator $D_\rho$ which was   first studied in
\cite{CSX} \quad \textemdash \quad    a  degree $0$ derivation on $ \wedge^\bullet (TM\oplus A)$   
determined by its $(TM\oplus A)$-to-$ (TM\oplus A)$ part:
\begin{eqnarray*}\label{D}
	D_\rho(X+u)=\rho(u),\qquad \forall X\in TM, u\in A.
\end{eqnarray*}
Hence $D_\rho $ maps $(\wedge^pTM)\otimes(\wedge^q A)$ to $(\wedge^{p+1}TM)\otimes(\wedge^{q-1} A)$.
For
$\pi$   $\in   TM\otimes (\wedge^{k-1} A) $,
we introduce
\begin{equation}\label{Bpi}
	B\pi= \frac{1-e^{-\rhopush}}{\rhopush} (\pi)
	=
	\pi-\tfrac{1}{2!}\rhopush\pi+\tfrac{1}{3!}\rhopush^2
	\pi+\cdots+\tfrac{\minuspower{k-1}}{k!}\rhopush^{k-1}\pi\in \wedge^k (TM\oplus A) .
\end{equation}
Note that the term $\rhopush^{j}\pi\in  (\wedge^{j+1} TM)\otimes (\wedge^{k-1-j} A) $.

Recall that $A$, the tangent Lie algebroid of $\Gpd$, is a module of the jet groupoid $\jet\Gpd$ via the adjoint action (see Definition \ref{Def:jetGpdadjoint}).  Therefore, $\jet\Gpd$ also acts on $\wedge^k A$. 
Hence we have a coboundary operator
$d_{\jet \Gpd}: C^{n}(\jet\Gpd , \wedge^k A)\to C^{n+1}(\jet \Gpd ,\wedge^k A)$. 
  We denote by $Z^1(\jet\Gpd , \wedge^k A)$ the set of $1$-cocycles $c:\jet\Gpd\to \wedge^kA$.
\begin{definition}\label{Defn:groupoidcharpair} Let $k\geqslant 1$ be an integer.  A  \textbf{$(k,0)$-characteristic pair} on $\Gpd$ is a pair
	$(c,\pi)  \in Z^1(\jet \Gpd , \wedge^k A)\times \Gamma(TM\otimes (\wedge^{k-1} A))$, where $\pi$ is  $\rho$-compatible    and when $c$ is restricted to $\heat$, it satisfies   \begin{eqnarray}
		\label{formula1}c([h_x])&=&(B\pi)_x-L_{[h_x]} (B\pi)_x,\qquad \forall [h_x]\in  \heat_x,~x\in M.
	\end{eqnarray}
\end{definition}

This paper draws heavily on our previous work \cite{CL1} on multiplicative $k$-vector fields of Lie groupoids.   Before diving into the specific details of these fields, however, it is important to review some introductory information about the tangent and cotangent Lie groupoids. For a more comprehensive understanding, refer to \cite{CDW, Kosmann}. The Lie groupoid $\Gpd \rightrightarrows M$ has a corresponding tangent bundle $T\Gpd$ that is a Lie groupoid over $TM$, with its structure maps determined by the tangent maps of $\Gpd$'s structure maps. Similarly, the cotangent bundle $T^*\Gpd$ is a Lie groupoid over $A^*$, with source and target maps given by $s$ and $t:$ $T^*\Gpd\to A^*$, respectively,   
	\begin{eqnarray}\label{stco}
	\langle s(\alpha_g), u\rangle=\langle \alpha_g,\overleftarrow{u}_g\rangle=\langle \alpha_g,L_{g*}(u-\rho(u))\rangle, \quad\mbox{and}\qquad \langle t(\alpha_g), u\rangle=\langle \alpha_g,\overrightarrow{u}_g\rangle=\langle \alpha_g,R_{g*} u\rangle,
	\end{eqnarray} for all $\alpha_g\in T_g^*\Gpd$ and $u\in  A_{s(g)} $ (or $A_{t(g)}$). 
	The multiplication of composable elements $\alpha_g\in T_g^*\Gpd$ and $\beta_r\in T_r^*\Gpd$ is defined by 
	\[(\alpha_g\cdot \beta_r)(X_g\cdot Y_r)=\alpha_g(X_g)+\beta_r(Y_r),\qquad\forall (X_g,Y_r)\in (T \Gpd )^{(2)}.\] 
 	
	We remark that  both $T\Gpd\rightrightarrows TM$ and $T^*\Gpd \rightrightarrows A^*$ are special cases of VB-groupoids \cites{GM}. Taking $T^*\Gpd$ for example, we have the  law of interchange:
	\begin{eqnarray}\label{interlaw}
	(\gamma_1+\gamma_3)\cdot(\gamma_2+\gamma_4)=\gamma_1\cdot \gamma_2+\gamma_3\cdot \gamma_4,
	\end{eqnarray}
	for all $\gamma_1,\gamma_3\in T^*_g \Gpd$ and $\gamma_2,\gamma_4\in T^*_r \Gpd$ on the premise that $(g,r)\in \Gpd^{(2)}$ and $(\gamma_1,\gamma_2),(\gamma_3,\gamma_4)\in (T^*\Gpd)^{(2)}$.

	 In the  literature, various definitions or characterizations of multiplicative multi-vector fields can be found (see \cites{ILX, Kosmann}). In this article, we   use the following one. 
\begin{definition}\cite{ILX, Kosmann}  Let $k\geqslant 1$ be an integer. 
	A  $k$-vector field $\Pi\in \mathfrak{X}^k(\Gpd)$ on $\Gpd$  is called {\bf multiplicative} if   
	\begin{equation*}
	\xymatrix{
		\oplus^{k-1} T^*\Gpd\ar@{->}@< 2pt> [d]
		\ar @{ ->} @<-2pt> [d]  \ar[r]^{\quad \Pi^\sharp} &T\Gpd\ar@{->}@< 2pt> [d]
		\ar @{ ->} @<-2pt> [d] \\
		\oplus^{k-1} A^*\ar[r]^{\pi^\sharp} & TM \ .}
	\end{equation*}
	 is a Lie groupoid morphism. 	 
	 	Here the groupoid $\oplus^{k-1} T^*\Gpd\rightrightarrows \oplus^{k-1} A^*$ is the direct sum of $T^*\Gpd\rightrightarrows A^*$, 
	 $\Pi^\sharp$ is defined by contraction: $(\alpha^1,\cdots, \alpha^{k-1})\mapsto \Pi(\alpha^1,\cdots, \alpha^{k-1},\tobefilledin)$,   and $\pi^\sharp$ is similar: $(\eta^1,\cdots, \eta^{k-1})\mapsto (-1)^{k-1} \pi(\tobefilledin,\eta^1,\cdots, \eta^{k-1})$, where $\pi=\mathrm{pr}_{\Gamma(TM\otimes \wedge^{k-1} A)}(\Pi|_M)$.
	 
\end{definition}

One of the main results in \cite{CL1} is the following theorem.
\begin{theorem}\label{Thm:mainexplicitformula} There is a one-to-one correspondence between multiplicative $k$-vector fields $\Pi$ on a Lie groupoid $\Gpd \rightrightarrows M$, and   $(k,0)$-characteristic pairs $(c,\pi)$ on $\Gpd$
	such that
	\begin{equation}\label{formula3}
		\Pi_g=R_{g*}c([b_g])+L_{[b_g]}(B\pi)_{s(g)}
	\end{equation}
	holds for all $g\in \Gpd $ and   bisections $b_g$ through $g$, where $B\pi$ is given by Equation ~\eqref{Bpi}.
\end{theorem}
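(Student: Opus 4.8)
The plan is to prove the correspondence by exhibiting the two mutually inverse assignments encoded in \eqref{formula3} and checking that each lands in the correct class. In the backward direction I would take a $(k,0)$-characteristic pair $(c,\pi)$ and \emph{define} $\Pi_g$ to be the right-hand side of \eqref{formula3} for any bisection $b_g$ through $g$; smoothness of $\Pi$ follows because bisections may be chosen to vary smoothly with $g$. In the forward direction I would take a multiplicative $\Pi$, set $\pi:=\mathrm{pr}_{\Gamma(TM\otimes\wedge^{k-1}A)}(\Pi|_M)$, and solve \eqref{formula3} for $c([b_g]):=R_{g^{-1}*}\bigl(\Pi_g-L_{[b_g]}(B\pi)_{s(g)}\bigr)$. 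Since both assignments are read off directly from \eqref{formula3}, once each is shown to be well defined and to produce an object of the required type, the fact that they are mutually inverse is automatic.

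The first substantive point is that the right-hand side of \eqref{formula3} is independent of the bisection $b_g$, and this is exactly where condition \eqref{formula1} on the isotropy jet groups $\heat$ is used. Two bisections through $g$ have jets related by $[b'_g]=[b_g][h_x]$ with $[h_x]\in\heat_x$ and $x=s(g)$. Combining the $1$-cocycle identity $c([b_g][h_x])=c([b_g])+\Ad_{[b_g]}c([h_x])$, the composition law $L_{[b'_g]}=L_{[b_g]}\circ L_{[h_x]}$, and the relation $R_{g*}\circ\Ad_{[b_g]}=L_{[b_g]}$ obtained from $\Ad_{[b_g]}=R_{g^{-1}*}\circ L_{[b_g]}$ (Definition \ref{Def:jetGpdadjoint}), the discrepancy between the two choices reduces to $L_{[b_g]}\bigl(c([h_x])+L_{[h_x]}(B\pi)_x-(B\pi)_x\bigr)$, which vanishes precisely by \eqref{formula1}. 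Reading this computation in reverse shows that \eqref{formula1} is forced upon the $c$ extracted from any multiplicative $\Pi$, so it is a consequence of well-definedness rather than an independent hypothesis.

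To pin down the tensors, I would evaluate \eqref{formula3} at a unit $x\in M$ with the trivial bisection: since a groupoid $1$-cocycle vanishes on units, this gives $\Pi_x=(B\pi)_x$. Reading off \eqref{Bpi}, the top $A$-degree component of $B\pi$ is $\pi$ itself while every correction $\rhopush^{j}\pi$ ($j\geqslant 1$) has strictly lower $A$-degree; hence $\pi$ is recovered from $\Pi|_M$ by projection onto $TM\otimes\wedge^{k-1}A$, and in particular $\pi\mapsto B\pi$ is injective, so $\pi$ is uniquely determined. The $\rho$-compatibility of $\pi$ (Definition \ref{Defn:properktensor}) I would extract from the requirement that $\Pi^\sharp$ intertwine the source and target maps of the cotangent groupoid $T^*\Gpd\rightrightarrows A^*$ along units; because those maps involve $\rho^*$ through \eqref{stco}, this compatibility unwinds to exactly $\iota_{\rho^*\xi}\iota_\eta\pi=-\iota_{\rho^*\eta}\iota_\xi\pi$ for all $\xi,\eta\in\Omega^1(M)$.

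The heart of the argument, and the step I expect to be the main obstacle, is the equivalence between multiplicativity of $\Pi$ and the $1$-cocycle property of $c$ over the jet groupoid $\jet\Gpd$. Here I would convert the statement that $\Pi^\sharp\colon\oplus^{k-1}T^*\Gpd\to T\Gpd$ is a Lie groupoid morphism into a pointwise identity expressing $\Pi_{gr}$ through the tangent-groupoid multiplication of $\Pi_g$ and $\Pi_r$, using the cotangent structure maps \eqref{stco} and the interchange law \eqref{interlaw}. Substituting the decomposition \eqref{formula3} into all three of $\Pi_g$, $\Pi_r$, $\Pi_{gr}$, I expect the tensor contributions $L_{[b]}(B\pi)$ to recombine on their own, thanks to the specific shape of $B\pi$, leaving only the cocycle parts; transporting these to a common fibre and using $\Ad_{[b_g]}=R_{g^{-1}*}\circ L_{[b_g]}$ together with the jet multiplication $[b_g][b'_r]=[(bb')_{gr}]$ should distil exactly $c([b_g][b'_r])=c([b_g])+\Ad_{[b_g]}c([b'_r])$. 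The delicate part is the bookkeeping of the left and right translations and adjoint actions across the multiplication; once organized, the same identity yields both implications and completes the proof of the correspondence.
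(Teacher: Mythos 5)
First, a caveat: this paper does not actually prove Theorem \ref{Thm:mainexplicitformula} — it is recalled verbatim from the authors' earlier work \cite{CL1} ("One of the main results in \cite{CL1} is the following theorem"), so there is no in-paper argument to compare yours against. Judged on its own terms, your architecture is the right one and matches what the paper's surrounding text implies: $\pi$ is the $\Gamma(TM\otimes\wedge^{k-1}A)$-projection of $\Pi|_M$ and $c$ is recovered by Equation \eqref{cocyclecp}. Your well-definedness argument is correct and essentially complete: parametrizing bisections through $g$ by $[b'_g]=[b_g][h_x]$, the cocycle identity plus $R_{g*}\circ\Ad_{[b_g]}=L_{[b_g]}$ (Definition \ref{Def:jetGpdadjoint}) reduces the discrepancy to $L_{[b_g]}\bigl(c([h_x])+L_{[h_x]}(B\pi)_x-(B\pi)_x\bigr)$, which vanishes precisely by \eqref{formula1}; and evaluating at units with the trivial bisection correctly gives $\Pi|_M=B\pi$ and the injectivity of $\pi\mapsto B\pi$.

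The genuine gap is that the entire content of the theorem sits in the one step you defer. You write that you "expect the tensor contributions $L_{[b]}(B\pi)$ to recombine on their own, thanks to the specific shape of $B\pi$," but that recombination is exactly what must be proved, and it is not a formality: the coefficients $\tfrac{(-1)^{j}}{(j+1)!}$ in \eqref{Bpi} are forced by the way the tangent groupoid $T\Gpd\rightrightarrows TM$ multiplies an $A$-direction against a $TM$-direction (an element of $A_{s(g)}$ composed with its image under $\rho$ produces mixed $TM\otimes A$ cross terms, and iterating this is what generates the $\rhopush^{j}\pi$ corrections). Asserting the cancellation without exhibiting it is equivalent to asserting the theorem; at minimum one must carry out the computation of $\Pi_g\cdot\Pi_r$ versus $\Pi_{gr}$ using the characterization of multiplicativity on composable covectors together with \eqref{interlaw} and \eqref{stco}, and show that the surviving terms are exactly $c([b_g][b'_r])-c([b_g])-\Ad_{[b_g]}c([b'_r])$ transported to $T_{gr}\Gpd$. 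The same remark applies, less seriously, to the $\rho$-compatibility of $\pi$: you say it "unwinds" from the compatibility of $\Pi^\sharp$ with the cotangent source and target maps, which is the right mechanism, but the unwinding is again a computation that needs to be written down rather than announced. Until those two computations are supplied, the proposal is a correct and well-organized plan rather than a proof.
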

An equivalent form of this formula was found by   Iglesias-Ponte,   Laurent-Gengoux and  Xu as early as ten years ago. But in their final work \cite{ILX}, they did not write it.

The second element $\pi$ $\in \Gamma(TM\otimes (\wedge^{k-1} A))$  of the characteristic pair of $\Pi$ is indeed the $\Gamma(TM\otimes (\wedge^{k-1} A))$-component of $\Pi$'s restriction on $M$, i.e. $$\Pi|_M\in \sum_{i+j=k}  \Gamma((\wedge^{i}  TM)\otimes (\wedge^{j} A)) .$$
We shall call $\pi$ the \textbf{leading term} of $\Pi\in \XkmultG{k} $.
 {The first element of the pair, i.e. the $1$-cocycle $c:\jet\Gpd\to \wedge^kA$, can be determined by $\Pi$: 
\begin{eqnarray}\label{cocyclecp}
c([b_g])=R_{g*}^{-1}(\Pi_g-L_{[b_g]}(B\pi)_{s(g)}).
\end{eqnarray}}
\begin{example} \label{exact case}
The characteristic pair $(c,\pi)$ of an exact multiplicative $k$-vector field  $\Pi=\overleftarrow{u}-\overrightarrow{u}$ for $u\in \Gamma(\wedge^k A)$ is given by 
\[c=d_{\jet \Gpd} u,\qquad \pi=-D_\rho(u).\]
\end{example}

\section{Multiplicative $k$-forms and characteristic pairs of  $(0,k)$-type}\label{Section:multforms-charpairs}

\subsection{Multiplicative $k$-forms}\label{Sec:multkforms}

As usual, we denote by $\Gpd \rightrightarrows M$ a Lie groupoid over $M$ and $\Omega^k{(\Gpd )}=\Gamma(\wedge^k T^*\Gpd )$ the space of differential $k$-forms ($k$-forms for short, $k\geqslant 1$). 
A {\bf multiplicative} $k$-form on $\Gpd$ is an element $\Theta\in \Omega^k(\Gpd )$ satisfying   
\begin{eqnarray}\label{multform}
	m^*\Theta=\mathrm{pr}_1^*\Theta+\mathrm{pr}_2^*\Theta,
\end{eqnarray}
where the   maps $m$ and $\mathrm{pr}_i$  $: \Gpd ^{(2)}\to \Gpd$ are, respectively, the groupoid multiplication and       the projection from $\Gpd ^{(2)}$ to its $i$-th summand (see \cite{BC,BCO,C}).  We denote by $\OmegakmultG{k}$ the space of  multiplicative  $k$-forms on $\Gpd$. 

By default, a multiplicative ${0}$-form is one and the same as a smooth multiplicative function  on $\Gpd$, i.e.     $f\colon \Gpd\to \mathbb{R}$ which is a morphism of Lie groupoids:
$$
f(gr)=f(g)+f(r),\quad \forall (g,r) \in \Gpd^{(2)}.
$$

For $k\geqslant  1$, we have an equivalent characterization: $\Theta\in \Omega^k(\Gpd)$ is multiplicative if and only if 
	$\Theta^\sharp:\oplus^{k-1} T\Gpd\to T^*\Gpd$ is a Lie groupoid morphism.  {For more knowledge on this topic, please refer to \cite{Kosmann}.  }

\subsection{$\rho$-compatible $(0,k)$-tensors}   Let $A\to M$    be a vector  bundle which is equipped with a bundle map $\rho:A\to TM$. 
\begin{definition} \label{Def:rhocompatible0ktensor}Let $k\geqslant 1$ be an integer. 
	A {\bf $\rho$-compatible $(0,k)$-tensor} is vector bundle map $\theta: A\to \wedge^{k-1} T^*M$ such that
	$$\iota_{\rho(u)}\theta(u)=0,\qquad\forall u\in A.$$
	Equivalently, it is an element $\theta \in \Gamma(A^*\otimes (\wedge^{k-1} T^*M))$  subject to the following property: 
	\begin{eqnarray}\label{Eqn:rhocompatiblecondition}
	\iota_{\rho(v)}\iota_u \theta=-\iota_{\rho(u)}\iota_v \theta, \qquad \forall u,v\in A.
	\end{eqnarray} 
\end{definition} 
We also define \begin{equation}\label{Eqt:Drhostarpush} \Drhostarpush : \wedge^\bullet (A^*\oplus T^*M)\to  \wedge^\bullet (A^*\oplus T^*M)\end{equation} as a degree $0$-derivation such that
\begin{eqnarray*}\label{D_*}
	\Drhostarpush (\xi+\alpha)=\rho^*(\xi),\qquad \forall \alpha\in  A^*, \xi\in T^*M.
\end{eqnarray*}
In particular, $\Drhostarpush $ maps $ \wedge^i A^*\otimes (\wedge^{j} T^*M)$ to $ \wedge^{i+1} A^*\otimes (\wedge^{j-1} T^*M)$.

\begin{example}In the particular case that $k=1$, a $\rho$-compatible $(0,1)$-tensor is simply a section $\theta\in \Gamma(A^*)$. No other conditions are needed.  
 If $k=2$,  then a $\rho$-compatible $(0,2)$-tensor is an element $\theta\in \Gamma(A^*\otimes T^*M)$ subject to the condition $\theta(u,\rho(u))=0$, for all $u\in \Gamma(A)$. In other words, $(\id\otimes \rho^*)\theta\in \Gamma(A^*\otimes A^*)$ is skew-symmetric.
\end{example}

\begin{example}\label{2.4}
	Given any $\gamma\in \Omega^k(M)$, the tensor $\Drhostarpush  \gamma\in \Gamma(A^*\otimes (\wedge^{k-1} T^*M))$ is $\rho$-compatible.  In fact, we have
	$$
	\iota_{\rho(v)}\iota_u \Drhostarpush\gamma=\iota_{\rho(v)}\iota_{\rho(u)}\gamma=-\iota_{\rho(u)}\iota_{\rho(v)}\gamma=-\iota_{\rho(u)}\iota_v\Drhostarpush\gamma.
	$$
	
\end{example}
\begin{lemma}\label{formula}
	Given a $\rho$-compatible $(0,k)$-tensor $\theta \in \Gamma(A^*\otimes (\wedge^{k-1}  T^*M))$ and an integer $  j\geqslant  1 $, we have
	\begin{eqnarray}\label{eq}
		\iota_{\rho(u)} \Drhostarpush ^{j-1} \theta =\Drhostarpush ^j (\iota_u \theta )=\frac{1}{j+1}\iota_u \Drhostarpush ^j \theta ,\qquad \forall u\in A.
	\end{eqnarray}
	
\end{lemma}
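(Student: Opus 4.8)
The plan is to reduce the lemma to two elementary commutation relations between the interior products and $\Drhostarpush$, together with a single base identity that is the only place where the $\rho$-compatibility of $\theta$ is genuinely used. First I would record, for a fixed $u\in A$, the behaviour of $\Drhostarpush$ relative to the contractions $\iota_u$ (which lowers the $A^*$-degree) and $\iota_{\rho(u)}$ (which lowers the $T^*M$-degree). Since $\Drhostarpush$ is an even, $C^\infty(M)$-linear derivation of $\wedge^\bullet(A^*\oplus T^*M)$ and $\iota_u,\iota_{\rho(u)}$ are odd derivations, the graded commutators $[\iota_u,\Drhostarpush]$ and $[\iota_{\rho(u)},\Drhostarpush]$ are again derivations, hence are determined by their values on the generators $A^*$ and $T^*M$. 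A direct check on generators gives $\iota_u\Drhostarpush=\Drhostarpush\iota_u+\iota_{\rho(u)}$ and $\iota_{\rho(u)}\Drhostarpush=\Drhostarpush\iota_{\rho(u)}$; that is, $\iota_{\rho(u)}$ commutes with $\Drhostarpush$.

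The heart of the argument is the base case $j=1$, namely the identity $\iota_{\rho(u)}\theta=\Drhostarpush(\iota_u\theta)$. To prove it I would work in a local frame $\{e_\alpha\}$ of $A$ with dual coframe $\{e^\alpha\}$, writing $\theta=\sum_\alpha e^\alpha\wedge\theta_\alpha$ with $\theta_\alpha\in\Gamma(\wedge^{k-1}T^*M)$, and use the pointwise formula $\Drhostarpush\omega=\sum_\gamma e^\gamma\wedge\iota_{\rho(e_\gamma)}\omega$ for $\omega\in\wedge^\bullet T^*M$ (itself verified by checking that both sides are even derivations agreeing on $T^*M$). Expanding both sides in this frame turns $\iota_{\rho(u)}\theta$ and $\Drhostarpush(\iota_u\theta)$ into the same double sum up to the exchange $\alpha\leftrightarrow\gamma$ of the contraction indices; the $\rho$-compatibility of $\theta$, in the form $\iota_{\rho(e_\gamma)}\theta_\alpha=-\iota_{\rho(e_\alpha)}\theta_\gamma$, supplies exactly the sign needed for the two sums to coincide. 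This is the step I expect to be the main obstacle, since it is the only genuinely tensorial computation and the point where the defining antisymmetry of $\theta$ is indispensable.

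With the base identity in hand, the general $j$ follows formally. Applying $\Drhostarpush^{j-1}$ to $\iota_{\rho(u)}\theta=\Drhostarpush(\iota_u\theta)$ and using that $\iota_{\rho(u)}$ commutes with $\Drhostarpush$ yields the first equality of the lemma:
\[
\iota_{\rho(u)}\Drhostarpush^{j-1}\theta=\Drhostarpush^{j-1}\iota_{\rho(u)}\theta=\Drhostarpush^j(\iota_u\theta).
\]
For the second equality I would prove $\iota_u\Drhostarpush^j\theta=(j+1)\,\Drhostarpush^j(\iota_u\theta)$ by induction on $j$ (the case $j=0$ being trivial): applying $\iota_u\Drhostarpush=\Drhostarpush\iota_u+\iota_{\rho(u)}$ to $\Drhostarpush^{j-1}\theta$ gives $\iota_u\Drhostarpush^j\theta=\Drhostarpush(\iota_u\Drhostarpush^{j-1}\theta)+\iota_{\rho(u)}\Drhostarpush^{j-1}\theta$, the first equality rewrites the last term as $\Drhostarpush^j(\iota_u\theta)$, and the inductive hypothesis $\iota_u\Drhostarpush^{j-1}\theta=j\,\Drhostarpush^{j-1}(\iota_u\theta)$ turns the first term into $j\,\Drhostarpush^j(\iota_u\theta)$, closing the induction. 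Dividing by $j+1$ gives $\Drhostarpush^j(\iota_u\theta)=\tfrac{1}{j+1}\,\iota_u\Drhostarpush^j\theta$, completing the three-term chain of equalities.
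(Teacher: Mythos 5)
Your proof is correct and follows essentially the same route as the paper: the same two commutation relations $\iota_u\Drhostarpush-\Drhostarpush\iota_u=\iota_{\rho(u)}$ and $\iota_{\rho(u)}\Drhostarpush=\Drhostarpush\iota_{\rho(u)}$, the same base identity $\iota_{\rho(u)}\theta=\Drhostarpush(\iota_u\theta)$ extracted from $\rho$-compatibility, and an induction on $j$. Your organization of the induction (proving $\iota_u\Drhostarpush^{j}\theta=(j+1)\Drhostarpush^{j}(\iota_u\theta)$ directly) is marginally cleaner than the paper's, and your frame computation for the base case just spells out what the paper asserts in one line, but these are cosmetic differences.
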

\begin{proof}
	It is direct to check the following equations:
	\begin{eqnarray}
		\label{eq1}\iota_{u} {\littlecirc} \Drhostarpush -\Drhostarpush {\littlecirc} \iota_u&=&\iota_{\rho(u)},\\
		\label{eq2}\iota_{\rho(u)}{\littlecirc} \Drhostarpush &=& \Drhostarpush {\littlecirc} \iota_{\rho(u)}.
	\end{eqnarray}
	Now we prove Equation ~\eqref{eq} by induction. When $j=1$, it follows from the definition of $\rho$-compatibility, namely Equation ~\eqref{Eqn:rhocompatiblecondition}, that
	$\iota_{\rho (u)} \theta =\Drhostarpush (\iota_u \theta )$. By Equation ~\eqref{eq1}, we get
	\[(\iota_u{\littlecirc} \Drhostarpush -\Drhostarpush {\littlecirc} \iota_u)\theta =\iota_{\rho (u)} \theta =\Drhostarpush (\iota_u \theta )\]
	and thus $\Drhostarpush  (\iota_u \theta )=\frac{1}{2}\iota_u \Drhostarpush  \theta $. Assume that Equation ~\eqref{eq} holds for $j\geqslant  1$. Then using Equation ~\eqref{eq2}, we have
	\[\iota_{\rho(u)} (\Drhostarpush ^j \theta )=(\Drhostarpush {\littlecirc} \iota_{\rho(u)} )(\Drhostarpush ^{j-1}\theta )=\Drhostarpush {\littlecirc} \Drhostarpush ^j (\iota_u \theta )=\Drhostarpush ^{j+1} (\iota_u \theta ).\]
	Besides, applying Equations  ~\eqref{eq1} and \eqref{eq2}, we have
	\begin{eqnarray*}
		&&\Drhostarpush ^{j+1}(\iota_u \theta )=\frac{1}{j+1}(\Drhostarpush {\littlecirc} \iota_u{\littlecirc} \Drhostarpush ^j)\theta \\ &=&
		\frac{1}{j+1}(\iota_u{\littlecirc} \Drhostarpush -\iota_{\rho(u)})(\Drhostarpush  ^j \theta )= \frac{1}{j+1}(\iota_u(\Drhostarpush ^{j+1}\theta )-\Drhostarpush ^{j+1}(\iota_u \theta )),
	\end{eqnarray*}
	which implies that
	\[\Drhostarpush ^{j+1} (\iota_u \theta )=\frac{1}{j+2}\iota_u \Drhostarpush ^{j+1} \theta .\]
	Thus Equation ~\eqref{eq} also holds for $j+1$, and the assertion is proved by induction.
\end{proof}

For the convenience of future use, we define a bundle map $$B:~  A^*\otimes( \wedge^{k-1} T^*M)  \to    \bigoplus_{i=1}^k ~\wedge^iA^*\otimes (\wedge^{k-i} T^*M) $$ by
\begin{eqnarray}\label{Btheta}
	B\theta&:=&\frac{e^{\Drhostarpush }-1}{\Drhostarpush }(\theta)=\theta+\frac{1}{2} \Drhostarpush  \theta+\frac{1}{3!}\Drhostarpush ^2 \theta+\cdots+\frac{1}{k!} \Drhostarpush ^{k-1} \theta. 
\end{eqnarray}

Note that the term $\Drhostarpush ^{j-1} \theta$ sits in   $\Gamma(\wedge^{j}A^*\otimes \wedge^{k-j}T^*M)$ (for $1\leq j\leq k$).

\begin{proposition}If $\theta\in \Gamma(A^*\otimes( \wedge^{k-1} T^*M))$ is $\rho$-compatible, then the following statements are true.
	\begin{itemize}
		\item[(1)] For all $u\in A$, we have
	\begin{equation}\label{Eqt:Bthetawithuproperty}
	\iota_u (B\theta)	= \iota_{\rho(u)} (B\theta)+\iota_u \theta;
	\end{equation}
\item[(2)] For all $u_1,\cdots, u_j\in A_x$ ($1\leq j\leq k$), $X_{j+1},\cdots, X_k\in  T_xM $, we have
 \begin{eqnarray}\nonumber 
	&&(\Drhostarpush ^{j-1} \theta)  (u_1,\cdots,u_j,X_{j+1},\cdots,X_{k})\\\label{Eqn:Drhostarjminusone}
	&=& j! \theta(u_1,\rho(u_2), \rho(u_3),\cdots,\rho(u_j),X_{j+1},\cdots,X_{k}). \end{eqnarray}

\item[(3)] For all $u_1+X_1,\cdots, u_k+X_k\in  A\oplus TM $, we have
\begin{eqnarray}\nonumber 
	&&  (B\theta)(u_1+X_1,\cdots, u_k+X_k)\\\label{Eqt:Bthetatemp3}
	&=& \sum_{j=1}^{k}\sum_{\sigma\in \mathrm{Sh}(j,k-j)} \sign(\sigma)~ \theta\bigl(u_{\sigma_1},\rho(u_{\sigma_2}),\cdots,\rho(u_{\sigma_j}),X_{\sigma_{j+1}},\cdots,X_{\sigma_k}\bigr)\\\nonumber
	&=& \theta(u_1,\rho(u_2)+X_2,\rho(u_3)+X_3,\cdots,\rho(u_k)+X_k)\\\nonumber
	&& \quad + \theta(X_1,u_2,\rho(u_3)+X_3, \rho(u_4)+X_4,\cdots,\rho(u_k)+X_k)\\\nonumber &&\quad\quad+
	\theta(X_1, X_2,u_3,\rho(u_4)+X_4,\rho(u_5)+X_5\cdots,\rho(u_k)+X_k)
	\\\label{Eqt:Bthetatemp4}
	&&\quad\quad\quad\quad+\cdots+\theta(X_1, X_2,\cdots, X_{k-1},u_k).
	\end{eqnarray}
\end{itemize}
\end{proposition}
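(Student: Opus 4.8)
The three parts build on one another, so the plan is to prove them in the stated order, using Lemma \ref{formula} and the $\rho$-compatibility identity \eqref{Eqn:rhocompatiblecondition} throughout, together with the grading fact that $\Drhostarpush^{j-1}\theta\in\Gamma(\wedge^j A^*\otimes\wedge^{k-j}T^*M)$ (so in particular $\Drhostarpush^{k}\theta=0$, as it would live in $\wedge^{k+1}A^*\otimes\wedge^{-1}T^*M$).

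For part (1) I would argue by a telescoping computation. Writing $B\theta=\sum_{j=1}^{k}\tfrac{1}{j!}\Drhostarpush^{j-1}\theta$ from \eqref{Btheta}, I contract termwise with $u$ and with $\rho(u)$. On the $\iota_{\rho(u)}$ terms I apply Lemma \ref{formula} in the form $\iota_{\rho(u)}\Drhostarpush^{j-1}\theta=\tfrac{1}{j+1}\iota_u\Drhostarpush^{j}\theta$; after reindexing by $m=j+1$ and discarding the vanishing top term $\Drhostarpush^{k}\theta=0$, the series for $\iota_{\rho(u)}(B\theta)$ coincides with that for $\iota_u(B\theta)$ except for the single $j=1$ summand, so the difference collapses to exactly $\iota_u\theta$. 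This gives \eqref{Eqt:Bthetawithuproperty} with no sign subtleties.

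Part (2) I would prove by induction on $j$, the base case $j=1$ being the defining pairing of $\theta$. For the step I expect the sign and factorial bookkeeping to be the main obstacle. Using the reindexed Lemma in the form $\iota_{u_1}\Drhostarpush^{j-1}\theta=j\,\iota_{\rho(u_1)}\Drhostarpush^{j-2}\theta$, the evaluation picks up a factor $j$ and replaces $u_1$ by $\rho(u_1)$ in a $TM$-slot; moving $\rho(u_1)$ past the remaining $j-1$ $A$-arguments costs $(-1)^{j-1}$, after which the inductive hypothesis produces $(j-1)!\,\theta(u_2,\rho(u_3),\dots,\rho(u_j),\rho(u_1),X_\bullet)$ (with the fixed $TM$-arguments suppressed as $X_\bullet$). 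The conceptual key is to observe that the map $F(w_1,\dots,w_j):=\theta(w_1,\rho(w_2),\dots,\rho(w_j),X_\bullet)$ is \emph{totally} antisymmetric: antisymmetry in $w_2,\dots,w_j$ is just antisymmetry of $\theta$, whereas the transposition $w_1\leftrightarrow w_2$ costs a sign \emph{precisely} by $\rho$-compatibility \eqref{Eqn:rhocompatiblecondition}, and these transpositions generate $S_j$. Reordering back to $(u_1,\dots,u_j)$ is a $j$-cycle contributing a second $(-1)^{j-1}$, which cancels the first; the numerical factors then compose to $j\cdot(j-1)!=j!$, giving \eqref{Eqn:Drhostarjminusone}.

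For part (3) I would first record the shuffle decomposition of the pairing: for $\eta\in\Gamma(\wedge^j A^*\otimes\wedge^{k-j}T^*M)\hookrightarrow\Gamma(\wedge^{k}(A^*\oplus T^*M))$, evaluation on $u_i+X_i$ forces exactly $j$ slots to receive $A$-parts, whence $\eta(u_1+X_1,\dots,u_k+X_k)=\sum_{\sigma\in\mathrm{Sh}(j,k-j)}\sign(\sigma)\,\eta(u_{\sigma_1},\dots,u_{\sigma_j},X_{\sigma_{j+1}},\dots,X_{\sigma_k})$. Applying this to $\eta=\Drhostarpush^{j-1}\theta$ and substituting part (2) makes the $j!$ cancel the $1/j!$ in $B\theta$, and summing over $j$ yields \eqref{Eqt:Bthetatemp3}. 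To reach the explicit form \eqref{Eqt:Bthetatemp4} I would regroup \eqref{Eqt:Bthetatemp3} by the position $i=\sigma_1$ of the unique bare $A$-argument (the minimum of the $A$-block): for fixed $i$ the indices $<i$ lie in the $TM$-block (contributing $X_1,\dots,X_{i-1}$), index $i$ is bare, and each index $>i$ independently contributes $\rho(u_m)$ or $X_m$. Rewriting each shuffled term in natural order turns $\sign(\sigma)$ into $\sign(\sigma)^2=1$, and summing the free $A/TM$ assignment of the indices $>i$ combines $\rho(u_m)$ and $X_m$ into $\rho(u_m)+X_m$ by multilinearity, producing the $i$-th summand of \eqref{Eqt:Bthetatemp4}. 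The delicate point of the whole argument remains the paired sign cancellation in part (2), for which the total antisymmetry of $F$ extracted from $\rho$-compatibility is the decisive ingredient.
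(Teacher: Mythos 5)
Your proof is correct, and parts (1) and (3) run essentially as the paper intends: the telescoping of the series for $B\theta$ via Lemma \ref{formula} (with the top term killed by $\Drhostarpush^{k}\theta=0$) is exactly the paper's computation for (1), and for (3) the paper simply declares the shuffle decomposition and the regrouping by the position of the bare $A$-argument to be immediate, so your expansion just supplies the details it omits. The one genuine divergence is in part (2): the paper does not induct, but instead writes out $(\Drhostarpush^{j-1}\theta)(u_1,\dots,u_j,X_\bullet)$ directly as $(j-1)!\sum_{i=1}^{j}(-1)^{i+1}\theta\bigl(u_i,\rho(u_1),\dots,\widehat{\rho(u_i)},\dots,\rho(u_j),X_\bullet\bigr)$ from the combinatorial formula for an iterated derivation, and then uses $\rho$-compatibility to show all $j$ summands coincide. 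Your induction replaces that unproved explicit expansion with repeated appeals to Lemma \ref{formula}, which is already established, at the cost of tracking the two cancelling signs $(-1)^{j-1}$; the decisive observation in both arguments is the same, namely that $\rho$-compatibility \eqref{Eqn:rhocompatiblecondition} upgrades $(w_1,\dots,w_j)\mapsto\theta(w_1,\rho(w_2),\dots,\rho(w_j),X_\bullet)$ to a totally antisymmetric map. Either route is sound; yours is marginally more self-contained, the paper's is shorter if one grants the expansion formula.
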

 \begin{proof}
		By the definition of $B\theta$ in Equation \eqref{Btheta}, we do direct  computations:\begin{eqnarray*}
\iota_u (B\theta)&=& \iota_u\theta+\iota_u \bigl( \frac{1}{2} \Drhostarpush  \theta+\frac{1}{3!} \Drhostarpush ^2 \theta+\cdots+\frac{1}{k!}  \Drhostarpush ^{k-1} \theta\bigr)\\
&=&\iota_u\theta+\iota_{\rho(u)} \bigl( \theta+\frac{1}{2} \Drhostarpush  \theta+\frac{1}{3!} \Drhostarpush ^2 \theta+\cdots+\frac{1}{(k-1)!}  \Drhostarpush ^{k-2} \theta \bigr).
	\end{eqnarray*}
In the last step,  we repetitively used Equation \eqref{eq}. The above terms turn to the desired right hand side of Equation \eqref{Eqt:Bthetawithuproperty}.

By the definition of $\Drhostarpush$,   we have
\begin{eqnarray*}
	&&(\Drhostarpush ^{j-1} \theta)  (u_1,\cdots,u_j,X_{j+1},\cdots,X_{k})\\
	&=&(j-1)!
	\sum_{i=1}^j
	(-1)^{i+1} \theta(u_i, \rho(u_1),\cdots,\widehat{\rho(u_i)},\cdots,\rho(u_j),X_{j+1},\cdots,X_{k}),
\end{eqnarray*}
where $u_i\in A_x$ and $X_i\in T_x M$.  Then by the $\rho$-compatibility condition of $\theta$, the term 
\begin{eqnarray*}
	&&(-1)^{i+1} \theta(u_i, \rho(u_1),\cdots,\widehat{\rho(u_i)},\cdots,\rho(u_j),X_{j+1},\cdots,X_{k})\\
	&=&(-1)^{i} \theta(u_1,\rho(u_i), \rho(u_2),\cdots,\widehat{\rho(u_i)},\cdots,\rho(u_j),X_{j+1},\cdots,X_{k})
	\\
	&=& \theta(u_1,\rho(u_2), \rho(u_3),\cdots,\rho(u_j),X_{j+1},\cdots,X_{k}) .
\end{eqnarray*}
This proves Equation \eqref{Eqn:Drhostarjminusone} and we then get \begin{eqnarray*}\nonumber 
	(B\theta)(u_1,\cdots,u_j,X_{j+1},\cdots,X_{k}) 
	&=& \frac{1}{j!} (\Drhostarpush ^{j-1} \theta)  (u_1,\cdots,u_j,X_{j+1},\cdots,X_{k})\\
	&=&  \theta(u_1,\rho(u_2), \rho(u_3),\cdots,\rho(u_j),X_{j+1},\cdots,X_{k}).
\end{eqnarray*} From these relations  it is immediate to   derive Equations \eqref{Eqt:Bthetatemp3} and \eqref{Eqt:Bthetatemp4}.
\end{proof}

 \subsection{From multiplicative forms to $\rho$-compatible tensors}

 Let $k\geqslant 1$ be an integer and $\Theta\in \OmegakmultG{k}$ be multiplicative. We
 first notice that   $M$ is isotropic with respect $\Theta$, i.e. 
 $\Theta_x(X_1,\cdots, X_k)=0$ for $X_i\in T_x M$. In fact, this can be easily seen   by  applying Equation ~\eqref{multform} to the tangent vectors $(X_1,X_1),\cdots, (X_k,X_k)$ (at $(x,x)\in \Gpd ^{(2)}$) and noticing that $X_i=m_*(X_i,X_i)$. We then know that $ \Theta|_M$ has no summand in the space $\Gamma(\wedge^{k} T^*M)$, or  
 \[\Theta|_M\in  \bigoplus_{i=1}^k\Gamma(\wedge^iA^*\otimes (\wedge^{k-i} T^*M)).\]

 We now claim that     all  the $\Gamma(\wedge^i A^*\otimes (\wedge^{k-i} T^*M))$-summands of $\Theta|_M$ ($ i=2,3,\cdots, k$) are decided by  $\theta\in \Gamma(A^*\otimes (\wedge^{k-1} T^*M))$ which is the $\Gamma(A^*\otimes (\wedge^{k-1} T^*M))$-component of 
 $  \Theta|_M $.  The element $\theta$ will be called the \textbf{leading term} of $\Theta\in \OmegakmultG{k}$.


 \begin{proposition}\label{Prop:ThetaalongM}
 	Let   $\Theta\in \OmegakmultG{k}$  be a multiplicative $k$-form  on $\Gpd $ ($k\geqslant 1$). Then we have the following facts.
 	\begin{itemize}
 		\item[(1)] Its leading term $\theta:=\mathrm{pr}_{\Gamma( A^*\otimes (\wedge^{k-1} T^*M))} \Theta|_M$ is a $\rho$-compatible $(0,k)$-tensor;
 		\item[(2)] The restriction of $\Theta$ on $M$ is given by \begin{eqnarray}\label{Btheta2}
 		\Theta|_M= B\theta.
 		\end{eqnarray}(See Equation \eqref{Btheta} for the definition of $B\theta$.)
 	\end{itemize}
  \end{proposition}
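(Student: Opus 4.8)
The plan is to derive both assertions from a single vanishing property of $\Theta$ along $M$, extracted directly from the multiplicativity condition \eqref{multform}. Throughout I work at a fixed unit $x\in M$ and use the splitting $T_x\Gpd=A_x\oplus T_xM$, under which $\ker s_*|_x=A_x$ and $\ker t_*|_x=\{u-\rho(u)\mid u\in A_x\}$; indeed $\overrightarrow{u}_x=u\in\ker s_*$ and $\overleftarrow{u}_x=u-\rho(u)\in\ker t_*$. Regarding $\Theta_x$ as an element of $\wedge^k(A_x^*\oplus T_x^*M)$, and recalling from the isotropy of $M$ established above that its $\wedge^kT_x^*M$-component vanishes, both claims will follow once I prove the \emph{Key vanishing}:
\[\Theta_x(v,w,Z_3,\dots,Z_k)=0\quad\text{for all } v\in\ker s_*|_x,\ w\in\ker t_*|_x,\ Z_3,\dots,Z_k\in T_x\Gpd.\]
Equivalently, $\Theta_x$ annihilates every wedge containing one vector from $\ker s_*$ and one from $\ker t_*$.

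To prove the Key vanishing I would apply \eqref{multform} at the composable point $(x,x)\in\Gpd^{(2)}$, evaluating both sides on $k$ carefully chosen composable pairs of tangent vectors in the tangent groupoid $T\Gpd\rightrightarrows TM$. For the first slot I take $(v,0_x)$, for the second $(0_x,w)$, and for each slot $a\in\{3,\dots,k\}$ the pair $(Z_a,\,s_*Z_a)$, where $0_x$ is the zero vector (the unit of $T\Gpd$ over $0\in T_xM$) and $s_*Z_a\in T_xM\hookrightarrow T_x\Gpd$ is the unit over $s_*Z_a$. Each pair is composable, and by the unit laws of the tangent groupoid the products are respectively $m_*(v,0_x)=v$ (since $v\in\ker s_*$), $m_*(0_x,w)=w$ (since $w\in\ker t_*$), and $m_*(Z_a,s_*Z_a)=Z_a$. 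Hence the left-hand side of \eqref{multform} reads $\Theta_x(v,w,Z_3,\dots,Z_k)$, while the right-hand side equals $\Theta_x(v,0_x,Z_3,\dots,Z_k)+\Theta_x(0_x,w,s_*Z_3,\dots,s_*Z_k)$, and both summands vanish because one of their entries is the zero vector. The only real subtlety is choosing the pairs so that all three unit laws apply at once and both terms on the right collapse; notably, no formula for the tangent multiplication beyond the unit laws is required.

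With the Key vanishing in hand the two statements are short deductions. For (1), applying it with $v=u$, $w=v'-\rho(v')$ (for $u,v'\in A_x$) and $Z_i=X_i\in T_xM$ gives $\Theta_x(u,v',X_3,\dots,X_k)=\Theta_x(u,\rho(v'),X_3,\dots,X_k)$; combining this with the skew-symmetry of $\Theta_x$ and the same identity with $u,v'$ interchanged yields $\Theta_x(u,\rho(v'),X_3,\dots,X_k)=-\Theta_x(v',\rho(u),X_3,\dots,X_k)$, which is exactly the $\rho$-compatibility condition \eqref{Eqn:rhocompatiblecondition} for $\theta=\mathrm{pr}_{\Gamma(A^*\otimes(\wedge^{k-1}T^*M))}\Theta|_M$. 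For (2), I would use the Key vanishing repeatedly to replace anchored slots: since $u_j-\rho(u_j)\in\ker t_*$ while $u_1\in\ker s_*$, each replacement of $u_j$ by $\rho(u_j)$ (for $j\geqslant 2$) leaves $\Theta_x$ unchanged, so that
\[\Theta_x(u_1,\dots,u_i,X_{i+1},\dots,X_k)=\theta\bigl(u_1,\rho(u_2),\dots,\rho(u_i),X_{i+1},\dots,X_k\bigr).\]
By Equation \eqref{Eqn:Drhostarjminusone} the right-hand side equals $\tfrac{1}{i!}(\Drhostarpush^{i-1}\theta)(u_1,\dots,u_i,X_{i+1},\dots,X_k)$, which is precisely the value of $B\theta$ on such homogeneous arguments by the definition \eqref{Btheta}. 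Since $\Theta_x$ and $B\theta$ then agree on all inputs of this homogeneous type (the purely tangential input being covered by isotropy), they coincide, giving $\Theta|_M=B\theta$. I expect the bookkeeping of the iterated replacement in (2) to be the only place demanding care, the conceptual weight of the argument resting entirely on the Key vanishing.
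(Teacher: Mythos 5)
Your proof is correct and follows essentially the same route as the paper: your Key vanishing is precisely the $g=x$ case of Lemma \ref{lifting} (that $\iota_{u-\rho(u)}(\Theta|_M)$ lands in $\Gamma(\wedge^{k-1}T^*M)$), obtained by the same device of feeding suitably chosen composable tangent pairs into the multiplicativity identity \eqref{multform}. The remaining steps \textemdash\ $\rho$-compatibility via skew-symmetry, and $\Theta|_M=B\theta$ via the iterated replacement $u_j\mapsto\rho(u_j)$ combined with \eqref{Eqn:Drhostarjminusone} \textemdash\ merely reorganize the paper's induction based on \eqref{induction} and Lemma \ref{formula}, so the argument is sound but not substantively different.
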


   In fact, this proposition   is covered by  Crainic-Salazar-Struchiner's result  \cite[Proposition 4.1]{C} (see also Proposition \ref{4.1}, Theorem \ref{main2} and Corollary \ref{Cor:fromThetatocharpair} in the sequel), which is highly nontrivial. However,   we shall give a direct  proof   using some basic techniques. Before that, let us   state the following fact.

 \begin{lemma}\label{lifting}
 	Given $k\geqslant 1$ and $\Theta\in \OmegakmultG{k}$, for any $u\in \Gamma(A)$, the term $\iota_{u-\rho(u)} (\Theta|_M) \in \Omega^{k-1}(\Gpd)|_M$ has no summand in the space $\Gamma(\wedge^i A^*\otimes (\wedge^{k-1-i} T^*M))$, $i=1,2,\cdots, k-1$. 
 	\end{lemma}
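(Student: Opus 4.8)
The plan is to first reformulate the statement as a single vanishing identity, and then to derive that identity directly from multiplicativity.

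Since $T\Gpd|_M = A\oplus TM$, restriction along $M$ identifies $\wedge^{k-1}T^*\Gpd|_M$ with $\bigoplus_{i=0}^{k-1}\wedge^i A^*\otimes(\wedge^{k-1-i}T^*M)$, and the component of a $(k-1)$-form in the slot $\wedge^iA^*\otimes(\wedge^{k-1-i}T^*M)$ with $i\geq 1$ is precisely what is detected by inserting at least one vector from $A$. Thus the assertion that $\iota_{u-\rho(u)}(\Theta|_M)$ has no summand with $i\geq 1$ is equivalent to $\iota_v\iota_{u-\rho(u)}(\Theta|_M)=0$ for every $v\in A$; in other words, writing $\overleftarrow{u}_x=u-\rho(u)$, it suffices to prove
\begin{equation*}
\Theta_x(\overleftarrow{u}_x,\,v,\,W_1,\dots,W_{k-2})=0,\qquad \forall\, v\in A_x,\ W_1,\dots,W_{k-2}\in T_x\Gpd.
\end{equation*}

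The tool is the multiplicativity relation $m^*\Theta=\mathrm{pr}_1^*\Theta+\mathrm{pr}_2^*\Theta$ of Equation \eqref{multform}, evaluated at the unit pair $(x,x)\in\Gpd^{(2)}$. Here $T_{(x,x)}\Gpd^{(2)}$ consists of composable pairs $(P,Q)$ with $s_*P=t_*Q$, and at a unit the tangent multiplication takes the form $m_*(P,Q)=a_P+Q$, where $a_P\in A_x$ is the $A$-part of $P$ in the splitting $T_x\Gpd=A_x\oplus T_xM$. I will only need three composable building blocks, each verified by differentiating the idempotence identities $m(v,x)=v$, $m(x,r)=r$ and $m(y,y)=y$: namely $m_*(v,0)=v$ for $v\in A_x$; $m_*(0,\overleftarrow{u}_x)=\overleftarrow{u}_x$, where $(0,\overleftarrow{u}_x)$ is composable because $t_*\overleftarrow{u}_x=\rho(u)-\rho(u)=0$; and $m_*(W,s_*W)=W$ for an arbitrary $W\in T_x\Gpd$, where $s_*W$ is viewed inside $T_xM\subset T_x\Gpd$.

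Feeding the $k$ composable pairs $(0,\overleftarrow{u}_x)$, $(v,0)$, $(W_1,s_*W_1),\dots,(W_{k-2},s_*W_{k-2})$ into the multiplicativity relation at $(x,x)$, the left-hand side $m^*\Theta$ yields exactly $\Theta_x(\overleftarrow{u}_x,v,W_1,\dots,W_{k-2})$. On the right-hand side, the $\mathrm{pr}_1$-term has a $0$ in its first argument (coming from the first pair) while the $\mathrm{pr}_2$-term has a $0$ in its second argument (coming from the second pair), so both contributions vanish. This forces the displayed expression to be zero, which is the reduced claim, and hence the lemma.

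The only nonroutine point is the expression for $m_*$ at a unit, i.e. establishing the three building blocks together with their composability; this rests on $\ker s_*|_x=A_x$ and $\ker t_*|_x=\{u-\rho(u)\}$. Once these are in hand, the remainder is pure bookkeeping with the decomposition $T\Gpd|_M=A\oplus TM$, and I expect no genuine obstacle beyond keeping track of which slot carries the distinguished $0$.
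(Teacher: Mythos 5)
Your proof is correct and follows essentially the same route as the paper: both arguments evaluate the multiplicativity identity \eqref{multform} on composable tangent pairs built from $(0,\overleftarrow{u})$ and $(W,s_*W)$. The only difference is that you insert the extra pair $(v,0)$ and work directly at the unit $(x,x)$ so that both terms on the right-hand side acquire a zero slot, whereas the paper first derives the identity $\Theta_g(\overleftarrow{u}_g,X_1,\cdots,X_{k-1})=\Theta_{s(g)}\bigl((u-\rho(u))_{s(g)},s_*X_1,\cdots,s_*X_{k-1}\bigr)$ at a general point $(g,s(g))$ and then specializes to $g=x$.
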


  \begin{proof} Consider the particular point $(g,s(g))\in \Gpd ^{(2)}$ and 
 	the following tangent vectors at $(g,s(g))$:
 	\[\bigl(0_g,(u-\rho(u))_{s(g)}\bigr),~(X_1,s_* X_1),~\cdots, ~(X_{k-1}, s_* X_{k-1}).\] 
 	Here $X_i\in T_g\Gpd$.  Notice the following facts:
 	\[m_*\bigl(0_g, (u-\rho(u))_{s(g)}\bigr)=L_{g*} (u-\rho(u))_{s(g)}=\overleftarrow{u}_g,\qquad m_*(X_i,s_* X_i)=X_i.\]
 	Substituting  these vectors to Equation \eqref{multform},  we obtain
 	\[\Theta_g(\overleftarrow{u}_g,X_1,\cdots, X_{k-1})=\Theta_{s(g)}((u-\rho(u))_{s(g)},s_* X_1,\cdots,s_* X_{k-1}).\]
 	Especially, if $g=x=s(g)\in M$, the above equation implies the desired assertion.\end{proof} 
 
 This lemma is indeed saying that $\iota_{u-\rho(u)} (\Theta|_M)$ is a $(k-1)$-form on $M$. Let us denote it by  $\omega=\iota_{u-\rho(u)} (\Theta|_M)\in \Omega^{k-1}(M)$, and then  the term $\iota_{\overleftarrow{u}} \Theta\in \Omega^{k-1}(\Gpd)$ is a left invariant $(k-1)$-form, and the following identity holds 
 \begin{equation}
 \label{Eqt:iotauTheta1}
 \iota_{\overleftarrow{u}} \Theta=s^*\omega.
 \end{equation}

   {Similar to the proof above, by considering the tangent vectors 
 	\[ (u _{t(g)},0_g),~(t_* X_1, X_1),~\cdots, ~( t_* X_{k-1},X_{k-1}) \]
 	at the point $(t(g),g)\in \Gpd ^{(2)}$, we get
 	$$\Theta_g(\overrightarrow{u}_g,X_1,\cdots, X_{k-1})=\Theta_{t(g)}( u_{t(g)},t_* X_1,\cdots,t_* X_{k-1}).$$ 
 	Then by the fact that $M$ is isotropic with respect $\Theta$, the right hand side of the above equation becomes
 	$$
 	\Theta_{t(g)}( (u-\rho(u))_{t(g)},t_* X_1,\cdots,t_* X_{k-1})
 	=\omega_{t(g)}(t_* X_1,\cdots,t_* X_{k-1}).
 	$$ }
  This shows that the $(k-1)$-form $\iota_{\overrightarrow{u}} \Theta$ is  right invariant, and the following identity holds 
 \begin{equation*}
 \label{Eqt:iotauTheta2}
 \iota_{\overrightarrow{u}} \Theta=t^*\omega.
 \end{equation*}

 We now turn to the  proof of Proposition \ref{Prop:ThetaalongM}.
 \begin{proof}
 	First, since $M$ is isotropic with respect to $\Theta$, it has no summand in $\wedge^k T^*M$. Thus $\Theta|_M$ is expressed as
 	\[\Theta|_M=\theta^{1,k-1}+\theta^{2,k-2}+\cdots+\theta^{k,0},\]
 	where $\theta^{i,k-i}\in \Gamma(\wedge^i A^*\otimes (\wedge^{k-i} T^*M))$. Next, Lemma \ref{lifting}  tells us that $\iota_{u-\rho(u)} \Theta|_M \in \Gamma(\wedge^{k-1} T^*M)$, which signifies that
 	\begin{eqnarray}\label{induction}
 	\iota_{\rho(u)} \theta^{i,k-i}=\iota_{u}\theta^{i+1,k-i-1},\qquad i=1,2,\cdots,k-1.
 	\end{eqnarray}
 	By Lemma \ref{lifting}, we also have $\iota_{u-\rho(u)}\iota_v\Theta|_M=0$ for $u,v\in \Gamma(A)$. This implies that $\iota_u\iota_v \Theta|_M=\iota_{\rho(u)}\iota_v \Theta|_M$. Thus we obtain
 	\[\iota_u\iota_{\rho(v)}\theta^{1,k-1}=-\mathrm{pr}_{\Gamma(\wedge^{k-2} T^*M)}(\iota_v\iota_u \Theta|_M)=\mathrm{pr}_{\Gamma(\wedge^{k-2} T^*M)}(\iota_u\iota_v\Theta|_M)=-\iota_{v}\iota_{\rho(u)} \theta^{1,k-1}.\]
 	That is, $\theta^{1,k-1}$ is $\rho$-compatible.
 	Then, we claim that
 	\[ \theta^{i,k-i}=\frac{1}{i!} \Drhostarpush ^{i-1} \theta,\qquad \mbox{where~}~ \theta:=\theta^{1,k-1},\] which yields the desired Equation \eqref{Btheta2}.
 	The proof is by induction for $i$ \quad \textemdash \quad   It is true for $i=1$. Assume that the assertion holds for $i$. By Equation ~\eqref{induction} and Lemma \ref{formula}, we find that 
 	\begin{eqnarray*}
 		\iota_u \theta^{i+1,k-i-1}&=&\iota_{\rho(u)} \theta^{i,k-i}
 		= \frac{1}{i!} \iota_{\rho(u)}\Drhostarpush ^{i-1}\theta = \frac{1}{i!}\frac{1}{i+1} \iota_{u} \Drhostarpush ^i \theta= \frac{1}{(i+1)!} \iota_u \Drhostarpush ^i \theta,
 	\end{eqnarray*}
 	for  all $u\in \Gamma(A)$. Then we get $\theta^{i+1,k-i-1}= \frac{1}{(i+1)!} \Drhostarpush ^i \theta$.
 \end{proof}

 \begin{remark}
 	By Equations \eqref{Eqn:Drhostarjminusone}	and   \eqref{Btheta2}, we have
 	\begin{eqnarray*}\nonumber
 	\Theta_x(u_1,\cdots,u_j,X_{j+1},\cdots,X_{k}) 
 	&=& \frac{1}{j!} (\Drhostarpush ^{j-1} \theta)  (u_1,\cdots,u_j,X_{j+1},\cdots,X_{k})\\\label{Eqt:jthtermofTheta}
 	&=& \theta(u_1,\rho(u_2), \rho(u_3),\cdots,\rho(u_j),X_{j+1},\cdots,X_{k}).
 	\end{eqnarray*}
 	In fact, this relation  already appeared in \cite[Lemma 4.2]{C} (or \cite[Remark 2]{CMS1}). So  one can  treat Equation \eqref{Btheta2} as a compact form of their result. 	 
 \end{remark}  
  {
 \begin{remark}\label{alpha}
 For $\Theta\in \Omega^k_{\mult}(\Gpd)$, by definition of the source map $s$ of $T^*\Gpd\rightrightarrows A^*$ as in \eqref{stco} and Equation \eqref{Eqt:iotauTheta1},  we see that $s(\Theta_g)$ only depends on $\Theta_{s(g)}$. So by $(2)$ of Proposition \ref{Prop:ThetaalongM} and Formula \eqref{Btheta}, we have 
 \[s(\Theta)=s(\Theta|_M)=s(B\theta)=\frac{1}{k!}\Drhostarpush^{k-1}\theta\in \Gamma(\wedge^k A^*).\]
 Similarly, we have $t(\Theta)=\frac{1}{k!}\Drhostarpush^{k-1}\theta$. In particular, we have $s(\Theta)=t(\Theta)=\theta\in \Gamma(A^*)$ for $\Theta\in \Omega^1_{\mult}(\Gpd)$.
 \end{remark}}

\subsection{Groupoid $(0,k)$-characteristic pairs}

\subsubsection{Characterization of multiplicative forms}
We need a result due to Crainic,  Salazar, and Struchiner \cite{C} which we now recall. They used it to prove the correspondence of multiplicative forms and Spencer operators.  

 In \cite{C}, a special type of pairs $(e,\theta)$ was introduced, where   
 \begin{itemize}
 	\item[(1)] $e$ is a   $1$-cocycle of the jet groupoid $\jet \Gpd$ valued in $\wedge^k T^*M$, i.e. 
 	$e\in Z^1(\jet \Gpd , \wedge^k T^*M)$, and 
 	\item[(2)] $\theta\in \Gamma(A^*\otimes (\wedge^{k-1} T^*M)) $ is a $\rho$-compatible $(0,k)$-tensor.
 \end{itemize} Here $k\geqslant 1$ is an integer, and 
moreover, 
 $c$ and $\theta$  are subject to the following two conditions:\begin{itemize}
 	\item[(1)] 
 \begin{eqnarray}
 	&&\nonumber e[b'_g](\Ad_{[b'_g]} X_1,\cdots, \Ad_{[b'_g]} X_k)-e[b_g](\Ad_{[b_g]} X_1,\cdots, \Ad_{[b_g]} X_k)\\\label{C1} &=&\sum_{i=1}^k(-1)^{i+1}\theta\big((b'_g\ominus b_g)X_i,\Ad_{[b_g]}X_1,\cdots,\Ad_{[b_g]}X_{i-1},\Ad_{[b'_g]}X_{i+1},\cdots,\Ad_{[b'_g]}X_{k}\big),
 \end{eqnarray} 
 \item[(2)] 
 \begin{eqnarray}\nonumber && \mbox{ ~~~~~~~~~~~~~    }
 	\theta\big(\Ad_{[b_g]} u,\Ad_{[b_g]} X_1,\cdots,\Ad_{[b_g]} X_{k-1}\big)-\theta(u,X_1,\cdots, X_{k-1})\qquad \mbox{\qquad ~\qquad ~\qquad ~\qquad ~~\qquad~\qquad ~\qquad ~~\qquad ~\qquad ~\qquad ~\qquad ~\qquad~\qquad     }\qquad \\\label{C3} &=&e[b_g]\big(\Ad_{[b_g]} \rho(u),\Ad_{[b_g]}X_1,\cdots,\Ad_{[b_g]} X_{k-1}\big),
 \end{eqnarray} 
 for all   bisections $b_g$, $b'_g$ passing through $g$, $u\in A_{s(g)}$, and $X_1,\cdots,X_k\in T_{s(g)} M$.    Here $\Ad$ is the natural adjoint action of $\jet \Gpd $ on  $TM$ and
 \[b'_g\ominus b_g:=R_{g^{-1}*}{\littlecirc} (b'_{g*}-b_{g*}):~\ ~ T_{s(g)} M\to A_{t(g)}.\]
 
\end{itemize}

For convenience, we give such pairs a notion.
\begin{definition}\label{Def:groupoid0kcharpair} A pair $(e,\theta)$ as described above is called a  \textbf{$(0,k)$-characteristic pair}  on the Lie groupoid $\Gpd$. 

\end{definition}


We shall reinterpret the compatibility conditions of $c$ and $\theta$, namely Equations \eqref{C1} and \eqref{C3}, from a different aspect (see Proposition \ref{co cp}).

A multiplicative function  ($0$-form) on $\Gpd$ is exactly a Lie groupoid $1$-cocycle valued in the trivial  $\Gpd$-module $M\times \mathbb{R}$. In other words, we have $\OmegakmultG{0}=Z^1(\Gpd,M\times \mathbb{R})$. We wish to see what data characterize the space $\OmegakmultG{k}$ for $k\geqslant 1$. The result we state below gives an answer. Indeed, it is  a particular instance  of \cite[Proposition 4.1]{C}. 

\begin{proposition}\label{4.1}
	There is a one-to-one correspondence between multiplicative $k$-forms $\Theta\in \OmegakmultG{k}$ and $(0,k)$-characteristic pairs $(e,\theta)$ on $\Gpd$ such that for all bisections $b_g$ passing through a point $g\in \Gpd$, one has
\begin{eqnarray}\nonumber
	&&\Theta_g\bigl(R_{[b_g]}(u_1+X_1) ,\cdots, R_{[b_g]}(u_k+X_k)  \bigr)\\\label{Eqt:Thetaoriginal} &=&e[b_g](X_1,\cdots,X_k)+\sum_{j=1}^{k}\sum_{\sigma\in \mathrm{Sh}(j,k-j)} \sign(\sigma)~ \theta\bigl(u_{\sigma_1},\rho(u_{\sigma_2}),\cdots,\rho(u_{\sigma_j}),X_{\sigma_{j+1}},\cdots,X_{\sigma_k}\bigr),
\end{eqnarray}
where $u_i+X_i\in A_{t(g)}\oplus T_{t(g)}M $ ($i=1,\cdots, k$). Here we used the identification  $$T_g \Gpd = R_{[b_g]}(A_{t(g)}\oplus T_{t(g)}M) $$
and the notation $\mathrm{Sh}(p,q)$ stands for the set of $(p,q)$-shuffles. 
 \end{proposition}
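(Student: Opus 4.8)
The plan is to build the correspondence by hand and verify the formula \eqref{Eqt:Thetaoriginal} by peeling the arguments one slot at a time. Given a multiplicative $k$-form $\Theta$, I take $\theta:=\mathrm{pr}_{\Gamma(A^*\otimes \wedge^{k-1}T^*M)}(\Theta|_M)$, which is $\rho$-compatible by Proposition \ref{Prop:ThetaalongM}, and I define
$$e[b_g](X_1,\dots,X_k):=\Theta_g\bigl(R_{[b_g]}X_1,\dots,R_{[b_g]}X_k\bigr),\qquad X_i\in T_{t(g)}M .$$
Because $R_{[b_g]}$ depends only on the jet $[b_g]$ (and $g$ is recovered as its image under $P$), this is well defined on $\jet\Gpd$ and is manifestly $\wedge^k T^*M$-valued; $(e,\theta)$ is the candidate characteristic pair. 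By \eqref{Eqt:Bthetatemp3} the right-hand side of \eqref{Eqt:Thetaoriginal} is precisely $e[b_g](X_1,\dots,X_k)+(B\theta)(u_1+X_1,\dots,u_k+X_k)$, so, using $\Theta|_M=B\theta$ from Proposition \ref{Prop:ThetaalongM}, the whole formula is equivalent to the compact pointwise identity $R_{[b_g]}^*\Theta_g=e[b_g]+(\Theta|_M)_{t(g)}$.

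To prove it I first assemble three ingredients: (i) $\Theta|_M=B\theta$ (Proposition \ref{Prop:ThetaalongM}); (ii) the right-invariant contraction identity $\iota_{\overrightarrow u}\Theta=t^*(\iota_u\theta)$, which follows from Lemma \ref{lifting} and its right-invariant analogue established there, once one notes $\iota_{u-\rho(u)}(B\theta)=\iota_u\theta$ by \eqref{Eqt:Bthetawithuproperty} (compare \eqref{Eqt:iotauTheta1}); and (iii) the transport identities $R_{[b_g]}(u)=\overrightarrow u_g$ for $u\in A_{t(g)}$ and $t_*R_{[b_g]}(u+X)=\rho(u)+X$, the latter because $R_{[b_g]}$ restricts on $T_{t(g)}M$ to the tangent of the section $R_b^{!}$ of $t$, while $t_*\overrightarrow u_g=\rho(u)$. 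With these in hand I expand $R_{[b_g]}(u_i+X_i)=\overrightarrow{(u_i)}_g+R_{[b_g]}X_i$ from left to right. Once the first $i-1$ slots are fixed to base vectors $R_{[b_g]}X_1,\dots,R_{[b_g]}X_{i-1}$, resolving slot $i$ and moving $\overrightarrow{(u_i)}_g$ to the front gives, via (ii) and (iii),
\[
(-1)^{i-1}(\iota_{u_i}\theta)\bigl(X_1,\dots,X_{i-1},\rho(u_{i+1})+X_{i+1},\dots,\rho(u_k)+X_k\bigr)
=\theta\bigl(X_1,\dots,X_{i-1},u_i,\rho(u_{i+1})+X_{i+1},\dots,\rho(u_k)+X_k\bigr),
\]
which is exactly the $i$-th line of the expansion \eqref{Eqt:Bthetatemp4}; the sign from transposing $u_i$ past $X_1,\dots,X_{i-1}$ cancels against reinserting it in position $i$ inside $\theta$. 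The leftover all-base term is $e[b_g](X_1,\dots,X_k)$, and summing reproduces \eqref{Eqt:Thetaoriginal}.

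It then remains to see that $(e,\theta)$ is a genuine characteristic pair and that the assignment is a bijection. That $e$ is a $1$-cocycle, i.e. $e[(bb')_{gr}]=\Ad^\vee_{[b_g]}e[b'_r]+e[b_g]$, I obtain from multiplicativity \eqref{multform}: feeding $m^*\Theta=\mathrm{pr}_1^*\Theta+\mathrm{pr}_2^*\Theta$ the composable lifts of $X_1,\dots,X_k$ and using that the product-jet right-translation factors the transported base vectors through those of $[b_g]$ and $[b'_r]$ splits $\Theta_{gr}$ into the untwisted $e[b_g]$-piece and the coadjointly transported $e[b'_r]$-piece. The compatibility conditions \eqref{C1} and \eqref{C3} are then read off as the statement that the right-hand side of \eqref{Eqt:Thetaoriginal} is independent of the bisection: comparing the formula for two bisections $b_g,b'_g$ through the same $g$ on all-base arguments yields \eqref{C1}, and on mixed arguments yields \eqref{C3}. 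Conversely, starting from any characteristic pair $(e,\theta)$ I define $\Theta_g$ by \eqref{Eqt:Thetaoriginal}; conditions \eqref{C1} and \eqref{C3} guarantee independence of $b_g$, so $\Theta$ is a well-defined $k$-form, and reversing the above computation shows it is multiplicative with leading term $\theta$ and associated cocycle $e$. The two constructions are inverse to one another by construction.

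The main obstacle is precisely the bookkeeping in this last step: one must match the groupoid multiplication $m_*$ on composable tangent vectors with the composition of the jet right-translations $R_{[b_g]}$ and the coadjoint action $\Ad^\vee$, checking that the mixed (partly $A$-valued, partly $TM$-valued) arguments transform consistently. Since this is exactly the content of Crainic--Salazar--Struchiner \cite[Proposition 4.1]{C}, one may alternatively invoke their bijection between multiplicative forms and pairs satisfying \eqref{C1}--\eqref{C3}, and read the computation of the present paragraph purely as the verification that \eqref{Eqt:Thetaoriginal} is the explicit form of that correspondence.
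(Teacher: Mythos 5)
Your proposal is correct in substance, but it is worth being clear about what the paper actually does here: it gives \emph{no} proof of Proposition \ref{4.1} at all, simply declaring it a particular instance of Crainic--Salazar--Struchiner \cite[Proposition 4.1]{C}; the only computation the paper supplies in this vicinity is the reformulation \eqref{Eqt:Thetaoriginal} $\Leftrightarrow$ \eqref{formula theta} via \eqref{Eqt:Bthetatemp3}, which is Theorem \ref{main2}. Your forward direction is therefore genuine added content: defining $e[b_g]$ as the restriction of $R_{[b_g]}^*\Theta_g$ to base vectors (which agrees with \eqref{tildec}), splitting $R_{[b_g]}(u_i+X_i)=\overrightarrow{(u_i)}_g+R_{[b_g]}X_i$, and peeling the first $A$-slot using $\iota_{\overrightarrow{u}}\Theta=t^*(\iota_u\theta)$ together with $t_*\overrightarrow{u}_g=\rho(u)$ and $t_*R_{[b_g]}X=X$ is a sound, self-contained derivation of \eqref{formula theta} from Proposition \ref{Prop:ThetaalongM}, Lemma \ref{lifting} and \eqref{Eqt:Bthetawithuproperty}; the regrouping by the minimal $A$-slot does reproduce \eqref{Eqt:Bthetatemp4} with the signs cancelling as you say. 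What this buys is a proof of the formula that does not pass through the Spencer-operator machinery of \cite{C}.

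The caveat is the bijection itself. Your phrase ``reversing the above computation shows it is multiplicative'' is not a proof: the forward computation consumed multiplicativity of $\Theta$ as an input, through $\Theta|_M=B\theta$ and through the left/right invariance of $\iota_{\overleftarrow{u}}\Theta$ and $\iota_{\overrightarrow{u}}\Theta$, and these are consequences of \eqref{multform}, not reformulations of it. To go backwards you would have to verify directly that the $k$-form defined pointwise by \eqref{Eqt:Thetaoriginal} satisfies $m^*\Theta=\mathrm{pr}_1^*\Theta+\mathrm{pr}_2^*\Theta$, which requires matching $m_*$ on composable tangent vectors against $R_{[(bb')_{gr}]}=R_{[b'_r]}\circ R_{[b_g]}$ and the coadjoint twisting of $e$ on all mixed arguments --- precisely the bookkeeping you defer. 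Likewise, that $(e,\theta)$ satisfies the cocycle identity and \eqref{C1}--\eqref{C3} is asserted rather than checked. Since you close by invoking \cite[Proposition 4.1]{C} for exactly these points, your proposal is, in its rigorous form, the same proof as the paper's (a citation), enriched by an independent and correct derivation of the explicit formula; just do not present the converse half as established by your computation.
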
  
\begin{remark}\label{Lie group case}
	From this one-to-one correspondence,  we see that all  multiplicative $k$-forms with  $k\geqslant  \dim M + 2$ on $\Gpd$ are trivial. In particular, if the Lie groupoid happens to be a Lie group $G$ (i.e. $M$ is a single point), then all multiplicative $k$-forms on $G$ with  $k\geqslant  2$  are trivial. 
	

	For this reason, \textit{we will always assume that $1\leqslant k\leqslant \dim M+1$ in the sequel}.
\end{remark}

Upon the correspondence of this proposition, we   refer to $(e,\theta)$ as the 
 $(0,k)$-characteristic pair \textit{of} the multiplicative $k$-form $\Theta\in \OmegakmultG{k}$. In addition, we can reformulate the relation between $(e,\theta)$ and $\Theta$ in a compact manner:  
\begin{theorem}
	\label{main2}
	Let   $\Theta\in \Omega^k (\Gpd )$ be multiplicative and $(e,\theta)$ its corresponding $(0,k)$-characteristic pair. Then for any       bisection $b_g$ passing through $g\in \Gpd$, we have
	\begin{eqnarray}\label{formula theta}
		\Theta_g=R_{[b_g^{-1}]}^*\big(e[b_g]+B\theta_{t(g)}\big).\end{eqnarray}
  In this formula, $B\theta$ is defined by Equation ~\eqref{Btheta}  and $R^*_{[b_g^{-1}]}:~T^*_{t(g)}\Gpd\to T^*_g \mathcal{G} $ is the dual map of the right translation $R_{[b_g^{-1}]}$ $:~   T_g \mathcal{G} \to T_{t(g)}\Gpd$.
\end{theorem}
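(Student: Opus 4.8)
The plan is to read off the compact formula \eqref{formula theta} from the explicit correspondence already furnished by Proposition \ref{4.1}, using the combinatorial identity \eqref{Eqt:Bthetatemp3} to recognize the shuffle sum as a value of $B\theta$. First I would fix a bisection $b_g$ through $g$ and invoke the identification $T_g\Gpd = R_{[b_g]}(A_{t(g)}\oplus T_{t(g)}M)$, writing an arbitrary $k$-tuple of tangent vectors at $g$ as $v_i = R_{[b_g]}(u_i+X_i)$ with $u_i+X_i\in A_{t(g)}\oplus T_{t(g)}M$. The right-hand side of \eqref{Eqt:Thetaoriginal} then consists of the cocycle term $e[b_g](X_1,\ldots,X_k)$ together with the double shuffle sum, and by Equation \eqref{Eqt:Bthetatemp3} the latter is precisely $(B\theta)_{t(g)}(u_1+X_1,\ldots,u_k+X_k)$. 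Hence \eqref{Eqt:Thetaoriginal} reads
$$\Theta_g(v_1,\ldots,v_k) = e[b_g](X_1,\ldots,X_k) + (B\theta)_{t(g)}(u_1+X_1,\ldots,u_k+X_k).$$

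Next I would reassemble the right-hand side into a single element of $\wedge^k T^*_{t(g)}\Gpd = \wedge^k(A^*_{t(g)}\oplus T^*_{t(g)}M)$. Viewing $e[b_g]\in \wedge^k T^*_{t(g)}M$ inside the larger space through the summand $T^*M\hookrightarrow A^*\oplus T^*M$, it annihilates every $A$-direction, so $e[b_g](u_1+X_1,\ldots,u_k+X_k) = e[b_g](X_1,\ldots,X_k)$. The displayed identity therefore becomes
$$\Theta_g(v_1,\ldots,v_k) = \big(e[b_g]+B\theta_{t(g)}\big)(u_1+X_1,\ldots,u_k+X_k).$$

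The last step is to recast the substitution $v_i = R_{[b_g]}(u_i+X_i)$ as a pullback. I would check that $R_{[b_g^{-1}]}$ is the inverse of $R_{[b_g]}$ on the relevant fibres, which holds because $[b_g^{-1}]=[b_g]^{-1}$ in the jet groupoid, so that $u_i+X_i = R_{[b_g^{-1}]}(v_i)$. Feeding this into the previous display and using $R^*_{[b_g^{-1}]} = (R_{[b_g^{-1}]})^*$ gives
$$\Theta_g(v_1,\ldots,v_k) = \big(R^*_{[b_g^{-1}]}(e[b_g]+B\theta_{t(g)})\big)(v_1,\ldots,v_k),$$
and since the $v_i$ range over all of $T_g\Gpd$ this yields \eqref{formula theta}.

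I expect the only genuine subtlety to be bookkeeping rather than conceptual: verifying that $R_{[b_g^{-1}]} = (R_{[b_g]})^{-1}$ with the correct source/target fibres, and confirming that the embedding of $e[b_g]$ into $\wedge^k T^*_{t(g)}\Gpd$ together with the pullback convention line up so no spurious sign or transpose intrudes. Independence of the choice of bisection requires no separate argument, as it is already encoded in the compatibility conditions \eqref{C1}--\eqref{C3} underlying Proposition \ref{4.1}.
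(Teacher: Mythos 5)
Your proposal is correct and follows exactly the route of the paper's own proof: invoke Proposition \ref{4.1}, identify the shuffle sum with $(B\theta)_{t(g)}(u_1+X_1,\ldots,u_k+X_k)$ via Equation \eqref{Eqt:Bthetatemp3}, and dualize the substitution $v_i=R_{[b_g]}(u_i+X_i)$ into the pullback by $R^*_{[b_g^{-1}]}$. The only difference is that you spell out the final bookkeeping (the embedding of $e[b_g]$ and the inversion $R_{[b_g^{-1}]}=(R_{[b_g]})^{-1}$) which the paper leaves implicit.
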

\begin{proof}  Equation \eqref{formula theta} is just a variation of     Equation \eqref{Eqt:Thetaoriginal}, a result by Crainic, Salazar, and  Struchiner \cite{C}.
{In fact, by Equation \eqref{Eqt:Bthetatemp3},
we can reformulate Equation \eqref{Eqt:Thetaoriginal}:}
	\begin{eqnarray*}
		&&\Theta_g\bigl(R_{[b_g]}(u_1+X_1) ,\cdots, R_{[b_g]}(u_k+X_k)  \bigr)\\ &=& e[b_g](X_1,\cdots,X_k)+B\theta( u_1+X_1,\cdots, u_k+X_k),
	\end{eqnarray*}
which proves the desired Equation \eqref{formula theta}.  
\end{proof}

\begin{corollary}\label{Cor:fromThetatocharpair} Given a multiplicative $k$-form $\Theta\in \OmegakmultG{k}$, its corresponding $(0,k)$-characteristic pair $(e,\theta)$ is determined by the following methods:
	\begin{itemize}
		\item[(1)]  As a   map $ {e}:$  $\jet\Gpd \rightarrow \wedge^{k } T^*M$, one has  
		\begin{eqnarray}\label{tildec}
			 {e}[b]=R_b^{!*} \Theta,
		\end{eqnarray}
		for all bisections $b: M\to   \Gpd$      of $\Gpd$. Here we treat $[b]: M\to  \jet\Gpd$ as a   section of the fibre bundle $\jet\Gpd \stackrel{s}{\to}M$,  and $R_b^{!}$  $:M\to \Gpd$ is defined by Equation \eqref{Eqt:bt}. 
		\item[(2)] As a section of the vector bundle $A^*\otimes (\wedge^{k-1} T^*M)$, we have \begin{equation}\label{Eqt:FromThetatotheta}
			\theta =\mathrm{pr}_{\Gamma(A^*\otimes (\wedge^{k-1} T^*M))} \Theta|_M \,.
		\end{equation}
		Moreover,   $\Theta|_M$ is identically $B\theta$.
	\end{itemize}

\end{corollary}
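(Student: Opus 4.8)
The plan is to read both formulas off the correspondence already established in Proposition \ref{4.1} (equivalently Theorem \ref{main2}); no new analytic input is needed, only an inversion of that correspondence together with a recognition of the geometric meaning of each component.

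For part (2) I would evaluate the defining identity \eqref{Eqt:Thetaoriginal} at a unit $g=x\in M$, choosing the bisection whose jet at $x$ is the groupoid unit $\id\in\jet\Gpd$ (for instance the jet of $\id_M$). Then $R_{[b_x]}=\id$ on $A_x\oplus T_xM$, and since $e\in Z^1(\jet\Gpd,\wedge^kT^*M)$ vanishes on units, $e[\id]=0$. The right-hand side of \eqref{Eqt:Thetaoriginal} thus collapses, via \eqref{Eqt:Bthetatemp3}, to $B\theta$, giving $\Theta|_M=B\theta$; projecting onto the $\Gamma(A^*\otimes(\wedge^{k-1}T^*M))$-summand yields $\theta=\mathrm{pr}_{\Gamma(A^*\otimes(\wedge^{k-1}T^*M))}\Theta|_M$. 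This is exactly Proposition \ref{Prop:ThetaalongM}, so part (2) follows at once, and it also confirms that the $\theta$ of the characteristic pair coincides with the leading term of $\Theta$.

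For part (1) I would isolate $e[b_g]$ by feeding \eqref{Eqt:Thetaoriginal} only base vectors: set all $u_i=0$ and take $X_1,\dots,X_k\in T_{t(g)}M$. Each summand of the $B\theta$-term displayed in \eqref{Eqt:Bthetatemp4} carries at least one $A$-argument, hence vanishes on purely tangent inputs, leaving
\begin{equation*}
e[b_g](X_1,\dots,X_k)=\Theta_g\bigl(R_{[b_g]}X_1,\dots,R_{[b_g]}X_k\bigr),\qquad X_i\in T_{t(g)}M.
\end{equation*}
It then remains to identify $R_{[b_g]}|_{T_{t(g)}M}$ with the differential of the base section $R_b^{!}$ of \eqref{Eqt:bt}. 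Writing $g=b(x)$ and $y=\phi_b(x)=t(g)$, I would compute $(R_b^{!})_{*y}=b_{*x}\circ(\phi_b^{-1})_{*y}$ straight from $R_b^{!}(z)=b(\phi_b^{-1}(z))$, and match it against $R_{[b_g]}$ using the relations $L_{[b_g]}|_{T_xM}=b_{*x}$ and $\Ad_{[b_g]}|_{T_xM}=\phi_{b*}=R_{[b_g]^{-1}}\circ L_{[b_g]}$ from Definition \ref{Def:jetGpdadjoint}, together with the standard groupoid identity $R_{[b_g]^{-1}}=R_{[b_g]}^{-1}$. This chase gives $R_{[b_g]}|_{T_{t(g)}M}=(R_b^{!})_{*y}$, whence $e[b_g]=R_b^{!*}\Theta$.

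The only real obstacle is this final identification of the abstract jet right-translation $R_{[b_g]}$, restricted to the $TM$-slot, with the concrete tangent map of $R_b^{!}$; once the adjoint factorization $\Ad_{[b_g]}=R_{[b_g]^{-1}}L_{[b_g]}$ and the inverse relation $R_{[b_g]^{-1}}=R_{[b_g]}^{-1}$ are in hand, it reduces to a one-line composition. The remaining ingredients — vanishing of cocycles on units and the absence of a purely-$TM$ component in $B\theta$ — are routine bookkeeping.
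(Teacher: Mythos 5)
Your proposal is correct and follows essentially the same route as the paper: both specialize the decomposition formula of Theorem \ref{main2}/Proposition \ref{4.1} to the trivial bisection to get $\Theta|_M=B\theta$, and both extract $e[b_g]$ as the purely-$T^*M$ component of $R_{[b_g]}^*\Theta_g$ (which survives because $B\theta$ has no $\wedge^kT^*M$ summand). The only difference is that you spell out the identification $R_{[b_g]}|_{T_{t(g)}M}=(R_b^{!})_{*}$, which the paper leaves implicit; your derivation of it via $\Ad_{[b_g]}=R_{[b_g]^{-1}}\circ L_{[b_g]}$ and $L_{[b_g]}|_{T_xM}=b_{*x}$ is sound.
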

\begin{proof}In Equation \eqref{formula theta}, if we take  the trivial bisection $b=i_M: M\hookrightarrow \Gpd$,   then $e[b]=0$ and we get \begin{equation*}
	\label{Eqt:ThetaMBtheta} \Theta|_M=B\theta
	\end{equation*}  and hence Equation \eqref{Eqt:FromThetatotheta}. 
	Also, from Equation \eqref{formula theta}, we have
	$$
	e[b_g]=  R_{[b_g ]}^*\Theta_g -B\theta_{t(g)}.
	$$
	Taking  projection of both sides to $\wedge^k T^*_{t(g)}M$,   we obtain Equation \eqref{tildec}.
\end{proof}  

\begin{example}
{For a multiplicative $1$-form $\Theta\in \Omega^1_{\mult}(\Gpd)$, the corresponding  characteristic pair $(e,\theta)$ with $e\in Z^1(\jet \Gpd, T^*M)$ and $\theta\in \Gamma(A^*)$ is determined as follows:
\[\langle e[b_g], X_{t(g)}\rangle=\langle \Theta_g, b_{g*} \phi_{b_g*}^{-1} X_{t(g)}\rangle, \qquad \theta=\Theta|_M,\qquad \forall X_{t(g)}\in T_{t(g)} M.\]
We then have the relation $\Theta_g=R_{[b_g^{-1}]}^*(e[b_g]+\theta_{t(g)})$ for all bisections $b_g$ passing through $g\in \Gpd$.}
\end{example}

\begin{example}\label{Exm:dfchar}
	Given a multiplicative function  $f\in \OmegakmultG{0}$, we have $df\in \OmegakmultG{1}$. The corresponding  $(0,1)$-characteristic pair $(e,\theta)$ of $df$ is explained below. First, as $f$ can be regarded as a Lie groupoid $1$-cocycle $\Gpd\to M\times \mathbb{R}$, its infinitesimal $\hat{f}\colon A\to M\times \mathbb{R}$ is a Lie algebroid $1$-cocycle. Indeed, we can treat $\hat{f}$ as in $\Gamma(A^*)$ which reads
	$$
	\hat{f}(u)=- \overrightarrow{u}(f)|_M,\quad \forall u\in \Gamma(A).
	$$
	\begin{itemize}
		\item[(1)]The element $e\in Z^1(\jet  \Gpd, T^*M)$ is  determined by  
		\begin{eqnarray}\label{Eqt:eofdf}
		 {e}[b]=R_b^{!*}(df)=d(f\circ R_b^{!})\in \Omega^1(M)
		\end{eqnarray}
		for all bisections $b: M\to   \Gpd$      of $\Gpd$. Here we treat $[b]: M\to  \jet\Gpd$ as a   section of the fibre bundle $\jet\Gpd \stackrel{s}{\to}M$.
		
		\item[(2)]The element $\theta\in \Gamma(A^*)$ coincides with $(-\hat{f})$.
	\end{itemize}
	
\end{example}
{
\begin{example}\label{form on group}
Following Remark \ref{Lie group case}, on a Lie group $G$ with its Lie algebra $\mathfrak{g}:=T_eG$,    only   multiplicative $1$-forms could be nontrivial, and  $(0,1)$-characteristic pairs on $G$ are of the form $(0,\theta)$, where $\theta\in \mathfrak{g}^*$ is $G$-invariant, namely, $\Ad^\vee_g \theta=\theta$ for all $g\in G$. Any $\Theta\in \OmegakmultG{1}$ stems from such a $\theta$. In fact, from  $\Theta$ one may take $\theta:=\Theta|_e\in \mathfrak{g}^*$ which is necessarily $G$-invariant, and  from $\theta$ one can recover $\Theta$ as $\Theta(g)=R_{g^{-1}}^*\theta$ for all $g\in G$.
\end{example}}

\begin{lemma}\label{Lem:exactcharpairs}
	Let $\gamma\in \Omega^k(M)$ be a $k$-form on the base manifold $M$. It corresponds to an exact $k$-form on  the Lie groupoid $\Gpd $: \begin{equation*}\label{Eqt:exacte}
		J(\gamma)=s^*\gamma-t^*\gamma\in \Omega^k(\Gpd )\end{equation*} which is  multiplicative    and the corresponding $(0,k)$-characteristic pair is given by $$\bigl(e=d_{\jet \Gpd}(\gamma)  ,\theta=-\Drhostarpush  \gamma\bigr) ,$$ where $d_{\jet \Gpd}:$ $\Omega^k(M)$ $\to$  $Z^1(\jet \Gpd, \wedge^k T^*M)$ is the differential of $\jet \Gpd $ with respect to its  coadjoint action on $\wedge^k T^*M$. 	
\end{lemma}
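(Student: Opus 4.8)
The plan is to establish the two assertions of the lemma—that $J(\gamma)$ is multiplicative and that its $(0,k)$-characteristic pair is $(d_{\jet\Gpd}\gamma,\,-\Drhostarpush\gamma)$—separately, leaning on the extraction formulas of Corollary \ref{Cor:fromThetatocharpair}.

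First I would verify multiplicativity, $m^*J(\gamma)=\mathrm{pr}_1^*J(\gamma)+\mathrm{pr}_2^*J(\gamma)$, purely from the simplicial identities $s\circ m=s\circ\mathrm{pr}_2$, $t\circ m=t\circ\mathrm{pr}_1$, together with the composability relation $s\circ\mathrm{pr}_1=t\circ\mathrm{pr}_2$. Pulling $J(\gamma)=s^*\gamma-t^*\gamma$ back along $m$ yields $\mathrm{pr}_2^*s^*\gamma-\mathrm{pr}_1^*t^*\gamma$; expanding the right-hand side, the two cross terms $\mathrm{pr}_1^*s^*\gamma$ and $\mathrm{pr}_2^*t^*\gamma$ coincide by composability and cancel, leaving exactly $\mathrm{pr}_2^*s^*\gamma-\mathrm{pr}_1^*t^*\gamma$. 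This is a short formal computation.

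Next I would compute the leading term $\theta$. By Corollary \ref{Cor:fromThetatocharpair}(2), $\theta$ is the $A^*\otimes\wedge^{k-1}T^*M$-component of $J(\gamma)|_M$. Using $T\Gpd|_M=A\oplus TM$ with $s_*(u+X)=X$ and $t_*(u+X)=\rho(u)+X$, I evaluate $J(\gamma)|_M$ on one argument $u\in A$ and $k-1$ vectors $X_i\in TM$: the $s^*\gamma$ term vanishes because its first slot is $s_*u=0$, and the $t^*\gamma$ term contributes $-\gamma(\rho(u),X_2,\ldots,X_k)$. The key identity is $\iota_u\Drhostarpush\gamma=\iota_{\rho(u)}\gamma$, a direct consequence of $\Drhostarpush$ being a degree-$0$ derivation sending $\xi\in T^*M$ to $\rho^*\xi\in A^*$; it shows $\gamma(\rho(u),X_2,\ldots,X_k)=(\Drhostarpush\gamma)(u,X_2,\ldots,X_k)$, whence $\theta=-\Drhostarpush\gamma$. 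That this tensor is $\rho$-compatible is exactly Example \ref{2.4}.

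Finally I would identify the cocycle $e$. By Corollary \ref{Cor:fromThetatocharpair}(1), $e[b]=R_b^{!*}J(\gamma)$; since $t\circ R_b^{!}=\mathrm{id}_M$ and $s\circ R_b^{!}=\phi_b^{-1}$, this gives $e[b]=(\phi_b^{-1})^*\gamma-\gamma$. On the other side, applying the $0$-cochain coboundary formula to $\gamma\in C^0(\jet\Gpd,\wedge^k T^*M)$ produces $(d_{\jet\Gpd}\gamma)[b_g]=\Ad^\vee_{[b_g]}\gamma_{s(g)}-\gamma_{t(g)}$. The crux is to identify $\Ad^\vee_{[b_g]}\gamma_{s(g)}$ with $\bigl((\phi_b^{-1})^*\gamma\bigr)_{t(g)}$: by Definition \ref{Def:jetGpdadjoint} the action $\Ad_{[b_g]}$ on $TM$ is $\phi_{b*}$, so $\Ad^\vee_{[b_g]}=(\Ad_{[b_g]^{-1}})^*$ acts on $\wedge^k T^*M$ as the pullback by the diffeomorphism $\phi_b^{-1}$. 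Matching the two expressions gives $e=d_{\jet\Gpd}\gamma$. I expect this last identification—tracking the direction and sign conventions of the coadjoint action against the pullback $(\phi_b^{-1})^*$—to be the only delicate point, the remainder being routine bookkeeping.
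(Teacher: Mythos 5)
Your proposal is correct and follows essentially the same route as the paper: both extract $e$ via $e[b]=R_b^{!*}J(\gamma)=\Ad^\vee_{[b]}\gamma-\gamma$ and read off $\theta$ from the restriction $J(\gamma)|_M(u,X_1,\dots,X_{k-1})=-\gamma(\rho(u),X_1,\dots,X_{k-1})=-(\Drhostarpush\gamma)(u,X_1,\dots,X_{k-1})$, exactly as in Corollary \ref{Cor:fromThetatocharpair}. The only difference is that you spell out the multiplicativity check and the identification of $\Ad^\vee_{[b_g]}$ with the pullback $(\phi_b^{-1})^*$, details the paper leaves implicit.
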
	\begin{proof}
In fact, by Corollary \ref{Cor:fromThetatocharpair} we have \[e[b]= R_b^{!*} (s^*\gamma-t^*\gamma)=  \Ad^\vee_{[b ]} \gamma-\gamma=( d_{\jet \Gpd}\gamma)[b ] \]
	for all bisections $b\in \mathrm{Bis}(\Gpd )$, and  
	\begin{eqnarray*}
		\theta(u,X_1,\cdots,X_{k-1})&=&(s^*\gamma-t^*\gamma)|_M (u,X_1,\cdots,X_{k-1})\\ &=&-\gamma(\rho(u),X_1,\cdots,X_{k-1})\\ &=&-(\Drhostarpush  \gamma)(u,X_1,\cdots,X_{k-1}),
	\end{eqnarray*}
	for all $u\in \Gamma(A)$ and $X_i\in \mathfrak{X}^1(M)$.
\end{proof}

\subsubsection{Another description of groupoid $(0,k)$-characteristic pairs}

\begin{proposition}\label{co cp}
	Let   $e\in Z^1(\jet \Gpd , \wedge^k T^*M)$  be a $1$-cocycle and $ \theta\in \Gamma(A^*\otimes (\wedge^{k-1} T^*M))$ be $\rho$-compatible. They form a $(0,k)$-characteristic pair $(e,\theta)$ on $\Gpd$ if and only if 
\begin{eqnarray}
\label{co1}&&e[h]=R_{[h]}^*(B\theta)-B\theta, \qquad \forall [h]\in \heat,\\
\label{coo3}\mbox{ and }&& \Drhostarpush\circ e =-d_{\jet \Gpd} \theta.\end{eqnarray}
Here    $d_{\jet \Gpd}: \Gamma(A^*\otimes (\wedge^{k-1} T^*M))\to C^1(\jet \Gpd,A^*\otimes (\wedge^{k-1} T^*M))$ is the differential of $\jet \Gpd$ with coefficient in $A^*\otimes (\wedge^{k-1} T^*M)$.\end{proposition}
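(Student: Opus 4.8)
The plan is to verify that the two defining conditions \eqref{C1} and \eqref{C3} of a $(0,k)$-characteristic pair (Definition \ref{Def:groupoid0kcharpair}) are equivalent, respectively, to the two displayed identities \eqref{coo3} and \eqref{co1}. I would treat the two equivalences separately: establish \eqref{C3} $\Leftrightarrow$ \eqref{coo3} by a direct unfolding of definitions, and \eqref{C1} $\Leftrightarrow$ \eqref{co1} by reducing the two-bisection identity to its restriction to the isotropy bundle $\heat$.

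First I would tackle \eqref{C3} $\Leftrightarrow$ \eqref{coo3}, which is purely computational. Here $d_{\jet\Gpd}\theta$ is the differential of the $0$-cochain $\theta$ valued in the $\jet\Gpd$-module $A^*\otimes(\wedge^{k-1}T^*M)$, so that $(d_{\jet\Gpd}\theta)[b_g]=\Phi_{[b_g]}\theta_{s(g)}-\theta_{t(g)}$, where $\Phi_{[b_g]}$ is the coadjoint action; evaluating on arguments $v,Y_1,\dots,Y_{k-1}$ with $v\in A_{t(g)}$ and $Y_i\in T_{t(g)}M$ rewrites $\Phi_{[b_g]}\theta$ via $\Ad_{[b_g]^{-1}}$ on each slot. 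On the other hand $(\Drhostarpush\circ e)[b_g]$ evaluated on the same arguments is $e[b_g](\rho(v),Y_1,\dots,Y_{k-1})$ by the very definition of $\Drhostarpush$. Substituting $v=\Ad_{[b_g]}u$ and $Y_i=\Ad_{[b_g]}X_i$, and using that the adjoint action intertwines the anchor, i.e. $\rho\circ\Ad_{[b_g]}=\Ad_{[b_g]}\circ\rho$ (readily verified from Definition \ref{Def:jetGpdadjoint}), I would see that \eqref{coo3} becomes verbatim the identity \eqref{C3}. This makes the first equivalence transparent.

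For \eqref{C1} $\Leftrightarrow$ \eqref{co1}, the point is that \eqref{co1} is exactly the special instance of \eqref{C1} obtained at a unit $g=x\in M$ with the trivial bisection $b_g=\id_M$ and an isotropy bisection $b'_g=h$, $[h]=\id+H\in\heat_x$. In this case $e[\id]=0$, $(h\ominus\id)X=H(X)$, and $\Ad_{[h]}X=(\id+\rho H)X$; after the change of variables $Y_i:=(\id+\rho H)^{-1}X_i$ and using the explicit right translation $R_{[h]}X=H(\id+\rho H)^{-1}X+(\id+\rho H)^{-1}X$ of Lemma \ref{ad for h}, I would expand $B\theta$ on $R_{[h]}X_1,\dots,R_{[h]}X_k$ via \eqref{Eqt:Bthetatemp4}. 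Moving the factor $HY_i$ to the front of the $i$-th summand produces the sign $(-1)^{i-1}$ and matches, term by term, the right-hand side of \eqref{C1}; since $B\theta$ vanishes on purely tangent arguments, this yields precisely $e[h]=R_{[h]}^*(B\theta)-B\theta$, which is \eqref{co1}. To pass from this isotropy identity back to the general \eqref{C1}, I would write $[b'_g]=[b_g][h]$ with $[h]=[b_g]^{-1}[b'_g]\in\heat_{s(g)}$, expand the cocycle value $e[b'_g]=\Ad^\vee_{[b_g]}e[h]+e[b_g]$, use $(b'_g\ominus b_g)X=\Ad_{[b_g]}(HX)$, and invoke \eqref{co1} together with \eqref{coo3} to rewrite the residual $e[b_g]$-terms as the required $\theta$-contractions; conversely \eqref{C1} specializes to \eqref{co1}.

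The main obstacle I anticipate is the $(\Leftarrow)$ half of this second equivalence: reconstituting the full two-bisection identity \eqref{C1} from the isotropy identity \eqref{co1}. This requires a careful multilinear bookkeeping in which the difference $e[b_g](\Ad_{[b'_g]}X_1,\dots)-e[b_g](\Ad_{[b_g]}X_1,\dots)$ is expanded over the subsets of slots replaced by $\Ad_{[b_g]}\rho(HX_i)$, with each resulting term matched, via the anchor-intertwining property and \eqref{coo3} (equivalently \eqref{C3}), to one of the $\theta$-terms on the right of \eqref{C1}; tracking the signs and which arguments carry $\Ad_{[b_g]}$ versus $\Ad_{[b'_g]}$ is the delicate part. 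A clean way to sidestep part of this, at least for the forward direction, is to invoke Theorem \ref{main2}: when $(e,\theta)$ is already a characteristic pair, evaluating the formula \eqref{formula theta} at $g=x$ for the two bisections $\id_M$ and $h$ and equating the two expressions for the genuine form $\Theta_x$ gives \eqref{co1} as an equality of honest $k$-forms, automatically including the vanishing of all mixed $\wedge^iA^*\otimes\wedge^{k-i}T^*M$ components with $i\ge1$.
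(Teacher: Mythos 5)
Your overall strategy coincides with the paper's: split the claim into the equivalences \eqref{C3}$\Leftrightarrow$\eqref{coo3} and \eqref{C1}$\Leftrightarrow$\eqref{co1}, prove the first by unwinding $d_{\jet\Gpd}\theta$ and $\Drhostarpush\circ e$ (using $\rho\circ\Ad_{[b_g]}=\Ad_{[b_g]}\circ\rho$), and obtain \eqref{co1} from \eqref{C1} by specializing to a unit with the trivial bisection and an isotropy jet. All of that is fine, and your alternative derivation of the forward implication via Theorem \ref{main2} is legitimate. The paper additionally records the explicit expansion of $R_{[h]}^*(B\theta)-B\theta$ via Lemma \ref{ad for h} and Equation \eqref{Eqt:Bthetatemp4} (its Equation \eqref{CtoCL}), exactly as you propose.

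The genuine problem is in your reconstruction of \eqref{C1} from \eqref{co1}. You factor $[b'_g]=[b_g][h]$ with $[h]\in\heat_{s(g)}$, so that $e[b'_g]=e[b_g]+\Ad^\vee_{[b_g]}e[h]$ and $\Ad_{[b'_g]}=\Ad_{[b_g]}\circ(\id+\rho H)$. The residual
$e[b_g]\bigl(\Ad_{[b_g]}(\id+\rho H)X_1,\cdots\bigr)-e[b_g]\bigl(\Ad_{[b_g]}X_1,\cdots\bigr)$
then survives, and its multilinear expansion contains, besides the single-replacement terms you plan to convert via \eqref{coo3}, all the terms where two or more slots carry $\rho(\Ad_{[b_g]}HX_i)$; these cannot be matched one-to-one with the $k$ summands on the right of \eqref{C1}, so the term-by-term matching you describe does not close up without a further layer of cancellations (and you also face a base-point mismatch, since $e[h]$ for $[h]\in\heat_{s(g)}$ produces $\theta$ at $s(g)$ while \eqref{C1} involves $\theta$ at $t(g)$, forcing yet another appeal to \eqref{C3}). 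The clean fix, which is what the paper does, is to factor the other way: $[b'_g]=[h][b_g]$ with $[h]\in\heat_{t(g)}$. Then $e[b'_g]=e[h]+\Ad^\vee_{[h]}e[b_g]$, and $\Ad^\vee_{[h]}e[b_g]$ evaluated on $\Ad_{[h]}\Ad_{[b_g]}X_i$ returns exactly $e[b_g](\Ad_{[b_g]}X_1,\cdots,\Ad_{[b_g]}X_k)$, so the $e[b_g]$-terms cancel identically; what remains is $e[h]$ applied to $(\id+\rho H)\Ad_{[b_g]}X_i$, and since $b'_g\ominus b_g=H\circ\Ad_{[b_g]}$ this is precisely the isotropy identity \eqref{co1} with arguments $\Ad_{[b_g]}X_i$, with no use of \eqref{coo3} at all. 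You should rework the $(\Leftarrow)$ step along these lines.
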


  One notes that  the right hand side of Equation  ~\eqref{co1} lands in $\Gamma(\wedge^k (T^*M\oplus A^*))$. So, before we give the proof, we need to explain why it only has the $\Gamma(\wedge^k   T^*M) $-component,   or
\begin{equation}\label{Eqt:temp2temp}\iota_{v}(R_{[h]}^*(B\theta)-B\theta)
	=0,\quad \forall v\in A_x.\end{equation}
In fact, one can examine that, for all $u_i+X_i\in  A_x\oplus T_xM $,
\begin{eqnarray*}
	&& \bigl(\iota_{v}(R_{[h]}^*(B\theta)\bigr)(u_1+X_1,\cdots,u_{k-1}+X_{k-1})
	\\
	&=& (B\theta)\bigl(R_{[h]}v,R_{[h]}(u_1+X_1),\cdots,R_{[h]}(u_{k-1}+X_{k-1})\bigr),\\
	&=& (B\theta)\bigl( v,  u_1+H(\id+\rho H)^{-1}  X_1 + (\id+\rho H)^{-1}  X_1  ,\\&&\qquad \cdots,u_{k-1}+H(\id+\rho H)^{-1}  X_{k-1} + (\id+\rho H)^{-1}  X_{k-1}  \bigr)\qquad \mbox{(by Equation \eqref{Eqn:Rhx})}\\
	&=&  \theta\bigl( v,  \rho(u_1)+ X_1 ,\cdots, \rho(u_{k-1})+ X_{k-1}  \bigr), \qquad \mbox{ by Equation \eqref{Eqt:Bthetatemp4}.} 
\end{eqnarray*}
The last line is independent of $[h]$  and hence Equation \eqref{Eqt:temp2temp} is valid.

\begin{proof}[Proof of Proposition \ref{co cp}] We first show that Equation \eqref{coo3} is equivalent to Equation \eqref{C3}. In fact, 	Equation \eqref{coo3} reads $$\Drhostarpush  ( e  [b_g] )=\theta-\Ad^\vee_{[b_g]} \theta,\quad \forall [b_g]\in \jet_g \Gpd  .$$ When applied to   arguments $u\in A_{t(g)}$ and $X_i\in T_{t(g)} M$, the above equality becomes
	\begin{eqnarray}\nonumber 
		e[b_g](\rho(u),X_1,\cdots,X_{k-1})=\theta(u,X_1,\cdots,X_{k-1})-\theta\big(
		\Ad_{[b^{-1}_g]} \rho(u), \Ad_{[b^{-1}_g]}X_1,\cdots, \Ad_{[b^{-1}_g]} X_{k-1}\big)
	\end{eqnarray}
	which is clearly the same relation as  Equation  \eqref{C3}.

We then unravel Equation \eqref{co1} and it suffices to consider    an arbitrary $[h]=\id+H\in \heat_x$ (where  $H \in {\underline{\mathrm{Hom}}(T_xM,A_x)}$) and all $X_1,\cdots, X_k\in T_xM$. Substituting them into Equation \eqref{co1}, we get 
\begin{eqnarray}\nonumber
	&&e[h](X_1,X_2,\cdots,X_k)\\\nonumber
	&=&(R_{[h]}^*(B\theta)-B\theta)(X_1,\cdots, X_k)=(B\theta)(R_{[h]}X_1,\cdots, R_{[h]}X_k)\\\nonumber
	&=&(B\theta)(H(\id+\rho H)^{-1}  X_1 + (\id+\rho H)^{-1}  X_1 ,\cdots, H(\id+\rho H)^{-1}  X_k + (\id+\rho H)^{-1}  X_k )\\\nonumber 
		&=&\theta(H(\id+\rho H)^{-1}X_1, X_2,\cdots, X_k)\\\nonumber &&\quad +\theta((\id+\rho H)^{-1}X_1,H(\id+\rho H)^{-1}X_2,  X_3,\cdots, X_k)\\\label{CtoCL}
	&&\quad\quad+\cdots+\theta((\id+\rho H)^{-1}X_1,  \cdots, (\id+\rho H)^{-1}X_{k-1},H(\id+\rho H)^{-1}X_k). 
\end{eqnarray}
	Here in the last step, we used Equation \eqref{Eqt:Bthetatemp4}. 	
	So to finish the proof, it suffices to show the equivalence of Equations $\eqref{C1}\Leftrightarrow \eqref{CtoCL}$ (which $ \Leftrightarrow \eqref{co1} $): 
	 
\begin{itemize}
	 	\item[(1)]  $\eqref{C1}\Rightarrow \eqref{CtoCL}$: Consider the particular point $g=x\in M$. Take two bisections $b_x$ and $b_x'$ passing through $x$. where $b_x$ is the trivial identity section. So we know that $[b_x]=\id+0_x$  is the unit of the group  $\heat_x$, and we suppose that $[b'_x]=[h]=\id+H$ for some $H \in {\underline{\mathrm{Hom}}(T_xM,A_x)}$.    Then we have $\Ad_{[h]} X_i=(\id+\rho H) X_i$ (for $X_i\in T_xM$),   $e[b_x]=0$ (since $e$ is a $1$-cocycle), and \[b'_x\ominus b_x=R_{x^{-1}*}{\littlecirc} (b'_{x*}-b_{x*})= H, ~\ \mbox{as a map }~ T_{x} M\to A_{x}.\]
			Therefore, in this particular case, from Equation \eqref{C1} we have
\begin{eqnarray*}\nonumber &&e[h]((\id+\rho H)X_1, (\id+\rho H)X_2,\cdots,(\id+\rho H)X_k)\\\nonumber 
	&=&\theta(H(X_1),(\id+\rho H)X_2,\cdots,(\id+\rho H)X_k)\\\nonumber &&\quad +\theta(X_1,H(X_2),(\id+\rho H)X_3,\cdots,(\id+\rho H)X_k )\\\label{Eqt:CtoCLvariation}&&\quad\quad+\cdots+\theta(X_1,X_2,\cdots, X_{k-1},H(X_k)),
\end{eqnarray*} which is just a variation of Equation \eqref{CtoCL}.
	 
\item[(2)]    $\eqref{CtoCL}\Rightarrow \eqref{C1}$: 
For two bisections $b_g$ and $b'_g$ passing through $g$, there  exists some $[h]=\id+H\in \heat_{t(g)}$ such that $[b'_g]=[h]\cdot [b_g]$. Therefore, the left hand side of Equation \eqref{C1} becomes
\begin{eqnarray*}
&&e[b'_g](\Ad_{[b'_g]} X_1,\cdots, \Ad_{[b'_g]} X_k)-e[b_g](\Ad_{[b_g]} X_1,\cdots, \Ad_{[b_g]} X_k)\\
	&=& e([h][b_g])(\Ad_{[h]}\Ad_{[b_g]} X_1,\cdots, \Ad_{[h]}\Ad_{[b_g]} X_k)-e[b_g](\Ad_{[b_g]} X_1,\cdots, \Ad_{[b_g]} X_k).
\end{eqnarray*}
Using the cocycle condition $e([h][b_g])=e[h]+\Ad^\vee_{[h]} e[b_g]$ and $\Ad_{[h]}X=(\id+\rho H)X$ (for $X\in T_{t(g)}M$), the above computation continues: 
\begin{eqnarray*}
 	\mbox{LHS of Equation \eqref{C1}} 	&=&  e[h]((\id+\rho H)\Ad_{[b_g]} X_1,\cdots,(\id+\rho H)\Ad_{[b_g]} X_k ) .
	\end{eqnarray*}
On the other hand, we have \[b'_g\ominus b_g =R_{g^{-1}*}{\littlecirc} (b'_{g*}-b_{g*})=H\circ \Ad_{[b_g]},~\ \mbox{as a map }~ T_{s(g)} M\to A_{t(g)}.\]
So we get  
\begin{eqnarray*}
&&\mbox{RHS of Equation \eqref{C1}}\\&=&	\theta(H\Ad_{[b_g]}  X_1 ,(\id+\rho H) \Ad_{[b_g]}X_2,\cdots,(\id+\rho H) \Ad_{[b_g]}X_k)\\  &&\quad +\theta(\Ad_{[b_g]}X_1,H\Ad_{[b_g]} X_2 ,(\id+\rho H)\Ad_{[b_g]}X_3,\cdots,(\id+\rho H)\Ad_{[b_g]}X_k )\\ &&\quad\quad+\cdots+\theta(\Ad_{[b_g]}X_1,\Ad_{[b_g]}X_2,\cdots, \Ad_{[b_g]}X_{k-1},H\Ad_{[b_g]} X_k ).
	\end{eqnarray*}
From these we   see that 
if  Equation \eqref{CtoCL} holds, then the two sides of 
Equation \eqref{C1} match.
\end{itemize}  
\end{proof}

\subsection{Lie algebroid $(0,k)$-characteristic pairs  and IM-forms}

\begin{definition}\label{2.16}  Let $(A,[{\tobefilledin},{\tobefilledin}],\rho)$ be a Lie algebroid over $M$. Let $k\geqslant 1$ be an integer. 
A {\bf $(0,k)$-characteristic pair} on $A$ is a pair $(\mu,\theta)$  where
$\mu\in Z^1(\jet A,\wedge^k T^*M)$ is a Lie algebroid $1$-cocycle, $\theta\in \Gamma(A^*\otimes (\wedge^{k-1} T^*M))$ is  a $\rho$-compatible $(0,k)$-tensor, and they are subject to the following two conditions: 
\begin{itemize}
\item[\rm{(i)}] For all $H\in \core= \mathrm{Hom}(TM,A)$, we have
\begin{equation}\label{Eqt:edfu}\mu(H)=-(H^*\otimes \id_{T^*M}^{\otimes( k-1)})  \theta ;\end{equation}
\item[\rm{(ii)}] The identity 
\begin{eqnarray}\label{Drhomu}\Drhostarpush\circ \mu =-d_{\jet A} \theta ,
\end{eqnarray} i.e., \begin{equation}\label{Eqt:tempsecond}\iota_{\rho(v)} \mu(\liftingd u)= {\iota_{[u,v]} \theta}-\mathcal{L}_{\rho(u)} {\iota_v \theta}\end{equation}
holds for all   $u,v\in \Gamma(A)$. \end{itemize} Recall here   $d_{\jet A}: ~\Gamma(A^*\otimes (\wedge^{k-1} T^*M))\to C^1(\jet A,A^*\otimes (\wedge^{k-1} T^*M))$ is the Lie algebroid differential   with coefficient in the  $\jet A$-module $A^*\otimes (\wedge^{k-1} T^*M)$.
\end{definition}

Another way expressing Condition \eqref{Eqt:edfu} is that the following  relation:
 \begin{equation}\label{Eqt:edfu2}\mu(df\otimes u)=-df\wedge {\iota_u \theta}\end{equation}
holds for all $H=df\otimes u\in T^*M\otimes A$.

Indeed, such pairs $(\mu,\theta)$ are also   introduced in {\cite{C}}. We call them Lie algebroid characteristic pairs because  they can be viewed as the infinitesimal counterpart of groupoid $(0,k)$-characteristic pairs as in Definition \ref{Def:groupoid0kcharpair}.

\begin{proposition}\label{Prop:groupoidchartoalgebroidchar}
If $(e,\theta)$ is a $(0,k)$-characteristic pair on a Lie groupoid $\Gpd$, then the pair $(\widehat{e},\theta)$, where $\widehat{e}\in Z^1(\jet A,\wedge^k T^*M)$ is the infinitesimal $1$-cocycle of $e$,
is a  $(0,k)$-characteristic pair on the tangent Lie algebroid $A$ of $\Gpd$.
\end{proposition}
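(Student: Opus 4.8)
The plan is to verify the two defining conditions of a Lie algebroid $(0,k)$-characteristic pair (Definition \ref{2.16}) for the pair $(\widehat e,\theta)$ by differentiating the two groupoid conditions satisfied by $(e,\theta)$. It is cleanest to start from the reformulation in Proposition \ref{co cp}, so that $(e,\theta)$ obeys \eqref{co1} on the isotropy bundle $\heat$ and \eqref{coo3} on all of $\jet\Gpd$. That $\widehat e$ is a Lie algebroid $1$-cocycle in $Z^1(\jet A,\wedge^k T^*M)$ is the standard fact that the infinitesimal of a groupoid $1$-cocycle is an algebroid $1$-cocycle, and $\theta$ is literally the same $\rho$-compatible tensor; so only Conditions (i), i.e. \eqref{Eqt:edfu}, and (ii), i.e. \eqref{Drhomu}, of Definition \ref{2.16} need checking.

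For Condition (ii) I would apply the infinitesimal construction $e\mapsto\widehat e$ to the identity \eqref{coo3}. The right-hand side infinitesimalizes to $-d_{\jet A}\theta$ by the commutation relation \eqref{Eqt:dGpddA}, applied to the Lie groupoid $\jet\Gpd$ with coefficients in the module $A^*\otimes(\wedge^{k-1}T^*M)$ and to the $0$-cochain $\theta$. For the left-hand side I use the second (module-free) expression in \eqref{Eqt:hatcdefinition}: for a curve $\gamma(\epsilon)$ in the $s$-fibre over $x$, every value $e(\gamma(\epsilon)^{-1})$ lies in the single fibre $\wedge^k T^*_x M$, on which $\Drhostarpush$ is a fixed linear map, so it commutes with $\tfrac{d}{d\epsilon}\big|_0$ and gives $\widehat{\Drhostarpush\circ e}=\Drhostarpush\circ\widehat e$ (note $\Drhostarpush\circ e=-d_{\jet\Gpd}\theta$ is a coboundary, hence a cocycle, so the formula applies). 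Combining, \eqref{coo3} yields $\Drhostarpush\circ\widehat e=-d_{\jet A}\theta$, which is exactly \eqref{Drhomu}, equivalently \eqref{Eqt:tempsecond}.

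For Condition (i) I would differentiate \eqref{co1} along the isotropy. Fix $x\in M$ and $H\in\core_x=\Hom(T_xM,A_x)$, and take the curve $[h_\epsilon]=\id+\epsilon H\in\heat_x$ with $[h_0]=\id_x$. Since the group inverse negates the tangent vector, \eqref{Eqt:hatcdefinition} gives $\widehat e(H)=\tfrac{d}{d\epsilon}\big|_0 e[h_\epsilon^{-1}]=-\tfrac{d}{d\tau}\big|_0 e[\id+\tau H]$. Substituting \eqref{co1} gives $e[\id+\tau H]=R_{[\id+\tau H]}^*(B\theta)-B\theta$, and from the right-translation formula \eqref{Eqn:Rhx} one computes $\tfrac{d}{d\tau}\big|_0 R_{[\id+\tau H]}(X)=(H-\rho H)(X)$ for $X\in T_xM$. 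Feeding this into the multilinear $B\theta$ by the Leibniz rule and evaluating each term, which has exactly one $A$-slot, via the shuffle expansion \eqref{Eqt:Bthetatemp3}--\eqref{Eqt:Bthetatemp4}, I obtain $\tfrac{d}{d\tau}\big|_0 e[\id+\tau H](X_1,\dots,X_k)=\sum_{i=1}^k(-1)^{i-1}\theta\big(H(X_i),X_1,\dots,\widehat{X_i},\dots,X_k\big)$. Specializing to $H=df\otimes u$ collapses this to $(df\wedge\iota_u\theta)(X_1,\dots,X_k)$, whence $\widehat e(df\otimes u)=-df\wedge\iota_u\theta$, which is precisely \eqref{Eqt:edfu2}, equivalently \eqref{Eqt:edfu}.

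The functoriality in the Condition (ii) step and the cocycle property of $\widehat e$ are routine; the main obstacle is the bookkeeping in Condition (i), where one must simultaneously track the sign introduced by the inverse in \eqref{Eqt:hatcdefinition} and correctly reduce $B\theta$ on arguments carrying a single $A$-component using \eqref{Eqt:Bthetatemp3}--\eqref{Eqt:Bthetatemp4}. Once these two differentiated identities are in hand, they are exactly Conditions (i) and (ii) of Definition \ref{2.16}, so $(\widehat e,\theta)$ is a $(0,k)$-characteristic pair on $A$.
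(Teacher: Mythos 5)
Your proposal is correct and follows essentially the same route as the paper: reduce to the two conditions of Proposition \ref{co cp}, obtain Condition (ii) by infinitesimalizing \eqref{coo3} via \eqref{Eqt:hatcdefinition} and \eqref{Eqt:dGpddA}, and obtain Condition (i) by differentiating \eqref{co1} along the curve $\id+\epsilon H$ in $\heat_x$. The only (cosmetic) difference is that the paper organizes the Condition (i) computation through the dual maps $R^*_{[h]}$ and the coadjoint formula \eqref{coadjoint}, whereas you push forward tangent vectors by $R_{[h]}$ and evaluate $B\theta$ directly via \eqref{Eqt:Bthetatemp4}; both yield $\widehat{e}(H)=-(H^*\otimes\id_{T^*M}^{\otimes(k-1)})\theta$.
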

\begin{proof}We use Proposition \ref{co cp} which describes the conditions of $e$ and $\theta$. Clearly, Equation \eqref{coo3} together with the definition of infinitesimal  \eqref{Eqt:hatcdefinition} and relation  \eqref{Eqt:dGpddA} implies the second condition in Definition \ref{2.16}.  	
	
	We now derive Equation \eqref{Eqt:edfu} from Equation \eqref{co1}.  Consider $H\in  \mathrm{Hom}(TM, A)$ and a curve of isotropy jets $[h(\epsilon)]=\id+\epsilon H \in \heat$. 
	Then, we can compute $\widehat{e}(H)$ according to Equation  \eqref{Eqt:hatcdefinition}: 
	\begin{eqnarray}\nonumber
		\widehat{e}(H)&=&-\frac{d}{d\epsilon}|_{\epsilon=0} \Ad^\vee_{[h(\epsilon)]^{-1}} e[h(\epsilon)]\\\label{Eqn:ccheckHtemp1}
		&=& 
		-\frac{d}{d\epsilon}|_{\epsilon=0}(\id+\epsilon \rho H)^{*\otimes k}   (R_{( \id+\epsilon H)}^*B \theta-B\theta) .
		\end{eqnarray}
	Here we have used Equation \eqref{coadjoint}.  Observing the fact that $(R_{( \id+\epsilon H)}^*B \theta-B\theta))\in \wedge^k T^*M$ (see explanation after Proposition \ref{co cp} or Equation \eqref{Eqt:temp2temp}), we have
	\begin{eqnarray}\nonumber 
	 (\id+\epsilon \rho H)^{*\otimes k} (R_{(\id+\epsilon H)}^*B \theta-B\theta) &=&
	(\id+\epsilon \rho H)^{*\otimes k}\circ	\mathrm{pr}_{\Gamma(\wedge^k T^*M)} (R_{(\id+\epsilon H)}^*B \theta-B\theta)\\\nonumber &=&(\id+\epsilon \rho H)^{*\otimes k}\circ\mathrm{pr}_{\Gamma(\wedge^k T^*M)}  \circ R_{(\id+\epsilon H)}^*( B \theta )\\\label{Eqn:Rstartemp}&=&((\id+\epsilon \rho H)^{*}\circ \mathrm{pr}_{\Gamma(T^*M)}\circ R_{(\id+\epsilon H)}^*)^{\otimes k} B\theta.
	\end{eqnarray}
	We then recall from Equation   \eqref{Rhxichi} the formula of $R_{(\id+\epsilon H)}^* $ and derive  the following expression of composition of three maps:
	$$(\id+\epsilon \rho H)^{* }\circ \mathrm{pr}_{\Gamma( T^*M)}\circ R_{(\id+\epsilon H)}^*:~\begin{cases}
		A^*\to T^*M,\quad \chi\mapsto \epsilon (\id+\epsilon \rho H)^{-1*}\circ   H^* \chi\mapsto \epsilon    H^* \chi,\\T^*M\to T^*M, \quad \xi \mapsto (\id+\epsilon \rho H)^{-1*}\xi\mapsto \xi.
	\end{cases}
	$$

	So we are able to continue from \eqref{Eqn:ccheckHtemp1} and \eqref{Eqn:Rstartemp},  getting
		\begin{eqnarray}\nonumber
		\widehat{e}(H)&=&
	-\frac{d}{d\epsilon}|_{\epsilon=0}  (\id_{T^*M}\oplus \epsilon H^*)^{\otimes k} B\theta=-(H^*\otimes \id_{T^*M}^{\otimes( k-1)})  \theta.
\end{eqnarray}
In the last step, we have used the following relation:
$$ (\id_{T^*M}\oplus \epsilon H^*)^{\otimes k} B\theta \equiv \epsilon (H^*\otimes \id_{T^*M}^{\otimes( k-1)})  \theta \mod \epsilon^2,
$$	 
 which is easily seen.
\end{proof}
The following lemma is the Lie algebroid version of Lemma  \ref{Lem:exactcharpairs}.
 \begin{lemma}\label{Lem:Liealgebraspecialcharpair} Let $A$ be a Lie algebroid over $M$ with anchor map $\rho:~A\to TM$. 
	To every $\gamma\in \Omega^k(M)$, there is an  associated   $(0,k)$-characteristic pair $(\mu_{\gamma},\theta_{\gamma})$ on $A$, where $\mu_{\gamma}:\jet A\to \wedge^k T^*M$ and $\theta_{\gamma}\in \Gamma(A^*\otimes (\wedge^{k-1} T^*M))$ are defined respectively by
	\[\mu_\gamma=d_{\jet A} (\gamma)\qquad\mbox{ and }~ \theta_\gamma=-\Drhostarpush\gamma  . \]
\end{lemma}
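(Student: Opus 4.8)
The plan is to verify directly that the pair $(\mu_\gamma,\theta_\gamma)$ satisfies all the requirements of Definition \ref{2.16}. Two of them come for free: since $\mu_\gamma=d_{\jet A}\gamma$ is a coboundary, it is automatically a $1$-cocycle in $Z^1(\jet A,\wedge^k T^*M)$ because $d_{\jet A}^2=0$; and $\theta_\gamma=-\Drhostarpush\gamma$ is $\rho$-compatible by Example \ref{2.4} (the overall sign being irrelevant to the condition \eqref{Eqn:rhocompatiblecondition}). It thus remains to check the two compatibility identities \eqref{Eqt:edfu} and \eqref{Drhomu}. The basic computational input, used repeatedly, is that for any $w\in A$ one has $\iota_w\theta_\gamma=-\iota_w\Drhostarpush\gamma=-\iota_{\rho(w)}\gamma$, which is immediate from the definition of $\Drhostarpush$.

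First I would treat condition \eqref{Eqt:edfu} in the equivalent form \eqref{Eqt:edfu2}, namely $\mu_\gamma(df\otimes u)=-df\wedge\iota_u\theta_\gamma$ for $H=df\otimes u\in\core$. The left-hand side is the coadjoint action of the core element $df\otimes u$ on $\gamma$, which I would extract as the infinitesimal of the group-level formula \eqref{coadjoint}: writing $[h(\epsilon)]=\id+\epsilon(df\otimes u)$ one has $\rho\circ(df\otimes u)=df\otimes\rho(u)$ as an endomorphism of $TM$, and differentiating \eqref{coadjoint} shows that the coadjoint action on $\wedge^\bullet T^*M$ is the degree-$0$ derivation extending $\xi\mapsto\langle\xi,\rho(u)\rangle\,df$, i.e. $df\wedge\iota_{\rho(u)}$. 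Hence $\mu_\gamma(df\otimes u)=df\wedge\iota_{\rho(u)}\gamma$. The right-hand side is $-df\wedge\iota_u\theta_\gamma=df\wedge\iota_{\rho(u)}\gamma$ by the displayed identity above, so the two sides agree.

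Next I would verify condition \eqref{Drhomu} in its unpacked form \eqref{Eqt:tempsecond}, that is $\iota_{\rho(v)}\mu_\gamma(\liftingd u)=\iota_{[u,v]}\theta_\gamma-\mathcal{L}_{\rho(u)}\iota_v\theta_\gamma$. Since $\ad_{\liftingd u}X=[\rho(u),X]$, the coadjoint action of $\liftingd u$ on $\wedge^k T^*M$ is the Lie derivative, so $\mu_\gamma(\liftingd u)=\mathcal{L}_{\rho(u)}\gamma$ and the left-hand side equals $\iota_{\rho(v)}\mathcal{L}_{\rho(u)}\gamma$. On the right-hand side I would substitute $\iota_w\theta_\gamma=-\iota_{\rho(w)}\gamma$ together with $\rho([u,v])=[\rho(u),\rho(v)]$ to obtain $-\iota_{[\rho(u),\rho(v)]}\gamma+\mathcal{L}_{\rho(u)}\iota_{\rho(v)}\gamma$, and then Cartan's identity $\mathcal{L}_{\rho(u)}\iota_{\rho(v)}=\iota_{\rho(v)}\mathcal{L}_{\rho(u)}+\iota_{[\rho(u),\rho(v)]}$ collapses this to $\iota_{\rho(v)}\mathcal{L}_{\rho(u)}\gamma$, matching the left-hand side.

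The only delicate point is the identification of the two coadjoint actions appearing above — that of the core element $df\otimes u$ and that of the jet lift $\liftingd u$ — together with the correct signs; everything else is Cartan calculus. I expect this bookkeeping to be the main (and essentially the only) obstacle. As a sanity check, and an alternative route valid when $A$ integrates to some $\Gpd$, one may instead apply Lemma \ref{Lem:exactcharpairs} to the form $J(\gamma)=s^*\gamma-t^*\gamma$, pass to infinitesimals via Proposition \ref{Prop:groupoidchartoalgebroidchar}, and use \eqref{Eqt:dGpddA} to identify $\widehat{d_{\jet\Gpd}\gamma}=d_{\jet A}\gamma$; the direct verification above has the advantage of requiring no integrability hypothesis on $A$.
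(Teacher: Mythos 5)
Your proposal is correct and follows essentially the same route as the paper: a direct verification of conditions \eqref{Eqt:edfu2} and \eqref{Eqt:tempsecond} resting on the identities $\mu_\gamma(\liftingd u)=\mathcal{L}_{\rho(u)}\gamma$ and $\iota_w\theta_\gamma=-\iota_{\rho(w)}\gamma$ together with Cartan's formula. The one (cosmetic) difference is in how $\mu_\gamma(df\otimes u)$ is computed: where you differentiate the group-level formula \eqref{coadjoint} to obtain the coadjoint action of a core element, the paper writes $df\otimes u=\liftingd(fu)-f\liftingd u$ and uses $\mathcal{L}_{f\rho(u)}\gamma-f\mathcal{L}_{\rho(u)}\gamma=df\wedge\iota_{\rho(u)}\gamma$, which reaches the same expression $df\wedge\iota_{\rho(u)}\gamma$ while staying intrinsic to $\jet A$ and sidestepping exactly the sign bookkeeping you flag as the delicate point.
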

 \begin{proof}By Example \ref{2.4}, we see that $\theta_\gamma$ is  $\rho$-compatible. Following the definition, we have
	$$\mu_\gamma(\liftingd u)=\mathcal{L}_{\rho(u)} \gamma,~ \forall~ u\in \Gamma(A).$$
	So we can examine: 
\[\mu_{\gamma}(df\otimes u)=\mu_{\gamma}(\liftingd(fu)-f\liftingd u)= \mathcal{L}_{f\rho(u)} \gamma-f\mathcal{L}_{\rho(u)}\gamma= df\wedge \iota_{\rho(u)}\gamma= - df\wedge \iota_u \theta_{\gamma},\]
which verifies \eqref{Eqt:edfu2}.
Also, we check that
\[\iota_{\rho(v)} \mu_\gamma(\liftingd u)= \iota_{\rho(v)}\mathcal{L}_{\rho(u)} \gamma=-\iota_{[\rho(u),\rho(v)]} \gamma+\mathcal{L}_{\rho(u)} \iota_{\rho(v)} \gamma=\iota_{[u,v]} \theta_\gamma-\mathcal{L}_{\rho(u)} \iota_v \theta_{\gamma},\]
which fulfils (ii) of Definition \ref{2.16}. This proves that $(\mu_{\gamma},\theta_{\gamma})$ is a $(0,k)$-characteristic pair on $A$.\end{proof}

It turns out that   
  Lie algebroid $(0,k)$-characteristic pairs  are variations of  the well-known notion of    IM-forms  (abbreviated from  \textit{infinitesimally multiplicative}).  
  
\begin{definition}\rm{(\cite{BC})} Let $k\geqslant 1$ be an integer.
An {\bf IM $k$-form} on a Lie algebroid $A\to M$ is a pair $(\nu,\theta)$ of vector bundle maps   $\nu: A\to \wedge^k T^*M$ and $\theta:A\to \wedge^{k-1} T^*M$ satisfying the following conditions:
\begin{itemize}
\item[\rm (1)] $\iota_{\rho(u)} \theta(v)=-\iota_{\rho(v)} \theta(u)$;
\item[\rm (2)] $\theta[u,v]=\mathcal{L}_{\rho(u)} \theta(v)-\iota_{\rho(v)} d\theta(u)-\iota_{\rho(v)}\nu(u)$;
\item[\rm (3)] $\nu[u,v]=\mathcal{L}_{\rho(u)} \nu(v)-\iota_{\rho(v)}d\nu(u)$,
\end{itemize}
for all $u,v\in \Gamma(A)$.
\end{definition}

\begin{example}Consider the $k=1$ case. An IM $1$-form is a pair $(\nu, \theta)$ formed by $\nu:A\to T^*M$  (seen as in $\Gamma(A^*\otimes T^*M)$) and $\theta\in \Gamma(A^*)$ 
		such that 
		\[(\id\otimes \rho^*)\nu=d_A \theta \qquad\mbox{and }~  \mathcal{L}_{\rho(u)}(\nu)= \Drhostarpush d\nu(u),\qquad u \in \Gamma(A),\]
		where $d_A:\Gamma(A^*)\to \Gamma(\wedge^2 A^*)$ is the differential of the Lie algebroid $A$.
\end{example}

\begin{proposition}\label{Prop:LiepaircharpairtoIMform} 
		There is a one-to-one correspondence between the set of  $(0,k)$-characteristic pairs $(\mu,\theta)$ on a Lie algebroid $A$ and the set of IM $k$-forms $(\nu,\theta)$  such that
	\[  \nu(u)=-\mu(\liftingd u)-d{\iota_u \theta},\qquad \forall u\in \Gamma(A).\]
\end{proposition}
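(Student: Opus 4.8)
The plan is to fix the shared datum $\theta$ and to read the stated formula as an invertible change of the remaining variable. Both objects carry the same tensor $\theta$, and its $\rho$-compatibility is at once condition~(1) in the definition of an IM $k$-form and part of Definition~\ref{2.16}; hence the correspondence keeps $\theta$ fixed and only repackages $\mu$ as $\nu$. Reading the relation as $\mu(\liftingd u)=-\nu(u)-d\iota_u\theta$ recovers $\mu$ on the image of the lifting map $\liftingd$, while on the core $\core=\Hom(TM,A)$ the value of $\mu$ is forced to be $\mu(H)=-(H^*\otimes\id_{T^*M}^{\otimes(k-1)})\theta$ by Condition~(i) of Definition~\ref{2.16}. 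Since every section splits uniquely as $\xi=\liftingd(p\xi)+\bigl(\xi-\liftingd(p\xi)\bigr)$ with $\xi-\liftingd(p\xi)\in\Gamma(\core)$, these two prescriptions pin down a single candidate $\mu$, and the assignment $(\mu,\theta)\mapsto(\nu,\theta)$ is tautologically inverse to it. So the entire content is to verify that the two prescriptions are compatible and that the defining conditions on the two sides translate into one another.

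First I would dispatch well-definedness. On the IM side, $\nu$ must be $C^\infty(M)$-linear; since $\liftingd(fu)=f\liftingd u+df\otimes u$, one computes
\[\nu(fu)=-\mu(f\liftingd u+df\otimes u)-d(f\,\iota_u\theta)=f\nu(u)-\mu(df\otimes u)-df\wedge\iota_u\theta,\]
and substituting $\mu(df\otimes u)=-df\wedge\iota_u\theta$ from \eqref{Eqt:edfu2} makes the two $df\wedge\iota_u\theta$ terms cancel, leaving $\nu(fu)=f\nu(u)$. Run in reverse, the identical cancellation shows that the prescription for $\mu$ on lifts and on $\core$ obeys the Leibniz rule $\mu(\liftingd(fu))=f\mu(\liftingd u)+\mu(df\otimes u)$ dictated by $\liftingd(fu)=f\liftingd u+df\otimes u$, so that $\mu$ is a genuine bundle map $\jet A\to\wedge^kT^*M$. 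This is precisely where Condition~(i) enters, in both directions.

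Next I would match the conditions that only see lifts. Contracting $\mu(\liftingd u)=-\nu(u)-d\iota_u\theta$ with $\rho(v)$ yields $\iota_{\rho(v)}\mu(\liftingd u)=-\iota_{\rho(v)}\nu(u)-\iota_{\rho(v)}d\iota_u\theta$; comparing with the unpacked Condition~(ii), i.e.\ \eqref{Eqt:tempsecond}, and rearranging gives exactly condition~(2) of the IM form, $\theta[u,v]=\mathcal{L}_{\rho(u)}\iota_v\theta-\iota_{\rho(v)}d\iota_u\theta-\iota_{\rho(v)}\nu(u)$. For condition~(3) I would evaluate $d_{\jet A}\mu=0$ on two lifts: using $[\liftingd u_1,\liftingd u_2]=\liftingd[u_1,u_2]$ and the fact that $\jet A$ acts on $\wedge^kT^*M$ via $\nabla_{\liftingd u}=\mathcal{L}_{\rho(u)}$, this reads $\mathcal{L}_{\rho(u_1)}\mu(\liftingd u_2)-\mathcal{L}_{\rho(u_2)}\mu(\liftingd u_1)-\mu(\liftingd[u_1,u_2])=0$. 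Substituting $\mu(\liftingd u)=-\nu(u)-d\iota_u\theta$, splitting into a $\nu$-part and an exact part, and collapsing each with Cartan's formula $\mathcal{L}_X=d\iota_X+\iota_Xd$ — invoking the already-matched condition~(2) to rewrite $\iota_{[u_1,u_2]}\theta$ and condition~(3) to rewrite $\nu[u_1,u_2]$ — makes the two halves cancel. Hence the cocycle identity on lifts is equivalent to condition~(3) modulo condition~(2), as needed in both directions.

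The remaining, and I expect most tedious, task is the cocycle identity $d_{\jet A}\mu=0$ on the mixed pairs $(\liftingd u,H)$ and on pairs $(H_1,H_2)$ of core sections. These are purely tensorial, with no de Rham differentials: they involve only $\mu(H)=-(H^*\otimes\id_{T^*M}^{\otimes(k-1)})\theta$, the coadjoint $\jet A$-action on $\wedge^kT^*M$, and the jet brackets $[\liftingd u,H]$ and $[H_1,H_2]$ (with $\core$ an ideal of $\jet A$). I would check them by directly unwinding the definitions, the only delicate points being the signs in the coadjoint action and the repeated use of the $\rho$-compatibility of $\theta$; this bookkeeping, rather than any conceptual difficulty, is the genuine obstacle. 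Once all four cases of the cocycle equation together with Conditions~(i) and~(ii) have been matched against conditions~(1)--(3), the two maps constructed in the first paragraph are mutually inverse by construction, which establishes the bijection.
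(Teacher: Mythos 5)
Your proposal is correct, and it is exactly the direct verification the paper has in mind (the paper only remarks that the proof ``is a direct verification, which we omit''). The identification of the two $\theta$'s via $\rho$-compatibility, the Leibniz computation showing that Condition (i) (equivalently \eqref{Eqt:edfu2}) is precisely what makes $\nu$ tensorial and, in reverse, what makes $\mu$ a well-defined bundle map, the translation of Condition (ii) into condition (2) of an IM form, and the reduction of condition (3) to the cocycle identity on pairs of lifts modulo condition (2) are all right, as is the observation that the two assignments are inverse by construction.

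The one point worth adding is that the step you single out as ``the genuine obstacle'' --- verifying $d_{\jet A}\mu=0$ on the mixed pairs $(\liftingd u, H)$ and on pairs of core sections --- costs nothing. Once $\mu$ is known to be a bundle map $\jet A\to\wedge^k T^*M$, the $2$-cochain $d_{\jet A}\mu$ is $C^\infty(M)$-bilinear, so its value at a point $x$ depends only on the two arguments in the fibre $\jet_x A$; and every element of $\jet_x A$ is the value $(\liftingd u)_x$ of a lift. Hence vanishing on pairs of lifts, which you have already established, forces vanishing on the mixed and core pairs as well, and your argument is complete as it stands.
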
 
The proof of this proposition is a direct verification, which we omit.

 Note that we have assumed that $k\geqslant 1$ in the above discussions. For the extreme case of a multiplicative $0$-form, namely a multiplicative function $f\in \OmegakmultG{0}=Z^1(\Gpd,M\times \mathbb{R})$, its infinitesimal is the Lie algebroid $1$-cocycle $\hat{f}$ (see Example \ref{Exm:dfchar}).
So we can simply define  {\bf IM $0$-forms} on a Lie algebroid $A$ to be   Lie algebroid $1$-cocycles of $A$.


\subsection{The transitive case}
 {
 A Lie groupoid $\Gpd$ is called transitive if 
 given any two points in the base manifold, there is at least one element in $\Gpd$ connecting them. A Lie algebroid $A$ over $M$ is called transitive if its anchor map $\rho: A\to TM$ is  surjective. It is standard that the tangent Lie algebroid  of a transitive Lie groupoid is  transitive.}

 {
\begin{lemma}\label{transitiverho}
If $A$ is  transitive and $k\geqslant 2$, then any $\rho$-compatible $(0,k)$-tensor $\theta\in \Gamma(A^*\otimes (\wedge^{k-1}T^*M))$  is determined uniquely by some  $\gamma\in \Omega^k(M)$ such that  $\theta=\Drhostarpush \gamma$ (cf.  Example \ref{2.4}). 
\end{lemma}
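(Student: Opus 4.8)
The plan is to prove that the map $\gamma\mapsto \Drhostarpush\gamma$ of Example \ref{2.4} is a bijection from $\Omega^k(M)$ onto the space of $\rho$-compatible $(0,k)$-tensors, under the standing hypotheses that $\rho$ is surjective and $k\geqslant 2$. Uniqueness is immediate: if $\Drhostarpush\gamma=0$, then for every $u\in A$ and $X_1,\dots,X_{k-1}\in TM$ one has $\gamma(\rho(u),X_1,\dots,X_{k-1})=(\Drhostarpush\gamma)(u,X_1,\dots,X_{k-1})=0$, and since $\rho$ is surjective the first argument $\rho(u)$ exhausts $TM$, forcing $\gamma=0$. So the real content is existence, and the crux of the whole argument is the following observation, which is exactly where $k\geqslant 2$ and transitivity enter.

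\textbf{Key step.} A $\rho$-compatible $\theta$ annihilates $\ker\rho$, i.e. $\iota_w\theta=0$ whenever $\rho(w)=0$. Indeed, by transitivity every tangent vector has the form $\rho(u_1)$, and the $\rho$-compatibility relation \eqref{Eqn:rhocompatiblecondition} gives
\[\iota_{\rho(u_1)}\iota_w\theta=-\iota_{\rho(w)}\iota_{u_1}\theta=0,\]
because $\rho(w)=0$. As $\rho(u_1)$ ranges over all of $TM$ and $\iota_w\theta$ is a $(k-1)$-form with $k-1\geqslant 1$, a form annihilated by all contractions must vanish, so $\iota_w\theta=0$. For $k=1$ there is no slot to contract and this conclusion genuinely fails, which is consistent with the restriction $k\geqslant 2$ in the statement.

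For existence I would choose a splitting $\sigma\colon TM\to A$ of the anchor, so that $\rho\circ\sigma=\id_{TM}$ (such a bundle map exists because the surjection $\rho$ yields a smoothly split short exact sequence $0\to\ker\rho\to A\to TM\to 0$), and define
\[\gamma(Y_1,\dots,Y_k):=\theta\bigl(\sigma(Y_1),Y_2,\dots,Y_k\bigr),\qquad Y_1,\dots,Y_k\in TM.\]
This expression is tensorial, and it is already skew in $Y_2,\dots,Y_k$ since $\theta(\sigma(Y_1))\in\wedge^{k-1}T^*M$. Applying \eqref{Eqn:rhocompatiblecondition} with $u=\sigma(Y_1)$ and $v=\sigma(Y_2)$ and using $\rho\sigma=\id$ yields $\gamma(Y_1,Y_2,\dots)=-\gamma(Y_2,Y_1,\dots)$. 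Since the transposition $(1\,2)$ together with the permutations of $\{2,\dots,k\}$ generate $S_k$, the form $\gamma$ is totally antisymmetric, hence $\gamma\in\Omega^k(M)$.

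It remains to verify $\Drhostarpush\gamma=\theta$. For $u\in A$ and $X_1,\dots,X_{k-1}\in TM$,
\[(\Drhostarpush\gamma)(u,X_1,\dots,X_{k-1})=\gamma(\rho(u),X_1,\dots,X_{k-1})=\theta\bigl(\sigma\rho(u),X_1,\dots,X_{k-1}\bigr),\]
and since $u-\sigma\rho(u)\in\ker\rho$ the Key step gives $\iota_{u-\sigma\rho(u)}\theta=0$, so $\theta(\sigma\rho(u),\dots)=\theta(u,\dots)$ and the right-hand side equals $\theta(u,X_1,\dots,X_{k-1})$. This establishes $\Drhostarpush\gamma=\theta$ and finishes existence. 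The main obstacle is precisely the Key step: it is the one non-formal input, simultaneously making $\gamma$ independent of the chosen splitting and permitting the recovery of $\theta$ from its restriction along $\sigma$; everything else is bookkeeping.
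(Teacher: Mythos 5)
Your proof is correct and takes essentially the same route as the paper: the paper defines $\gamma$ by the relation $\iota_X\gamma=\iota_u\theta$ for any $u$ with $\rho(u)=X$ and appeals to $\rho$-compatibility for well-definedness, which is exactly your Key step that $\iota_w\theta=0$ for $w\in\ker\rho$. Your explicit splitting $\sigma$, the check of total antisymmetry of $\gamma$, and the uniqueness argument merely spell out details the paper leaves implicit.
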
}
\begin{proof}As $\rho$ is surjective, given $\theta$ we can define $\gamma$ by the relation $$\iota_X \gamma=\iota_u\theta,\qquad ~\forall X\in TM \mbox{ and } u\in \Gamma(A)  \mbox{~such that~} ~ \rho(u)=X .$$ The $\rho$-compatibility property of $\theta$ guarantees that $\gamma$ is well-defined when $k\geqslant 2$. It is clear that the above relation is equivalent to  $\theta=\Drhostarpush \gamma$. Uniqueness of $\gamma$ is thus apparent. 
\end{proof}
 
\begin{proposition}\label{transgpd}
	Let $\Gpd$ be a transitive Lie groupoid over $M$. 
	\begin{itemize}
	\item[\rm (1)]If   $k\geqslant 2$, then  all   $(0,k)$-characteristic pairs on $\Gpd$   are of the form $(d_{\jet \Gpd}(\gamma), -\Drhostarpush \gamma)$ as described by Lemma \ref{Lem:exactcharpairs}.
		\item[\rm (2)]If   $k\geqslant 2$, then 
	all multiplicative $k$-forms on $\Gpd$   are exact, namely they are of the form $s^*\gamma-t^*\gamma$ for  $\gamma\in \Omega^k(M)$; 
	\item[\rm (3)] Given any $\theta\in \Gamma(A^*)$ satisfying the condition
	\begin{equation}\label{Eqt:conditionoftheta}\iota_v ( d_{\jet \Gpd} \theta)=0,\quad\forall   v\in \ker\rho,\end{equation}
	there exists a unique    $1$-cocycle $e_\theta\colon \jet \Gpd\to T^*M$ such that the pair $(e_\theta,\theta)$ is a $(0,1)$-characteristic pair on $\Gpd$.
		 Moreover, all $(0,1)$-characteristic pairs on $\Gpd$ arise from this construction.  
 \item[\rm (4)] Every $\theta$ satisfying Condition \eqref{Eqt:conditionoftheta} gives rise to a  multiplicative $1$-form $\Theta$ on $\Gpd$ such that
 $$
 \Theta_g(R_{[b_g]}(u+X))=\theta_{t(g)}(u+v)-\theta_{s(g)}(\Ad_{[b_g]^{-1}}v)
 $$
 for all    $u\in A_{t(g)}$, $X\in T_{t(g)} M$, bisection $b_g$ passing through $g\in \Gpd$, and $v\in A_{t(g)}$ satisfies $\rho(v)=X$. 
 All multiplicative $1$-forms $\Theta$ on $\Gpd$ are of this form.
 
  \end{itemize}
\end{proposition}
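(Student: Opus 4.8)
The plan is to treat the cases $k\geq 2$ and $k=1$ separately, using throughout that the two defining conditions of a $(0,k)$-characteristic pair in Proposition \ref{co cp} (Equations \eqref{co1} and \eqref{coo3}) are \emph{linear} in $(e,\theta)$, so that characteristic pairs form a vector space.

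For part (1), given an arbitrary $(0,k)$-characteristic pair $(e,\theta)$ with $k\geq 2$, Lemma \ref{transitiverho} yields a unique $\gamma\in\Omega^k(M)$ with $\theta=-\Drhostarpush\gamma$, and Lemma \ref{Lem:exactcharpairs} says $(d_{\jet \Gpd}\gamma,\theta)$ is then also a characteristic pair. By linearity their difference $(e-d_{\jet \Gpd}\gamma,0)$ is a characteristic pair with vanishing leading term, so Equation \eqref{coo3} forces $\Drhostarpush\circ(e-d_{\jet \Gpd}\gamma)=0$. Since $\rho$ is surjective, the map $\Drhostarpush\colon\wedge^k T^*M\to A^*\otimes\wedge^{k-1}T^*M$ is injective — indeed $(\Drhostarpush w)(u,X_2,\dots,X_k)=w(\rho(u),X_2,\dots,X_k)$ and $\rho(u)$ runs over all of $TM$ — hence $e=d_{\jet \Gpd}\gamma$, proving (1). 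Part (2) then drops out: by Proposition \ref{4.1} a multiplicative $k$-form is determined by its characteristic pair; by (1) this pair equals the one of $J(\gamma)=s^*\gamma-t^*\gamma$ computed in Lemma \ref{Lem:exactcharpairs}, so $\Theta=s^*\gamma-t^*\gamma$.

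For part (3) we have $k=1$, where $B\theta=\theta$ and Equation \eqref{coo3} reduces to $\rho^*\circ e=-d_{\jet \Gpd}\theta$. As $\rho$ is onto, $\rho^*\colon T^*M\to A^*$ is fibrewise injective with image the covectors killing $\ker\rho$; hence this equation has at most one solution $e$, and a solution exists exactly when $d_{\jet \Gpd}\theta$ takes values in that image, i.e. exactly when $\iota_v(d_{\jet \Gpd}\theta)=0$ for $v\in\ker\rho$ — Condition \eqref{Eqt:conditionoftheta}. This defines $e_\theta[b]:=-(\rho^*)^{-1}\big((d_{\jet \Gpd}\theta)[b]\big)$ and gives uniqueness immediately. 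Conversely, any $(0,1)$-characteristic pair $(e,\theta)$ satisfies \eqref{coo3}, so $d_{\jet \Gpd}\theta=-\rho^*e$ kills $\ker\rho$ and $e=e_\theta$; this is the ``all pairs arise'' clause. It remains to check that $e_\theta$ is a cocycle and that \eqref{co1} holds. Cocycleness I would get by applying $d_{\jet \Gpd}$ to $\rho^*e_\theta=-d_{\jet \Gpd}\theta$, using that $\rho^*$ is $\jet \Gpd$-equivariant (the anchor intertwines the adjoint actions), $d_{\jet \Gpd}^2=0$, and injectivity of $\rho^*$. For \eqref{co1}, which for $k=1$ reads $e_\theta[h]=R_{[h]}^*\theta-\theta$ with $[h]=\id+H\in\heat$, I would apply the injective $\rho^*$ and reduce to $\rho^*\big((\id+\rho H)^{-1*}H^*\theta\big)=\theta-\Ad^\vee_{[h]}\theta$; after inserting the formulas of Lemma \ref{ad for h} this collapses to the push-through identity $H(\id+\rho H)^{-1}\rho=(\id+H\rho)^{-1}H\rho$, which holds because $(\id+H\rho)^{-1}H=H(\id+\rho H)^{-1}$.

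Finally, for part (4) I would feed the pair $(e_\theta,\theta)$ into Theorem \ref{main2}: with $k=1$ it gives $\Theta_g=R_{[b_g^{-1}]}^*\big(e_\theta[b_g]+\theta_{t(g)}\big)$, hence $\Theta_g\big(R_{[b_g]}(u+X)\big)=e_\theta[b_g](X)+\theta_{t(g)}(u)$. Choosing $v\in A_{t(g)}$ with $\rho(v)=X$ and rewriting $e_\theta[b_g](X)=\langle\rho^*e_\theta[b_g],v\rangle=\langle\theta_{t(g)}-\Ad^\vee_{[b_g]}\theta_{s(g)},v\rangle=\theta_{t(g)}(v)-\theta_{s(g)}(\Ad_{[b_g]^{-1}}v)$ reproduces the stated formula; its independence of the choice of $v$ is precisely Condition \eqref{Eqt:conditionoftheta}. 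That every multiplicative $1$-form is of this shape follows from the bijection of Proposition \ref{4.1} together with the converse established in part (3). The one genuinely computational obstacle is the verification of \eqref{co1}: marshalling the coadjoint action of the isotropy jet group and confirming the push-through identity; every other step is either an injectivity argument provided by transitivity or a direct appeal to the correspondence theorems already proved.
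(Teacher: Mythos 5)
Your proof is correct and follows essentially the same route as the paper's: Lemma \ref{transitiverho} plus injectivity of $\Drhostarpush$ (equivalently of $\rho^*$) for parts (1) and (3), and the correspondence of Proposition \ref{4.1} / Theorem \ref{main2} for parts (2) and (4). The only differences are cosmetic — you subtract the exact pair of Lemma \ref{Lem:exactcharpairs} where the paper substitutes directly into Equation \eqref{coo3}, and you write out the push-through identity behind Equation \eqref{co1} and the formula in part (4), which the paper leaves implicit.
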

\begin{proof} (1) Let $(e,\theta)$ be a $(0,k)$-characteristic pair on $\Gpd$. Following Lemma \ref{transitiverho} and since $\theta$ is $\rho$-compatible and $\Gpd$ is transitive, we have $\theta=\Drhostarpush \gamma$ where $\gamma\in \Omega^k(M)$. Then the compatibility condition \eqref{coo3} becomes
	 $\Drhostarpush\circ e =-\Drhostarpush\circ d_{\jet \Gpd} \gamma$. So $e$ is indeed the negative of $d_{\jet \Gpd} \gamma$ for $\rho$ being surjective.
	 
	 Statement (2) is implied by (1) due to the one-to-one correspondence established by Proposition \ref{4.1}.


(3) First, as we have Condition \eqref{Eqt:conditionoftheta}, $d_{\jet \Gpd} \theta$ is indeed a map $\jet \Gpd\to \mathrm{Im} \rho^*$. Second, since $\rho^*$ is injective, we can define $e_\theta$ via the relation 
\begin{equation}\label{Eqt:k=1case2gpd}
\rho^*\circ e_\theta  =- d_{\jet \Gpd} \theta   .
\end{equation}
To see  that $e_\theta$ is a $1$-cocycle, one notices that the right hand side of \eqref{Eqt:k=1case2gpd} is a $1$-cocycle valued in $A^*$, and $T^*M\xrightarrow{\rho^*} A^*$ is compatible with the actions of $\jet\Gpd$ on $T^*M$ and $A^*$.
 
Now we examine that $(e_\theta,\theta)$ is a $(0,1)$-characteristic pair on $\Gpd$. The given $\theta\in \Gamma(A^*)$ is certainly $\rho$-compatible. One of the conditions we need is Equation \eqref{coo3} which now becomes Equation \eqref{Eqt:k=1case2gpd}.   

The other condition we need, namely Equation \eqref{co1}, now reads $$e_\theta [h]=R_{[h]}^*( \theta)- \theta, \qquad \forall [h]\in \heat.$$
Indeed, it is implied by Equation \eqref{Eqt:k=1case2gpd} as $\rho^*$ is injective. 

Finally, given any $(0,1)$-characteristic pair $(e,\theta)$ on $\Gpd$, one   easily finds that $e$ must be of the form $e_\theta$.

 Following the result of Statement (3),    one can use Proposition \ref{4.1} to show Statement (4) directly. 

\end{proof}
\begin{remark}
	Our earlier work \cite{CL1} presents some results on the structure of multiplicative multi-vector fields   on   transitive Lie groupoids.
\end{remark}

We now turn to   Lie algebroid characteristic pairs and IM forms.   The statements (2) and (4) in the following proposition already appeared in \cite[Remark 3.5]{BC}.
\begin{proposition}
Let $A$ be a transitive  Lie algebroid over $M$. 
\begin{enumerate}
	\item[\rm (1)]If $k\geqslant 2$, then all $(0,k)$-characteristic pairs  on $A$ are   of the form 
	$(\mu_{\gamma}=d_{\jet A}(\gamma),\theta_{\gamma}=-\Drhostarpush\gamma)$ as described by Lemma \ref{Lem:Liealgebraspecialcharpair}, where $\gamma\in \Omega^k(M)$. 
	\item[\rm (2)]If $k\geqslant 2$, then all  IM $k$-forms  are of the form $(\nu_\gamma,\theta_\gamma=-\Drhostarpush\gamma)$ where  $\gamma\in \Omega^k(M)$ and  $\nu_{\gamma}: A\to \wedge^k T^*M$ is determined  by the formula 
	\[\nu_\gamma(u)=\iota_{\rho(u)} d\gamma,\qquad \forall u\in \Gamma(A).\]
	\item[\rm (3)] Given any $\theta\in \Gamma(A^*)$ satisfying the condition 
\begin{equation}\label{Eqt:conditionoftheta2}\iota_v ( d_A \theta)=0,\quad\forall   v\in \ker\rho,\end{equation}	
	there exists a unique    $1$-cocycle $\mu_\theta\colon \jet A\to T^*M$ such that the pair $(\mu_\theta,\theta)$ is a $(0,1)$-characteristic pair on $A$.
	The element $\mu_\theta$ is defined by the relation: \[\rho^* \circ \mu_\theta (\liftingd  u) =-\mathcal{L}_u \theta,\qquad \forall u\in \Gamma(A).\] 

Moreover, all $(0,1)$-characteristic pairs on $A$ arise from this construction. 	
	 \item[\rm (4)] Given any $\theta$ satisfying Condition \eqref{Eqt:conditionoftheta2}, there exists a unique    $\nu_\theta: A\to \wedge^k T^*M$ such that   
  $(\nu_\theta,\theta)$ is an IM $1$-form.  The element $\nu_\theta$ is defined by the relation: 
  \[\rho^*\circ \nu_\theta(u)=\iota_u (d_A \theta),\qquad \forall u\in \Gamma(A).\]  
  
  Moreover, all IM $1$-forms of $A$ are of this form.
  \end{enumerate}

\end{proposition}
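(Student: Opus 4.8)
The plan is to treat the four assertions as two pairs: statements (1) and (3), about $(0,k)$-characteristic pairs, carry the real content, whereas (2) and (4) are their images under the one-to-one correspondence of Proposition \ref{Prop:LiepaircharpairtoIMform}, and so I would deduce them from (1) and (3) by a short computation with Cartan's magic formula. The entire argument is the infinitesimal mirror of the groupoid Proposition \ref{transgpd}, and it rests on one elementary consequence of transitivity: since $\rho$ is surjective, $\rho^{*}\colon T^{*}M\to A^{*}$ is injective, and therefore $\Drhostarpush$ is injective on $\wedge^{k}T^{*}M$. Indeed, the identity $\iota_{u}\Drhostarpush\gamma=\iota_{\rho(u)}\gamma$ shows that $\Drhostarpush\gamma=0$ forces $\iota_{Y}\gamma=0$ for every $Y\in TM$, hence $\gamma=0$.

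For statement (1), let $(\mu,\theta)$ be a $(0,k)$-characteristic pair with $k\geqslant 2$. I would first apply Lemma \ref{transitiverho} to write $\theta=\Drhostarpush\gamma'$ for a unique $\gamma'\in\Omega^{k}(M)$, and set $\gamma:=-\gamma'$, so that $\theta=\theta_{\gamma}$. Since $(\mu_{\gamma},\theta_{\gamma})$ of Lemma \ref{Lem:Liealgebraspecialcharpair} is itself a characteristic pair, both $\mu$ and $\mu_{\gamma}$ satisfy condition \eqref{Drhomu}, i.e. $\Drhostarpush\circ\mu=-d_{\jet A}\theta=\Drhostarpush\circ\mu_{\gamma}$; as $\mu$ and $\mu_{\gamma}$ are valued in $\wedge^{k}T^{*}M$, the injectivity of $\Drhostarpush$ there yields $\mu=\mu_{\gamma}$, which is the claim. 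Statement (2) is then obtained by transport: by Proposition \ref{Prop:LiepaircharpairtoIMform} every IM $k$-form is $(\nu,\theta)$ with $\nu(u)=-\mu(\liftingd u)-d\iota_{u}\theta$, and substituting $\mu_{\gamma}(\liftingd u)=\mathcal{L}_{\rho(u)}\gamma$ together with $\iota_{u}\theta_{\gamma}=-\iota_{\rho(u)}\gamma$ and simplifying by Cartan's magic formula $\mathcal{L}_{\rho(u)}=\iota_{\rho(u)}d+d\iota_{\rho(u)}$ collapses the expression to the stated $\nu_{\gamma}(u)=\iota_{\rho(u)}d\gamma$.

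For statement (3), I would begin with $\theta\in\Gamma(A^{*})$ obeying \eqref{Eqt:conditionoftheta2}, which is automatically $\rho$-compatible in the $k=1$ case. The crux is to show that the coboundary $-d_{\jet A}\theta\colon\jet A\to A^{*}$ actually lands in $\mathrm{Im}\,\rho^{*}=(\ker\rho)^{\circ}$: on the isotropy part $\core$ this is a direct computation, while on a lift $\liftingd u$ one has $d_{\jet A}\theta(\liftingd u)=\mathcal{L}_{u}\theta$ and $\langle\mathcal{L}_{u}\theta,v\rangle=-\langle\iota_{v}(d_{A}\theta),u\rangle=0$ for $v\in\ker\rho$, which is exactly where hypothesis \eqref{Eqt:conditionoftheta2} enters. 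Injectivity of $\rho^{*}$ then allows me to define $\mu_{\theta}$ by $\rho^{*}\circ\mu_{\theta}=-d_{\jet A}\theta$, and the $\jet A$-equivariance of $\rho^{*}$ promotes $\mu_{\theta}$ to a cocycle. Verifying the two axioms of Definition \ref{2.16} reduces, again by $\rho^{*}$-injectivity, to the defining relation restricted to $\core$ (yielding \eqref{Eqt:edfu}) and to $\liftingd u$ (yielding \eqref{Eqt:tempsecond}); uniqueness and the claim that every pair so arises follow because \eqref{Eqt:tempsecond} forces $\rho^{*}\mu(\liftingd u)=-\mathcal{L}_{u}\theta$, which conversely forces \eqref{Eqt:conditionoftheta2}. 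Statement (4) is the transport of (3): from $\nu_{\theta}(u)=-\mu_{\theta}(\liftingd u)-d\iota_{u}\theta$ with $\iota_{u}\theta=\theta(u)$ one computes $\rho^{*}\circ\nu_{\theta}(u)=\mathcal{L}_{u}\theta-\rho^{*}d(\theta(u))=\iota_{u}(d_{A}\theta)$, and $\rho^{*}$-injectivity gives both existence and uniqueness.

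The main obstacle is the image computation inside statement (3)---verifying that $-d_{\jet A}\theta$ takes values in $\mathrm{Im}\,\rho^{*}$---since this is the sole place where hypothesis \eqref{Eqt:conditionoftheta2} is used and it must be confirmed separately on $\core$ and on the lifts $\liftingd u$ that together generate $\jet A$. Once that is in hand, every subsequent step is forced by the injectivity of $\rho^{*}$ (equivalently of $\Drhostarpush$ on $\wedge^{k}T^{*}M$), so the remainder is essentially bookkeeping parallel to the groupoid proof of Proposition \ref{transgpd}.
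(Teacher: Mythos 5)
Your proposal is correct in strategy and follows exactly the route the paper intends: the paper omits the proof of this proposition, declaring it ``completely similar'' to that of Proposition \ref{transgpd}, and your argument is precisely that mirroring --- surjectivity of $\rho$ gives injectivity of $\rho^*$ and hence of $\Drhostarpush$ on $\wedge^k T^*M$, which pins down $\mu$ from condition \eqref{Drhomu} in (1) and from the defining relation $\rho^*\circ\mu_\theta=-d_{\jet A}\theta$ in (3) --- supplemented by transporting (1) and (3) to (2) and (4) through Proposition \ref{Prop:LiepaircharpairtoIMform}. The image computation in (3) (checking $-d_{\jet A}\theta$ lands in $\mathrm{Im}\,\rho^*$ separately on $\core$ and on lifts $\liftingd u$, the latter being where \eqref{Eqt:conditionoftheta2} enters) and the derivation $\rho^*\circ\nu_\theta(u)=\mathcal{L}_u\theta-\rho^*d(\theta(u))=\iota_u(d_A\theta)$ in (4) both check out.

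One point does not survive scrutiny as written: in statement (2), the computation you describe gives
$\nu_\gamma(u)=-\mu_\gamma(\liftingd u)-d\iota_u\theta_\gamma=-\mathcal{L}_{\rho(u)}\gamma+d\iota_{\rho(u)}\gamma=-\iota_{\rho(u)}d\gamma$,
i.e.\ the \emph{negative} of the formula in the proposition, so it does not ``collapse to the stated'' expression as you claim. A direct check of axiom (2) of the IM-form definition with $\theta_\gamma(u)=-\iota_{\rho(u)}\gamma$ confirms that $-\iota_{\rho(u)}d\gamma$ is the sign forced by the paper's own conventions (Lemma \ref{Lem:Liealgebraspecialcharpair} and Proposition \ref{Prop:LiepaircharpairtoIMform}); the $+$ sign in the proposition's statement (2) appears to be a typo rather than a flaw in your method, but you should state the sign your computation actually produces rather than silently matching the printed formula.
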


The proof of this proposition is completely similar to the   previous one, so we   omit it. 

\section{The complex of multiplicative forms}\label{Sec:CartanComplex}
\subsection{The de Rham differential}
It is easily verified that the standard de Rham differential $d:~\Omega^\bullet(\Gpd)\to \Omega^{\bullet+1}(\Gpd)$ maps multiplicative $k$-forms   to multiplicative $(k+1)$-forms. In plain terms,  
$(\OmegakmultG{\bullet},d)$ is a subcomplex of $(\Omega^\bullet(\Gpd),d)$.
As $\OmegakmultG{\bullet}$ corresponds to groupoid $(0,\bullet)$-characteristic pairs,  it is  tempting to describe $d$ in terms of   characteristic pairs as well.

First, given $f\in \OmegakmultG{0}$ (a multiplicative function on $\Gpd$), we have $df\in \OmegakmultG{1}$ which corresponds to the $(0,1)$-characteristic pair $(e,\theta)$ as described in Example \ref{Exm:dfchar}. 
 
Second, for all $k\geqslant 1$, we characterise $d:\OmegakmultG{k}\to \OmegakmultG{k+1}$ as follows.

\begin{proposition}\label{Prop:differentialofcharpairs}
	Given a multiplicative $k$-form $\Theta\in \OmegakmultG{k}$ which corresponds to the $(0,k)$-characteristic pair $(e,\theta)$,   the $(0,k+1)$-characteristic pair of $d\Theta\in \OmegakmultG{k+1}$, denoted by $(\tilde{e}, \tilde{\theta})$, is given as follows: 
	\begin{itemize}
		\item[(1)]  The map $\tilde{e}:$  $\jet\Gpd \rightarrow \wedge^{k+1} T^*M$ is determined by  
		\begin{eqnarray}\label{tildec2}
		\tilde{e}[b]=d(e[b]),
		\end{eqnarray}
		for all bisections $b: M\to   \Gpd$      of $\Gpd$. Here we treat $[b]: M\to  \jet\Gpd$ as a   section of the fibre bundle $\jet\Gpd \stackrel{s}{\to}M$,  $e[b]$ as in $\Omega^k(M)$, and $d: \Omega^k(M)\to \Omega^{k+1}(M)$ is the standard de Rham differential.
		\item[(2)] The section $\tilde{\theta}\in \Gamma(A^*\otimes (\wedge^k T^*M))$ is determined by
		\begin{eqnarray}\label{tildetheta}
		\iota_u \tilde{\theta} &=&-d ( \iota_u \theta ) -\widehat{e}(\liftingd u),\end{eqnarray}
		for all $u$$\in$$\Gamma(A)$. Here $\widehat{e}$$\in$$Z^1(\jet A,\wedge^k T^*M)$ is the infinitesimal of the $1$-cocycle  $e$$\in$  $Z^1(\jet \Gpd,\wedge^k T^*M)$, and   $\liftingd$$:\Gamma(A)\to \Gamma({\jet A})$ is the lifting map defined by Equation \eqref{Eqt:liftingd}. 
	\end{itemize}
\end{proposition}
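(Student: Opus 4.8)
The plan is to use the two extraction formulas of Corollary \ref{Cor:fromThetatocharpair}, namely $\tilde{e}[b]=R_b^{!*}(d\Theta)$ and $\tilde\theta=\mathrm{pr}_{\Gamma(A^*\otimes(\wedge^k T^*M))}(d\Theta)|_M$, applied to the multiplicative $(k+1)$-form $d\Theta$ (which is multiplicative since $d$ preserves multiplicativity). Part (1) is then immediate: for any bisection $b$ the map $R_b^{!}\colon M\to\Gpd$ is smooth, so naturality of the de Rham differential under pullback gives $\tilde{e}[b]=R_b^{!*}(d\Theta)=d\,(R_b^{!*}\Theta)=d\,(e[b])$, which is exactly \eqref{tildec2}.

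For part (2) I would first record a general identity valid for \emph{every} multiplicative form $\Phi$ with leading term $\vartheta$, namely $\iota_{\overleftarrow{u}}\Phi=s^*(\iota_u\vartheta)$ for all $u\in\Gamma(A)$. This is precisely \eqref{Eqt:iotauTheta1} once one observes that the $(k-1)$-form $\omega=\iota_{u-\rho(u)}(\Phi|_M)$ appearing there equals $\iota_u\vartheta$: indeed $\Phi|_M=B\vartheta$ by Proposition \ref{Prop:ThetaalongM}, and \eqref{Eqt:Bthetawithuproperty} yields $\iota_{u-\rho(u)}(B\vartheta)=\iota_u\vartheta$. Applying this to $\Phi=d\Theta$, whose leading term is the sought $\tilde\theta$, gives $\iota_{\overleftarrow{u}}(d\Theta)=s^*(\iota_u\tilde\theta)$. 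Now I invoke Cartan's formula $\iota_{\overleftarrow{u}}(d\Theta)=\mathcal{L}_{\overleftarrow{u}}\Theta-d\,(\iota_{\overleftarrow{u}}\Theta)$ together with $\iota_{\overleftarrow{u}}\Theta=s^*(\iota_u\theta)$ (the same identity for $\Theta$ itself), and restrict everything to vectors tangent to $M$. Since $s$ restricts to the identity on $M$, pulling back along $M\hookrightarrow\Gpd$ collapses the $s^*$'s and commutes with $d$, leaving
\[
\iota_u\tilde\theta=\big(\mathcal{L}_{\overleftarrow{u}}\Theta\big)\big|_M-d\,(\iota_u\theta).
\]
Thus the whole statement reduces to the single identity $\big(\mathcal{L}_{\overleftarrow{u}}\Theta\big)\big|_M=-\,\widehat{e}(\liftingd u)$.

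To establish this last identity I would use the flow description of $\overleftarrow{u}$: the flow of the left-invariant vector field $\overleftarrow{u}$, restricted to $M$, is the family $R^{!}_{\exp(\epsilon u)}\colon M\to\Gpd$. Hence $(\mathcal{L}_{\overleftarrow{u}}\Theta)|_M=\frac{d}{d\epsilon}\big|_{\epsilon=0}\,(R^{!}_{\exp(\epsilon u)})^{*}\Theta=\frac{d}{d\epsilon}\big|_{\epsilon=0}\,e[\exp(\epsilon u)]$, the second equality being \eqref{tildec}. On the other hand, $\gamma(\epsilon)=[\exp(\epsilon u)]$ is a path of jets through the unit with $\gamma'(0)=\liftingd u$, so by the definition \eqref{Eqt:hatcdefinition} of the infinitesimal cocycle (with the coadjoint action $\Ad^\vee$ as the module structure on $\wedge^k T^*M$) we have $\widehat{e}(\liftingd u)=-\frac{d}{d\epsilon}\big|_{0}\,\Ad^\vee_{\gamma(\epsilon)^{-1}}e(\gamma(\epsilon))$. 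Because $e$ vanishes on units, $e(\gamma(0))=0$, the Leibniz rule kills the term coming from differentiating $\Ad^\vee_{\gamma(\epsilon)^{-1}}$, and we are left with $\widehat{e}(\liftingd u)=-\frac{d}{d\epsilon}\big|_0 e[\exp(\epsilon u)]=-(\mathcal{L}_{\overleftarrow{u}}\Theta)|_M$, as desired. The main obstacle is exactly this last step: one must pin down the convention for the flow of $\overleftarrow{u}$ (including its direction), identify $\gamma'(0)$ with $\liftingd u$, and handle the fact that $e(\gamma(\epsilon))$ lives in fibres of $\wedge^k T^*M$ over the moving base point $\phi_{\exp(\epsilon u)}(x)$; the vanishing of $e$ at the unit is what makes both the coadjoint twist and the moving base point harmless, and keeping track of these signs is the only genuinely delicate bookkeeping in the argument.
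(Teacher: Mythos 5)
Your proposal is correct, and part (2) is organized genuinely differently from the paper's argument. The paper computes $\tilde\theta(u_x,X_{1x},\dots,X_{kx})=(d\Theta)(\overrightarrow{u},\tilde X_1,\dots,\tilde X_k)|_x$ by expanding $d\Theta$ via the Koszul formula on carefully chosen extensions $\tilde X_i$ (transported by the flow $L_{\exp\epsilon u}$ of $\overrightarrow{u}$ so that $[\overrightarrow{u},\tilde X_i]=0$), recognizing the first group of terms as $-d(\iota_u\theta)$ and the last as $-\widehat{e}(\liftingd u)$. You instead invoke Cartan's formula together with the contraction identity $\iota_{\overleftarrow{u}}\Phi=s^*(\iota_u\vartheta)$ (which you correctly derive from Equations \eqref{Eqt:iotauTheta1} and \eqref{Eqt:Bthetawithuproperty}; it is also Lemma \ref{contraction2}(2), whose proof the paper omits), reducing everything to $(\mathcal{L}_{\overleftarrow{u}}\Theta)|_M=-\widehat{e}(\liftingd u)$. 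This avoids constructing the extensions $\tilde X_i$ and the term-by-term expansion of $d\Theta$, at the price of needing the contraction identity up front; both routes bottom out in the same flow computation of the derivative of $e$ along $\exp(\epsilon u)$. Part (1) is identical to the paper's.

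One refinement to the last step, which you rightly flag as the delicate point. Rather than a first-order Leibniz argument, use the exact consequence of the cocycle condition, $e(\gamma^{-1})=-\Ad^\vee_{\gamma^{-1}}e(\gamma)$, which together with \eqref{Eqt:hatcdefinition} gives $\widehat{e}(\liftingd u)=\frac{d}{d\epsilon}\big|_0\,e(\gamma(\epsilon)^{-1})$ with no approximation and with all values in the fixed fibre $\wedge^kT^*_xM$. Then observe that the curve of jets produced by your flow computation, namely $\delta(\epsilon)=[\exp\epsilon u]_{g_\epsilon}$ with $g_\epsilon=R^{!}_{\exp\epsilon u}(x)$ (target $x$, source $\phi(\epsilon)^{-1}x$), satisfies $\delta(\epsilon)=\gamma(-\epsilon)^{-1}$ for the $s$-fibre curve $\gamma(\epsilon)=[\exp\epsilon u]_{\exp\epsilon u(x)}$; hence $\frac{d}{d\epsilon}\big|_0\,(R^{!}_{\exp\epsilon u})^*\Theta\big|_x=\frac{d}{d\epsilon}\big|_0\,e(\delta(\epsilon))=-\widehat{e}(\liftingd u)$, which is exactly the sign you need. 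With that bookkeeping made explicit, the argument is complete.
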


\begin{proof}By Corollary \ref{Cor:fromThetatocharpair}, we have
	$$
	\tilde{e}[b]=R_b^{!*}(d\Theta)=d\big(  R_b^{!*}( \Theta)\big)=d(e[b]),
	$$
	which proves Equation \eqref{tildec2}. 
	
	Also by  Corollary \ref{Cor:fromThetatocharpair}, we have\[  \tilde{\theta}=\mathrm{pr}_{\Gamma(A^*\otimes (\wedge^{k } T^*M))} (d\Theta)|_M.\]
	Therefore, to find $\tilde{\theta}$ at $x\in M$, we need to consider arbitrary $u_x\in A_x$ and $X_{1x},\cdots, X_{kx}\in T_x M$, and to evaluate 
	\begin{equation}
	\label{Eqt:tildethetastarts}
	\tilde{\theta}_x(u_x, X_{1x},\cdots, X_{kx})=(d\Theta)(u_x, X_{1x},\cdots, X_{kx}).
	\end{equation} 
	
	We   extend $u_x$ to a smooth section $u\in \Gamma(A)$ which has its exponential $\exp{\epsilon u}\in \mathrm{Bis}(\Gpd )$ (for $|\epsilon|$ sufficiently small). 
	The flow of the right invariant vector field $\overrightarrow{u}\in \mathfrak{X}^1(\Gpd)$ is given by $L_{\exp{\epsilon u}}$. 
	The map $\phi(\epsilon)= t\circ \exp{\epsilon u}:M\to M$ is indeed the flow of $\rho(u)=t_* \overrightarrow{u}\in \mathfrak{X}^1(M)$.

	Using the flow $\phi(\epsilon)$,
	one is able to find extensions $X_i\in \mathfrak{X}^1(M)$ of $X_{ix}$ such that $X_i|_{\phi(\epsilon)x}=\phi(\epsilon)_* X_{ix}$.
	
	We then use the flow $L_{\exp{\epsilon u}}$ of $\overrightarrow{u}$ to extend   $X_i\in \mathfrak{X}^1(M)$ to ${\tilde{X_i}}\in \mathfrak{X}^1(\Gpd^{o})$, $\Gpd^{o}$ being a small neighbourhood of the identity section $M\subset \Gpd$, such that
	$$
	\tilde{X_i}|_{ \exp{\epsilon u}(y)}=L_{\exp{\epsilon u}*} X_{i}|_{ y},\qquad\forall y\in M.
	$$
	It follows that $[\overrightarrow{u},\tilde{X_i}]=0$ and 
	\begin{equation}
	\label{Eqt:tildeXiatflow}
	\tilde{X_i}|_{ \exp{\epsilon u}(x)}=L_{\exp{\epsilon u}*} X_{ix}=
	R_{ \exp{\epsilon u}*}^{!} X_{i}|_{\phi(\epsilon)x}.
	\end{equation}
	We are now ready to compute:
	\begin{eqnarray*}
		\mbox{Equation \eqref{Eqt:tildethetastarts}}&=&  (d\Theta)|_x(\overrightarrow{u}, \tilde{X}_{1} ,\cdots,\tilde{X}_{k})\\ &=&\sum_{i=1}^k (-1)^{i} X_{ix}\bigl(\Theta(\overrightarrow{u},\cdots, \widehat{ \tilde{X_i} },\cdots) \bigr ) -\sum_{i<j}(-1)^{i+j} \Theta_x(\overrightarrow{u}, [\tilde{X_i} ,\tilde{X_j}],\cdots) \\ &&\qquad +\overrightarrow{u}_x \bigl(\Theta( \tilde{X}_{1} ,\cdots,\tilde{X}_{k}) \bigr ) \\ &=&
		\sum_{i=1}^k (-1)^{i} X_{ix}\bigl((\iota_u \theta)(\cdots, \widehat{   X_i  },\cdots) \bigr ) -\sum_{i<j}(-1)^{i+j} (\iota_u \theta)_x(  [ {X_i} , {X_j}],\cdots) \\ &&\qquad +\overrightarrow{u}_x \bigl(\Theta( \tilde{X}_{1} ,\cdots,\tilde{X}_{k}) \bigr ). 
	\end{eqnarray*}
	The first two terms add to $(-d(\iota_u \theta))_x(X_1,\cdots,X_k)$, while the third one is
	\begin{eqnarray*} 
		&&\frac{d}{d\epsilon}|_{ \epsilon =0} \Theta_{\exp\epsilon u(x)}( \tilde{X}_{1} ,\cdots,\tilde{X}_{k})
		\\	&=&
		\frac{d}{d\epsilon}|_{ \epsilon =0} R_{ \exp{\epsilon u}}^{!*}\Theta_{\exp\epsilon u(x)}( X_{1}|_{\phi(\epsilon)x} ,\cdots,X_{k}|_{\phi(\epsilon)x}),\quad\mbox{(by Equation \eqref{Eqt:tildeXiatflow})}
		\\
		&=& \frac{d}{d\epsilon}|_{ \epsilon =0} e[\exp\epsilon u]( X_{1}|_{\phi(\epsilon)x} ,\cdots,X_{k}|_{\phi(\epsilon)x}),\quad\mbox{(by Corollary \ref{Cor:fromThetatocharpair})}
		\\
		&=&\frac{d}{d\epsilon}|_{ \epsilon =0} ((\Ad^\vee_{\exp\epsilon u})^{-1}e[\exp\epsilon u])( X_{1x}  ,\cdots,X_{kx} )\\
		&=&\frac{d}{d\epsilon}|_{ \epsilon =0} (\Ad^\vee_{\exp( \epsilon \liftingd u)})^{-1} \bigl(e\circ \exp(\epsilon \liftingd u) \bigr)( X_{1x}  ,\cdots,X_{kx} )\\
		&=& -\widehat{e}(\liftingd u)(X_{1x}  ,\cdots,X_{kx}),\end{eqnarray*}
 where $\widehat{e}\in Z^1(\jet A,\wedge^k T^*M)$, the infinitesimal   of $e\in Z^1(\jet \Gpd,\wedge^k T^*M)$, is computed according to its definition formula \eqref{Eqt:hatcdefinition}. This proves Equation \eqref{tildetheta}.

\end{proof}
 
	\begin{example}
		Let $\Theta\in \Omega^2(\Gpd)$ be a presymplectic structure, i.e., $\Theta$ is a closed and multiplicative  (but not necessarily nondegenerate) 2-form. Suppose that $\Theta$ corresponds to the $(0,2)$-characteristic pair $(e,\theta)$, where $e\in Z^2(\jet \Gpd, \wedge^2 T^*M)$ and $\theta\in \Gamma(A^*\otimes T^*M)$. Then by Proposition \ref{Prop:differentialofcharpairs}, we have 
		\[d(e[b])=0,\qquad \widehat{e}(\liftingd u)=-d(\iota_u \theta),\qquad \forall u\in \Gamma(A),\]
		where $\widehat{e}\in Z^1(\jet A,\wedge^2 T^*M)$ is the infinitesimal of $e$. Thus, a $(0,2)$-characteristic pair $(e,\theta)$ of a presymplectic structure $\Theta$ satisfies \[\theta(u,\rho(v))=-\theta(v,\rho(u)),\qquad u,v\in \Gamma(A)\]
		and the infinitesimal $\widehat{e}$ of $e$ is determined by $\theta$ via $\widehat{e}(\liftingd u)=-d(\iota_u \theta)$. Moreover, $\Theta$ and $(e,\theta)$ are related by the formula
		\[\Theta_g(R_{[b_g]}(u_1+X_1),R_{[b_g]}(u_2+X_2))=e[b_g](X_1,X_2)+\theta(u_1,X_2)-\theta(u_2,X_1)+\theta(u_1,\rho(u_2)).\]
	\end{example}

By taking infinitesimal,  multiplicative functions on $\Gpd$ correspond to Lie algebroid $1$-cocycles of  the tangent Lie algebroid $A$, and $(0,k)$-characteristic pairs on $\Gpd$ correspond  to $(0,k)$-characteristic pairs on   $A$ (see Proposition \ref{Prop:groupoidchartoalgebroidchar}), and also to IM $k$-forms  of $A$ (see Proposition \ref{Prop:LiepaircharpairtoIMform}). Then we are naturally led to consider the infinitesimal version of Proposition \ref{Prop:differentialofcharpairs}. 

\begin{proposition}Let $A$ be a Lie algebroid. Denote by $\mathrm{CP}^0(A)=Z^1(A,M\times \mathbb{R})$ the set of Lie algebroid $1$-cocycles   and $\mathrm{CP}^k(A)$ the set of $(0,k)$-characteristic pairs   on $A$  for   $k\geqslant  1$. Then $\mathrm{CP}^\bullet (A)=\oplus_{j=0}^{\dim M+1} \mathrm{CP}^j (A) $ admits a canonical cochain complex structure with the  differential   expressed as follows:
		\begin{enumerate}
		\item[\rm(1)] The differential $d:\mathrm{CP}^0(A)\to \mathrm{CP}^1(A)$ is simply  {$d(c )=(\mu_c,-c)$, for all $1$-cocycles $c\in Z^1(A,M\times \mathbb{R})$, where $\mu_c:\jet A\to T^*M$ is given by $\mu_c(\liftingd u)=d(c(u))$ (c.f. Example \ref{Exm:dfchar}).}
		\item[\rm(2)] For $1\leqslant k\leqslant \dim M$, the differential $d:\mathrm{CP}^k(A)\to \mathrm{CP}^{k+1}(A)$ is given by  {$d(\mu,\theta)=(\tilde{\mu},\tilde{\theta})$ } where
		\[\tilde{\mu}(\liftingd u):=d\mu(\liftingd u),\qquad\mbox{and~}~ \iota_u \tilde{\theta}:=-d(\iota_u \theta)-\mu(\liftingd u),\qquad \forall u\in \Gamma(A).\]
	\end{enumerate}

\end{proposition}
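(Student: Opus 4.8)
The plan is to establish two things: first, that the stated formulas genuinely send a $(0,k)$-characteristic pair to a $(0,k+1)$-characteristic pair in the sense of Definition \ref{2.16}, so that $d$ is well defined as a map $\mathrm{CP}^k(A)\to\mathrm{CP}^{k+1}(A)$; and second, that $d\circ d=0$. These formulas are the exact infinitesimal shadow of the groupoid differential computed in Proposition \ref{Prop:differentialofcharpairs} (with the infinitesimal cocycle $\widehat{e}$ there replaced by $\mu$), so the two verifications run in parallel. I would carry them out directly by Cartan calculus, the only ingredients being the defining conditions \eqref{Eqt:edfu}--\eqref{Eqt:tempsecond} of the input pair, the $\rho$-compatibility \eqref{Eqn:rhocompatiblecondition} of $\theta$, and the cocycle condition $d_{\jet A}\mu=0$.

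For the base step $d\colon\mathrm{CP}^0(A)\to\mathrm{CP}^1(A)$, I would check that $(\mu_c,-c)$ satisfies Definition \ref{2.16} with $k=1$. Here $-c\in\Gamma(A^*)$ is automatically $\rho$-compatible; condition \eqref{Eqt:edfu} reduces to $\mu_c(df\otimes u)=c(u)\,df$, which matches $\mu_c(\liftingd(fu))=d(f\,c(u))=c(u)\,df+f\,d(c(u))$ and thus makes $\mu_c$ a consistent $\CinfM$-linear bundle map; and condition \eqref{Eqt:tempsecond} becomes $\rho(v)(c(u))=\rho(u)(c(v))-c([u,v])$, which is precisely the cocycle equation $d_A c=0$. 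This mirrors Example \ref{Exm:dfchar}.

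For the inductive step $d\colon\mathrm{CP}^k(A)\to\mathrm{CP}^{k+1}(A)$ with $k\geqslant1$ I would argue in three parts. First, \emph{consistency of $\tilde\mu$ together with condition \eqref{Eqt:edfu}}: writing $\liftingd(fu)=f\liftingd u+df\otimes u$ and using $\mu(df\otimes u)=-df\wedge\iota_u\theta$ from \eqref{Eqt:edfu2}, one gets $d\mu(\liftingd(fu))=f\,d\mu(\liftingd u)+df\wedge\big(\mu(\liftingd u)+d\iota_u\theta\big)$; since $\mu(\liftingd u)+d\iota_u\theta=-\iota_u\tilde\theta$ by the defining relation for $\tilde\theta$, this forces $\tilde\mu(df\otimes u)=-df\wedge\iota_u\tilde\theta$, which simultaneously shows $\tilde\mu$ is a well-defined $\CinfM$-linear map and verifies \eqref{Eqt:edfu2} for $(\tilde\mu,\tilde\theta)$. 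Second, \emph{$\tilde\mu$ is a cocycle}: since $d$ commutes with $\mathcal{L}_{\rho(u)}$ and $[\liftingd u_1,\liftingd u_2]=\liftingd[u_1,u_2]$, one gets $(d_{\jet A}\tilde\mu)(\liftingd u_1,\liftingd u_2)=d\big((d_{\jet A}\mu)(\liftingd u_1,\liftingd u_2)\big)=0$, and the $\core$-generators are handled identically. Third, \emph{$\rho$-compatibility of $\tilde\theta$ and condition \eqref{Eqt:tempsecond}}: substituting $\iota_w\tilde\theta=-d\iota_w\theta-\mu(\liftingd w)$, invoking condition \eqref{Eqt:tempsecond} for $(\mu,\theta)$ together with $\mathcal{L}_{\rho(u)}=\iota_{\rho(u)}d+d\iota_{\rho(u)}$ and $d\mathcal{L}_{\rho(u)}=\mathcal{L}_{\rho(u)}d$, one finds the surplus terms cancel. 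Concretely, $\iota_{\rho(u)}\iota_v\tilde\theta+\iota_{\rho(v)}\iota_u\tilde\theta$ collapses to $d(\iota_{\rho(u)}\iota_v\theta+\iota_{\rho(v)}\iota_u\theta)=0$ by \eqref{Eqn:rhocompatiblecondition}, and for \eqref{Eqt:tempsecond} one additionally uses the cocycle identity $\mu(\liftingd[u,v])=\mathcal{L}_{\rho(u)}\mu(\liftingd v)-\mathcal{L}_{\rho(v)}\mu(\liftingd u)$ to reduce both sides to $\iota_{\rho(v)}d\mu(\liftingd u)$.

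Finally, $d\circ d=0$ is formal: on $\mathrm{CP}^0$ one finds $\tilde{\tilde\mu}=d\,d(c(u))=0$ and $\iota_u\tilde{\tilde\theta}=d(\iota_u c)-d(c(u))=0$; on $\mathrm{CP}^k$ with $k\geqslant1$ one has $\tilde{\tilde\mu}(\liftingd u)=d\,d\mu(\liftingd u)=0$ and $\iota_u\tilde{\tilde\theta}=-d(\iota_u\tilde\theta)-\tilde\mu(\liftingd u)=d\,d(\iota_u\theta)+d\mu(\liftingd u)-d\mu(\liftingd u)=0$, using $d^2=0$ for the de Rham differential. The main obstacle is the third part of the inductive step: it is the only place where the argument is not purely formal, relying essentially on combining the two compatibility conditions of the input pair with Cartan calculus, whereas everything else is bookkeeping. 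I would also note that for integrable $A$ the whole statement follows at once by differentiating Proposition \ref{Prop:differentialofcharpairs}, whose $d^2=0$ is automatic since it is transported from the de Rham complex on $\Gpd$ via Theorem \ref{main2}; local integrability of Lie algebroids then reduces the general case to this one, but the direct verification above avoids any appeal to integrability.
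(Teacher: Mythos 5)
Your proof is correct, and it supplies more than the paper does: the paper states this proposition with no proof at all, presenting it only as the ``infinitesimal version'' of Proposition \ref{Prop:differentialofcharpairs}, i.e.\ as what one obtains by differentiating the groupoid-level differential of $(0,k)$-characteristic pairs. Your direct Cartan-calculus verification is the right way to make that assertion self-contained at the algebroid level. The two key computations check out: the $\rho$-compatibility of $\tilde\theta$ does collapse, after substituting \eqref{Eqt:tempsecond} for $(\mu,\theta)$ and symmetrizing, to $-\iota_{[u,v]}\theta-\iota_{[v,u]}\theta+d\bigl(\iota_{\rho(u)}\iota_v\theta+\iota_{\rho(v)}\iota_u\theta\bigr)=0$; and condition \eqref{Eqt:tempsecond} for $(\tilde\mu,\tilde\theta)$ reduces, via the cocycle identity for $\mu$ on lifted sections together with $d\mathcal{L}_{\rho(v)}=\mathcal{L}_{\rho(v)}d$ and Cartan's formula, to $\iota_{\rho(v)}d\mu(\liftingd u)$ on both sides. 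Your observation that \eqref{Eqt:edfu2} for $(\tilde\mu,\tilde\theta)$ is forced by expanding $\tilde\mu(\liftingd(fu))$ is also the cleanest way to see that $\tilde\mu$ is a well-defined bundle map on all of $\jet A$ (and, since $\Gamma(\jet A)$ is spanned over $\CinfM$ by lifted sections, checking $d_{\jet A}\tilde\mu=0$ on pairs $(\liftingd u_1,\liftingd u_2)$ already suffices by tensoriality). The trade-off between the two routes is as you describe: differentiating Proposition \ref{Prop:differentialofcharpairs} makes $d^2=0$ automatic but presupposes (local) integrability, whereas your argument uses only the axioms of Definition \ref{2.16} and works for an arbitrary Lie algebroid.
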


We have a direct  corollary following the one-to-one correspondence established by Proposition  \ref{Prop:LiepaircharpairtoIMform}.
\begin{corollary}\label{Cor:cochaincomplexofIMA}
	 Denote by $\mathrm{IM}^0(A)=Z^1(A,M\times \mathbb{R})$ the set of Lie algebroid $1$-cocycles   and $\mathrm{IM}^k(A)$ the set of IM $k$-forms of $A$  for   $k\geqslant  1$. Then $\mathrm{IM}^\bullet (A)=\oplus_{j=0}^{\dim M+1} \mathrm{IM}^j (A) $ admits a canonical cochain complex structure with the differential  as  described below:
	\begin{enumerate}
		\item [\rm(1)]The differential $d:\mathrm{CP}^0(A)\to \mathrm{CP}^1(A)$ is simply  {$d(c )=(0,-c)$, for all $1$-cocycles $c\in Z^1(A,M\times \mathbb{R})$.}
		\item[\rm(2)] For $1\leqslant k\leqslant \dim M$, the differential $d:\mathrm{CP}^k(A)\to \mathrm{CP}^{k+1}(A)$ is given by  {$d(\nu,\theta)=(0,\nu)$.}
	\end{enumerate}
\end{corollary}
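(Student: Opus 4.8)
The plan is to obtain the cochain complex $(\mathrm{IM}^\bullet(A),d)$ by transporting the cochain complex structure on $\mathrm{CP}^\bullet(A)$ (established in the preceding proposition) along the degreewise bijections of Proposition \ref{Prop:LiepaircharpairtoIMform}. I would write $\Psi_k\colon\mathrm{CP}^k(A)\to\mathrm{IM}^k(A)$ for the isomorphism sending $(\mu,\theta)$ to $(\nu,\theta)$ with $\nu(u)=-\mu(\liftingd u)-d(\iota_u\theta)$, and set $\Psi_0=\id$ on $\mathrm{CP}^0(A)=\mathrm{IM}^0(A)=Z^1(A,M\times\mathbb{R})$. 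Since $\Psi=\oplus_k\Psi_k$ is an isomorphism of graded vector spaces and $d_{\mathrm{CP}}^2=0$, the conjugated operator $d:=\Psi\circ d_{\mathrm{CP}}\circ\Psi^{-1}$ is automatically a differential on $\mathrm{IM}^\bullet(A)$; the only remaining task is to compute it and verify it has the stated form.

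For degree zero, I would evaluate $d(c)=\Psi_1(d_{\mathrm{CP}}(c))=\Psi_1(\mu_c,-c)$ with $\mu_c(\liftingd u)=d(c(u))$. Its $\nu$-component is $-\mu_c(\liftingd u)-d(\iota_u(-c))=-d(c(u))+d(c(u))=0$, giving $d(c)=(0,-c)$, which is (1). For $1\leqslant k\leqslant\dim M$, I would start from $(\nu,\theta)=\Psi_k(\mu,\theta)$, apply $d_{\mathrm{CP}}(\mu,\theta)=(\tilde\mu,\tilde\theta)$ with $\tilde\mu(\liftingd u)=d(\mu(\liftingd u))$ and $\iota_u\tilde\theta=-d(\iota_u\theta)-\mu(\liftingd u)$, and then apply $\Psi_{k+1}$. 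The resulting $\theta$-component is $\tilde\theta$, whose contraction against $u$ equals $\iota_u\tilde\theta=-\mu(\liftingd u)-d(\iota_u\theta)=\nu(u)$, so it is exactly $\nu$. The resulting $\nu$-component is
\begin{eqnarray*}
-\tilde\mu(\liftingd u)-d(\iota_u\tilde\theta)
&=&-d(\mu(\liftingd u))-d\bigl(-d(\iota_u\theta)-\mu(\liftingd u)\bigr)\\
&=&-d(\mu(\liftingd u))+d(d(\iota_u\theta))+d(\mu(\liftingd u))=0,
\end{eqnarray*}
where the middle term vanishes by $d^2=0$. Hence $d(\nu,\theta)=(0,\nu)$, which is (2).

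The computation is essentially forced, so I do not anticipate a genuine obstacle; the one point that actually uses something is the cancellation in the last display, which relies on the de Rham identity $d^2=0$ and on $\tilde\mu,\tilde\theta$ being constructed from $\mu,\theta$ with the same differential $d$, so that the two copies of $d(\mu(\liftingd u))$ cancel and the iterated de Rham differential of $\iota_u\theta$ dies. As a consistency check, $d^2=0$ on $\mathrm{IM}^\bullet(A)$ is also visible directly: from $d(\nu,\theta)=(0,\nu)$ one gets $d(d(\nu,\theta))=d(0,\nu)=(0,0)$.
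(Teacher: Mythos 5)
Your proposal is correct and is exactly the route the paper takes: the corollary is obtained by transporting the differential on $\mathrm{CP}^\bullet(A)$ from the preceding proposition through the bijection of Proposition \ref{Prop:LiepaircharpairtoIMform}, and your explicit computation (the cancellation of the two copies of $d(\mu(\liftingd u))$ together with $d^2=0$, and the identification $\iota_u\tilde\theta=\nu(u)$) is the verification the paper leaves implicit when it calls this a ``direct corollary.'' Nothing is missing.
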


We note that the above fact already appeared in \cite{BC}.

\subsection{The Cartan calculus}
In this part, useful formulas \textit{\'{a} la} Cartan are introduced to describe the interaction between multiplicative multi-vector fields and forms on a Lie groupoid $G\rightrightarrows M$. The notation used is consistent with the earlier sections.

\begin{lemma}\label{contraction}
	Let $\Pi\in \XkmultG{k}$ ($k\geqslant  1$) and $\alpha\in \OmegakmultG{1}$ be given.
	\begin{itemize}
		\item [(1)]
		Their contraction is also multiplicative, i.e.  $\iota_\alpha \Pi\in \XkmultG{k-1}$;	
		\item [(2)]For $\gamma\in \Omega^1(M)$, we have $\iota_{s^*\gamma} \Pi=\overleftarrow{\iota_\gamma \pi}$  and $\iota_{t^*\gamma} \Pi=\overrightarrow{\iota_\gamma \pi}$, where $\pi\in \Gamma(TM\otimes (\wedge^{k-1} A))$ is the leading term of $\Pi$;
		\item [(3)]For $u\in \Gamma(\wedge^k A)$, we have $\iota_{\alpha} \overleftarrow{u}=\overleftarrow{\iota_{a} u}$  and $\iota_{\alpha} \overrightarrow{u}=\overrightarrow{\iota_{a} u}$, where $a\in \Gamma(A^*)$ is the leading term of $\alpha$.
	\end{itemize} 
\end{lemma}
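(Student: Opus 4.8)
The three assertions are logically independent, so the plan is to prove them separately, treating (1) as a structural fact and (2),(3) as concrete contraction identities. For (1) I would use the characterisation of multiplicativity by sharp maps. Since $\alpha\in\OmegakmultG{1}$, the bundle map $\alpha\colon\Gpd\to T^*\Gpd$ is a Lie groupoid morphism over its leading term $a\in\Gamma(A^*)$ (recall from Remark \ref{alpha} that $s(\alpha)=t(\alpha)=a$). Inserting $\alpha$ in the first slot then defines a Lie groupoid morphism $\Psi\colon\oplus^{k-2}T^*\Gpd\to\oplus^{k-1}T^*\Gpd$, $(\beta_1,\dots,\beta_{k-2})_g\mapsto(\alpha_g,\beta_1,\dots,\beta_{k-2})$, which preserves sources, targets, composability and the fibred product precisely because $\alpha$ is multiplicative. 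On the other hand, multiplicativity of $\Pi\in\XkmultG{k}$ means that $\Pi^{\sharp}\colon\oplus^{k-1}T^*\Gpd\to T\Gpd$ is a Lie groupoid morphism. Unravelling the contraction gives the identity $(\iota_\alpha\Pi)^{\sharp}=\Pi^{\sharp}\circ\Psi$, a composite of Lie groupoid morphisms and hence a morphism; this is exactly the statement $\iota_\alpha\Pi\in\XkmultG{k-1}$.

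For (3) I would compute directly on decomposables $u=u_1\wedge\cdots\wedge u_k$ with $u_i\in\Gamma(A)$, so that $\overleftarrow{u}=\overleftarrow{u_1}\wedge\cdots\wedge\overleftarrow{u_k}$, and then extend by linearity. The key input is the specialisation of \eqref{Eqt:iotauTheta1} to $k=1$: for the multiplicative $1$-form $\alpha$ with leading term $a$ one has $\iota_{\overleftarrow{u_i}}\alpha=s^*(\iota_{u_i}a)$ and, by its right-invariant analogue, $\iota_{\overrightarrow{u_i}}\alpha=t^*(\iota_{u_i}a)$. Expanding $\iota_\alpha\overleftarrow{u}$ by the Leibniz rule and using the $C^\infty(M)$-linearity $\overleftarrow{fv}=s^*(f)\,\overleftarrow{v}$ of the left-invariant extension collapses the alternating sum to $\overleftarrow{\iota_a u}$; the right-invariant case is identical with $s$ replaced by $t$. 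This step is routine.

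For (2) I would contract the explicit decomposition \eqref{formula3}, $\Pi_g=R_{g*}c([b_g])+L_{[b_g]}(B\pi)_{s(g)}$, against $(s^*\gamma)_g$. Because $s\circ R_g$ is constant, the pullback $R_{g}^*(s^*\gamma)$ annihilates $A_{t(g)}$ and the cocycle term drops out entirely; because $s\circ L_g=s$ and $s\circ b=\mathrm{id}_M$, the pullback $L_{[b_g]}^*(s^*\gamma)$ restricts to $\gamma$ on the $TM$-summand and vanishes on $A$, so the surviving term is $L_{[b_g]}\,\iota_\gamma(B\pi)_{s(g)}$, where $\iota_\gamma$ contracts the $TM$-factors of $B\pi$. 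It then remains to identify $L_{[b_g]}\,\iota_\gamma(B\pi)$ with the left-invariant field $\overleftarrow{\iota_\gamma\pi}$; the statement for $t^*\gamma$ is proved symmetrically, using that $t\circ L_g$ is constant so that the roles of left and right invariance are interchanged.

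The main obstacle is exactly this last identification in (2). The difficulty is that $\iota_\gamma(B\pi)$ is not simply $\iota_\gamma\pi$: the higher terms $\rhopush^{\,j}\pi$ in $B\pi$ (see \eqref{Bpi}) carry extra $TM$-factors that survive one contraction and would otherwise spoil the answer. The mechanism that rescues the identity is the $\rho$-compatibility of $\pi$ (Definition \ref{Defn:properktensor}), which forces the symmetric part of $\pi$ in $TM\otimes TM$ to vanish; this is precisely what makes $\iota_\gamma(B\pi)$ reproduce the restriction to $M$ of the honest left-invariant field $\overleftarrow{\iota_\gamma\pi}$ (for instance, when $k=2$ one checks that $\iota_\gamma\rhopush\pi=2\rho(\iota_\gamma\pi)$, so that the $B$-series telescopes correctly). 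Combined with the fact that $L_{[b_g]}$ restricts to the left translation $L_{g*}$ on $\ker t_*$, this gives both the left-invariance of $\iota_{s^*\gamma}\Pi$ and its claimed value.
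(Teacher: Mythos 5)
Your argument is correct in outline, but for parts (2) and (3) it takes a genuinely different and substantially more computational route than the paper. For (1) you and the paper do essentially the same thing: multiplicativity of $\alpha$ and of $\Pi$ are both encoded as groupoid-morphism properties and combined; the paper phrases this as the additive identity $\Pi_{gr}(\alpha_g\cdot\beta_r,\dots)=\Pi_g(\dots)+\Pi_r(\dots)$ together with $\alpha_{gr}=\alpha_g\cdot\alpha_r$, while you compose sharp maps --- equivalent bookkeeping. For (2) and (3) the paper never touches the decomposition \eqref{formula3}: it uses only the commutativity $s_*\circ\Pi^{\sharp}=\pi^{\sharp}\circ(\oplus^{k-1}s)$ coming from the morphism diagram, the definition \eqref{stco} of the source of a covector, and the identity $\overleftarrow{w}(\alpha^1,\dots,\alpha^{k-1})=w(s(\alpha^1),\dots,s(\alpha^{k-1}))$; each of (2) and (3) is then a three-line chain of equalities. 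Your route instead contracts $\Pi_g=R_{g*}c([b_g])+L_{[b_g]}(B\pi)_{s(g)}$ against $s^*\gamma$. The steps you describe are all true --- the cocycle term is killed because $s\circ R_g$ is constant, and $L_{[b_g]}^*(s^*\gamma)$ is $\gamma$ on $TM$ and $0$ on $A$ --- but the whole weight of the proof then falls on the identity $\iota_{\gamma}(B\pi)=e^{-\rhopush}(\iota_{\gamma}\pi)$ (whose image under $\wedge^{k-1}L_{[b_g]}$ is $\overleftarrow{\iota_\gamma\pi}_g$, since $\wedge^{k-1}L_{[b_g]}$ sends $e^{-\rhopush}v$ to $\overleftarrow{v}_g$). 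You only verify this for $k=2$ and assert that the series "telescopes"; in general it requires an induction on the powers $\rhopush^{j}\pi$ using $\rho$-compatibility, exactly parallel to Lemma \ref{formula} for $(0,k)$-tensors. So your proof is completable but buys nothing over the paper's, which sidesteps $B\pi$ entirely; the one thing your computation does expose is the geometric content of the leading term, namely that $\rho$-compatibility is precisely what makes the $B$-series consistent with left-invariant extension. Part (3) as you set it up (Leibniz expansion on decomposables using $\iota_{\overleftarrow{u_i}}\alpha=s^*\langle a,u_i\rangle$ and $\overleftarrow{fv}=s^*(f)\,\overleftarrow{v}$) is fine and genuinely routine, just longer than the paper's one-line use of $s(\alpha)=a$.
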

\begin{proof}  To show that $\iota_\alpha \Pi\in \XkmultG{k-1}$, we need two facts:
	\begin{itemize}
		\item[ (i)]   An $n$-vector  field $\Gamma\in \mathfrak{X}^n(\Gpd)$ is multiplicative if and only if 
		\[\Gamma_{gr}(\alpha^1_g\cdot \beta^1_r,\cdots, \alpha^n_g\cdot \beta^n_r)=\Gamma_g(\alpha^1_g,\cdots,\alpha^n_g)+\Gamma_r(\beta^1_r,\cdots, \beta^n_r),\qquad \forall (g,r)\in \Gpd^{(2)},\]
		for all composable pairs $(\alpha^i_g\in T_g^* \Gpd,\beta^i_r\in T_r^* \Gpd)$,  $i=1,\cdots, n$. See \cite[Proposition 2.7]{ILX}. \item[ (ii)] A $1$-form $\alpha\in \Omega^1(\Gpd)$ is multiplicative if and only if  $\alpha:\Gpd\to T^*\Gpd$ is a groupoid morphism.  This is explained in Section \ref{Sec:multkforms} (see also \cite{Kosmann}). 
	\end{itemize}
	Since our $\Pi$ and $\alpha$ are both multiplicative, we have  $\alpha_{gr}=\alpha_g\cdot \alpha_r$ for $(g,r)\in \Gpd^{(2)}$ by (ii) and 
	\begin{eqnarray*}
		(\iota_\alpha\Pi)_{gr}(\alpha^1_g\cdot \beta^1_r,\cdots, \alpha^{k-1}_g\cdot \beta^{k-1}_r)&=&\Pi_{gr}(\alpha_{gr}, \alpha^1_g\cdot \beta^1_r,\cdots, \alpha^{k-1}_g\cdot \beta^{k-1}_r)\\ &\stackrel{(ii)}{=}&\Pi_{gr}(\alpha_g\cdot \alpha_r, \alpha^1_g\cdot \beta^1_r,\cdots, \alpha^{k-1}_g\cdot \beta^{k-1}_r)\\ &\stackrel{(i)}{=}&\Pi_g(\alpha_g,\alpha^1_g,\cdots, \alpha^{k-1}_g)+\Pi_r(\alpha_r,\beta^1_r,\cdots, \beta^{k-1}_r)\\ &=&
		(\iota_\alpha \Pi)_g(\alpha^1_g,\cdots,\alpha^{k-1}_g)+(\iota_\alpha \Pi)_r(\beta^1_r,\cdots,\beta^{k-1}_r).
	\end{eqnarray*}
	By (i) again, the above relation proves the assertion  $\iota_\alpha \Pi\in \XkmultG{k-1}$.

	Next, we show the  equality $\iota_{s^*\gamma} \Pi=\overleftarrow{\iota_\gamma \pi}$. In fact, by $\Pi$ being multiplicative, we have a groupoid morphism  
	\begin{equation*}
	\xymatrix{
		\oplus^{k-1} T^*\Gpd\ar@{->}@< 2pt> [d]
		\ar @{ ->} @<-2pt> [d]  \ar[r]^{\quad \Pi^\sharp} &T\Gpd\ar@{->}@< 2pt> [d]
		\ar @{ ->} @<-2pt> [d] \\
		\oplus^{k-1} A^*\ar[r]^{\pi^\sharp} & TM \ .}
	\end{equation*}
 Therefore we have the relation
	$s_*\circ \Pi^\sharp=\pi^\sharp \circ (\oplus^{k-1} s)
	$. Using this, we can examine  the relation
	\begin{eqnarray*}
		(\iota_{s^*\gamma}\Pi)(\alpha^1,\cdots, \alpha^{k-1})&=&(-1)^{k-1}\langle \Pi^\sharp(\alpha^1,\cdots,\alpha^{k-1}),s^*\gamma\rangle\\ &=&(-1)^{k-1}\langle s_* \Pi^\sharp(\alpha^1,\cdots,\alpha^{k-1}),\gamma\rangle\\ &=&(-1)^{k-1}\langle \pi^\sharp(s(\alpha^1),\cdots,s(\alpha^{k-1})),\gamma\rangle=(\iota_{\gamma}\pi)( s(\alpha^1),\cdots,s(\alpha^{k-1}))\\ &=&\overleftarrow{\iota_\gamma \pi}(\alpha^1,\cdots, \alpha^{k-1}).
	\end{eqnarray*}
	The  last step is due to the definition  of $s:~T^*\Gpd\rightarrow A^*$ as in \eqref{stco}. The other equality $\iota_{t^*\gamma} \Pi=\overrightarrow{\iota_\gamma \pi}$ is approached similarly.
	

	We finally show $\iota_{\alpha} \overleftarrow{u}=\overleftarrow{\iota_{a} u}$ (the other one is similar). For this, we need the fact $s(\alpha)=a$ (see Remark \ref{alpha}). Then we have
	\begin{eqnarray*}
		(\iota_{\alpha}\overleftarrow{u})(\alpha^1,\cdots,\alpha^{k-1})&=&u(s(\alpha),s(\alpha^1),\cdots,s(\alpha^{k-1}))=u(a,s(\alpha^1),\cdots,s(\alpha^{k-1}))
		\\ &=&(\iota_a u)(s(\alpha^1),\cdots,s(\alpha^{k-1}))\\ &=&\overleftarrow{\iota_a u}(\alpha^1,\cdots,\alpha^{k-1}),
	\end{eqnarray*}
as desired.
\end{proof}

\begin{example}  
	For a Poisson Lie groupoid $(\Gpd,P)$ (see the next section for more details), the Hamiltonian vector field $X_f:=P^\sharp(df)$ of a multiplicative function $f\in C^\infty(\Gpd)$ is a multiplicative vector field, by (1) of Lemma \ref{contraction} and the fact that $df\in \OmegakmultG{1}$.\end{example}

\begin{example} 
	Consider an exact $2$-vector field $ P\in\XkmultG{2}$ which is of the form  $P=\overleftarrow{u}-\overrightarrow{u}$, where $u\in \Gamma(\wedge^2 A)$. Then for any $\alpha\in \OmegakmultG{1}$, by (3) of Lemma \ref{contraction}, we have 
	\[P^\sharp (\alpha)=\overleftarrow{\iota_a u}-\overrightarrow{\iota_a u}\in \XkmultG{1},\]
	where $a\in \Gamma(A^*)$ is the leading term of $\alpha$. In general, the contraction of $\alpha$ to every exact $k$-vector field  yields an exact $(k-1)$-vector field.   \end{example}

We have a   lemma parallel to the previous one.

\begin{lemma}\label{contraction2}
	Suppose that $\Theta\in \OmegakmultG{k}$ and $X\in  \XkmultG{1}$ are given.
	\begin{itemize}
		\item [(1)]
		Their contraction is also multiplicative, i.e.  $\iota_X \Theta\in \OmegakmultG{k-1}$;	
		\item [(2)]For $u\in \Gamma(A)$, we have $\iota_{\overleftarrow{u}}\Theta=s^*(\iota_u \theta)$  and $\iota_{\overrightarrow{u}}\Theta=t^*(\iota_u \theta)$, where $\theta\in \Gamma(A^*\otimes (\wedge^{k-1} T^*M))$ is the leading term   of  $\Theta $;
		\item [(3)]For $\gamma\in \Omega^k(M)$, we have $\iota_{X} s^*\gamma=s^*(\iota_x \gamma)$  and $\iota_{X}t^*\gamma=t^*(\iota_x \gamma)$, where $x\in \mathfrak{X}^1 (M)$ is the leading term of $X$.
	\end{itemize} 
\end{lemma}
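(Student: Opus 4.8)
The plan is to treat the three items essentially independently, in each case reducing the claim to a pointwise identity via the appropriate multiplicativity characterization, dualizing the argument of Lemma~\ref{contraction}. For item (1) I would first record the dual analogue of fact (i) used there: unravelling \eqref{multform} shows that $\Theta\in\Omega^k(\Gpd)$ is multiplicative precisely when
\[\Theta_{gr}(Y^1_g\cdot Z^1_r,\ldots,Y^k_g\cdot Z^k_r)=\Theta_g(Y^1_g,\ldots,Y^k_g)+\Theta_r(Z^1_r,\ldots,Z^k_r)\]
for all $(g,r)\in\Gpd^{(2)}$ and all composable pairs $(Y^i_g,Z^i_r)\in(T\Gpd)^{(2)}$, since tangent vectors to $\Gpd^{(2)}$ at $(g,r)$ are exactly such composable pairs and $m_*(Y,Z)=Y\cdot Z$, $\mathrm{pr}_{i*}$ being the two projections. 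On the other side, $X\in\XkmultG{1}$ being multiplicative means $X\colon\Gpd\to T\Gpd$ is a Lie groupoid morphism, so $X_{gr}=X_g\cdot X_r$. Inserting this into the first slot of the displayed identity and reading off $\iota_X\Theta$ then shows at once that $\iota_X\Theta$ satisfies the same additivity for $(k-1)$-forms, whence $\iota_X\Theta\in\OmegakmultG{k-1}$.

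Item (2) is almost entirely contained in the proof of Proposition~\ref{Prop:ThetaalongM}. There it is shown in \eqref{Eqt:iotauTheta1}, together with the analogous identity $\iota_{\overrightarrow{u}}\Theta=t^*\omega$ established immediately afterward, that $\iota_{\overleftarrow{u}}\Theta=s^*\omega$ and $\iota_{\overrightarrow{u}}\Theta=t^*\omega$ with $\omega:=\iota_{u-\rho(u)}(\Theta|_M)\in\Omega^{k-1}(M)$. It remains only to identify $\omega$ with $\iota_u\theta$: since $\Theta|_M=B\theta$ by Proposition~\ref{Prop:ThetaalongM}(2), one has $\omega=\iota_{u-\rho(u)}(B\theta)=\iota_u(B\theta)-\iota_{\rho(u)}(B\theta)$, and \eqref{Eqt:Bthetawithuproperty} gives exactly $\iota_u(B\theta)-\iota_{\rho(u)}(B\theta)=\iota_u\theta$. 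This yields both equalities simultaneously, so item (2) requires no new computation.

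For item (3), the key input is that the leading term $x$ of $X\in\XkmultG{1}$ is precisely the base map of the groupoid morphism $X\colon\Gpd\to T\Gpd$ (for $k=1$ the defining diagram reads $X$ over $x\colon M\to TM$), so that commutativity with the source and target maps gives $s_*X_g=x_{s(g)}$ and $t_*X_g=x_{t(g)}$. Then for $Y^1,\ldots,Y^{k-1}\in T_g\Gpd$ I would simply compute
\[(\iota_X s^*\gamma)_g(Y^1,\ldots,Y^{k-1})=\gamma_{s(g)}(s_*X_g,s_*Y^1,\ldots,s_*Y^{k-1})=(\iota_x\gamma)_{s(g)}(s_*Y^1,\ldots,s_*Y^{k-1}),\]
which is $\bigl(s^*(\iota_x\gamma)\bigr)_g$ on the same vectors, and the target case is identical. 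None of the three items poses a genuine obstacle: the only point needing care is the observation in item (3) that the leading term $x$ coincides with the base map of $X$ (equivalently, that a multiplicative vector field is tangent to $M$ along the units), which is exactly what validates $s_*X_g=x_{s(g)}$ and $t_*X_g=x_{t(g)}$.
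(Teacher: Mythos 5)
Your proof is correct and is exactly the argument the paper intends: the authors omit the proof of this lemma, remarking only that part of (2) is reminiscent of Lemma~\ref{lifting}, and your treatment of (1) and (3) is the straightforward dualization of the proof of Lemma~\ref{contraction} via the additivity characterization coming from \eqref{multform}, while your (2) correctly assembles the identities $\iota_{\overleftarrow{u}}\Theta=s^*\omega$, $\iota_{\overrightarrow{u}}\Theta=t^*\omega$ already established after Lemma~\ref{lifting} with Proposition~\ref{Prop:ThetaalongM}(2) and \eqref{Eqt:Bthetawithuproperty} to identify $\omega=\iota_u\theta$. The one point needing care in (3) — that a multiplicative vector field is tangent to $M$ along the units, so its leading term is the base map and $s_*X_g=x_{s(g)}$, $t_*X_g=x_{t(g)}$ — is handled correctly.
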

 The proof is omitted. 
   {Note that part of Statement $\mathrm{(2)}$  is  reminiscent of Lemma \ref{lifting}.}

At this point, we see that the de Rham differential $d$ of forms, the contraction $\iota_X$ by a multiplicative vector  field $X\in  \XkmultG{1}$, and the Lie derivative via Cartan's formula
$$\mathcal{L}_{X}=\iota_X\circ d+d\circ \iota_X 
$$
all preserve  $\OmegakmultG{\bullet}$.

For a multiplicative $2$-vector field $P\in \XkmultG{2}$, we have seen that the map $P^\sharp$ maps a multiplicative $1$-form $\Theta$ to a multiplicative vector field  $P^\sharp(\Theta)$. Thereby,  if $\Theta$ corresponds to the $(0,1)$-characteristic pair $(e,\theta)$,  $P^\sharp(\Theta)$ should correspond to a $(1,0)$-characteristic pair $(c_{P^\sharp(\Theta)},\pi_{P^\sharp(\Theta)})$, where
\[e\in Z^1(\jet \Gpd, T^*M),\qquad \theta\in \Gamma(A^*),\qquad c_{P^\sharp(\Theta)}\in Z^1(\jet \Gpd, A), \quad\mbox{and}\quad \pi_{P^\sharp(\Theta)}\in \mathfrak{X}^1(M).\]

To find the explicit relations between these data,  we assume that $P\in \XkmultG{2}$ corresponds to the  $(2,0)$-characteristic pair  $(c_P, p)$, where $c_P\in Z^1(\jet \Gpd,\wedge^2 A)$ and $p\in \Gamma(TM\otimes A)$.

\begin{proposition}\label{cpmap}
	With assumptions as above,    the $1$-cocycle $c_{P^\sharp(\Theta)}:~\jet \Gpd\to A$ is given by
	\[c_{P^\sharp(\Theta)}([b_g])= (\Ad_{[b_g]} p)^\sharp \circ e([b_g])+\iota_\theta\circ  c_P([b_g]),\qquad  \]
	and $\pi_{P^\sharp(\Theta)}\in \mathfrak{X}^1(M)$ by $p^\sharp(\theta)$, i.e.
	$$\pi_{P^\sharp(\Theta)}(f)=-p  (df, \theta),\qquad \forall f\in \CinfM.
	$$
\end{proposition}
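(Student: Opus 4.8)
The plan is to compute the $(1,0)$-characteristic pair $(c_{P^\sharp(\Theta)},\pi_{P^\sharp(\Theta)})$ of the multiplicative vector field $P^\sharp(\Theta)$ directly from its defining formulas, reading off each component via the explicit expressions already established. For the leading term $\pi_{P^\sharp(\Theta)}$, recall from Theorem~\ref{Thm:mainexplicitformula} that the leading term is simply the $\Gamma(TM\otimes\wedge^0 A)=\mathfrak{X}^1(M)$-component of $(P^\sharp\Theta)|_M$. Since by Corollary~\ref{Cor:fromThetatocharpair} one has $\Theta|_M=B\theta=\theta\in\Gamma(A^*)$ when $k=1$, and $P|_M$ has leading term $p\in\Gamma(TM\otimes A)$, contracting gives $(P^\sharp\Theta)|_M=p^\sharp(\theta)$ up to lower-order terms that vanish on restriction. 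Concretely one checks $\pi_{P^\sharp(\Theta)}(f)=\langle (P^\sharp\Theta)|_M, df\rangle=-p(df,\theta)$ for all $f\in\CinfM$, which is the asserted formula.

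For the $1$-cocycle $c_{P^\sharp(\Theta)}$, the plan is to use Equation~\eqref{cocyclecp}, namely $c_{P^\sharp(\Theta)}([b_g])=R_{g*}^{-1}\bigl((P^\sharp\Theta)_g-L_{[b_g]}(B\pi_{P^\sharp(\Theta)})_{s(g)}\bigr)$. I would substitute the decomposition formula~\eqref{formula3} for $P_g$ (with pair $(c_P,p)$) and the dual decomposition formula~\eqref{formula theta} for $\Theta_g$ (with pair $(e,\theta)$), then evaluate $P_g^\sharp$ on $\Theta_g$. The key algebraic point is that $P_g=R_{g*}c_P([b_g])+L_{[b_g]}(B p)_{s(g)}$ splits into a right-translated and a left-translated piece, while $\Theta_g=R_{[b_g^{-1}]}^*(e[b_g]+B\theta_{t(g)})$ splits analogously; contracting the two and using the compatibility of left/right translations with the adjoint action $\Ad_{[b_g]}$ should produce exactly the two summands $(\Ad_{[b_g]}p)^\sharp\circ e([b_g])$ and $\iota_\theta\circ c_P([b_g])$.

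The main obstacle I anticipate is bookkeeping the interaction between the translations $R_{g*},L_{[b_g]},R_{[b_g^{-1}]}^*$ when pairing a multivector with a form at the point $g$, and correctly transporting everything back to the fibre over $s(g)$ or $t(g)$ via $R_{g*}^{-1}$. In particular, the term $(\Ad_{[b_g]}p)^\sharp\circ e([b_g])$ arises from contracting the left-invariant part $L_{[b_g]}(Bp)$ of $P$ against the right-translated cocycle part $R_{[b_g^{-1}]}^*e[b_g]$ of $\Theta$, so I would need the identity relating $R_{g*}^{-1}\circ L_{[b_g]}$ on $Bp$ to the adjoint action $\Ad_{[b_g]}$ (as in Definition~\ref{Def:jetGpdadjoint}), and dually that $R_{[b_g^{-1}]}^*$ is the transpose of $R_{[b_g]}$. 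The second summand $\iota_\theta\circ c_P([b_g])$ should come more directly from contracting $R_{g*}c_P([b_g])$ against the $B\theta$ part of $\Theta$, using $s(\Theta)=\theta$ (Remark~\ref{alpha}) and the source-map description~\eqref{stco}. Once these translation identities are in place, the verification reduces to a routine contraction computation, and I would finish by confirming that the resulting $c_{P^\sharp(\Theta)}$ indeed satisfies the $1$-cocycle condition, which is automatic since $P^\sharp(\Theta)$ is multiplicative by Lemma~\ref{contraction}.
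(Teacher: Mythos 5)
Your strategy coincides with the paper's: the leading term $\pi_{P^\sharp(\Theta)}$ is obtained exactly as you say, by projecting $P^\sharp(\Theta)|_M=P^\sharp|_M(\Theta|_M)$ onto $\Gamma(TM)$, and the cocycle is computed from Equation \eqref{cocyclecp} after substituting the decompositions \eqref{formula3} and \eqref{formula theta}. The paper organizes the contraction by first pushing everything to $t(g)$ with $R_{[b_g^{-1}]*}$, which yields
\begin{equation*}
c_{P^\sharp(\Theta)}([b_g])=\bigl(c_P([b_g])+\Ad_{[b_g]}(p-\tfrac{1}{2}D_\rho p)\bigr)^\sharp\bigl(e[b_g]+\theta\bigr)-\Ad_{[b_g]}(p^\sharp\theta),
\end{equation*}
so your translation-bookkeeping concerns are handled exactly as you anticipate.

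There is, however, a genuine gap in your term-by-term accounting. Expanding the pairing above does \emph{not} produce only the two summands $(\Ad_{[b_g]}p)^\sharp\circ e([b_g])$ and $\iota_\theta\circ c_P([b_g])$: it also produces the three cross terms
\begin{equation*}
(\Ad_{[b_g]} p)^\sharp (\theta)\;-\;\tfrac{1}{2}(\Ad_{[b_g]} D_\rho p)^\sharp (e[b_g])\;-\;\Ad_{[b_g]}(p^\sharp \theta),
\end{equation*}
all valued in $T_{t(g)}M$, coming from contracting the left-invariant part of $P$ against the $B\theta$ part of $\Theta$ and from the $D_\rho p$ component of $Bp$ against $e[b_g]$. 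The proof is not complete until these cancel, and that cancellation is not routine: writing $p=X\otimes u$, it rests precisely on the compatibility condition \eqref{C3} of the $(0,1)$-characteristic pair, in the form $e[b_g](\Ad_{[b_g]}\rho(u))=\theta(\Ad_{[b_g]}u)-\theta(u)$, together with the $\rho$-compatibility of $p$ (which is what lets the two terms of $\tfrac{1}{2}\iota_{e[b_g]}(X\wedge\rho(u))$ collapse to $e[b_g](\rho(u))\,X$). Your plan never invokes \eqref{C3}, so as written the computation does not close. The remaining ingredients you list --- that $R_{g*}^{-1}\circ L_{[b_g]}$ on $Bp$ is $\Ad_{[b_g]}$, that $\iota_{e[b_g]}c_P([b_g])=0$ because $c_P([b_g])\in\wedge^2 A$ pairs trivially with $T^*M$, and that the cocycle property of the result is automatic from multiplicativity of $P^\sharp(\Theta)$ --- are all correct and agree with the paper.
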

\begin{proof}     	The formula for $\pi_{P^\sharp(\Theta)}$ can be found by its definition: \[\pi_{P^\sharp(\Theta)}=\mathrm{pr}_{\Gamma(TM)} P^\sharp(\Theta)|_M=\mathrm{pr}_{\Gamma(TM)} P^\sharp|_M(\Theta|_M)=p^\sharp(\theta).\]

	Then, according to Equations \eqref{formula3} and \eqref{formula theta}, we have
	\[P_g=R_{g*}c_P([b_g])+L_{[b_g]}(p-\frac{1}{2}D_\rho p),\qquad P^\sharp(\Theta)_g=R_{g*}c_{P^\sharp \Theta}([b_g])+L_{[b_g]}(p^\sharp \theta),\]
	and 
	\[\Theta_g=R_{[b_g^{-1}]}^*(e[b_g]+\theta).\]
	Then we have 
	\begin{eqnarray*}
		c_{P^\sharp(\Theta)}([b_g])&=&R_{g^{-1}*}\big(P^\sharp(\Theta)_g-L_{[b_g]}(p^\sharp \theta)\big)
		\\ &=&R_{[b_g^{-1}]*} P^\sharp (R_{[b_g^{-1}]}^*(e[b_g]+\theta))-\Ad_{[b_g]}p^\sharp \theta
		\\ &=&(R_{[b_g^{-1}]*}P_g)^\sharp (e[b_g]+\theta)-\Ad_{[b_g]}p^\sharp \theta
		\\ &=&\bigl(c_P([b_g])+\Ad_{[b_g]}(p-\frac{1}{2}D_\rho p)\bigl)^\sharp(e[b_g]+\theta)-\Ad_{[b_g]}p^\sharp \theta
		\\ &=&\iota_\theta c_P([b_g])+(\Ad_{[b_g]} p)^\sharp e([b_g])+(\Ad_{[b_g]} p)^\sharp (\theta)-\frac{1}{2}(\Ad_{[b_g]} D_\rho p)^\sharp (e[b_g])-\Ad_{[b_g]}(p^\sharp \theta).
	\end{eqnarray*}
	To prove  the desired formula for $c_{P^\sharp(\Theta)}([b_g])$, it remains to show that the    last three terms in the last line above cancel out. In fact, by applying \eqref{C3},
	we have
	\[e[b_g](\Ad_{[b_g]} \rho(u))=\theta(\Ad_{[b_g]} u)-\theta(u),\qquad u\in \Gamma(A).\]
	We may write $p=X\otimes u$ for $X\in \mathfrak{X}^1(M)$ and $u\in \Gamma(A)$,  and then we have 
	\begin{eqnarray*}
		&&(\Ad_{[b_g]} p)^\sharp (\theta)-\frac{1}{2}(\Ad_{[b_g]} D_\rho p)^\sharp (e[b_g])-\Ad_{[b_g]}(p^\sharp \theta)\\ &=&
		\theta(\Ad_{[b_g]} u) \Ad_{[b_g]}X-e[b_g](\Ad_{[b_g]} \rho(u)) \Ad_{[b_g]} X-\theta(u) \Ad_{[b_g]} X\\ &=&0,
	\end{eqnarray*}
	where we have used the $\rho$-compatibility of $p$.  
\end{proof}

\begin{example} 
	For a Poisson Lie group $(G,P)$ (see \cite{LuW}), by Example \ref{form on group}, the characteristic pair of a multiplicative $1$-form $\Theta$ is $(0,\theta)$ such that $\Theta(g)=R_{g^{-1}}^*\theta$, where $\theta\in \mathfrak{g}^*$ is $G$-invariant. Also, the characteristic pair of $P$ is $(c_P,0)$, where $c_P\in Z^1(G,\wedge^2 \mathfrak{g})$ is subject to $c_P(g)=R_{g^{-1}*} P_g$. 
	The characteristic pair of $P^\sharp(\Theta)$ is $(c,0)$, where $c\in Z^1(G,\mathfrak{g})$ is determined by
	\[c(g)=R_{g^{-1}*}(P^\sharp \Theta)=R_{g^{-1}*} P^\sharp(R_{g^{-1}}^*\theta)=(R_{g^{-1}*} P_g)(\theta)=\iota_\theta c_P(g).\]
\end{example}

\begin{example} 
	By Lemma \ref{Lem:exactcharpairs}, the characteristic pair of an exact multiplicative $1$-form  $\Theta=s^*\gamma-t^*\gamma$ for $\gamma\in \Omega^1(M)$ is $(e=d_{\jet \Gpd}\gamma,\theta=-\rho^*\gamma)$, where $\rho^*$ is the dual map of the anchor $\rho$ of the Lie algebroid $A$. Based on Lemma \ref{contraction}, we know that \[P^\sharp(\Theta)=\overleftarrow{\iota_\gamma p}-\overrightarrow{\iota_\gamma p},\]
	and its 
	characteristic pair $(c,\pi)$ is given by
	\[c=d_{\jet \Gpd} (\iota_\gamma p)\in Z^1(\jet \Gpd, A),\quad\mbox{~and~}~ \pi=-\rho(\iota_\gamma p)\in \mathfrak{X}^1(M),\]
	by Example  \ref{exact case}.
	 One can also show these identities by utilization of Proposition \ref{cpmap}.  
	
\end{example}

\section{Multiplicative forms on Poisson groupoids}\label{Sec:PoissonGpd}

\subsection{Multiplicative $1$-forms on Poisson  groupoids}
Consider a smooth manifold $N$ and a bivector field $P\in \mathfrak{X}^2 (N)$. One can define a skew-symmetric bracket $[{\tobefilledin},{\tobefilledin}]_P$ on $\Omega^1(N)$  given by:
\begin{equation}\label{Eqt:Pbracket1forms}[\alpha,\beta]_P=\mathcal{L}_{P^\sharp \alpha} \beta -\mathcal{L}_{P^\sharp \beta} \alpha-dP(\alpha,\beta)
=d(\iota_{P^\sharp \alpha} \beta) + \iota_{P^\sharp \alpha}  d\beta-\iota_{P^\sharp \beta}  d\alpha,\qquad \forall \alpha,\beta\in \Omega^1(N),\end{equation}
and an anchor map   $P^\sharp:T^*N\to TN$, $\alpha\mapsto \iota_{\alpha}P$. We have two formulas  (see  \cite{Kosmann2}):
\begin{eqnarray}\label{quasiim}
[\alpha_1,[\alpha_2,\alpha_3]_P]_P+c.p.=-\frac{1}{2} L_{[P,P](\alpha_1,\alpha_2,\tobefilledin)} \alpha_3+c.p.+d([P,P](\alpha_1,\alpha_2,\alpha_3)),\qquad \forall \alpha_i\in \Omega^1(N),
\end{eqnarray} 
and  
\begin{equation}\label{Eqt:PsharpwrtPbracket}
P^\sharp[\alpha_1,\alpha_2]_P-[P^\sharp \alpha_1,P^\sharp \alpha_2]=\frac{1}{2}[P,P](\alpha_1,\alpha_2),\qquad \forall \alpha_i\in \Omega^1(N).\end{equation}

A Poisson manifold is a pair $(N,P)$ where $N$ is a smooth manifold and $P\in \mathfrak{X}^2 (N)$ is a   bivector field subject to $[P,P]=0$. Due to Equations \eqref{quasiim} and \eqref{Eqt:PsharpwrtPbracket}, we see that $T^*N$ is a Lie algebroid when equipped with the bracket $[\tobefilledin,\tobefilledin]_P$ and the anchor $P^\sharp$.

Recall that a {\bf Poisson groupoid} is a Lie groupoid $\Gpd$ with a multiplicative bivector field $P\in \XkmultG{2}$ such that $[P,P]=0$ (see  \cites{W1, MX1}).
In this section, we study the space $\OmegakmultG{1}$ of multiplicative $1$-forms on a Poisson groupoid. We first show that $\OmegakmultG{1}$ carries  a natural Lie algebra structure.

\begin{theorem}\label{Thm:Omega1multLiealgebra}
	For a Poisson Lie groupoid $(\Gpd,P)$,  the space of multiplicative $1$-forms  $\OmegakmultG{1} $ is a Lie subalgebra of the Lie algebra $(\Omega^1(\Gpd),[{\tobefilledin},{\tobefilledin}]_P)$. 
\end{theorem}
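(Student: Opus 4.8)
The plan is to show that $\OmegakmultG{1}$, which is evidently a linear subspace of $\Omega^1(\Gpd)$, is closed under the bracket $[{\tobefilledin},{\tobefilledin}]_P$; since $(\Omega^1(\Gpd),[{\tobefilledin},{\tobefilledin}]_P)$ is already a Lie algebra (because $[P,P]=0$), closure is all that remains to be checked. The key idea is to exploit the second expression for the bracket in Equation \eqref{Eqt:Pbracket1forms},
$$[\alpha,\beta]_P=d(\iota_{P^\sharp\alpha}\beta)+\iota_{P^\sharp\alpha}d\beta-\iota_{P^\sharp\beta}d\alpha,$$
and to argue that each of the three summands is a multiplicative $1$-form whenever $\alpha,\beta\in\OmegakmultG{1}$. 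Every operation appearing here — contracting a multiplicative vector field into a multiplicative form, and applying the de Rham differential — has already been shown to preserve multiplicativity, so the work reduces to bookkeeping together with one genuine verification.

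For the last two summands I would first observe that $P^\sharp\alpha=\iota_\alpha P$ and $P^\sharp\beta=\iota_\beta P$ are multiplicative vector fields: this is exactly Lemma \ref{contraction}(1) applied to $\Pi=P\in\XkmultG{2}$ and a multiplicative $1$-form, yielding $P^\sharp\alpha,P^\sharp\beta\in\XkmultG{1}$. Next, since the de Rham differential sends multiplicative forms to multiplicative forms (see the opening of Section \ref{Sec:CartanComplex}), we have $d\alpha,d\beta\in\OmegakmultG{2}$. Contracting a multiplicative vector field into a multiplicative $2$-form produces a multiplicative $1$-form by Lemma \ref{contraction2}(1), so both $\iota_{P^\sharp\alpha}d\beta$ and $\iota_{P^\sharp\beta}d\alpha$ lie in $\OmegakmultG{1}$.

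The remaining term $d(\iota_{P^\sharp\alpha}\beta)$ is where the main point lies, and I would treat it by identifying the function $\iota_{P^\sharp\alpha}\beta=\langle\beta,P^\sharp\alpha\rangle=P(\alpha,\beta)$ and proving that it is a \emph{multiplicative function}, i.e. $P(\alpha,\beta)_{gr}=P(\alpha,\beta)_g+P(\alpha,\beta)_r$ for all $(g,r)\in\Gpd^{(2)}$. The ingredients are: the multiplicativity of the $1$-forms, which says that $\alpha\colon\Gpd\to T^*\Gpd$ and $\beta\colon\Gpd\to T^*\Gpd$ are groupoid morphisms (Section \ref{Sec:multkforms}), so that $\alpha_{gr}=\alpha_g\cdot\alpha_r$ and $\beta_{gr}=\beta_g\cdot\beta_r$ with $(\alpha_g,\alpha_r)$, $(\beta_g,\beta_r)$ composable in $T^*\Gpd$; and the multiplicativity of $P$, in the characterization recalled in the proof of Lemma \ref{contraction} (see also \cite[Proposition 2.7]{ILX}), namely $P_{gr}(\alpha_g\cdot\alpha_r,\beta_g\cdot\beta_r)=P_g(\alpha_g,\beta_g)+P_r(\alpha_r,\beta_r)$. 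Combining these gives $P(\alpha,\beta)_{gr}=P_{gr}(\alpha_{gr},\beta_{gr})=P(\alpha,\beta)_g+P(\alpha,\beta)_r$, so $P(\alpha,\beta)\in\OmegakmultG{0}$ and therefore $d(\iota_{P^\sharp\alpha}\beta)\in\OmegakmultG{1}$.

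Putting the three computations together shows $[\alpha,\beta]_P\in\OmegakmultG{1}$, which establishes closure and hence the theorem. The only step that is not pure citation is the multiplicativity of the pairing $P(\alpha,\beta)$; this is the heart of the argument, but it follows directly by matching the groupoid-morphism property of $\alpha$ and $\beta$ against the defining identity of a multiplicative bivector, so I do not anticipate a genuine obstacle — only the need to state the composable-pair bookkeeping carefully.
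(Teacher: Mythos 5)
Your proof is correct and follows essentially the same route as the paper: the paper also writes $[\Theta_1,\Theta_2]_P=d(\iota_{P^\sharp\Theta_1}\Theta_2)+\iota_{P^\sharp\Theta_1}d\Theta_2-\iota_{P^\sharp\Theta_2}d\Theta_1$ and invokes Lemma \ref{contraction}(1), Lemma \ref{contraction2}(1), and the $d$-invariance of multiplicativity. The one step you verify by hand -- that $P(\alpha,\beta)$ is a multiplicative function -- is exactly the $k=1$ instance of Lemma \ref{contraction2}(1), so your composable-pair computation just re-proves that case rather than citing it.
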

\begin{proof}
For $\Theta_1,\Theta_2\in \OmegakmultG{1}$, we wish to show that \begin{equation}\label{Eqt:PbracketTheta12}
		[\Theta_1,\Theta_2]_P=d(\iota_{P^\sharp \Theta_1} \Theta_2) + \iota_{P^\sharp \Theta_1}  d\Theta_2-\iota_{P^\sharp \Theta_2}  d\Theta_1\end{equation} is also multiplicative. In fact, this follows by (1) of
  Lemma \ref{contraction}, (1) of Lemma \ref{contraction2}, and the fact that the de Rham differentials of multiplicative forms are still multiplicative.  
\end{proof}
 
 In general, it is hard to explicitly calculate  the Lie bracket on $\OmegakmultG{1}$. However, when $M$ is a single point, we have the following fact.
 \begin{example}
  Suppose that we are working with a Poisson Lie group $(G,P)$. According to Example \ref{form on group}, $\Omega_{\mathrm{mult}}^1(G)$ is in one-to-one correspondence with the set of $G$-invariant elements $\theta\in \mathfrak{g}^*$. In specific, $\Theta\in \Omega_{\mathrm{mult}}^1(G)$ corresponds to $\theta:=\Theta|_e$ and conversely $\Theta(g)=R_{g^{-1}}^*\theta$$(=L_{g^{-1}}^*\theta)$ for all $g\in G$.  Let $\Theta_1$ and $ \Theta_2\in \Omega_{\mathrm{mult}}^1(G)$ be arising from $G$-invariant $\theta_1$ and $\theta_2\in  \mathfrak{g}^*$ respectively. To get  $[\Theta_1,\Theta_2]_P$, it suffices to compute $[\Theta_1,\Theta_2]_P|_e$. According to the defining Equation \eqref{Eqt:PbracketTheta12}, we have, for all  $u\in \mathfrak{g}=T_eG$,
  $$\langle [\Theta_1,\Theta_2]_P|_e,u\rangle=\overrightarrow{u}|_e(P(\Theta_1,\Theta_2))=\langle (\mathcal{L}_{\overrightarrow{u}}P)|_e, \theta_1\wedge \theta_2 \rangle.$$
   It is a standard fact that $\mathcal{L}_{\overrightarrow{u}}P$ coincides with $(-\overrightarrow{d_* u})$ where $d_*: \wedge^\bullet \mathfrak{g} \to \wedge^{\bullet+1}\mathfrak{g}$ stems from the Lie bialgebra $(\mathfrak{g},\mathfrak{g}^*)$ induced by the Poisson Lie group $(G,P)$. Therefore, we get 
   $$\langle [\Theta_1,\Theta_2]_P|_e,u\rangle=-\langle d_* u, \theta_1\wedge \theta_2 \rangle
   = \langle  u, [\theta_1, \theta_2]_* \rangle.$$
   Here $[\cdot,\cdot]_*$ denotes the Lie bracket on $\mathfrak{g}^*$.
   So we conclude that as Lie algebras,   $\Omega_{\mathrm{mult}}^1(G)$ is isomorphic to the Lie subalgebra of $\mathfrak{g}^*$ consisting of $G$-invariant elements.
  
  \end{example}

 Let us recall the notion of Lie algebra crossed modules.
 \begin{definition}\cite{Gerstenhaber}
 	A Lie algebra crossed module consists of a pair of Lie algebras
 	$\thetaalgebradegone$ and $\galgebradegzero$, and a morphism of Lie algebras 
 	$\phi:~\thetaalgebradegone\to \galgebradegzero$ such that $\galgebradegzero$ acts on
 	$\thetaalgebradegone$ by derivations and satisfies, for all $x\in\galgebradegzero$,
 	$u,v\in\thetaalgebradegone$,
 	\begin{itemize}
 		\item[(1)] ${\phi (u)} \moduleaction v=\ba{u}{v}$;
 		\item[(2)] $\phi (  x \moduleaction u)=\ba{ x }{\phi (u)}$,
 	\end{itemize}where $\moduleaction$ denotes the $\galgebradegzero$-action on $\thetaalgebradegone$.
 	
 \end{definition}
 We shall write $\crossedmoduletriple{\thetaalgebradegone}{\phi}{\galgebradegzero}$ to denote a Lie algebra crossed module.
 \begin{definition}\label{Def:liealgebracrossedmodulemorphism}
 	A morphism $(f,F)$: $\crossedmoduletriple{\thetaalgebradegone}{\phi}{\galgebradegzero}\to \crossedmoduletriple{\thetaalgebradegone'}{\phi' }{\galgebradegzero'}$ of Lie algebra crossed modules consists of two Lie algebra morphisms $f: \thetaalgebradegone\to \thetaalgebradegone'$ and $F:
 	\galgebradegzero\to \galgebradegzero'$ which fit into the following commutative diagram	
 	\begin{equation*}
 		\xymatrix{
 			\thetaalgebradegone\ar@{^{}->}[d]_{\phi} \ar[r]^{f} &\thetaalgebradegone'\ar@{^{}->}[d]^{\phi'} \\
 			\galgebradegzero\ar[r]^{F} & \galgebradegzero'},
 	\end{equation*}	
 	and satisfy that $f(x\triangleright u)=F(x)\triangleright' f(u)$ for $x\in \galgebradegzero$ and $u\in \thetaalgebradegone$.

 \end{definition}

A typical instance of Lie algebra crossed modules arising from Lie groupoids is illustrated in \cite{BCLX} (see also \cites{BEL,OW}). 
Given a Lie groupoid $\Gpd$ with its tangent Lie algebroid $A$, the triple  $(\Gamma(A)\xrightarrow{T} \mathfrak{X}^1_{\mult}(\Gpd))$ where $T: u\mapsto \overleftarrow{u}-\overrightarrow{u}$, consists a Lie algebra crossed module.  {Here the action of $\mathfrak{X}^1_{\mult}(\Gpd)$ on $\Gamma(A)$ is determined by $\overleftarrow{X\triangleright u}=[X,\overleftarrow{u}]$ (or $\overrightarrow{X\triangleright u}=[X,\overrightarrow{u}]$).}
 In analogy to  $T$,   we have a map of vector spaces (following Lemma \ref{Lem:exactcharpairs})
 \begin{equation*}\label{Eqt:crossedmoduleJmap}
 \Omega^{1}(M)\xrightarrow{~J~} \OmegakmultG{1},\qquad \gamma\mapsto s^*\gamma-t^*\gamma.\end{equation*}

In the case of a Poisson Lie groupoid   $(\Gpd,P)$, a canonical Lie algebra crossed module structure can be found that underlies both the aforementioned vector spaces and the linear map $J$. This key finding is attributed to Ortiz and Waldron \cite{OW}. Their investigation demonstrated that the set of multiplicative sections of any $\mathcal{LA}$-groupoid possesses the structure of a strict Lie $2$-algebra, which is presented using a crossed module. By applying this general outcome to the specific situation of the cotangent bundle of a Poisson groupoid (which is highlighted in \cite[Example 7.3]{OW}), we arrive at the desired structure. To provide further background, we   rephrase this fact and present an alternative approach to it.

 \begin{theorem}\cite{OW} \label{Thm:LiealgebracrossedmodulePoissonLiegroupoid}
 	Continue to use notations as above.  Endow $\Omega^1(M)$ with the Lie bracket $[{\tobefilledin},{\tobefilledin}]_{\piM}$ and $\OmegakmultG{1}$ with $[{\tobefilledin},{\tobefilledin}]_P$.  Then, 
 	\begin{itemize}
 		\item[(1)]There exists  a Lie algebra action    $\actionmapaction{\tobefilledin}{\tobefilledin}$:
 		$\OmegakmultG{1}\otimes\Omega^1 (M)\rightarrow  \Omega^1 (M)$ such that
 		$$[\Theta,s^*\gamma]_P=s^*(\actionmapaction{\Theta}{\gamma})\,,\quad\forall \Theta\in \OmegakmultG{1}, \gamma\in \Omega^1 (M);
 		$$
 		\item[(2)]The  triple $(\Omega^{1}(M)\xrightarrow{~J~} \OmegakmultG{1})$ forms a Lie algebra crossed module where   $J$ is defined by $J(\gamma)=s^*\gamma-t^*\gamma$.
 	\end{itemize}

 \end{theorem}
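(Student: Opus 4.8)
The plan is to establish the two assertions in turn, using the structure theory of Poisson groupoids together with the contraction lemmas proved above. Recall first that a Poisson groupoid $(\Gpd,P)$ induces on its base a unique Poisson bivector $\piM$ for which $s$ is a Poisson map and $t$ an anti-Poisson map; this is the bivector whose cotangent-Lie-algebroid bracket is the $[{\tobefilledin},{\tobefilledin}]_{\piM}$ appearing in the statement. Consequently $s$ being Poisson gives $[s^*\gamma_1,s^*\gamma_2]_P=s^*[\gamma_1,\gamma_2]_{\piM}$, while $t$ being anti-Poisson gives $[t^*\gamma_1,t^*\gamma_2]_P=-t^*[\gamma_1,\gamma_2]_{\piM}$. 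I will also use two symmetry facts: the inversion $\mathrm{inv}$ is anti-Poisson ($\mathrm{inv}_*P=-P$) and every multiplicative $1$-form satisfies $\mathrm{inv}^*\Theta=-\Theta$. Finally, by Lemma \ref{contraction}(1) the vector field $P^\sharp\Theta=\iota_\Theta P$ attached to $\Theta\in\OmegakmultG{1}$ is multiplicative, hence $s$- and $t$-projectable to a single vector field $\underline{X}_\Theta$ on $M$ (its leading term).

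For part (1) the core is to show that $[\Theta,s^*\gamma]_P$ is $s$-basic. I would write the Koszul bracket as
\[
[\Theta,s^*\gamma]_P=d(\iota_{P^\sharp\Theta}s^*\gamma)+\iota_{P^\sharp\Theta}\,d(s^*\gamma)-\iota_{P^\sharp s^*\gamma}\,d\Theta
\]
and dispatch the three terms with the paper's lemmas. The first two contract the multiplicative vector field $P^\sharp\Theta$ against $s^*\gamma$ and $s^*d\gamma$, so by Lemma \ref{contraction2}(3) they equal $s^*d(\iota_{\underline{X}_\Theta}\gamma)$ and $s^*(\iota_{\underline{X}_\Theta}d\gamma)$. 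For the third, Lemma \ref{contraction}(2) identifies $P^\sharp s^*\gamma=\overleftarrow{\iota_\gamma p}$ with $p$ the leading term of $P$, and then Lemma \ref{contraction2}(2) applied to the multiplicative form $d\Theta$ gives $\iota_{\overleftarrow{\iota_\gamma p}}\,d\Theta=s^*(\iota_{\iota_\gamma p}\tilde\theta)$, where $\tilde\theta$ is the leading term of $d\Theta$. Thus all three summands are pullbacks along $s$, so $[\Theta,s^*\gamma]_P=s^*(\actionmapaction{\Theta}{\gamma})$ defines $\actionmapaction{\Theta}{\gamma}\in\Omega^1(M)$ uniquely, $s^*$ being injective. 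That $\moduleaction$ is a Lie algebra action then follows from the Jacobi identity for $[{\tobefilledin},{\tobefilledin}]_P$ on $\Theta_1,\Theta_2,s^*\gamma$, and that it acts by derivations of $[{\tobefilledin},{\tobefilledin}]_{\piM}$ from Jacobi on $\Theta,s^*\gamma_1,s^*\gamma_2$ together with $s$ being Poisson; in each case one cancels an overall $s^*$ by injectivity.

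For part (2) I first check that $J$ is a morphism of Lie algebras. Expanding $[J\gamma_1,J\gamma_2]_P$ bilinearly, the two pure terms contribute $s^*[\gamma_1,\gamma_2]_{\piM}-t^*[\gamma_1,\gamma_2]_{\piM}=J[\gamma_1,\gamma_2]_{\piM}$ by the Poisson/anti-Poisson identities, and the two mixed terms $[s^*\gamma_1,t^*\gamma_2]_P$ vanish: through the Koszul formula, Lemma \ref{contraction}(2) shows $P^\sharp s^*\gamma_1=\overleftarrow{\iota_{\gamma_1}p}$ is tangent to the $t$-fibres and $P^\sharp t^*\gamma_2=\overrightarrow{\iota_{\gamma_2}p}$ to the $s$-fibres, so every surviving interior product annihilates the $t^*$- or $s^*$-pullbacks present. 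The Peiffer identity is then immediate, since $s^*(\actionmapaction{J(\gamma)}{\gamma'})=[J\gamma,s^*\gamma']_P=[s^*\gamma-t^*\gamma,s^*\gamma']_P=s^*[\gamma,\gamma']_{\piM}$, whence $\actionmapaction{J(\gamma)}{\gamma'}=[\gamma,\gamma']_{\piM}$.

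The remaining equivariance identity $J(\actionmapaction{\Theta}{\gamma})=[\Theta,J(\gamma)]_P$ is the main obstacle, since it forces the action defined through $s$ to agree with the one defined through $t$: expanding the right side yields $s^*(\actionmapaction{\Theta}{\gamma})-[\Theta,t^*\gamma]_P$, so I must prove $[\Theta,t^*\gamma]_P=t^*(\actionmapaction{\Theta}{\gamma})$ with the \emph{same} $\actionmapaction{\Theta}{\gamma}$. Here I would exploit the inversion symmetry. As $\mathrm{inv}$ is anti-Poisson, $\mathrm{inv}^*[\alpha,\beta]_P=-[\mathrm{inv}^*\alpha,\mathrm{inv}^*\beta]_P$, so using $\mathrm{inv}^*\Theta=-\Theta$ and $t\circ\mathrm{inv}=s$,
\[
\mathrm{inv}^*[\Theta,t^*\gamma]_P=-[-\Theta,s^*\gamma]_P=[\Theta,s^*\gamma]_P=s^*(\actionmapaction{\Theta}{\gamma}).
\]
Applying $\mathrm{inv}^*$ once more (it is an involution) and using $s\circ\mathrm{inv}=t$ gives $[\Theta,t^*\gamma]_P=t^*(\actionmapaction{\Theta}{\gamma})$, which verifies equivariance and hence all the crossed-module axioms. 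I expect the only delicate points to be the $s$-basicness lemma of part (1)—which the contraction lemmas render routine—and this matching of the $s$- and $t$-actions, for which the anti-Poisson, anti-invariant behaviour of the inversion is precisely the right tool.
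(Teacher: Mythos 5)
Your proof is correct, and for the decisive step it takes a genuinely different route from the paper. For part (1) your argument coincides in substance with the paper's Lemma \ref{Lem:Thetastartstar}: both split the Koszul bracket $[\Theta,s^*\gamma]_P$ into its three terms and use Lemmas \ref{contraction} and \ref{contraction2} to see each is an $s$-pullback (the paper goes further and extracts the explicit formula $\actionmapaction{\Theta}{\gamma}=\widehat{e}(\liftingd \iota_\gamma p)+\iota_{p^\sharp(\theta)}(d\gamma)$ via Proposition \ref{Prop:differentialofcharpairs}, which it needs later; you only establish existence, which is all the theorem asks). The real divergence is in proving $[\Theta,t^*\gamma]_P=t^*(\actionmapaction{\Theta}{\gamma})$ with the \emph{same} $\actionmapaction{\Theta}{\gamma}$, which is what makes $J(\actionmapaction{\Theta}{\gamma})=[\Theta,J(\gamma)]_P$ work. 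The paper simply reruns the Lemma \ref{Lem:Thetastartstar} computation with $t$ in place of $s$ and observes that the same expression appears; you instead deduce it from the symmetry package $\mathrm{inv}_*P=-P$, $\mathrm{inv}^*\Theta=-\Theta$, $t\circ\mathrm{inv}=s$, together with the linearity of the Koszul bracket in $P$. Both ingredients are sound: the anti-Poisson property of $\mathrm{inv}$ is Weinstein's standard fact (on the same footing as the paper's Lemma \ref{Lem:W1}, which is also cited rather than proved), and $\mathrm{inv}^*\Theta=-\Theta$ follows from multiplicativity of $\Theta$ applied to the pair $(X,\mathrm{inv}_*X)$ together with the isotropy of $M$. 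Your route is cleaner and avoids duplicating the characteristic-pair computation, at the cost of not producing the explicit action formula and of importing one more external fact; the paper's route is more computational but self-contained relative to its own lemmas and yields Equation \eqref{Eqt:Omega1multGonOmega1M}, which is reused elsewhere. Your verification that $[s^*\gamma_1,t^*\gamma_2]_P=0$ via the left/right invariance of $P^\sharp s^*\gamma_1$ and $P^\sharp t^*\gamma_2$ is also a legitimate re-derivation of what the paper takes from \cite{W1}.
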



We will prove this theorem  more directly  by utilizing the theory of characteristic pairs explained in Section \ref{Section:multforms-charpairs}.
 In what follows,  for the Poisson structure $P\in \XkmultG{2}$ on $\Gpd$, we shall denote by $p\in \Gamma(TM\otimes A)$ the leading term of $P$.

 We need the following technical Lemma.

\begin{lemma}\label{Lem:Thetastartstar}Let $P$ be in $\XkmultG{2} $ and 
	  $p\in \Gamma(TM\otimes A)$ the leading term of $P$. 	
	For all $\Theta\in \OmegakmultG{1}$ and $\gamma\in \Omega^1(M)$, one has 
\begin{eqnarray}\label{co3}
[\Theta,s^*\gamma]_P=s^*(\widehat{e}(\liftingd \iota_\gamma p)+\iota_{p^\sharp(\theta)}(d\gamma)),\quad\mbox{and~}~ [\Theta,t^*\gamma]_P=t^*(\widehat{e}(\liftingd \iota_\gamma p)+\iota_{p^\sharp(\theta)}(d\gamma)),\end{eqnarray}
where $(e,\theta)$  with $e\in Z^1(\jet \Gpd, T^*M)$ and $\theta\in \Gamma(A^*)$   is the $(0,1)$-characteristic pair of $\Theta$ and $\widehat{e}\in Z^1(\jet A,T^*M)$ is the infinitesimal of $e$.  Moreover, if $(\Gpd,P)$ is a Poisson groupoid, then the   map $\actionmapaction{\tobefilledin}{\tobefilledin}$:
$\OmegakmultG{1}\otimes\Omega^1 (M)\rightarrow  \Omega^1 (M)$   defined   by 
\begin{equation}
\label{Eqt:Omega1multGonOmega1M}
\actionmapaction{\Theta}{\gamma}
= \widehat{e}(\liftingd \iota_\gamma p)+\iota_{p^\sharp(\theta)}(d\gamma),
\end{equation} 
  is a Lie algebra action.
\end{lemma}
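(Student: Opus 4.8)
The plan is to compute $[\Theta,s^*\gamma]_P$ directly from the second expression for the Koszul bracket in \eqref{Eqt:Pbracket1forms}, namely
\[ [\Theta,s^*\gamma]_P = d(\iota_{P^\sharp\Theta}\, s^*\gamma) + \iota_{P^\sharp\Theta}\, d(s^*\gamma) - \iota_{P^\sharp(s^*\gamma)}\, d\Theta, \]
and to evaluate the three terms using the Cartan calculus of Section \ref{Sec:CartanComplex}. First I would record the two vector fields that appear: by Lemma \ref{contraction}(2), $P^\sharp(s^*\gamma)=\overleftarrow{\iota_\gamma p}$ with $\iota_\gamma p\in\Gamma(A)$; and by Lemma \ref{contraction}(1) the field $P^\sharp\Theta=\iota_\Theta P$ is multiplicative, with leading term $p^\sharp\theta\in\mathfrak{X}^1(M)$ by Proposition \ref{cpmap}. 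Here $(e,\theta)$ is the $(0,1)$-characteristic pair of $\Theta$ and $\widehat{e}$ its infinitesimal. Note that only the \emph{first part} of the claim is stated for a general $P\in\XkmultG{2}$; the Poisson condition is needed only for the action property at the end.

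Writing $u:=\iota_\gamma p$, I would evaluate the three terms via Lemma \ref{contraction2}. The first term is $d(\iota_{P^\sharp\Theta}s^*\gamma)=d\,s^*(\iota_{p^\sharp\theta}\gamma)=s^*\,d(\iota_{p^\sharp\theta}\gamma)$ by part (3) (with $k=1$) together with $d\circ s^*=s^*\circ d$; the second is $\iota_{P^\sharp\Theta}s^*(d\gamma)=s^*(\iota_{p^\sharp\theta}d\gamma)$ by part (3) (with $k=2$); and the third, since $d\Theta$ is multiplicative with leading term $\tilde\theta$, is $\iota_{\overleftarrow{u}}d\Theta=s^*(\iota_u\tilde\theta)$ by part (2). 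Substituting $\iota_u\tilde\theta=-d(\iota_u\theta)-\widehat{e}(\liftingd u)$ from Proposition \ref{Prop:differentialofcharpairs}(2) turns the whole bracket into $s^*$ of $d(\iota_{p^\sharp\theta}\gamma)+\iota_{p^\sharp\theta}d\gamma+d(\iota_u\theta)+\widehat{e}(\liftingd u)$. The two exact pieces cancel thanks to the pointwise identity $\iota_{\iota_\gamma p}\theta=-\iota_{p^\sharp\theta}\gamma$, which is immediate from the definition $p^\sharp(\theta)(f)=-p(df,\theta)$ (no $\rho$-compatibility is needed). What remains is exactly $s^*(\widehat{e}(\liftingd\iota_\gamma p)+\iota_{p^\sharp\theta}d\gamma)$, proving the first identity. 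The $t^*$-identity follows verbatim after replacing every left-invariant object by its right-invariant counterpart (the second halves of Lemma \ref{contraction}(2) and Lemma \ref{contraction2}(2),(3)); crucially the leading term $p^\sharp\theta$ and the formula for $\iota_u\tilde\theta$ are intrinsic, so the \emph{same} expression $\Theta\triangleright\gamma$ appears on both sides.

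For the last assertion, I would first note that $\Theta\triangleright\gamma=\widehat{e}(\liftingd\iota_\gamma p)+\iota_{p^\sharp\theta}d\gamma$ genuinely lies in $\Omega^1(M)$, so the two identities read $[\Theta,s^*\gamma]_P=s^*(\Theta\triangleright\gamma)$ and $[\Theta,t^*\gamma]_P=t^*(\Theta\triangleright\gamma)$. When $(\Gpd,P)$ is a Poisson groupoid we have $[P,P]=0$, so by \eqref{quasiim} the bracket $[\tobefilledin,\tobefilledin]_P$ satisfies the Jacobi identity, and $\OmegakmultG{1}$ is a Lie subalgebra by Theorem \ref{Thm:Omega1multLiealgebra}. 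Applying the first identity to $[\Theta_1,\Theta_2]_P\in\OmegakmultG{1}$ and then Jacobi gives
\[ s^*\bigl([\Theta_1,\Theta_2]_P\triangleright\gamma\bigr)=\bigl[\Theta_1,[\Theta_2,s^*\gamma]_P\bigr]_P-\bigl[\Theta_2,[\Theta_1,s^*\gamma]_P\bigr]_P=s^*\bigl(\Theta_1\triangleright(\Theta_2\triangleright\gamma)-\Theta_2\triangleright(\Theta_1\triangleright\gamma)\bigr), \]
where in the last step I re-apply the first identity to $\Theta_2\triangleright\gamma,\Theta_1\triangleright\gamma\in\Omega^1(M)$. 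Since $s$ is a surjective submersion, $s^*$ is injective, and I may cancel it to obtain the action axiom $[\Theta_1,\Theta_2]_P\triangleright\gamma=\Theta_1\triangleright(\Theta_2\triangleright\gamma)-\Theta_2\triangleright(\Theta_1\triangleright\gamma)$.

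I expect the main obstacle to be the careful bookkeeping in the term-by-term computation: matching the sign conventions for $P^\sharp$, $p^\sharp$, and the contraction maps across Lemmas \ref{contraction}, \ref{contraction2} and Propositions \ref{cpmap}, \ref{Prop:differentialofcharpairs}, and in particular spotting the cancellation of the two $d(\tobefilledin)$ terms that leaves only the clean expression $\Theta\triangleright\gamma$. Once the first identity is in hand, the action property is formal, relying only on the Jacobi identity supplied by $[P,P]=0$ and on the injectivity of $s^*$.
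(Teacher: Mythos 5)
Your proposal is correct and follows essentially the same route as the paper: expand the Koszul bracket into the three Cartan-type terms, evaluate them via Lemmas \ref{contraction} and \ref{contraction2} together with Propositions \ref{cpmap} and \ref{Prop:differentialofcharpairs}, observe that the two exact terms cancel because $\iota_{\iota_\gamma p}\theta=-\iota_{p^\sharp\theta}\gamma$, and deduce the action axiom from the Jacobi identity of $[\tobefilledin,\tobefilledin]_P$ and the injectivity of $s^*$. No gaps.
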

\begin{proof}  The leading term   of $P^\sharp(\Theta)$ is  $p^\sharp(\theta)$ (by Proposition \ref{cpmap}).
By (1) and (2) of Lemma \ref{contraction}, we have $P^\sharp\Theta\in \XkmultG{1}$ and 
\begin{eqnarray*}
\iota_{P^\sharp \Theta} (s^*\gamma)=\iota_{s^*\gamma} P^\sharp \Theta=s^*(\iota_\gamma p^\sharp(\theta) )=-s^*(p(\gamma,\theta)).
\end{eqnarray*} Similarly,  from $s_* P^\sharp \Theta=p^\sharp(\theta)$,  we obtain
\[\iota_{P^\sharp \Theta} (s^*d\gamma)=s^*\big(   \iota_{s_* P^\sharp \Theta}(d\gamma)\big) =s^* \big(   \iota_{p^\sharp(\theta)}(d\gamma)\big) .\]
Using these two identities, we can compute   
\begin{eqnarray*}
[\Theta, s^*\gamma]_P&=&\mathcal{L}_{P^\sharp\Theta}(s^*\gamma)-\iota_{P^\sharp(s^*\gamma)} d\Theta\\ &=&
d\iota_{P^\sharp\Theta}(s^*\gamma)+\iota_{P^\sharp\Theta} (s^*d\gamma)-\iota_{\overleftarrow{\iota_{\gamma} p}} d\Theta \quad (\mbox{by (2) of Lemma \ref{contraction}})\\ &=&-s^*(d(p(\gamma,\theta)))+s^* \big(   \iota_{p^\sharp(\theta)}(d\gamma)\big)-s^*(\widehat{d\Theta}(\iota_\gamma p))\quad (\mbox{by (2) of Lemma \ref{contraction2}}).
\end{eqnarray*}
Here $\widehat{d\Theta}=\mathrm{pr}_{\Gamma(A^*\otimes T^*M)}(d\Theta)|_M$ is the leading term of $d\Theta$. By Proposition \ref{Prop:differentialofcharpairs}, we have  
\[\widehat{d\Theta}(\iota_\gamma p)=-d(\theta(\iota_\gamma p))-\widehat{e}(\liftingd \iota_\gamma p), \]
where $\widehat{e}\in Z^1(\jet A, T^*M)$ is the infinitesimal of $e\in Z^1(\jet \Gpd,T^*M)$.  So  we get the first equality of \eqref{co3}:
\begin{eqnarray*}
[\Theta, s^*\gamma]_P=s^*\bigl(\widehat{e}(\liftingd \iota_\gamma p)+   \iota_{p^\sharp(\theta)}(d\gamma)\bigr) =s^*(\actionmapaction{\Theta}{\gamma}).\end{eqnarray*}
The other one is proved in a similar manner.

Furthermore, if $P$ is Poisson, then by the Jacobi identity of $[\tobefilledin,\tobefilledin]_P$, we have 
\[
[\Theta',[\Theta, s^*\gamma]_P]_P+[\Theta,[s^*\gamma,\Theta']_P]_P+[s^*\gamma,[\Theta',\Theta]_P]_P=0,\qquad \forall \Theta, \Theta'\in \Omega^1_{\mult}(\Gpd),\gamma\in \Omega^1(M), \]
It follows immediately  	 that 
\[s^*\bigl(\Theta'\triangleright(\Theta\triangleright \gamma)-\Theta\triangleright(\Theta'\triangleright \gamma)-[\Theta',\Theta]_P\triangleright \gamma\bigr)=0.\]
Since $s^*$ is injective,  we proved that the map $\triangleright$ defines an action of $\Omega^1_{\mult}(\Gpd)$ on $\Omega^1(M)$.
\end{proof}

We also need a standard fact.
\begin{lemma}\label{Lem:W1}
	\cite{W1}  The source map $s: \Gpd\to M$ is a Poisson map and the target map $t: \Gpd\to M$ is an anti-Poisson map. Moreover, for any $\gamma,\eta\in \Omega^1(M)$, we have $[s^*\gamma,t^*\eta]_P=0$.  As a direct consequence, we have \begin{equation}
	\label{Eqt:ststarbracketclosed}
	[s^*\gamma-t^*\gamma,s^*\eta-t^*\eta]_P=s^*[\gamma,\eta]_{\piM}-t^*[\gamma,\eta]_{\piM},\qquad\forall \gamma,\eta\in \Omega^1(M).\end{equation}
\end{lemma}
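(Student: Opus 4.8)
The plan is to express every contraction of $P$ against a pulled-back $1$-form through the leading term $p\in\Gamma(TM\otimes A)$ of $P$, and then to read off the three assertions from the tangency properties of invariant vector fields. Since $P\in\XkmultG{2}$, Lemma~\ref{contraction}(2) gives $P^\sharp(s^*\gamma)=\iota_{s^*\gamma}P=\overleftarrow{\iota_\gamma p}$ and $P^\sharp(t^*\gamma)=\iota_{t^*\gamma}P=\overrightarrow{\iota_\gamma p}$ for all $\gamma\in\Omega^1(M)$. Using the formulas $\overleftarrow{u}_g=L_{g*}(u-\rho(u))$ and $\overrightarrow{u}_g=R_{g*}u$ from \eqref{stco}, together with $s|_M=t|_M=\mathrm{id}_M$ and $\rho=t_*$, I would first record the tangency identities $s_*\overleftarrow{u}=-\rho(u)$, $t_*\overleftarrow{u}=0$, $t_*\overrightarrow{u}=\rho(u)$, and $s_*\overrightarrow{u}=0$; these express that left-invariant fields are tangent to the $t$-fibres and right-invariant fields to the $s$-fibres.

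Granting these, the core computations are short. For the source map, $P(s^*\gamma,s^*\eta)=\langle s^*\eta,\overleftarrow{\iota_\gamma p}\rangle=s^*\bigl(-\eta(\rho(\iota_\gamma p))\bigr)$, and the $\rho$-compatibility of the leading term $p$ (Definition~\ref{Defn:properktensor}) is exactly what makes $\gamma,\eta\mapsto-\eta(\rho(\iota_\gamma p))$ skew-symmetric, hence a genuine bivector $\piM$ on $M$ with $P(s^*\gamma,s^*\eta)=s^*\bigl(\piM(\gamma,\eta)\bigr)$; this is the statement that $s$ is Poisson, and pushing $[P,P]=0$ forward along the surjective submersion $s$ forces $[\piM,\piM]=0$. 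The same calculation with $t_*\overrightarrow{\iota_\gamma p}=\rho(\iota_\gamma p)$ yields $P(t^*\gamma,t^*\eta)=-t^*\bigl(\piM(\gamma,\eta)\bigr)$, so $t$ is anti-Poisson. Finally $P(s^*\gamma,t^*\eta)=\langle t^*\eta,\overleftarrow{\iota_\gamma p}\rangle=0$, because $t_*\overleftarrow{u}=0$.

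To upgrade the vanishing of this contraction to the vanishing of the full bracket, I would expand $[s^*\gamma,t^*\eta]_P$ by the Cartan formula \eqref{Eqt:Pbracket1forms} as $d(\iota_{P^\sharp s^*\gamma}t^*\eta)+\iota_{P^\sharp s^*\gamma}\,t^*(d\eta)-\iota_{P^\sharp t^*\eta}\,s^*(d\gamma)$. The first term vanishes as above; in the second, $P^\sharp s^*\gamma=\overleftarrow{\iota_\gamma p}$ is tangent to the $t$-fibres, so its contraction into the pullback $t^*(d\eta)$ is zero, and symmetrically $P^\sharp t^*\eta=\overrightarrow{\iota_\eta p}$ is tangent to the $s$-fibres, so its contraction into $s^*(d\gamma)$ vanishes. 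Hence $[s^*\gamma,t^*\eta]_P=0$, and $[t^*\gamma,s^*\eta]_P=0$ by skew-symmetry of the bracket.

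The displayed identity \eqref{Eqt:ststarbracketclosed} is then immediate by bilinearity: expanding $[s^*\gamma-t^*\gamma,\,s^*\eta-t^*\eta]_P$, the two cross terms drop out, while $[s^*\gamma,s^*\eta]_P=s^*[\gamma,\eta]_{\piM}$ and $[t^*\gamma,t^*\eta]_P=-t^*[\gamma,\eta]_{\piM}$ follow from $s$ being Poisson and $t$ anti-Poisson via the standard fact that a (anti-)Poisson map intertwines the Koszul brackets of $1$-forms. I expect the only genuinely delicate points to be this last intertwining property and the well-definedness of $\piM$: the former rests on showing that $P^\sharp s^*\gamma$ is $s$-related to $\piM^\sharp\gamma$ and on the injectivity of $s^*$ for the submersion $s$, and the latter on the $\rho$-compatibility of $p$; every other step reduces to one of the routine tangency computations sketched above.
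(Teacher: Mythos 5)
Your proof is correct. Note, however, that the paper does not prove this lemma at all: it is stated as a "standard fact" with a citation to Weinstein's paper \cite{W1}, where $s$ being Poisson, $t$ being anti-Poisson, and the vanishing of $\{s^*f,t^*g\}$ are established for Poisson groupoids. So your argument is not so much a different route as a self-contained verification that the paper omits, and it is a natural one: you derive everything from the paper's own Lemma \ref{contraction}(2) (namely $P^\sharp(s^*\gamma)=\overleftarrow{\iota_\gamma p}$ and $P^\sharp(t^*\gamma)=\overrightarrow{\iota_\gamma p}$) together with the tangency identities $s_*\overleftarrow{u}=-\rho(u)$, $t_*\overleftarrow{u}=0$, $t_*\overrightarrow{u}=\rho(u)$, $s_*\overrightarrow{u}=0$, all of which are correct. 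Your identification of the induced bracket, $\piM(\gamma,\eta)=-\eta(\rho(\iota_\gamma p))$, is consistent with the paper's formula $\piM=-\tfrac{1}{2}(1\otimes\rho)p$ (the factor $\tfrac{1}{2}$ is absorbed precisely by the skew-symmetry coming from $\rho$-compatibility, as you observe). The three contractions in the Cartan expansion of $[s^*\gamma,t^*\eta]_P$ do vanish for the reasons you give, and the final intertwining $[s^*\gamma,s^*\eta]_P=s^*[\gamma,\eta]_{\piM}$ indeed reduces to the $s$-relatedness $s_*\bigl(P^\sharp s^*\gamma\bigr)=\piM^\sharp\gamma$, which your computation already supplies since $\piM^\sharp\gamma=-\rho(\iota_\gamma p)$; you correctly flag this as the one step that must be made explicit rather than waved at. What your approach buys is a proof internal to the paper's formalism of leading terms and characteristic pairs, at the modest cost of invoking Lemma \ref{contraction}, which the paper proves only after establishing the multiplicativity machinery; Weinstein's original argument proceeds instead through coisotropic calculus on the graph of the multiplication.
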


Recall that the base manifold $M$ is equipped with an induced Poisson structure $\piM=s_* P\in \mathfrak{X}^2 (M)$  (see \cite{W1}). By our formula \eqref{Bpi}, we have $\piM=-\frac{1}{2}(1\otimes \rho)p$. 

We now finish the proof of Theorem \ref{Thm:LiealgebracrossedmodulePoissonLiegroupoid}.
\begin{proof} Statement (1) is proved by   Lemma \ref{Lem:Thetastartstar}. For  	(2), we note that   $J:~\Omega^{1}(M)\to  \OmegakmultG{1}$ is a morphism of Lie algebras (by Equation \eqref{Eqt:ststarbracketclosed}). 
			To prove that  $\Omega^{1}(M)\xrightarrow{~J~} \OmegakmultG{1}$ is a Lie algebra crossed module, it suffices to show  
	\[[\gamma,\gamma']_{\piM}=(J\gamma)\triangleright \gamma',\qquad\mbox{and}\quad J(\Theta\triangleright \gamma)=[\Theta,J(\gamma)]_P.\]
	In fact, the first one follows from     
	\[s^*[\gamma,\gamma']_{\piM}=[s^*\gamma,s^*\gamma']_P=[s^*\gamma-t^*\gamma,s^*\gamma']_P=s^*((J\gamma)\triangleright \gamma') \]
	(by  Lemma \ref{Lem:W1} and the definition of $\triangleright$). The second is a direct consequence of Lemma \ref{Lem:Thetastartstar}.  	
\end{proof}

 \begin{proposition}\label{morphismcm}
 	\label{pibracket} Let $(\Gpd,P)$ be a Poisson groupoid and 
 	$p\in \Gamma(TM\otimes A)$ the leading term of $P$.  
 	The map \[P^\sharp: (\OmegakmultG{1},[{\tobefilledin},{\tobefilledin}]_P)\to (\XkmultG{1},[{\tobefilledin},{\tobefilledin}]),\qquad \alpha\mapsto \iota_\alpha P\] is a Lie algebra morphism. Moreover, the pair $(P^\sharp,p^\sharp)$ constitutes a  morphism of Lie algebra crossed modules:
 	\begin{equation*}
 		\xymatrix{
 			\Omega^{1}(M)\ar@{^{}->}[d]_{J} \ar[r]^{p^\sharp} & \Gamma(A) \ar@{^{}->}[d]^{T} \\
 			\OmegakmultG{1}\ar[r]^{P^\sharp} & \XkmultG{1}},
 	\end{equation*}
 	where $T$ is defined by $u\mapsto \overleftarrow{  u}-\overrightarrow{  u}$ for $u\in \Gamma(A)$. 
\end{proposition}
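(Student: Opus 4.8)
The plan is to assemble everything from the Cartan-type identities of Lemma~\ref{contraction}, the morphism property \eqref{Eqt:PsharpwrtPbracket}, and the two crossed-module structures already in hand, using the injectivity of $u\mapsto\overleftarrow u$ as the only cancellation device; I would never compare $[{\tobefilledin},{\tobefilledin}]_{\piM}$ with the $A$-bracket by hand. I would first treat $P^\sharp$. Since $\Gpd$ is Poisson, $[P,P]=0$, so \eqref{Eqt:PsharpwrtPbracket} gives $P^\sharp[\alpha,\beta]_P=[P^\sharp\alpha,P^\sharp\beta]$ for all $\alpha,\beta\in\Omega^1(\Gpd)$. Restricting to $\alpha,\beta\in\OmegakmultG 1$, the bracket $[\alpha,\beta]_P$ stays in $\OmegakmultG 1$ by Theorem~\ref{Thm:Omega1multLiealgebra}, and $P^\sharp=\iota_{({\tobefilledin})}P$ sends $\OmegakmultG 1$ into $\XkmultG 1$ by (1) of Lemma~\ref{contraction} (with $\Pi=P$). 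Hence $P^\sharp\colon\OmegakmultG 1\to\XkmultG 1$ is a well-defined Lie algebra morphism, which is the first half of the claim.

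Next I would verify the two conditions of Definition~\ref{Def:liealgebracrossedmodulemorphism} beyond the morphism properties. The commuting square $T\circ p^\sharp=P^\sharp\circ J$ is immediate from (2) of Lemma~\ref{contraction}: since $p$ is the leading term of $P$ and $p^\sharp\gamma=\iota_\gamma p$, one has $P^\sharp(J\gamma)=\iota_{s^*\gamma}P-\iota_{t^*\gamma}P=\overleftarrow{\iota_\gamma p}-\overrightarrow{\iota_\gamma p}=T(p^\sharp\gamma)$. For the action compatibility $p^\sharp(\Theta\triangleright\gamma)=P^\sharp\Theta\triangleright p^\sharp\gamma$, I would apply $\overleftarrow{({\tobefilledin})}$ and chain together the definition $\overleftarrow{X\triangleright u}=[X,\overleftarrow u]$ of the $\XkmultG 1$-action, the identity $\overleftarrow{p^\sharp\gamma}=P^\sharp(s^*\gamma)$ (again (2) of Lemma~\ref{contraction}), the morphism property of $P^\sharp$, and $[\Theta,s^*\gamma]_P=s^*(\Theta\triangleright\gamma)$ from Theorem~\ref{Thm:LiealgebracrossedmodulePoissonLiegroupoid}(1) (Lemma~\ref{Lem:Thetastartstar}):
\[
\overleftarrow{P^\sharp\Theta\triangleright p^\sharp\gamma}=[P^\sharp\Theta,P^\sharp(s^*\gamma)]=P^\sharp[\Theta,s^*\gamma]_P=P^\sharp\big(s^*(\Theta\triangleright\gamma)\big)=\overleftarrow{p^\sharp(\Theta\triangleright\gamma)}.
\]
As $u\mapsto\overleftarrow u$ is injective, this gives the required identity.

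Finally I would recover the remaining morphism property, namely that $p^\sharp\colon(\Omega^1(M),[{\tobefilledin},{\tobefilledin}]_{\piM})\to\Gamma(A)$ is a Lie algebra morphism, by bootstrapping from the previous two steps and the first crossed-module axiom of each triple: $(J\gamma)\triangleright\gamma'=[\gamma,\gamma']_{\piM}$ for $\Omega^{1}(M)\xrightarrow J\OmegakmultG 1$ (established inside the proof of Theorem~\ref{Thm:LiealgebracrossedmodulePoissonLiegroupoid}) and $(Tu)\triangleright v=[u,v]$ for the crossed module $\Gamma(A)\xrightarrow T\XkmultG 1$ of \cite{BCLX}. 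Then
\[
p^\sharp[\gamma,\gamma']_{\piM}=p^\sharp\big((J\gamma)\triangleright\gamma'\big)=P^\sharp(J\gamma)\triangleright p^\sharp\gamma'=T(p^\sharp\gamma)\triangleright p^\sharp\gamma'=[p^\sharp\gamma,p^\sharp\gamma'],
\]
where the second equality is the action compatibility and the third is the commuting square. Collecting all steps, $(p^\sharp,P^\sharp)$ satisfies every clause of Definition~\ref{Def:liealgebracrossedmodulemorphism}, which proves the proposition.

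I expect the main obstacle to be conceptual rather than computational: spotting that the morphism property of $p^\sharp$ should not be attacked directly (that route would force one into the Lie bialgebroid of the Poisson groupoid and a comparison of $[{\tobefilledin},{\tobefilledin}]_{\piM}$ with the anchor $\rho\circ p^\sharp$), but instead follows formally once the square commutes and the action is compatible. The secondary difficulty is bookkeeping: the argument relies on applying the two $\overleftarrow{({\tobefilledin})}$-reductions of Lemma~\ref{contraction}(2) and the injectivity of $u\mapsto\overleftarrow u$ with matching sign conventions, and on using the correct axiom of \emph{both} crossed modules, so I would double-check that the $\XkmultG 1$-action $\overleftarrow{X\triangleright u}=[X,\overleftarrow u]$ and the $\OmegakmultG 1$-action $s^*(\Theta\triangleright\gamma)=[\Theta,s^*\gamma]_P$ are normalized consistently before declaring the chains of equalities valid.
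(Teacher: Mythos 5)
Your proposal is correct and follows essentially the same route as the paper's proof: the commuting square via Lemma \ref{contraction}(2), the action compatibility via applying $\overleftarrow{(\cdot)}$ and chaining $P^\sharp s^*(\Theta\triangleright\gamma)=P^\sharp[\Theta,s^*\gamma]_P=[P^\sharp\Theta,\overleftarrow{p^\sharp\gamma}]$ together with injectivity of left translation, and finally bootstrapping the morphism property of $p^\sharp$ from the two crossed-module axioms rather than computing $[{\tobefilledin},{\tobefilledin}]_{\piM}$ directly. The only difference is that you spell out why $P^\sharp$ is a Lie algebra morphism (via $[P,P]=0$ and Equation \eqref{Eqt:PsharpwrtPbracket}), which the paper merely asserts.
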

 \begin{proof}
 We verify that $(P^\sharp,p^\sharp)$ is a morphism of Lie algebra crossed modules. First, $P^\sharp$ is a Lie algebra morphism. Second, by (2) of Lemma \ref{contraction},  
 we have 
 \[P^\sharp\circ J(\gamma)=P^\sharp(s^*\gamma-t^*\gamma)=\overleftarrow{p^\sharp \gamma}-\overrightarrow{p^\sharp \gamma}=T\circ p^\sharp(\gamma),\qquad \gamma\in \Omega^1(M).\]
 So $P^\sharp\circ J= T\circ p^\sharp$ holds true. 
Next we show the relation \[p^\sharp(\Theta\triangleright \gamma)=(P^\sharp\Theta)\triangleright (p^\sharp \gamma).\]
 In fact, using  (2) of Lemma \ref{contraction} again, we have
 \[ \overleftarrow{p^\sharp(\Theta\triangleright \gamma)}=P^\sharp s^*(\Theta\triangleright \gamma)=P^\sharp[\Theta,s^*\gamma]_P=[P^\sharp \Theta, \overleftarrow{p^\sharp\gamma}]=\overleftarrow{(P^\sharp\Theta)\triangleright (p^\sharp \gamma)}.\] Then we obtain the desired relation because   the left translation is injective. The fact that $p^\sharp$ is a Lie algebra morphism follows by direct verification: 
 \[p^\sharp[\gamma,\gamma']_{\piM}=p^\sharp(J\gamma\triangleright\gamma')=(P^\sharp J\gamma)\triangleright (p^\sharp \gamma')=(Tp^\sharp \gamma)\triangleright (p^\sharp \gamma')=[p^\sharp\gamma,p^\sharp \gamma']_A.\]
 \end{proof}
 
 {
 \begin{example}Suppose that a Poisson manifold $(M,\underline{P})$ admits a 
 	symplectic groupoid $(\mathcal{G},\omega)$ which integrates the Lie algebroid 
 	$T^*M$ arising from the Poisson structure $\underline{P}$.  In this case, the pair $((\omega^{\sharp})^{-1},\id)$ forms an isomorphism of Lie algebra crossed modules:
\begin{equation*}
 		\xymatrix{
 			\Omega^{1}(M)\ar@{^{}->}[d]_{J} \ar[r]^{\id} &\Omega^{1}(M) \ar@{^{}->}[d]^{T} \\
 			\OmegakmultG{1}\ar[r]^{(\omega^{\sharp})^{-1}} & \XkmultG{1}\,.}
 	\end{equation*} 
 \end{example}
 }

\subsection{The   DGLA  of multiplicative  forms on a Poisson groupoid} 
On a general Poisson manifold $(N,P)$, the space of all degree forms $\Omega^\bullet(N)=\oplus_{k=0}^n \Omega^k(N)$, where $n=\mathrm{dim} (N)$, admits a graded Lie bracket known as the Schouten-Nijenhuis bracket which   is extended by Leibniz rule from   the  Lie bracket \eqref{Eqt:Pbracket1forms} of $1$-forms $\Omega^1(N)$, and also denoted by $[{\tobefilledin},{\tobefilledin}]_P$. So we have a GLA $(\Omega^\bullet(N),[\tobefilledin,\tobefilledin]_P)$. {Equipped with the de Rham differential $d$, the triple $(\Omega^\bullet(N),[\tobefilledin,\tobefilledin]_P,d)$ is a DGLA. In fact, we have
\begin{eqnarray}\label{dg}
d[\alpha,\beta]_P=[d\alpha,\beta]_P+(-1)^{k-1}[\alpha,d\beta]_P,\qquad \forall \alpha\in \Omega^k(N), \beta\in \Omega^l(N).
\end{eqnarray}
See \cite{ST}.}
Also, the induced map 
\[\wedge^\bullet P^\sharp: (\Omega^\bullet(N),[\tobefilledin,\tobefilledin]_P,d)\to (\mathfrak{X}^\bullet (N),[\tobefilledin,\tobefilledin],[P,\tobefilledin])\] defined by 
\[(\wedge^k P^\sharp)(\alpha_1\wedge \cdots \wedge \alpha_k)=P^\sharp(\alpha_1)\wedge \cdots \wedge P^\sharp(\alpha_k)\]
is a morphism of DGLAs.   
 In other words, $\wedge^\bullet P^\sharp$ is   a morphism of {GLA}s and a cochain map:
\[(\wedge^{\bullet+1} P^\sharp)(d\alpha)=[P,(\wedge^\bullet P^\sharp)\alpha],\qquad \forall \alpha\in \Omega^\bullet(N).\]
Here we take the convention that when $\bullet=0$, $\wedge^0 P^\sharp$ reduces to the identity map $C^\infty(N)\to C^\infty(N)$. 

On a Poisson Lie groupoid $(\Gpd,P)$,  it is natural to expect that $\OmegakmultG{\bullet}$ also admits a DGLA structure $([{\tobefilledin},{\tobefilledin}]_P,d)$. We will  prove this fact and find  some  more interesting conclusions. 
Let us first recall the notion of {GLA} crossed modules (also known as $\mathbb{Z}$-graded Lie $2$-algebras).
\begin{definition} \cite{BCLX}
	A {GLA} crossed module  $\crossedmoduletriple{\thetaalgebradegone}{\phi}{\galgebradegzero}$ consists of a pair of {GLA}s
	$\thetaalgebradegone$ and $\galgebradegzero$, and a morphism of {GLA}s
	$\phi:~\thetaalgebradegone\to \galgebradegzero$ such that $\galgebradegzero$ acts on
	$\thetaalgebradegone$ and satisfies, for all $x,y\in\galgebradegzero$,
	$u,v\in\thetaalgebradegone$,
	\begin{itemize}
		\item[(1)] ${\phi (u)} \moduleaction v=\ba{u}{v}$;
		\item[(2)] $\phi (  x \moduleaction u)=\ba{ x }{\phi (u)}$,
	\end{itemize}where $\moduleaction$ denotes the $\galgebradegzero$-action on $\thetaalgebradegone$. 
\end{definition}

What we need is an enhanced version of this definition.
	 {
	\begin{definition} 
	A DGLA crossed module is a {GLA} crossed module $\crossedmoduletriple{\thetaalgebradegone}{\phi}{\galgebradegzero}$
	as defined above, where  $\thetaalgebradegone$ and $\galgebradegzero$ are both DGLAs, 
	$\phi:~\thetaalgebradegone\to \galgebradegzero$ is a morphism of DGLAs, and 	
	  the action $\triangleright$ of $\galgebradegzero$   on
	$\thetaalgebradegone$ is compatible with the relevant differentials:
	$$d_\thetaalgebradegone (x\triangleright  u)=(d_\galgebradegzero x)\triangleright u+(-1)^{|x|}x\triangleright d_\thetaalgebradegone u,\quad\forall  x\in\galgebradegzero, u\in \thetaalgebradegone.$$
	\end{definition}}
Morphisms of GLA and DGLA crossed modules are defined in the same fashions as that of Definition \ref{Def:liealgebracrossedmodulemorphism}.

\begin{example}\label{exBCLX}\cite{BCLX} Let $\Gpd$ be a Lie groupoid.
	{The space $\XkmultG{\bullet}$ of multiplicative multi-vector fields on $\Gpd$ is a graded vector space (not an algebra). It constitutes a {GLA} (after degree shifts),   the Schouten bracket being its structure map. Indeed, we have a {GLA} crossed module 
		\[\Gamma(\wedge^\bullet A) \xrightarrow{T} \XkmultG{\bullet} ,\qquad u\mapsto \overleftarrow{u}-\overrightarrow{u}.\]
		where $X\triangleright u\in \Gamma(\wedge^{k+l-1} A)$ is determined by the relation 
		\[\overleftarrow{X\triangleright u}=[X,\overleftarrow{u}],\qquad (\mathrm{or}, \qquad \overrightarrow{X\triangleright u}=[X,\overrightarrow{u}]),\qquad X\in \XkmultG{k}, u\in \Gamma(\wedge^l A).\]
	}
Note that we regard $\Gamma(\wedge^0 A)$ as $C^\infty(M)$ and $\XkmultG{0}$ as multiplicative functions on $\Gpd$. The action of $\XkmultG{0}$ on $\Gamma(\wedge^0 A)$ is simply trivial.
	\end{example}
	 {
\begin{example}Continuing the above example, if we are given a multiplicative Poisson bivector field $P$ on the Lie groupoid $\Gpd$,  then $(\mathfrak{X}^\bullet_{\mult}(\Gpd),[\tobefilledin,\tobefilledin],[P,\tobefilledin])$ becomes a DGLA equipped with the differential $[P,\tobefilledin]: \mathfrak{X}^\bullet_{\mult}(\Gpd)\to \mathfrak{X}^{\bullet+1}_{\mult}(\Gpd)$. It also induces a differential $\delta_P:\Gamma(\wedge^\bullet A)\to \Gamma(\wedge^{\bullet+1} A)$ defined by
	$$\overleftarrow{\delta_P u}=[P,\overleftarrow{u}],\qquad \forall u\in \Gamma(\wedge^\bullet A),  $$
	so that $(\Gamma(\wedge^\bullet A),[\tobefilledin,\tobefilledin]_A,\delta_P)$ is a DGLA.   Now, the {GLA} crossed module
 \[\Gamma(\wedge^\bullet A) \xrightarrow{T} \XkmultG{\bullet} ,\qquad u\mapsto \overleftarrow{u}-\overrightarrow{u}.\]
in Example \ref{exBCLX} is indeed a DGLA  crossed module. To see it, we need to show 
\[T (\delta_P u)=[P,Tu],\qquad \mbox{and}\qquad \delta_P(X\triangleright u)=[P,X]\triangleright u+(-1)^{k-1}X\triangleright \delta_P u,\quad \forall X\in \mathfrak{X}^k_{\mult}(\Gpd).\]
Let us examine these two equations: We have
\[T(\delta_P u)=\overleftarrow{\delta_P u}-\overrightarrow{\delta_P u} =[P,\overleftarrow{u}]-[P,\overrightarrow{u}]=[P,Tu],\]
and 
\begin{eqnarray*}
\overleftarrow{\delta_P(X\triangleright u)}&=&\overleftarrow{[P,X\triangleright u]}=[P,\overleftarrow{X\triangleright u}]=[P,[X,\overleftarrow{u}]]\\ &=&[[P,X],\overleftarrow{u}]+(-1)^{k-1}[X,[P,\overleftarrow{u}]]\\ &=&\overleftarrow{[P,X]\triangleright u}+(-1)^{k-1}\overleftarrow{X\triangleright (\delta_P u)},
\end{eqnarray*}
where   the graded Jacobi identity of the Schouten bracket is applied.
\end{example}
}

We present our main result, which notably improves upon the Ortiz-Waldron Theorem \ref{Thm:LiealgebracrossedmodulePoissonLiegroupoid}.
\begin{theorem}\label{multiform}
	Let  $(\Gpd,P)$ be a Poisson Lie groupoid. 
	\begin{itemize}
		\item[(1)] With respect to the graded Lie bracket $[{\tobefilledin},{\tobefilledin}]_P$ and  {the de Rham differential $d$},  the space $\OmegakmultG{\bullet}$ is a  {sub DGLA} of $\Omega^\bullet(\Gpd)$.
		\item[(2)] Endow $\Omega^\bullet(M)$ with the graded Lie bracket $[{\tobefilledin},{\tobefilledin}]_{\piM}$ and the de Rham differential $d$, where $\piM$ is the  Poisson structure on $M$ induced from $(\Gpd,P)$. The triple  $(\Omega^{\bullet}(M)\xrightarrow{~J~} \OmegakmultG{\bullet})$ consists a DGLA crossed module where $J$ is   defined by
		\begin{equation}\label{Eqt:crossedmoduleJmapalldegrees}
		 J(\gamma):= s^*\gamma-t^*\gamma,\quad \forall \gamma\in \Omega^{\bullet}(M)\end{equation}
	and the action map $\triangleright$ of $\OmegakmultG{\bullet}$ on $\Omega^{\bullet}(M)$ is uniquely determined by the relation $$ s^*(\Theta\triangleright \gamma)=[\Theta,s^*\gamma]_P,\quad \forall \Theta\in \Omega^k_{\mult}(\Gpd),\, \gamma\in \Omega^l(M).$$
		\item[(3)] The map $\wedge^\bullet P^\sharp$ sends multiplicative $k$-forms on $\Gpd$ to multiplicative $k$-vector fields, and thereby,  \[\wedge^\bullet P^\sharp:  (\OmegakmultG{\bullet},[{\tobefilledin},{\tobefilledin}]_P, {d})\to (\XkmultG{\bullet},[{\tobefilledin},{\tobefilledin}], {[P,\tobefilledin]})\]
		is a   morphism of DGLAs. (When $\bullet=0$, we treat $\wedge^0 P^\sharp$ as the identity map on the space of multiplicative functions on $\Gpd$.)
		\item[(4)] The map $\wedge^\bullet P^\sharp$ together with $\wedge^\bullet p^\sharp$ is a morphism of DGLA crossed modules:
		\begin{equation*}
			\xymatrix{
				\Omega^{\bullet}(M)\ar@{^{}->}[d]_{J} \ar[r]^{\wedge^\bullet p^\sharp} & \Gamma(\wedge^\bullet A) \ar@{^{}->}[d]^{T} \\
				\OmegakmultG{\bullet}\ar[r]^{\wedge^\bullet P^\sharp} & \XkmultG{\bullet}},
		\end{equation*}
		where $p=\mathrm{pr}_{\Gamma(TM\otimes A)} P|_M\in \Gamma(TM\otimes A)$ is the leading term of $P$.   (When $\bullet=0$, we treat $\wedge^0 p^\sharp$ as the identity map on   $C^\infty(M)$.)

	\end{itemize}

\end{theorem}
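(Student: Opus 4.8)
The plan is to treat the four statements as one package in which all of the genuinely analytic work is concentrated in three \emph{multiplicativity-preservation} claims, while everything else is either inherited from the ambient DGLA on $\Omega^\bullet(\Gpd)$ and the identity \eqref{dg}, or is a degree-by-degree repetition of the $k=1$ arguments already established (Theorem \ref{Thm:LiealgebracrossedmodulePoissonLiegroupoid}, Lemma \ref{Lem:Thetastartstar}, Proposition \ref{morphismcm}). The three claims I would isolate first are: (i) $[\Theta_1,\Theta_2]_P\in\OmegakmultG{\bullet}$ whenever $\Theta_1,\Theta_2\in\OmegakmultG{\bullet}$; (ii) $\wedge^k P^\sharp$ sends $\OmegakmultG{k}$ into $\XkmultG{k}$; and (iii) for $\Theta\in\OmegakmultG{k}$ and $\gamma\in\Omega^l(M)$ the form $[\Theta,s^*\gamma]_P$ is $s$-basic, i.e.\ equals $s^*\omega$ for a unique $\omega\in\Omega^{k+l-1}(M)$ (and symmetrically for $t$).

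For part (1), the graded Jacobi identity and the Leibniz rule \eqref{dg} hold on all of $\Omega^\bullet(\Gpd)$, so $(\OmegakmultG{\bullet},[{\tobefilledin},{\tobefilledin}]_P,d)$ is a sub-DGLA as soon as the two closure properties hold; closure under $d$ is recorded at the start of Section \ref{Sec:CartanComplex}, so all of (1) rests on claim (i). I would prove (i) by upgrading the contraction Lemmas \ref{contraction} and \ref{contraction2} to higher degree: writing the Koszul bracket through the Koszul--Brylinski generator $\partial_P=\iota_P d-d\,\iota_P$ and the operations $\iota_{(\cdot)}$ and $d$, one checks that each ingredient preserves the cocycle condition $m^*\Theta=\mathrm{pr}_1^*\Theta+\mathrm{pr}_2^*\Theta$; the multiplicativity of $P$ is exactly what makes $\partial_P$ (and contraction by $P$) compatible with $m^*$. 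The clean conceptual formulation is that multiplicative forms are the degree-$(1,\bullet)$ cocycles of the Bott--Shulman complex of $\Gpd$, and that multiplicativity of $P$ forces $[{\tobefilledin},{\tobefilledin}]_P$ to commute with the simplicial differential. Part (3) splits the same way: claim (ii) is immediate in degree one from $P^\sharp\Theta=\iota_\Theta P$ and Lemma \ref{contraction}(1), and in higher degree follows from the same simplicial-cocycle reasoning applied to $\wedge^\bullet P^\sharp$; once (ii) holds, the assertion that $\wedge^\bullet P^\sharp$ is a morphism of DGLAs is just the restriction of the general Poisson-manifold morphism $\wedge^\bullet P^\sharp\colon(\Omega^\bullet(\Gpd),[{\tobefilledin},{\tobefilledin}]_P,d)\to(\mathfrak{X}^\bullet(\Gpd),[{\tobefilledin},{\tobefilledin}],[P,{\tobefilledin}])$ recalled above.

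Parts (2) and (4) are then crossed-module bookkeeping extending the degree-one statements. That $J$ is a morphism of DGLAs follows by extending \eqref{Eqt:ststarbracketclosed}: since $s$ is Poisson and $t$ anti-Poisson, naturality of the Koszul bracket gives $s^*[\gamma,\eta]_{\piM}=[s^*\gamma,s^*\eta]_P$ and $[t^*\gamma,t^*\eta]_P=-t^*[\gamma,\eta]_{\piM}$ in all degrees, while the higher-degree version of Lemma \ref{Lem:W1} gives vanishing cross-brackets $[s^*\gamma,t^*\eta]_P=0=[t^*\gamma,s^*\eta]_P$; adding these yields $J[\gamma,\eta]_{\piM}=[J\gamma,J\eta]_P$, and $J$ commutes with $d$ trivially. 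The action is defined by claim (iii) through $s^*(\Theta\triangleright\gamma)=[\Theta,s^*\gamma]_P$; its $s$- and $t$-descriptions agree because $[\Theta,J\gamma]_P$ is multiplicative by (i) and of the form $s^*\omega_s-t^*\omega_t$, which forces $\omega_s=\omega_t$ by the isotropy of $M$ in Proposition \ref{Prop:ThetaalongM} (the $\wedge^{k+l-1}T^*M$-component of the restriction to $M$ is $\omega_s-\omega_t$ and must vanish). The crossed-module identities $J(\gamma)\triangleright\eta=[\gamma,\eta]_{\piM}$ and $J(\Theta\triangleright\gamma)=[\Theta,J\gamma]_P$, the graded action axiom, and the differential compatibility $d(\Theta\triangleright\gamma)=(d\Theta)\triangleright\gamma+(-1)^{|\Theta|}\Theta\triangleright d\gamma$ follow formally from the Jacobi identity, injectivity of $s^*$, and \eqref{dg}, exactly as in the proof of Theorem \ref{Thm:LiealgebracrossedmodulePoissonLiegroupoid}. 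For (4) one verifies $\wedge^\bullet P^\sharp\circ J=T\circ\wedge^\bullet p^\sharp$ from the higher-degree form of Lemma \ref{contraction}(2), namely $\wedge^\bullet P^\sharp(s^*\gamma)=\overleftarrow{\wedge^\bullet p^\sharp\gamma}$ and $\wedge^\bullet P^\sharp(t^*\gamma)=\overrightarrow{\wedge^\bullet p^\sharp\gamma}$; that $\wedge^\bullet p^\sharp$ is a DGLA morphism intertwining the two actions by extending Proposition \ref{morphismcm}; and that it is a cochain map by applying the $\Gpd$-level relation $\wedge^{\bullet+1}P^\sharp(d\,s^*\gamma)=[P,\wedge^\bullet P^\sharp(s^*\gamma)]$ and stripping off the (injective) left translation, which yields $\wedge^{\bullet+1}p^\sharp(d\gamma)=\delta_P(\wedge^\bullet p^\sharp\gamma)$.

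The main obstacle is the pair of claims (i) and (ii): the higher-degree upgrade of the contraction lemmas, i.e.\ showing that both the Koszul bracket and $\wedge^\bullet P^\sharp$ preserve the multiplicativity (simplicial cocycle) condition. This is precisely where the hypothesis that $P$ is multiplicative, and not merely Poisson, is indispensable, since it is the compatibility of $P$ with $m^*$ that propagates the cocycle property through $\partial_P$ and through $\wedge^\bullet P^\sharp$. Claim (iii) is then a corollary obtained either from an explicit basic representative generalizing Lemma \ref{Lem:Thetastartstar} (via Proposition \ref{Prop:differentialofcharpairs}) or from the basicness criterion $\iota_{\overrightarrow{u}}\eta=0=\iota_{\overrightarrow{u}}d\eta$ for all $u\in\Gamma(A)$ together with the contraction identities; everything else reduces to linear algebra inherited from the ambient structures.
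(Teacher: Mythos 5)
Your decomposition into the three claims (i)--(iii) is exactly the skeleton of the paper's argument, and your treatment of (ii), (iii), and of parts (2)--(4) as formal consequences of the Jacobi identity, injectivity of $s^*$, and Equation \eqref{dg} matches the paper's proof. The genuine gap is in your proof of claim (i), which is the heart of the theorem. You propose to write the Koszul bracket via the generator $\partial_P=\iota_P d-d\iota_P$ and to ``check that each ingredient preserves the cocycle condition.'' That decomposition is the Lichnerowicz--Koszul formula $[\alpha,\beta]_{P}=(-1)^{k-1}(\mathcal{L}_{P}(\alpha\wedge \beta)-\mathcal{L}_{P}(\alpha)\wedge \beta)-\alpha\wedge \mathcal{L}_{P}\beta$, and its individual ingredients do \emph{not} preserve multiplicativity: the wedge product of two multiplicative forms is not multiplicative (the paper flags this explicitly right after the statement of the theorem, precisely to warn that the bracket on $\OmegakmultG{\bullet}$ cannot be obtained by extending the degree-one case through products). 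So an ingredient-by-ingredient check along these lines cannot close; only the total sum is multiplicative, and that is the assertion to be proved, not an input. The appeal to the Bott--Shulman/simplicial picture has the same problem in disguise: $\Gpd^{(2)}$ carries no Poisson structure making $m$ and the $\mathrm{pr}_i$ simultaneously Poisson, so naturality of $[\tobefilledin,\tobefilledin]_P$ under the simplicial maps is not available off the shelf; multiplicativity of $P$ only makes the graph of $m$ coisotropic.

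What is missing is the paper's substitute decomposition: Lemma \ref{multiPbr} rewrites the bracket as $[\alpha,\beta]_P=\iota_{P^\sharp \alpha} d\beta+(-1)^{k-1}d\iota_{P^\sharp \alpha} \beta-(-1)^{(k-1)(l-1)}\iota_{P^\sharp \beta} d\alpha$, where the higher contraction $\iota_{P^\sharp\alpha}\beta$ is the specific operation \eqref{complex}, and Proposition \ref{multiPbr2} shows that \emph{this} ingredient is multiplicative. That proof is not a formal cocycle argument; it uses the identity \eqref{important} to exhibit $(\iota_{P^\sharp\alpha}\beta)^\sharp$ as a sum of compositions of the groupoid morphisms $\alpha^\sharp$, $P^\sharp$, $\beta^\sharp$, and then invokes the interchange law \eqref{interlaw} of $T^*\Gpd$ to conclude that the sum is again a groupoid morphism. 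You would need to supply this lemma (or an equivalent) for your claim (i) to go through; once it is in place, the rest of your outline --- Lemma \ref{for3} for claim (ii), the vanishing of $\iota_X[\alpha,s^*\gamma]_P$ for $X\in\ker s_*$ for claim (iii), and the crossed-module bookkeeping --- coincides with the paper's proof.
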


We should note that 
the wedge product of multiplicative forms  is not multiplicative in general. So one can \textit{not} deduce that the graded Lie bracket $[{\tobefilledin},{\tobefilledin}]_P$ on $\OmegakmultG{\bullet}$ is extended from the one on $\OmegakmultG{1}$.

To prove Theorem \ref{multiform}, we need to set up some basic formulas and facts. For a bivector field $P\in \mathfrak{X}^2(N)$,  we define $P^\sharp: \Omega^k(N)\to \Omega^{k-1}(N)\otimes \mathfrak{X}^1(N)$ by
$$
P^\sharp(\alpha_1\wedge\cdots\wedge \alpha_k):=\sum_{i=1}^k (-1)^{i+k} \alpha_1\wedge\cdots\wedge \widehat{\alpha_i}\wedge \cdots \wedge \alpha_k\otimes P^\sharp(\alpha_i).
$$ 
Then for $\alpha\in \Omega^k(N)$ and $\beta\in \Omega^l(N)$, define
$\iota_{P^\sharp \alpha} \beta\in \Omega^{k+l-2}(N)$ by
\begin{eqnarray}\label{complex}
&&\nonumber \iota_{P^\sharp (\alpha_1\wedge\cdots \wedge \alpha_k)} (\beta_1\wedge \cdots \wedge \beta_l)\\\nonumber &=& 
\sum_{i=1}^k (-1)^{i+k} \alpha_1\wedge\cdots\wedge \widehat{\alpha_i}\wedge \cdots \wedge \alpha_k\wedge \iota_{ P^\sharp(\alpha_i)}(\beta_1\wedge \cdots \wedge \beta_l) 
\\
&=&\sum_{ i,j}  (-1)^{i+k+j-1}  (\iota_{P^\sharp \alpha_i} \beta_j) \alpha_1\wedge\cdots\wedge \widehat{\alpha_i}\wedge \cdots \wedge \alpha_k \wedge \beta_1\wedge \cdots \wedge \widehat{\beta_j}\wedge \cdots \wedge \beta_l  .
\end{eqnarray}
For every $k$-form $\alpha\in \Omega^k(N)$, we denote by $\alpha^\sharp: \wedge^{k-1} TN\to T^*N$ the map 
\[\alpha^\sharp(X_1,\cdots,X_{k-1})=\alpha(X_1,\cdots,X_{k-1},\tobefilledin),\qquad X_i\in \mathfrak{X}^1(N).\]
 
With  notations as above,  we can verify the following identity. For all $X_1,\cdots,X_{k+l-3} \in \mathfrak{X}^1(N)$, one has 
\begin{eqnarray}\nonumber  
(\iota_{P^\sharp \alpha} \beta)^\sharp (X_1,\cdots,X_{k+l-3})&=&
 \sum_{\sigma\in \mathrm{Sh}(k-1,l-2)}(-1)^\sigma \beta^\sharp \big(P^\sharp\alpha^\sharp (X_{\sigma_1},\cdots, X_{\sigma_{k-1}}),X_{\sigma_{k},}\cdots, X_{\sigma_{k+l-3}}\big)\\\nonumber  &&-(-1)^{kl} \sum_{\tau\in \mathrm{Sh}(l-1,k-2)}(-1)^\tau\alpha^\sharp \big(P^\sharp\beta^\sharp (X_{\tau_1},\cdots, X_{\tau_{l-1}}),X_{\tau_{l}},\cdots, X_{\tau_{k+l-3}}\big).\\\label{important}&&
\end{eqnarray}

\begin{lemma}\label{multiPbr}
On a Poisson manifold $(N,P)$, for $\alpha\in \Omega^k(N)$ and $\beta\in \Omega^l(N)$, we have
\begin{equation}\label{Eqt:polyPbracket}[\alpha,\beta]_P= \iota_{P^\sharp \alpha} d\beta+(-1)^{k-1}d\iota_{P^\sharp \alpha} \beta-(-1)^{(k-1)(l-1)}\iota_{P^\sharp \beta} d\alpha,\end{equation}
where $\iota_{P^\sharp \alpha} \beta$ is defined by \eqref{complex}.
\end{lemma}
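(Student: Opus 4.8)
The bracket $[{\tobefilledin},{\tobefilledin}]_P$ is, by its construction, the unique extension of the $1$-form bracket \eqref{Eqt:Pbracket1forms} to a graded Lie bracket on $\Omega^\bullet(N)$ that is a biderivation of the wedge product (cf.\ \cite{ST}). The plan is therefore to show that the right-hand side of \eqref{Eqt:polyPbracket}, which I temporarily denote by $\Phi(\alpha,\beta)$, enjoys these same properties, forcing $\Phi$ and $[{\tobefilledin},{\tobefilledin}]_P$ to coincide. I would run this as an induction on the total degree $k+l$, where $k=\deg\alpha$ and $l=\deg\beta$.

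First I would settle the base cases $k,l\in\{0,1\}$. When $k=l=1$, Cartan's formula $\mathcal{L}_X=\iota_X d+d\iota_X$ turns $\Phi(\alpha,\beta)$ literally into the second expression of \eqref{Eqt:Pbracket1forms}, so the two agree. The remaining base cases involve a degree-$0$ argument; here the conventions $P^\sharp f=0$ and $\iota_X f=0$ for $f\in C^\infty(N)$ collapse $\Phi$ to the expected values, namely $\Phi(f,g)=0$ and $\Phi(f,\beta)=-(-1)^{l-1}\iota_{P^\sharp\beta}df$, which match $[f,g]_P=0$ and the derivation rule $[\beta,f]_P=\iota_{P^\sharp\beta}df$.

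For the inductive step I would first verify that $\Phi$ is graded antisymmetric, $\Phi(\alpha,\beta)=-(-1)^{(k-1)(l-1)}\Phi(\beta,\alpha)$. Comparing the two sides, the $\iota d$-terms cancel in pairs and the remaining $d\iota$-terms combine under $d$, so antisymmetry reduces to the single identity $\iota_{P^\sharp\alpha}\beta=(-1)^{kl}\iota_{P^\sharp\beta}\alpha$, which is a direct consequence of the antisymmetry of $P$ together with the explicit expansion \eqref{complex}. Granting antisymmetry, when $k+l\geqslant 3$ I may assume $l\geqslant 2$, write $\beta$ locally as $\beta'\wedge\beta''$ with $\deg\beta',\deg\beta''<l$, and expand both $[\alpha,\beta]_P$ and $\Phi(\alpha,\beta)$ by the graded Leibniz rule so that the induction hypothesis applies to each lower-degree piece.

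The main obstacle is precisely the verification that $\Phi(\alpha,{\tobefilledin})$ obeys the graded Leibniz rule of degree $k-1$: because the multi-contraction $\iota_{P^\sharp\alpha}$ of \eqref{complex} is a higher-order, not first-order, operator once $k\geqslant 2$, no single term of $\Phi$ is itself a derivation, and one must establish the exact interaction of $\iota_{P^\sharp\alpha}$ with $\wedge$ and combine it, with careful sign bookkeeping, with the ordinary Leibniz rule for $d$; the three terms of $\Phi$ then conspire to restore the derivation property. An alternative that sidesteps part of this bookkeeping is to invoke Koszul's degree $-1$ generating operator $\partial_P=d\iota_P-\iota_P d$: one checks that $[{\tobefilledin},{\tobefilledin}]_P$ is the derived bracket generated by $\partial_P$, after which \eqref{Eqt:polyPbracket} drops out by expanding $\partial_P$ on $\alpha\wedge\beta$ through the second-order Leibniz rule for $\iota_P$ (see \cite{Kosmann2}).
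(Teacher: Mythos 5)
Your overall strategy is sound and genuinely different in organization from the paper's: you characterize $[{\tobefilledin},{\tobefilledin}]_P$ as the unique graded biderivation of the wedge product extending the $1$-form bracket \eqref{Eqt:Pbracket1forms}, and then aim to show that the right-hand side $\Phi(\alpha,\beta)$ of \eqref{Eqt:polyPbracket} enjoys the same characterizing properties. Your base cases and your graded antisymmetry check are correct: the sign identity $\iota_{P^\sharp\alpha}\beta=(-1)^{kl}\iota_{P^\sharp\beta}\alpha$ does follow from \eqref{complex} together with the skew-symmetry of $P$, and it does make the three terms of $\Phi$ match up under the swap of arguments. The paper, by contrast, works entirely on decomposables: it expands $[\alpha_1\wedge\cdots\wedge\alpha_k,\,\beta_1\wedge\cdots\wedge\beta_l]_P$ by the Leibniz rule into the $1$-form brackets $[\alpha_i,\beta_j]_P$, separately expands each of $\iota_{P^\sharp\alpha}d\beta$, $d\iota_{P^\sharp\alpha}\beta$ and $\iota_{P^\sharp\beta}d\alpha$ via \eqref{complex}, and exhibits the pairwise cancellation of all unwanted terms.

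As written, however, your argument has a gap at exactly its load-bearing point. Given the base cases and antisymmetry, the entire content of the lemma is the claim that $\Phi(\alpha,{\tobefilledin})$ is a graded derivation of degree $k-1$; you correctly flag this as the main obstacle but then only assert that ``the three terms of $\Phi$ conspire to restore the derivation property.'' That conspiracy is not automatic: since $\iota_{P^\sharp\alpha}$ is not a first-order operator for $k\geqslant 2$, one must write out how it acts on a product $\beta'\wedge\beta''$ (producing cross terms in which $P^\sharp$ pairs a factor of $\alpha$ against $\beta'$ while the remaining factors land on $\beta''$, and vice versa), combine this with the ordinary Leibniz rule for $d$, and track the cancellation of all cross terms among the three summands of $\Phi$. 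This is precisely the page-long computation the paper carries out; omitting it leaves your proof incomplete rather than wrong. Your suggested shortcut via the generating operator $\iota_P\circ d-d\circ\iota_P$ and the second-order Leibniz identity for $\iota_P$ is a legitimate way to package that same computation (and is close in spirit to the formula from \cite{Koszul} that the paper quotes just before its proof), but it too is only named, not executed. To complete the proposal you must actually carry out one of these two verifications.
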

 
In the existing literature, a more common formula  of $[\tobefilledin,\tobefilledin]_P$ is of the form 
\[[\alpha,\beta]_{P}=(-1)^{k-1}(\mathcal{L}_{P}(\alpha\wedge \beta)-\mathcal{L}_{P}(\alpha)\wedge \beta)-\alpha\wedge \mathcal{L}_{P} \beta,\qquad \alpha\in \Omega^k(N),\beta\in \Omega^l(N).\]See  \cite{Koszul}.
Here $\mathcal{L}_{P}: \Omega^n(N)\to \Omega^{n-1}(N)$ is defined by $\mathcal{L}_{P}=\iota_P\circ d-d\circ \iota_P$,  and $\iota_P:\Omega^n(N)\to \Omega^{n-2}(N)$ is the contraction. The bracket $[\tobefilledin,\tobefilledin]_P$ is also known as the \textit{Koszul bracket}. From the formula as described above, one can prove Equation \eqref{Eqt:polyPbracket}.  
   For completeness, we sketch a direct proof of Equation \eqref{Eqt:polyPbracket}. 
\begin{proof}
If $\alpha=\alpha_1\wedge\cdots \wedge \alpha_k$ and $\beta=\beta_1\wedge \cdots \wedge \beta_l$, then by the Leibniz rule, we have
\begin{eqnarray*}
[\alpha,\beta]_P&=&[\alpha_1\wedge\cdots \wedge \alpha_k,\beta_1\wedge \cdots \wedge \beta_l]_P\\ &=&\sum_{i,j}(-1)^{i+j} [\alpha_i,\beta_j]_P\wedge \alpha_1\wedge \cdots \widehat{\alpha_i}\wedge \cdots\wedge \widehat{\beta_j}\wedge \cdots \wedge \beta_l\\ &=&\sum_{i,j}
(-1)^{i+j}(\iota_{P^\sharp \alpha_i} d\beta_j+d\iota_{P^\sharp \alpha_i} \beta_j -\iota_{P^\sharp \beta_j} d\alpha_i)\wedge \alpha_1\wedge \cdots \widehat{\alpha_i}\wedge \cdots\wedge \widehat{\beta_j}\wedge \cdots \wedge \beta_l.
\end{eqnarray*}By definition of $P^\sharp$, we have
\begin{eqnarray*}
\iota_{P^\sharp \alpha} d \beta&=&\sum_{i,j}(-1)^{i+k+j-1} \iota_{\alpha_1\wedge \cdots \widehat{\alpha_i}\wedge \cdots \wedge \alpha_k \otimes P^\sharp \alpha_i} (\beta_1\wedge \cdots \wedge d\beta_j\wedge \cdots\wedge \beta_l)\\ &=&\sum_{i,j}(-1)^{i+k+j-1}\big((-1)^{k-1}
(\iota_{P^\sharp \alpha_i} d\beta_j)\wedge \alpha_1\wedge \cdots \wedge\widehat{\alpha_i}\wedge \cdots \wedge  \widehat{\beta_j}\wedge \cdots \wedge \beta_l\\ &&+\sum_{p>j}(-1)^{p} (\iota_{P^\sharp \alpha_i} \beta_p) \alpha_1\wedge \cdots \wedge \widehat{\alpha_i}\wedge \cdots \wedge d\beta_j\wedge \cdots \wedge \widehat{\beta_p}\wedge  \cdots \wedge\beta_l\\ &&+\sum_{p<j} (-1)^{p-1} (\iota_{P^\sharp \alpha_i} \beta_p) \alpha_1\wedge \cdots \wedge \widehat{\alpha_i}\wedge \cdots  \wedge \widehat{\beta_p}\wedge \cdots \wedge d\beta_j\wedge  \cdots \wedge\beta_l\big),
\end{eqnarray*}
\begin{eqnarray*}
d\iota_{P^\sharp \alpha} \beta&=&\sum_{i,j} (-1)^{i+j+k-1}d((\iota_{P^\sharp \alpha_i} \beta_j)\wedge \alpha_1\wedge \cdots \wedge\widehat{\alpha_i}\wedge \cdots \wedge\widehat{\beta_j}\wedge \cdots \wedge \beta_l)\\ &=& 
\sum_{i,j} (-1)^{i+j+k-1}\big(d(\iota_{P^\sharp \alpha_i} \beta_j)\wedge \alpha_1\wedge \cdots \wedge \widehat{\alpha_i}\wedge \cdots \wedge \widehat{\beta_j}\wedge \cdots \wedge \beta_l
\\ &&+\sum_{p< i} (-1)^{p-1}(\iota_{P^\sharp \alpha_i} \beta_j)\alpha_1\wedge \cdots  \wedge d\alpha_p\wedge \cdots \wedge\widehat{\alpha_i}\wedge  \cdots \widehat{\beta_j}\wedge \cdots \wedge \beta_l
\\ &&+\sum_{p> i} (-1)^{p}(\iota_{P^\sharp \alpha_i} \beta_j)\alpha_1\wedge \cdots \wedge \widehat{\alpha_i}\wedge \cdots \wedge d\alpha_p \cdots \wedge\widehat{\beta_j}\wedge \cdots \wedge \beta_l
\\ &&+\sum_{p<j}(-1)^{p+k} (\iota_{P^\sharp \alpha_i} \beta_j) \alpha_1\wedge \cdots \wedge \widehat{\alpha_i}\wedge \cdots \wedge d\beta_p\wedge  \cdots  \wedge\widehat{\beta_j}\wedge \cdots \wedge \beta_l
\\ &&+\sum_{p>j}(-1)^{p+k-1} (\iota_{P^\sharp \alpha_i} \beta_j) \alpha_1\wedge \cdots \wedge \widehat{\alpha_i}\wedge  \cdots  \wedge\widehat{\beta_j}\wedge \cdots \wedge d\beta_p\wedge \cdots \wedge \beta_l
 \big),
\end{eqnarray*}and 
\begin{eqnarray*}
\iota_{P^\sharp \beta} d \alpha&=&\sum_{i,j}(-1)^{j+l+i-1} \iota_{\beta_1\wedge \cdots \widehat{\beta_j}\wedge \cdots \wedge \beta_l \otimes P^\sharp \beta_j} (\alpha_1\wedge \cdots \wedge d\alpha_i\wedge \cdots\wedge \alpha_k)\\ &=&\sum_{i,j}(-1)^{j+l+i-1}
\big((-1)^{l-1}(\iota_{P^\sharp \beta_j} d\alpha_i)\wedge \beta_1\wedge \cdots \wedge\widehat{\beta_j}\wedge \cdots \wedge  \widehat{\alpha_i}\wedge \cdots \wedge \alpha_k\\ &&
+\sum_{p<i} (-1)^{p-1} (\iota_{P^\sharp \beta_j} \alpha_p) \beta_1\wedge \cdots \wedge \widehat{\beta_j}\wedge \cdots \wedge \widehat{\alpha_p}\wedge \cdots\wedge d\alpha_i \wedge \cdots \wedge \alpha_k\\ &&+
\sum_{p>i} (-1)^p (\iota_{P^\sharp \beta_j} \alpha_p) \beta_1\wedge \cdots \wedge \widehat{\beta_j}\wedge \cdots \wedge d\alpha_i\wedge \cdots\wedge \widehat{\alpha_p}\wedge \cdots \wedge \alpha_k\big)
\\ &=&
\sum_{i,j}(-1)^{j+l+i-1}
\big((-1)^{k(l-1)}(\iota_{P^\sharp \beta_j} d\alpha_i)\wedge \alpha_1\wedge \cdots \wedge \widehat{\alpha_i}\wedge \cdots \wedge \beta_1\wedge \cdots \wedge\widehat{\beta_j}\wedge \cdots \wedge\beta_l  \\ &&
+\sum_{p<i} (-1)^{p-1+(l-1)k} (\iota_{P^\sharp \beta_j} \alpha_p)\alpha_1 \wedge \cdots\wedge \widehat{\alpha_p}\wedge \cdots \wedge d\alpha_i \wedge \cdots \wedge \widehat{\beta_j}\wedge \cdots \wedge \beta_l\\ &&+
\sum_{p>i} (-1)^{p+(l-1)k} (\iota_{P^\sharp \beta_j} \alpha_p) \alpha_1\wedge \cdots \wedge d\alpha_i\wedge \cdots \widehat{\alpha_p} \wedge \cdots \wedge \widehat{\beta_j}\wedge \cdots \wedge \beta_l\big).
\end{eqnarray*}
Taking the summation of these formulas, the second and third terms of $\iota_{P^\sharp \alpha} d\beta$  cancel out with the fourth and fifth terms of $(-1)^{k-1} d\iota_{P^\sharp \alpha} \beta$, and the  second and third terms of $-(-1)^{(k-1)(l-1)} \iota_{P^\sharp \beta} d\alpha$ cancel out with the third and second terms of $(-1)^{k-1} d\iota_{P^\sharp \alpha} \beta$. Combining these calculations  is the formula that we expect.   
\end{proof}


\begin{proposition}\label{multiPbr2}Let $\Gpd$ be a Lie groupoid. 
For all multiplicative forms $\alpha\in \OmegakmultG{k}$ and $\beta\in \OmegakmultG{l}$, if $P\in\XkmultG{2}$, then the contraction  $\iota_{P^\sharp\alpha} \beta\in \Omega^{k+l-2}(\Gpd)$ defined by \eqref{complex} is also multiplicative.
\end{proposition}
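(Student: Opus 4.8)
The plan is to use the $\sharp$-map characterization of multiplicativity together with the identity \eqref{important}. Recall that a form $\gamma\in\Omega^n(\Gpd)$ with $n\geqslant 1$ is multiplicative precisely when $\gamma^\sharp:\oplus^{n-1}T\Gpd\to T^*\Gpd$ is a Lie groupoid morphism (see Section \ref{Sec:multkforms}), and that $P\in\XkmultG{2}$ being multiplicative means exactly that $P^\sharp:T^*\Gpd\to T\Gpd$ is a groupoid morphism. Thus the three maps $\alpha^\sharp:\oplus^{k-1}T\Gpd\to T^*\Gpd$, $\beta^\sharp:\oplus^{l-1}T\Gpd\to T^*\Gpd$ and $P^\sharp$ are all groupoid morphisms. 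Assuming $k+l-2\geqslant 1$, it suffices to show that $(\iota_{P^\sharp\alpha}\beta)^\sharp$ is a groupoid morphism, and for this I would read off its expression directly from \eqref{important}.

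First I would observe that each summand in \eqref{important} is a groupoid morphism. Indeed, the map
\[(X_1,\cdots,X_{k+l-3})\mapsto \beta^\sharp\bigl(P^\sharp\alpha^\sharp(X_1,\cdots,X_{k-1}),X_k,\cdots,X_{k+l-3}\bigr)\]
is the composite of the projection onto the first $k-1$ factors followed by $\alpha^\sharp$, then $P^\sharp$, and finally $\beta^\sharp$ applied to the resulting vector together with the remaining $l-2$ factors. Each constituent is a groupoid morphism for the direct-sum VB-groupoids, and the composite of groupoid morphisms is a groupoid morphism; precomposing with the permutation of arguments indexed by a shuffle $\sigma$ (itself a groupoid morphism, acting diagonally) and multiplying by the scalar $(-1)^\sigma$ again yields a groupoid morphism. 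The same applies verbatim to the second family of summands, with the roles of $\alpha^\sharp$ and $\beta^\sharp$ interchanged.

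The main point is then to show that the \emph{sum} over the shuffles in \eqref{important} is still a groupoid morphism; this is exactly where the interchange law \eqref{interlaw} of the VB-groupoid $T^*\Gpd\rightrightarrows A^*$ enters. Writing $G_\sigma$ for the signed morphisms above and taking any composable pair $(\vec{X}_g,\vec{Y}_r)$ in the direct-sum groupoid, one has $G_\sigma(\vec{X}_g\cdot\vec{Y}_r)=G_\sigma(\vec{X}_g)\cdot G_\sigma(\vec{Y}_r)$ for each $\sigma$; applying \eqref{interlaw} iteratively (the partial sums remain composable because sources and targets add) gives
\[\sum_\sigma G_\sigma(\vec{X}_g)\cdot G_\sigma(\vec{Y}_r)=\Bigl(\sum_\sigma G_\sigma(\vec{X}_g)\Bigr)\cdot\Bigl(\sum_\sigma G_\sigma(\vec{Y}_r)\Bigr),\]
so $\sum_\sigma G_\sigma$ is a groupoid morphism. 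Hence $(\iota_{P^\sharp\alpha}\beta)^\sharp$ is a groupoid morphism and $\iota_{P^\sharp\alpha}\beta$ is multiplicative.

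The remaining extreme case $k=l=1$, where the result is a function and which falls outside the range of \eqref{important}, I would dispatch directly: $P^\sharp\alpha\in\XkmultG{1}$ by part (1) of Lemma \ref{contraction}, whence $\iota_{P^\sharp\alpha}\beta\in\OmegakmultG{0}$ by part (1) of Lemma \ref{contraction2}. I expect the interchange-law step — verifying that summing morphisms into the VB-groupoid $T^*\Gpd$ preserves the morphism property — to be the only genuinely delicate ingredient, the rest being bookkeeping of degrees and of the composites feeding into \eqref{important}.
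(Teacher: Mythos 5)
Your proposal is correct and follows essentially the same route as the paper: both express $(\iota_{P^\sharp\alpha}\beta)^\sharp$ via \eqref{important} as a signed sum of composites of the groupoid morphisms $\alpha^\sharp$, $P^\sharp$, $\beta^\sharp$, and then invoke the interchange law \eqref{interlaw} to conclude that the sum is again a groupoid morphism. Your explicit treatment of the edge case $k=l=1$ and of why summing morphisms into the VB-groupoid preserves the morphism property are welcome details that the paper leaves implicit, but they do not change the argument.
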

\begin{proof}  
To prove that the $(k+l-2)$-form $\iota_{P^\sharp\alpha} \beta$ is multiplicative, it suffices to show that
\[(\iota_{P^\sharp\alpha} \beta)^\sharp: \oplus^{k+l-3} T\Gpd\to T^*\Gpd\]
is a Lie groupoid morphism. 
As $\alpha$, $\beta$, and $P$ are all multiplicative, the three maps 
\[\alpha^\sharp: \oplus^{k-1} T\Gpd\to T^*\Gpd,\qquad \beta^\sharp: \oplus^{l-1} T\Gpd\to T^*\Gpd,\quad\mbox{~and~}~ P^\sharp: T^*\Gpd\to T\Gpd\]
are all groupoid morphisms. Thus the compositions
\[\oplus^{k+l-3} T\Gpd=\oplus^{k-1}T\Gpd ~\oplus~(\oplus^{l-2}T\Gpd)\xrightarrow{\alpha^\sharp\oplus Id}T^*\Gpd~\oplus~(\oplus^{l-2}T\Gpd)\xrightarrow{P^\sharp\oplus Id}T\Gpd~\oplus~(\oplus^{l-2}T\Gpd)\xrightarrow{\beta^{\sharp}} T^*\Gpd,\]
and 
\[\oplus^{k+l-3} T\Gpd=\oplus^{l-1}T\Gpd ~\oplus~(\oplus^{k-2}T\Gpd)\xrightarrow{\beta^\sharp\oplus Id}T^*\Gpd~\oplus~(\oplus^{k-2}T\Gpd)\xrightarrow{P^\sharp\oplus Id}T\Gpd~\oplus~(\oplus^{k-2}T\Gpd)\xrightarrow{\alpha^{\sharp}} T^*\Gpd,\]
are both Lie groupoid morphisms as well.
By Equation \eqref{important}, $(\iota_{P^\sharp\alpha} \beta)^\sharp$ is the summation of a series of the above two compositions. Based on the interchange law \eqref{interlaw} of $T^*\Gpd$, it is also a Lie groupoid morphism.
\end{proof}

 \begin{lemma}\label{for3}
For all integers $k$ and $\Theta\in \OmegakmultG{k}$, we have $(\wedge^k P^\sharp)  \Theta  \in \XkmultG{k}$.
\end{lemma}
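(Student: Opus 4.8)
The plan is to exhibit the contraction (``sharp'') map of $\Pi:=(\wedge^k P^\sharp)\Theta$ as a composition of Lie groupoid morphisms, in the same spirit as the proof of Proposition \ref{multiPbr2}. Recall the relevant characterizations: $\Theta\in\OmegakmultG{k}$ is multiplicative precisely when $\Theta^\sharp:\oplus^{k-1}T\Gpd\to T^*\Gpd$ is a groupoid morphism; $P\in\XkmultG{2}$ being multiplicative means $P^\sharp:T^*\Gpd\to T\Gpd$ is a groupoid morphism; and $\Pi\in\XkmultG{k}$ is multiplicative precisely when $\Pi^\sharp:\oplus^{k-1}T^*\Gpd\to T\Gpd$ is a groupoid morphism. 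I would first dispose of the trivial case $k=0$, where $\wedge^0 P^\sharp=\id$ sends multiplicative functions to multiplicative functions, and then treat $k\geq 1$.

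The central step is the pointwise algebraic identity
$$\Pi^\sharp=(-1)^{k-1}\,P^\sharp\circ\Theta^\sharp\circ(\oplus^{k-1}P^\sharp).$$
I would verify this first for a decomposable form $\Theta=\alpha_1\wedge\cdots\wedge\alpha_k$, so that $\Pi=P^\sharp(\alpha_1)\wedge\cdots\wedge P^\sharp(\alpha_k)$. Writing $v_a:=P^\sharp\alpha_a$ and using $\langle v_a,\xi\rangle=P(\alpha_a,\xi)=-\langle\alpha_a,P^\sharp\xi\rangle$, a cofactor expansion along the free slot of the two determinantal expressions $\Pi^\sharp(\xi_1,\cdots,\xi_{k-1})=\Pi(\xi_1,\cdots,\xi_{k-1},\tobefilledin)$ and $\Theta^\sharp(P^\sharp\xi_1,\cdots,P^\sharp\xi_{k-1})$ shows they agree up to the factor $(-1)^{k-1}$, which arises from the $k-1$ sign changes $\langle\alpha_a,P^\sharp\xi_b\rangle=-\langle v_a,\xi_b\rangle$ occurring in the remaining rows. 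Since both sides of the displayed identity are linear in $\Theta$, and $\wedge^k T^*_g\Gpd$ is spanned by decomposable elements at each $g$, the identity then extends to arbitrary $\Theta\in\OmegakmultG{k}$.

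With this identity in hand, the conclusion is immediate. By hypothesis $P^\sharp$ and $\Theta^\sharp$ are groupoid morphisms, and $\oplus^{k-1}P^\sharp$ is a groupoid morphism from the direct-sum VB-groupoid $\oplus^{k-1}T^*\Gpd\rightrightarrows\oplus^{k-1}A^*$ into $\oplus^{k-1}T\Gpd\rightrightarrows\oplus^{k-1}TM$, being the Whitney sum of $k-1$ copies of $P^\sharp$. Hence $P^\sharp\circ\Theta^\sharp\circ(\oplus^{k-1}P^\sharp)$ is a groupoid morphism, and so is its scalar multiple by $(-1)^{k-1}$. Therefore $\Pi^\sharp$ is a groupoid morphism, which is exactly the statement that $\Pi=(\wedge^k P^\sharp)\Theta\in\XkmultG{k}$.

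I expect the only delicate point to be the sign and index bookkeeping in the determinantal identity; conceptually there is no obstacle, since this reduces to a single composition of morphisms rather than the sum of two compositions required in Proposition \ref{multiPbr2}. One should also note that the overall factor $(-1)^{k-1}$ is harmless, as $\XkmultG{k}$ is closed under scalar multiplication (equivalently, scalar multiplication on the fibres of $T\Gpd$ is a VB-groupoid morphism).
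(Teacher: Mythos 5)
Your proof is correct, and it takes a different route from the paper's. The paper argues directly with the additivity characterization of multiplicative multi-vector fields: using $(\wedge^k P^\sharp)\Theta(\xi_1,\ldots,\xi_k)=(-1)^k\Theta(P^\sharp\xi_1,\ldots,P^\sharp\xi_k)$, the fact that $P^\sharp$ preserves products of composable covectors, and the additivity of $\Theta$ on products of composable tangent vectors, it verifies in four lines that $(\wedge^kP^\sharp)\Theta$ is additive on composable covectors, which is the criterion (i) quoted in the proof of Lemma \ref{contraction}. You instead prove the factorization $\Pi^\sharp=(-1)^{k-1}\,P^\sharp\circ\Theta^\sharp\circ(\oplus^{k-1}P^\sharp)$ and conclude by composing groupoid morphisms; the sign bookkeeping in your cofactor expansion is right (the $(k-1)\times(k-1)$ minor with entries $\langle\alpha_b,P^\sharp\xi_c\rangle=-\langle v_b,\xi_c\rangle$ contributes exactly $(-1)^{k-1}$), the extension from decomposable to general $\Theta$ by pointwise linearity is sound, and the observation that fibrewise scalar multiplication is a VB-groupoid morphism disposes of the overall sign. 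Both arguments rest on the same two inputs ($P^\sharp$ and $\Theta^\sharp$ being groupoid morphisms), but yours buys a reusable structural identity that makes the statement an instance of the same composition-of-morphisms pattern as Proposition \ref{multiPbr2}, at the cost of a slightly longer verification; the paper's computation is shorter but leaves the factorization implicit.
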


\begin{proof} 
For any $\alpha_g^i\in T_g^*\Gpd$ and $\beta_r^i\in T_r^* \Gpd$ such that $s(\alpha^i_g)=t(\beta_r^i)$, since $P^\sharp: T^*\Gpd\to T\Gpd$ is a Lie groupoid morphism and $\Theta$ is multiplicative, we have 
\begin{eqnarray*}
( (\wedge^k P^\sharp)  \Theta)(\alpha_g^1\cdot \beta_r^1,\cdots, \alpha_g^k \cdot \beta_r^k)&=&(-1)^k \Theta(P^\sharp(\alpha_g^1\cdot \beta_r^1),\cdots,P^\sharp(\alpha_g^k\cdot \beta_r^k))\\ &=&(-1)^k\Theta(P^\sharp(\alpha_g^1)\cdot P^\sharp(\beta_r^1),\cdots,P^\sharp(\alpha_r^k)\cdot P^\sharp(\beta_r^k))\\ &=&(-1)^k\Theta(P^\sharp(\alpha_g^1),\cdots, P^\sharp(\alpha_g^k))+(-1)^k\Theta(P^\sharp(\beta_r^1),\cdots, P^\sharp(\beta_r^k))\\ &=&(\wedge^k P^\sharp) (\Theta)(\alpha_g^1,\cdots,\alpha_g^k)+(\wedge^k P^\sharp) (\Theta)(\beta_r^1,\cdots,\beta_r^k).\end{eqnarray*}
This property implies that $(\wedge^kP^\sharp)\Theta \in \XkmultG{k}$.
\end{proof}


We are ready to finish the proof of Theorem \ref{multiform}.
\begin{proof} We first prove  Statement $\mathrm{(1)}$.
By Lemma \ref{multiPbr}, Proposition \ref{multiPbr2} and the fact that the de Rham differential preserves  multiplicativity,    the graded Lie bracket $[{\tobefilledin},{\tobefilledin}]_P$ is closed on multiplicative forms. Thus $\OmegakmultG{\bullet}\subset \Omega^\bullet(\Gpd)$ is a graded Lie subalgebra  {and   a sub DGLA as well}.

For $\mathrm{(2)}$, 
we need a fact: for $\alpha\in \OmegakmultG{k}$ and $\gamma\in \Omega^l(M)$, there exists a unique $(k+l-1)$-form $\omega\in \Omega^{k+l-1}(M)$ such that 
\begin{equation}\label{Eqt:tempomega}[\alpha,s^*\gamma]_P(=\iota_{P^\sharp \alpha} ds^* \gamma+d\iota_{P^\sharp \alpha} s^*\gamma-\iota_{P^\sharp s^*\gamma} d\alpha)=s^*\omega.\end{equation}
To see it, we need to show that, for all $X\in \ker s_*$,  $Y_i\in \mathfrak{X}^1(\Gpd)$   ($i=1,\cdots,k+l-2$), 
\begin{eqnarray}\label{property}
[\alpha,s^*\gamma]_P(X,Y_1,\cdots, Y_{k+l-2})=0.
\end{eqnarray}
Indeed, by Equation \eqref{important}, we have 
\begin{eqnarray*}
&&(\Omega^1(\Gpd)\ni) ~(\iota_{P^\sharp \alpha} s^*\gamma)^\sharp(X,Y_1,\cdots, Y_{k+l-4})\\ &=&\sum_{\sigma\in \mathrm{Sh}(k-2,l-2)} (-1)^\sigma (s^*\gamma)^\sharp (P^\sharp \alpha^\sharp (X,Y_{\sigma_1},\cdots Y_{\sigma_{k-2}}),Y_{\sigma_{k-1}},\cdots,Y_{\sigma_{k+l-4}})\\ &&-(-1)^{kl}\sum_{\tau\in \mathrm{Sh}(l-1,k-3)} (-1)^\tau \alpha^\sharp (P^\sharp s^*\gamma(Y_{\tau_1},\cdots,Y_{\tau_{l-1}}),X,Y_{\tau_{l}},\cdots Y_{\tau_{k+l-4}}).
\end{eqnarray*}

We claim that all the terms above  vanish. For this we examine that 
\begin{eqnarray*}
&&(s^*\gamma)^\sharp (P^\sharp \alpha^\sharp (X,Y_{\sigma_1},\cdots Y_{\sigma_{k-2}}),Y_{\sigma_{k-1}},\cdots,Y_{\sigma_{k+l-4}})\\ &=&s^*\big(\gamma^\sharp(s_*P^\sharp \alpha^\sharp(X,Y_{\sigma_1},\cdots Y_{\sigma_{k-2}}),Y_{\sigma_{k-1}},\cdots,Y_{\sigma_{k+l-4}})\big)\\ &=&s^*\big(\gamma^\sharp(P^\sharp \alpha^\sharp(s_*X,s_*Y_{\sigma_1},\cdots s_*Y_{\sigma_{k-2}}),s_*Y_{\sigma_{k-1}},\cdots,s_*Y_{\sigma_{k+l-4}})\big)=0,
\end{eqnarray*}
where we have used the facts that $P^\sharp$ and $\alpha^\sharp$ are Lie groupoid morphisms, which commute with the source maps, and $s_*X=0$. Similarly, we can verify that
\begin{eqnarray*}
&&\alpha^\sharp (P^\sharp s^*\gamma(Y_{\tau_1},\cdots,Y_{\tau_{l-1}}),X,Y_{\tau_{l}},\cdots Y_{\tau_{k+l-4}})\\&=&(-1)^{k+l} \langle P^\sharp s^*\gamma(Y_{\tau_1},\cdots,Y_{\tau_{l-1}}),\alpha^\sharp(X,Y_{\tau_{l}},\cdots Y_{\tau_{k+l-4}})\rangle
\\&=&-(-1)^{k+l} \langle \gamma(Y_{\tau_1},\cdots,Y_{\tau_{l-1}}), s_*P^\sharp\alpha^\sharp(X,Y_{\tau_{l}},\cdots Y_{\tau_{k+l-4}})\rangle\\ &=& -(-1)^{k+l} \langle \gamma(Y_{\tau_1},\cdots,Y_{\tau_{l-1}}), P^\sharp\alpha^\sharp(s_*X,s_*Y_{\tau_{l}},\cdots s_*Y_{\tau_{k+l-4}})\rangle=0.\end{eqnarray*}
So we have \[(\iota_{P^\sharp \alpha} s^*\gamma)^\sharp(X,Y_1,\cdots, Y_{k+l-4})=0.\]

Now, due to the expression of $[\alpha,s^*\gamma]_P$, we obtain the desired  \eqref{property}.

Once we obtain $\omega$ which is subject to Equation \eqref{Eqt:tempomega}, we can  define the action of $\alpha$ on $\gamma$ by setting $\alpha\triangleright \gamma:=\omega$. Thanks to the graded Jacobi identity of $[\tobefilledin,\tobefilledin]_P$ and the injectivity of $s^*$, we see that $\triangleright$ defines an action of the {GLA} $\Omega^\bullet_{\mult}(\Gpd)$ on $\Omega^\bullet(M)$.   {Moreover by Equation \eqref{dg}, we have
\[d[\alpha,s^*\gamma]_P=[d\alpha,s^*\gamma]_P+(-1)^{k-1} [\alpha,s^*d\gamma]_P,\]
which implies   $$d(\alpha\triangleright \gamma)=(d\alpha)\triangleright \gamma+(-1)^{k-1} \alpha\triangleright (d\gamma)$$ as $s^*$ is injective. So $\triangleright$ is   compatible with the differentials. One further checks that  $(\Omega^{\bullet}(M)\xrightarrow{~J~} \OmegakmultG{\bullet})$ defines a DGLA  crossed module.}

 {The space of multiplicative forms $\OmegakmultG{\bullet}\subset \Omega^\bullet(\Gpd)$ is preserved by the de Rham differential and the space of multiplicative multi-vector fields $\XkmultG{\bullet}\subset \mathfrak{X}^\bullet(\Gpd)$ is closed under the Schouten bracket.} So Statement $\mathrm{(3)}$ follows from Lemma \ref{for3} and  {the fact that $\wedge^\bullet P^\sharp: (\Omega^\bullet(\Gpd),[\tobefilledin,\tobefilledin]_P,d)\to (\mathfrak{X}^\bullet(\Gpd),[\tobefilledin,\tobefilledin],[P,\tobefilledin])$ is a morphism of DGLAs.} Finally,  Statements $\mathrm{(1)}$-$\mathrm{(3)}$ together with Lemma \ref{contraction} imply  $\mathrm{(4)}$.


\end{proof}

 \begin{example} Let $M$ be a smooth manifold.
The cotangent bundle  $T^*M\to M$ is an abelian Lie group bundle, and can be regarded as a special Lie groupoid; its source and target maps are both the bundle projection  $T^*M\to M$ and the multiplication is the fiberwise addition. With the canonical symplectic form $\omega=d\alpha$, $\alpha$ being the canonical Liouville-Poincar\'{e} $1$-form, $T^*M\to M$ is a symplectic Lie groupoid.
Let us take the standard local coordinates $(q^i,p^j)$ of $T^*M$, where $q^i$ is the coordinate on $M$ and $p^j$ that of the fibre. 
Then one can write $\omega=dq^i\wedge dp_i$. 

{As the groupoid multiplication of $T^*M$ is given by the fiberwise addition,  multiplicative multi-vector fields and multiplicative forms are indeed  linear multi-vector fields (\cite{ILX}) and linear forms (\cite{BC}), respectively.}  So according to \cite{BC}, a multi-vector field $\Pi\in \mathfrak{X}^k(T^*M)$ is multiplicative  if  it is locally of the form
\[  
\Pi=\frac{1}{k!}\Pi^{i_1\cdots i_k}_{j}(q)p^j \frac{\partial}{\partial p^{i_1}}\wedge \cdots\wedge  \frac{\partial}{\partial p^{i_k}}+\frac{1}{(k-1)!}\Pi^{i_1\cdots i_{k-1},j}(q)\frac{\partial}{\partial p^{i_1}}\wedge\cdots \wedge\frac{\partial}{\partial p^{i_{k-1}}}\wedge \frac{\partial}{\partial q^j}.
\]
Similarly, a $k$-form $\Theta\in \Omega^k(T^*M)$ is multiplicative if it is of the form 
\[
\Theta=\frac{1}{k!}\Theta_{i_1\cdots i_k,j}(q)p^j dq^{i_1}\wedge \cdots\wedge dq^{i_k}+\frac{1}{(k-1)!}\Theta_{i_1\cdots i_{k-1},j}(q)dq^{i_1}\wedge\cdots dq^{i_{k-1}}\wedge dp^j.
\]
As $\omega^\sharp: T (T^*M)\to T^*(T^*M)$ maps $\frac{\partial}{\partial p^{i}}$ to $-dq^{i}$ and 
$\frac{\partial}{\partial q^{i}}$ to $dp^i$, we see  that  $\omega^\sharp$ establishes an isomorphism between $\mathfrak{X}^k_{\mult}(T^*M)$ and $\Omega^k_{\mult}(T^*M)$.

 {Next, we find the Lie algebra crossed module and the {GLA} crossed module structures stemming from  the Poisson Lie groupoid $\Gpd=T^*M\to M$. The Poisson structure is $P=\frac{\partial}{\partial q^i}\wedge \frac{\partial}{\partial p^i}$ corresponding to the earlier symplectic structure $\omega$.
\begin{itemize}
\item [\rm (1)] Since source and target maps $s$ and $t$ are one and the same,   the map $J$ is just trivial: $\Omega^1(M)\xrightarrow{J=0} \Omega^1_{\mult}(\Gpd)$. The Lie bracket on $\Omega^1(M)$ is also trivial, whereas the Lie bracket on $\Omega^1_{\mult}(\Gpd)$ is listed below:
\begin{eqnarray*}
[\Theta_{i,j}(q) p^j dq^i, \Theta'_{a,b}(q) p^b dq^a]_P&=&\Theta_{i,j}(q)\Theta'_{a,i}(q) p^j dq^a-\Theta'_{a,b}(q) \Theta_{i,a}(q) p^b dq^i;\\ 
{[\Theta_{i,j}(q) p^j dq^i,\Theta'_l(q)dp^l]_P}&=&\Theta'_l(q) \frac{\partial \Theta_{i,j}(q)}{\partial q^l}p^j dq^i;\\ 
{[\Theta_k(q)dp^k,\Theta'_l(q)dp^l]_P}&=&-\Theta_k(q)\frac{\partial \Theta'_l(q)}{\partial q^k} dp^l+\Theta'_l(q)\frac{\partial \Theta_k(q)}{\partial q^l} dp^k.
\end{eqnarray*}
The action of $\Omega^1_{\mult}(\Gpd)$ on $\Omega^1(M)$ is given by
\[\big(\Theta_{i,j}(q) p^j dq^i+\Theta_l(q)dp^l\big)\triangleright (\gamma_k(q)dq^k)=-\gamma_k(q)\Theta_{i,k}(q)dq^i-\Theta_l(q)\frac{\partial \gamma_k(q)}{\partial q^l} dq^k.\]
\item [\rm (2)] For the same reasons,  we have the trivial map $\Omega^\bullet(M)\xrightarrow{J=0} \Omega^\bullet_{\mult}(\Gpd)$ and trivial  graded Lie bracket on $\Omega^\bullet(M)$. The graded Lie bracket on $\Omega^\bullet_{\mult}(\Gpd)$ is   as described below. 

Let $I=\{i_1,i_2,\cdots,i_k\}$ be a multi-index and  $dq^I=dq^{i_1}\wedge \cdots \wedge dq^{i_k}$ a $k$-form on $M$. Similarly let  $A=\{a_1,a_2,\cdots,a_l\}$  and $dq^A=dq^{a_1}\wedge \cdots dq^{a_l}$ be an $l$-form. Denote by $I_s$ be the multi-index by removing $i_s$ from $I$. The notation $A_t$ is similar. We have computed the following:
\begin{eqnarray*}
&&[\Theta_{I,j}(q) p^j dq^{I}, \Theta'_{A,b}(q) p^b dq^{A}]_P\\ &=&(-1)^{k-s}\Theta_{I,j}(q)\Theta'_{A,i_s}(q) p^j dq^{I_s}\wedge dq^A-(-1)^{l-t}\Theta'_{A,b}(q) \Theta_{I,a_t}(q) p^b dq^{A_t}\wedge dq^I;\\ 
&&[\Theta_{I,j}(q) p^j dq^{I},\Theta'_{L,b}(q)dq^L\wedge dp^b]_P\\ &=&(-1)^{l-s}\Theta'_{L,b}(q)\Theta_{I,l_s}(q)dq^{L_s}\wedge dp^b\wedge dq^I-\Theta'_{L,b}(q) \frac{\partial \Theta_{I,j}(q)} {\partial q^{b}} p^j dq^L\wedge dq^I; \\ 
&&[\Theta_{K,a}(q)dq^K\wedge dp^a,\Theta'_{L,b}(q)dq^L\wedge dp^b]_P\\ &=&-\Theta_{K,a}(q)\frac{\partial \Theta'_{L,b}(q)}{\partial q^{a}} dq^{K}\wedge dq^L\wedge dp^b+\Theta'_{L,b}(q)\frac{\partial \Theta_{K,a}(q)}{\partial p^b} dq^L\wedge dq^K\wedge dp^a.
\end{eqnarray*}
The action of $\Omega^k_{\mult}(\Gpd)$ on $\Omega^l(M)$ is given by
\begin{eqnarray*}
&&\big(\Theta_{I,j}(q) p^j dq^{I}+\Theta_{K,a}(q)dq^K\wedge dp^a\big)\triangleright (\gamma_L(q)dq^L)\\ &=&-(-1)^{l-s}\gamma_L(q)\Theta_{I,l_s}(q)dq^{L_s}\wedge dq^I-\Theta_{K,a}(q)\frac{\partial \gamma_L(q)}{\partial q^a}dq^K\wedge dq^L.
\end{eqnarray*}
\end{itemize}
We can also write explicitly the Schouten bracket on multiplicative multi-vector fields,  which are omitted.
}
\end{example}

 Finally, as the infinitesimal counterpart   of (1) of Theorem \ref{multiform}, we know that  $\mathrm{IM}^\bullet (A)$, the space of IM-forms of    the tangent Lie algebroid of $\Gpd$, carries a graded Lie bracket structure. Hence  $\mathrm{IM}^\bullet (A)$ is a DGLA provided that the groupoid $\Gpd$ is Poisson (cf. Corollary \ref{Cor:cochaincomplexofIMA}).  The correspondence between Poisson Lie groupoids and Lie bialgebroids \cite{MX1,MX3} suggests a canonical DGLA structure on $\mathrm{IM}^\bullet (A)$ can be derived from any Lie bialgebroid $(A,A^*)$. This will be explored in future research.



\begin{thebibliography}{10}
	
	
	
	\bibitem{VE} C. Arias Abad and M. Crainic, The Weil algebra and the van Est isomorphism, {\it Ann. Inst. Fourier.} 61(2011), no. 3, 927-970.
	



 \bibitem{BEL} D. Berwick-Evans and E. Lerman, Lie 2-algebras of vector fields,   {\it Pacific J. Math.}   309 (2020), 1-34.

	
	\bibitem{BCLX} F. Bonechi, N. Ciccoli, C. Laurent-Gengoux and P. Xu, Shifted Poisson structures on differentiable stacks, {\it Int. Math. Res. Not.}, rnaa293, https://doi.org/10.1093/imrn/rnaa293.
	
		\bibitem{BC} H. Bursztyn and A. Cabrera, Multiplicative forms at the infinitesimal level, {\it Math. Ann.}  353 (2012), 663-705.
	
	\bibitem{BCO} H. Bursztyn, A. Cabrera and C. Ortiz, Linear and multiplicative 2-forms, {\it Lett. Math. Phys.} 90 (2009), 59-83.
	
	 \bibitem{BD} H. Bursztyn and T. Drummond, Lie theory of multiplicative tensors, {\it Math. Ann.}  375 (2019), 1489-1554.	
	\bibitem{CMS0} A. Cabrera, I. Marcut and M. A. Salazar, On local integration of Lie brackets, {\it J. Reine Angew. Math.} 760 (2020), 267-293.	
	
	\bibitem{CMS1} A. Cabrera, I. Marcut and M. A. Salazar, Local formulas for multiplicative forms,  {\it Transformation Groups}, 2020.
	\bibitem{CDW} A. Coste, P. Dazord and A. Weinstein, Groupoildes symplectiques, {\it Publ. Delp. Math.}, Nouvelle Seirie. A, Vol. 2, Univ. Claude Bernard, Lyon (1987), 1-62. 
	
	\bibitem{CL1} Z. Chen and H. Lang, On the reduced space of multiplicative multi-vectors, arXiv: 2003.13384.
	
	\bibitem{CLomni} Z. Chen and Z.  Liu,  Omni-Lie algebroids,  {\it J. Geom. Phys.}  60 (2010), no. 5, 799-808.
	
	
	\bibitem{CSX}Z. Chen, M. Sti\'enon and P. Xu, Poisson 2-groups, {\it J. Differential Geom.}  94 (2013), no. 2, 209-240.
	\bibitem{CF} M. Crainic and R.L. Fernandes, Lectures on integrability of Lie brackets, {\it Geometry and Topology Monographs} 17 (2011), 1-107.
	
\bibitem{C} M. Crainic, M. A. Salazar and I. Struchiner, Multiplicative forms and Spencer operators, {\it Math. Z.}  279 (2015), no. 3-4, 939-979.
	
	  \bibitem{a} C. Ehresmann, Categories et structures, {\it Dunod, Paris}, 1965 (French). 
\bibitem{b} C. Ehresmann, Gattungen von lokalen Strukturen, {\it Jber. Deutsch. Math. Verein.} 60 (1957), MR0095894. 
	  
	
	\bibitem{Gerstenhaber}M. Gerstenhaber, A uniform cohomology theory for algebras, {\it Proc. Nat. Acad. Sci. } 51 (1964), 626-629.
	
	\bibitem{GM} A. Gracia-Saz and R. A. Mehta, $VB$-groupoids and representation theory of Lie groupoids, {\it J. Symp. Geom.} 15 (2017), no. 3, 741-783.
	
		
	 	\bibitem{ILX} D. Iglesias-Ponte, C. Laurent-Gengoux and P. Xu,  Universal lifting theorem and quasi-Poisson groupoids, {\it J. Eur. Math. Soc.}  14 (2012), no. 3, 681-731.	
	
	
	\bibitem{JSX} M. Jotz Lean, M. Sti\'enon, M. and P.  Xu,  Glanon groupoids, {\it Math. Ann.}  364 (2016), 485–518.
	
	\bibitem{Karasev} 	  M. V. Karas\"ev,
	Analogues of objects of the theory of Lie groups for nonlinear Poisson brackets. (Russian)
	{\it Izv. Akad. Nauk SSSR Ser. Mat.} 50 (1986), no. 3, 508–538, 638.

	
	 	\bibitem{Kosmann} Y. Kosmann-Schwarzbach, Multiplicativity, from Lie groups to generalized geometry. Geometry of jets and fields, in honour of Professor Janusz Grabowski, 131–166, {\it Banach Center Publ.}110, Polish Acad. Sci. Inst. Math., Warsaw, 2016.
	
	\bibitem{Kosmann2} Y. Kosmann-Schwarzbach and F. Magri, Poisson-Nijenhuis structures, {\it Ann. Inst. H. Poincare Phys. Theor.} 53 (1990), 35-81.	
	
	\bibitem{Koszul} J. L. Koszul, Crochet de Schouten-Nijenhuis et  cohomologie, {\it Ast\'erisque} (Numero Hors Serie) (1985), 257-271.
	
	\bibitem{LL} H. Lang and Z. Liu, Coadjoint orbits of Lie groupoids, {\it J. Geom. Phys.} 129 (2018), 217-232.
	
	  \bibitem{LSX} C. Laurent-Gengoux, M. Sti\'enon, and P. Xu, Lectures on Poisson groupoids, {\it Geometry and Topology Monographs} 17 (2011), 473-502.		
	
	 	\bibitem{LuW}  {J.-H. Lu, and A. Weinstein, Poisson Lie groups, dressing transformations, and Bruhat decompositions, {\it J. Differential Geom.} 31 (1990), 501-526.	}
	
	
  	\bibitem{Mackenzie} K. Mackenzie, General Theory of Lie Groupoids and Lie Algebroids, London Mathematical Society Lecture Note Series, vol. 213, Cambridge University Press.
	
	\bibitem{MX1} K. Mackenzie and P. Xu,  Lie bialgebroids and Poisson groupoids, {\it Duke Math. J.} 73 (1994), 415-452.
	
	\bibitem{MX2} K. Mackenzie and P. Xu, Classical lifting processes and multiplicative vector fields, {\it Quart. J. Math. Oxford Ser. (2)} 49 (1998), 59-85.	
	\bibitem{MX3}  { K.  Mackenzie and P Xu, Integration of Lie bialgebroids, {\it Topology} 39 (2000), 445-467.}	
	
	 \bibitem{OW} C. Ortiz and J. Waldron, On the Lie 2-algebra of sections of an $LA$-groupoid, {\it J. Geom. Phys.} 145 (2019), 103474, 34pp.	
	
	
	
	\bibitem{Luca} F. Pugliese, G. Sparano and L. Vitagliano, Multiplicative connections and their Lie theory, {\it Commun. Contemp. Math.}, https://doi.org/10.1142/S0219199721500929.
	
	
\bibitem{ST}G. Sharygin and D. Talalaev, On the Lie-formality of Poisson manifolds, {\it J. K-theory} 2 (2008), 361-384.
	

\bibitem{Weinstein1987} A. Weinstein, Symplectic groupoids and Poisson manifolds, {\it Bull. Amer. Math. Soc. (N.S.)} 16 (1987), 101-104.

 \bibitem{W1} A. Weinstein,  Coisotropic calculus and Poisson groupoids, {\it J. Math. Soc. Japan} 40 (1988),
    705-727.
 

  

	
	
	
\end{thebibliography}
\end{document}